\definecolor{CMUrot}{RGB}{128,18,18}
\definecolor{Gold}{RGB}{238,180,34}
\newcommand{\ol}[1]{\overline{#1}}
\numberwithin{equation}{section}
\newcommand{\R}{\ensuremath{\mathbb{R}}}
\newcommand{\Om}{\ensuremath{\Omega}}
\newcommand{\N}{\ensuremath{\mathbb{N}}}
\newcommand{\Z}{\ensuremath{\mathbb{Z}}}
\newcommand{\HS}{\ensuremath{\mathcal{H}}}
\newcommand{\SD}{\ensuremath{\mathcal{S}}}
\newcommand{\sd}{{\rm d}}
\newcommand{\supp}{\operatorname{supp}}
\newcommand{\eps}{\ensuremath{\varepsilon}}
\newcommand{\weight}[1]{\langle #1\rangle}
\newcommand{\Div}{\operatorname{div}}
\newcommand{\T}{\ensuremath{\mathbb{T}}}
\newcommand{\no}{\mathbf{n}}
\newcommand{\tn}[1]{\mathbf{#1}}
\def\nn{\mathbf{n}}
\let\e=\varepsilon
\newcommand{\ve}{\mathbf{v}}
\newcommand{\we}{\mathbf{w}}
\newcommand{\ue}{\mathbf{u}}
\newcommand{\btau}{{\boldsymbol{\tau}}}
\newtheorem{thm}{THEOREM}[section]
\newtheorem{cor}[thm]{Corollary}
\newtheorem{lem}[thm]{Lemma}
\newtheorem{defn}[thm]{Definition}
\newtheorem{theorem}[thm]{Theorem}
\newtheorem{prop}[thm]{Proposition}
\newtheorem{claim*}{Claim}
\newtheorem{rem}[thm]{Remark}
\newenvironment{proof*}[1]{{\bf Proof
#1:}}{\hspace*{\fill}\rule{1.2ex}{1.2ex}\\ }
\newenvironment{proof}{{\bf
Proof:\,}}{\hspace*{\fill}\rule{1.2ex}{1.2ex}\\ }
\global\long\def\cte{\hat{c}_{\varepsilon}}
\newcommand{\p}{\partial}
\newcommand{\G}{\Gamma}
\def\({\left(}
\def\){\right)}
\renewcommand{\O}{\Omega}
\newcommand{\tc}{\hat{c}}
\newcommand{\tr}{\hat{r}}
\newcommand{\order}{N}
\newcommand{\zg}{\zeta\circ d_\Gamma}
\begin{document}
\begin{titlepage}
\title{Sharp Interface Limit for a   Navier-Stokes/Allen-Cahn System in the Case of a Vanishing Mobility}
\author{Helmut Abels, Mingwen Fei, and Maximilian Moser}
\end{titlepage}

\maketitle
\abstract{
	 We consider the sharp interface limit of a Navier-Stokes/Allen Cahn equation in a bounded smooth domain in two space dimensions, in the case of vanishing mobility $m_\eps=\sqrt{\eps}$, where the small parameter $\eps>0$ related to the thickness of the diffuse interface is sent to zero. For well-prepared initial data and sufficiently small times, we rigorously prove convergence to the classical two-phase Navier-Stokes system with surface tension. The idea of the proof is to use asymptotic expansions to construct an approximate solution and to estimate the difference of the exact and approximate solutions with a spectral estimate for the (at the approximate solution) linearized Allen-Cahn operator. In the calculations we use a fractional order ansatz and new ansatz terms in higher orders leading to a suitable $\eps$-scaled and coupled model problem. Moreover, we apply the novel idea of introducing $\eps$-dependent coordinates.}


       {\small\noindent
{\bf Mathematics Subject Classification (2000):}
Primary: 76T99; Secondary:
35Q30, 
35Q35, 
35R35,
76D05, 
76D45\\ 
{\bf Key words:} Two-phase flow, diffuse interface model, sharp interface limit, Allen-Cahn equation, Navier-Stokes equation
}

       
       \section{Introduction and Main Result}\label{sec:Introduction}

Two-phase flows of macroscopically immiscible fluids is an important research area with many applications. There are two important model categories: sharp interface models and diffuse interface models. For sharp interface models the interface separating the fluids is assumed to be a hypersurface. These models usually consist of an evolution law for the hypersurface, coupled to equations in the bulk domains and on the interface. Solutions often develop singularities in finite time, in particular when the interface changes its topology. In contrast, diffuse interface models use a typically smooth order parameter (e.g. the density or volume fraction of the two fluids) that distinguishes the bulk domains and inbetween typically has steep gradients in a small transition zone (also called diffuse interface), which is proportional to a small parameter, e.g.~$\eps>0$. In applications the diffuse interface can be interpreted as microscopically small mixing region of the fluids. Quantities defined on the hypersurface in sharp interface models typically have a diffuse analogue that is defined in the diffuse interface. An important example is the relation of surface tension and capillary stress tensor, see Anderson, McFadden, Wheeler \cite{DiffIntModels}. Diffuse interface models may be more suited to describe phenomena acting on length scales related to the interface thickness, e.g. interface thicking phenomena, complicated contact angle behaviour and topology changes, cf. \cite{DiffIntModels}. Moreover, topology changes typically are no problem from an analytical or numerical point of view in contrast to sharp interface models. However, both model types are usually derived from physical principles or observations and can be used to model the same situations in applications. This motivates to study the connection between diffuse and sharp interface models by sending the small parameter $\eps$ (related to the thickness of the diffuse interface) to zero. Such limits are known as \enquote{sharp interface limits}.

Let $T_0>0$, $\Om\subseteq\R^2$ be a bounded smooth domain and $\eps>0$ be small. For $\ve_\eps\colon\overline{\Om}\times[0,T_0]\rightarrow\R^2$, $p_\eps, c_\eps\colon\overline{\Om}\times [0,T_0]\rightarrow\R$ we consider the following Navier-Stokes/Allen-Cahn system for small $\eps>0$: 
\begin{alignat}{2}\label{eq:NSAC1}
	\partial_t \ve_\eps +\ve_\eps\cdot \nabla \ve_\eps-\Div(2\nu(c_\eps)D\ve_\eps)  +\nabla p_\eps & = -\eps \Div (\nabla c_\eps \otimes \nabla c_\eps)&\quad & \text{in}\ \Omega\times(0,T_0),\\\label{eq:NSAC2}
	\Div \ve_\eps& = 0&\quad & \text{in}\ \Omega\times(0,T_0),\\\label{eq:NSAC3}
	\partial_t c_\eps +\ve_\eps\cdot \nabla c_\eps & =m_\eps\left[\Delta c_\eps - \frac{1}{\eps^{2}} f'(c_\eps)\right] &\quad & \text{in}\ \Omega\times(0,T_0),\\
	\label{eq:NSAC4}
	(\ve_\eps,c_\eps)|_{\partial\Omega}&= (0,-1)&\quad & \text{on }\partial\Omega\times(0,T_0),\\
	\label{eq:NSAC5}
	(\ve_\eps,c_\eps) |_{t=0}& = (\ve_{0,\eps},c_{0,\eps})&\quad& \text{in }\Omega,
\end{alignat}
where $\ve_\eps, p_\eps$ have the interpretation of a mean fluid velocity and pressure, respectively, and $c_\eps$ has the role of an order parameter distinguishing two components of a fluid mixture. Moreover, $\nu\colon\R\rightarrow(0,\infty)$ is a smooth concentration-dependent viscosity,  $m_\eps:=\sqrt{\eps}$ is the mobility and $f\colon\R\rightarrow[0,\infty)$ is a suitable smooth double-well potential with wells of equal depth, e.g.~$f(c)=\frac{1}{8}(c^2-1)^2$, specified below. For simplicity in the following analysis we assume that $\nu'\colon \R\to \R$ is even. Furthermore, $D\ve_\eps=\frac 12(\nabla\ve_\eps+(\nabla\ve_\eps)^T)$ is the symmetrized gradient and the operators $\nabla$, $\Delta$ and $\Div$ are defined to act on spatial variables only. Finally, note that $\nabla c_\eps \otimes \nabla c_\eps$ is a contribution to the stress tensor that represents capillary stresses due to surface tension effects in the (typically small) mixing region. The above model was introduced by Liu and Shen in \cite{LiuShenModelH} for constant viscosity along with a Navier-Stokes/Cahn-Hilliard variant in order to describe two-phase incompressible Newtonian fluids with the diffuse interface approach. The model was later derived in a thermodynamically consistent way by Jiang, Li, Liu~\cite{TwoPhaseVariableDensityJiangEtAl} via an energetic variational approach including the case of different densities. Moreover, they showed global existence of weak solutions in 3D and global well-posedness and longtime behaviour of strong solutions in 2D. 

We are interested in the sharp interface limit $\eps\rightarrow 0$ for the above system \eqref{eq:NSAC1}-\eqref{eq:NSAC5}. For well-prepared initial data and small times, we will rigorously prove the convergence of \eqref{eq:NSAC1}-\eqref{eq:NSAC5} to the following classical two-phase Navier-Stokes equation with surface tension:
\begin{alignat}{2}
	\label{eq:Limit1}
	\p_t \ve_0^\pm+\ve_0^\pm\cdot\nabla\ve_0^\pm-\nu^\pm\Delta \ve_0^\pm  +\nabla p^\pm_0 &= 0 &\qquad &\text{in }\Omega^\pm_t, t\in [0,T_0],\\\label{eq:Limit2}
	\Div \ve_0^\pm &= 0 &\qquad &\text{in }\Omega^\pm_t, t\in [0,T_0],\\\label{eq:Limit3}
	\llbracket 2\nu^\pm D\ve_0^\pm -p_0^\pm \tn{I}\rrbracket\no_{\Gamma_t} &= -\sigma H_{\Gamma_t}\no_{\Gamma_t} && \text{on }\Gamma_t, t\in [0,T_0],\\ \label{eq:Limit4}
	\llbracket\ve_0^\pm \rrbracket &=0 && \text{on }\Gamma_t, t\in [0,T_0],\\
	\label{eq:Limit5}
	V_{\Gamma_t}  &=\no_{\Gamma_t}\cdot \ve_0^\pm && \text{on }\Gamma_t, t\in [0,T_0],\\
	\label{eq:Limit6}
	\ve_0^-|_{\partial\Omega}&= 0&&\text{on }\partial\Omega\times(0,T_0),\\
	\Gamma_0=\Gamma^0,\qquad \ve_0^\pm|_{t=0}&= \ve_{0,0}^\pm && \text{ in }\Omega^\pm_0,\label{eq:Limit7}
\end{alignat}
where $T_0>0$, $\nu^\pm:=\nu(\pm 1)$,
$\Omega$ is the disjoint union of $\Omega^+_t, \Omega^-_t$ and $\Gamma_t$ for every $t\in[0,T_0]$, $\Omega^\pm_t$ are smooth domains, $\Gamma_t=\partial\Omega^+_t\subseteq \Omega$ and $\no_{\Gamma_t}$ is the interior normal of $\Gamma_t$ with respect to  $\Omega^+_t$. The jump $\llbracket u\rrbracket(.,t)$ in $x\in\Gamma_t$ of a quantity $u$ defined on $\Omega^+_t\cup\Omega^-_t$ is defined as
\begin{equation*}
	\llbracket u\rrbracket(x,t)= \lim_{r\to 0+} \left[u(x+r\no_{\Gamma_t}(x),t)- u(x-r\no_{\Gamma_t}(x),t)\right].
\end{equation*}
Moreover, $H_{\Gamma_t}$ is the (mean) curvature and $V_{\Gamma_t}$ is the normal velocity of $\Gamma_t$ with respect to $\no_{\Gamma_t}$. Furthermore, $(\Gamma^0,\ve_{0,0}^\pm)$ are suitable initial data. For the following we denote
\begin{equation*}
  \Omega^\pm = \bigcup_{t\in [0,T_0]}\Omega^\pm_t\times \{t\},\quad  \Gamma = \bigcup_{t\in [0,T_0]}\Gamma_t\times \{t\}.
\end{equation*}
The surface tension constant $\sigma$ is determined by $\sigma= \int_{\R}\theta_0'(\rho)^2\sd \rho$, where $\theta_0$ is the well-known optimal profile, i.e., the unique solution of
\begin{align}\label{eq:OptProfile}
	-\theta_0''+f'(\theta_0)=0\quad\text{ in }\R, \qquad
	\lim_{\rho\to\pm \infty}\theta_0(\rho)=\pm 1, \qquad \theta_0(0)=0.
\end{align}
As in Abels and Liu~\cite{StokesAllenCahn} we assume for the double well potential $f\colon \R\to \R$ that it is smooth and satisfies the assumptions
\begin{equation*}
  f'(\pm 1)=0, \quad f''(\pm 1)>0,\quad f(s)=f(-s)>0 \quad \text{for all }s\in (-1,1).
\end{equation*}
Then there is a unique solution $\theta_0 \colon\mathbb{R}\to \R$ of \eqref{eq:OptProfile},
which is monotone.
Moreover, for every $m\in\N_0$, there is some $C_m>0$ such that
\begin{equation*}\label{yuning:decayopti}
  |\p_\rho^m(\theta_0(\rho)\mp 1)|\leq C_m e^{-\alpha|\rho|}\quad\text{for all } \rho\in\R  \text{ with }\rho \gtrless 0,
\end{equation*}
where $\alpha=\min(\sqrt{f''(-1)}, \sqrt{f''(1)})$.  Since $f$ is assumed to be even, $\theta_0$ is odd and $\theta'_0$ is even.

Strong solutions for the problem \eqref{eq:Limit1}-\eqref{eq:Limit7} have been studied extensively in the literature starting with the results by Denisova and Solonnikov~\cite{DenisovaTwoPhase}. For further references we refer to Köhne, Prüss, and Wilke \cite{KoehnePruessWilkeTwoPhase} and the monograph by Pr\"uss and Simonett~\cite{PruessSimonettMovingInterfaces}, where in particular local well-posedness in an $L^p$-setting is shown. Existence of a notion of weak solutions, called varifold-solutions, globally in time was shown in \cite{GeneralTwoPhaseFlow}. Weak-strong uniqueness for these kind of solutions was shown by Hensel and Fischer~\cite{FischerHensel_NavStk}. 

Let us now comment on the choice of vanishing mobility $m_\eps=\sqrt{\eps}\rightarrow 0$ in \eqref{eq:NSAC4}. In \cite{AbelsConvectiveAC} a non-convergence result was shown for a convective Allen-Cahn equation for a mobility $m_\eps=m_0\eps^\alpha$, where $m_0>0$ is a constant and $\alpha>2$, and formal asymptotic calculations were carried out for the case $\alpha=0,1$. Hence for constant mobility $m_\eps=m_0$ the formal limit is a transport equation coupled to mean curvature flow, whereas for the case $m_\eps=m_0\eps$ the formal limit is a pure transport equation, cf.~\eqref{eq:Limit6} above. It is possible to adapt the formal calculations to the case of mobilities $m_\eps=m_0\eps^\alpha$ for all exponents $\alpha\in[0,1]$ (with the same limit system for $\alpha\in(0,1]$), the expansions just become more tedious and lengthy due to the fractional order ansatz. In Abels, Fei \cite{AbelsFei} the case of $m_\eps=1$, $\alpha=0$ was studied and rigorous convergence to a two-phase Navier-Stokes system coupled with mean curvature flow was shown as expected from the formal asymptotic expansions as long as a smooth solution of the limit system exists. However, for this limit system there is no conservation of mass and hence it could be considered physically less relevant compared to the classical two-phase Navier-Stokes system with surface tension, where one has pure transport of the interface. This clearly motivates the study of the case of vanishing mobility $m_\eps$ for $\eps\rightarrow 0$. To the best of our knowledge there is no rigorous convergence result in the case of vanishing mobility in the literature. The choice of $m_\eps=\sqrt{\eps}$ and $\alpha=\frac12$ for our result is motivated as follows: for the arguments in \cite{AbelsFei}, the exponent $\alpha=\frac12$ is critical in a heuristic sense by calculating the orders for $\alpha=1$ and by assuming some linear depencence on $\alpha$. The cases $\alpha\in(0,\frac12]$ should work formally with the strategy in \cite{AbelsFei}, but we decided to simply choose $\alpha=\frac12$, in particular in order to have simpler asymptotic expansions with just $\sqrt{\eps}$-spacing in the sums. We note that in a joint-work with Fischer the first and third author show convergence for more general scalings of $m_\eps>0$ using the relative entropy method. In this work the convergence is obtained in weaker norms (and assuming same viscosities for simplicity), but it also holds for three space dimensions, see \cite{AbelsFischerMoser}.

Our strategy to prove the sharp interface limit is via rigorous asymptotic expansions. The method goes back to de Mottoni and Schatzman \cite{DeMottoniSchatzman} who first applied it to prove the rigorous sharp interface limit for the Allen-Cahn equation. The strategy works as follows: it is assumed that there exists a smooth solution to the limit sharp interface problem locally in time (usually this is no restriction). Then in the first step, one rigorously constructs an approximate solution to the diffuse interface system via rigorous asymptotic expansions based on the evolving hypersurface that is part of the solution to the limit problem. In the second step, one estimates the difference between the exact and approximate solution with the aid of a spectral estimate for a linear operator depending on the diffuse interface equation and the approximate solution. Comparison principles are not needed for the method and one even obtains the typical profile of solutions across the diffuse interface. The strategy was applied to many other sharp interface limits as well, see Moser \cite{MoserAsy} for a list of results. Let us just mention the famous result by Alikakos, Bates, Chen \cite{AlikakosLimitCH} for the Cahn-Hilliard equation, Abels, Liu \cite{StokesAllenCahn} for a Stokes/Allen-Cahn system, Abels, Marquardt \cite{AbelsMarquardt1,AbelsMarquardt2} for a Stokes/Cahn-Hilliard system, and the recent result Abels, Fei \cite{AbelsFei} for the Navier-Stokes/Allen Cahn system with constant mobility. 

In general, rigorous results for sharp interface limits can be grouped into results concerning strong solutions for the limit system, in particular before singularities appear, and global time results using some weak notion for the sharp interface system. As described above, our result relies on the existence of a smooth solution for the limit system and assumes sufficiently small times. Another important strategy for sharp interface limits using strong solutions is the relative entropy method, see Fischer, Laux, Simon \cite{FischerLauxSimon_AC_MCF} where the convergence of the Allen-Cahn-equation to mean curvature flow is considered and Hensel, Liu \cite{HenselLiu}, where the Navier-Stokes/Allen-Cahn system with constant mobility (but equal viscosities) is considered, cf.~\cite{MoserAsy} for more references concerning the relative entropy method. Weak notions used for global time results for the Allen-Cahn equation are viscosity solutions (\cite{EvansSonerSouganidis,KatsoulakisKR,BarlesDaLio,BarlesSouganidis}), varifold solutions (\cite{Ilmanen,MizunoTonegawa,Kagaya}), BV-solutions (\cite{LauxSimon_BV}; conditional result) and a solution concept inbetween \cite{HenselLaux}. In \cite{AbelsFei} there are more references for results on Navier-Stokes/Cahn-Hilliard-type models. 

The following theorem is our main result about convergence of \eqref{eq:NSAC1}-\eqref{eq:NSAC5} to \eqref{eq:Limit1}-\eqref{eq:Limit7}:
\begin{thm}\label{thm:main}
Let $T_0>0$, $m_\eps=\sqrt{\eps}$ for all $\eps>0$, and $(\ve_0^\pm,\Gamma)$ be a smooth solution of the two-phase Navier-Stokes system with surface tension \eqref{eq:Limit1}-\eqref{eq:Limit7} on $[0,T_0]$ with $c_{0,\eps}(x,t)\in [-1,1]$ for all $(x,t)\in \overline{\Omega}\times [0,T_0]$, $\eps\in (0,1]$. Let $N\in\N$, $N\geq 3$. Then there exist $c_A=c_A(N,\eps),\ve_A=\ve_A(N,\eps)\in H^1(0,T_0;L^2(\Om))\cap L^2(0,T_0;H^2(\Om))$ for $\eps\in(0,1]$, uniformly bounded in these spaces and $\|c_A\|_\infty\leq 1+c$ with $c>0$ independent of $\eps\in(0,1]$, such that the following holds: 

Let $(\ve_\eps,c_\eps)$ be strong solutions of \eqref{eq:NSAC1}-\eqref{eq:NSAC5} with initial values  $\ve_{0,\eps}$, $c_{0,\eps}$ such that
\begin{equation}\label{initial assumption}
	\|c_{0,\eps}-c_A|_{t=0}\|_{L^2(\Omega)}+ \varepsilon^2\|\nabla(c_{0,\eps}-c_A|_{t=0})\|_{L^2(\Omega)}+ \|\ve_{0,\eps}-\ve_A|_{t=0}\|_{L^2(\Omega)}\leq C\eps^{\order+\frac12}
\end{equation}
for all $\eps\in (0,1]$ and some $C>0$.
Then there are some $\eps_0 \in (0,1]$, $R>0$ and $T_1\in(0,T_0]$ small such that for all $\eps \in (0,\eps_0]$ and some $C_R>0$ it holds
\begin{align}
		\|c_\eps-c_A\|_{L^\infty(0,T_1;L^2(\Omega))}+\eps^{\frac14}\|\nabla (c_\eps -c_A)\|_{L^2((\Omega\times (0,T_1))\setminus\Gamma(\delta))}  &\leq R\eps^{\order+\frac12},\label{eq_convC1}\\
		\eps^{\frac14} \|\nabla_{\btau_\eps}(c_\eps -c_A)\|_{L^2((\Omega\times(0,T_1))\cap \Gamma(2\delta))}+ \eps \|\nabla (c_\eps -c_A)\|_{L^2((\Omega\times(0,T_1))\cap \Gamma(2\delta))} &\leq R\eps^{\order+\frac12},\label{eq_convC2}\\
		\eps^2\|\nabla(c_\eps -c_A)\|_{L^\infty(0,T_1;L^2(\Omega))}+\eps^{2+\frac14}\|\nabla^2(c_\eps -c_A)\|_{L^2(\Omega\times(0,T_1))} &\leq R\eps^{\order+\frac12},\label{eq_convC3}\\
		\|\ve_\eps -\ve_A\|_{H^{\frac12}(0,T_1;L^2(\Om))}+\|\ve_\eps -\ve_A\|_{L^\infty(0,T_1;L^2(\Om))\cap L^2(0,T_1;H^1(\Om))} &\leq C_R\eps^{N+\frac14},\label{eq:convVelocityb}
\end{align}
where $\Gamma(\tilde{\delta})$ are standard tubular neighbourhoods for $\tilde{\delta}\in[0,3\delta]$, $\delta>0$ small and $\nabla_{\btau_\eps}$ is a suitable (approximate) tangential gradient, see Section \ref{subsec:Coordinates}. Moreover, let 
$d_\Gamma$ be the signed distance to $\Gamma$. Then
\begin{alignat}{2}
	c_A&=\zeta(d_\Gamma)\theta_0(\rho_\eps) \pm\chi_{\Omega^\pm} (1-\zeta(d_\Gamma)) + O(\eps^{\frac32})&\quad&\text{ in }L^\infty((0,T_0)\times\Om),\\
	\ve_A&=\ve_0^+(x,t)\eta(\rho_\eps)+\ve_0^-(x,t)(1-\eta(\rho_\eps)) + O(\sqrt{\eps}) &\quad&\text{ in }L^\infty(0,T_0;L^p(\Om)),
\end{alignat}
where $\eps\rho_\eps=d_\Gamma+O(\sqrt{\eps})$ in $\Gamma(3\delta)$, $p\in[1,\infty)$ is arbitrary, $\zeta\colon \R\to [0,1]$ is smooth such that $\supp \zeta \subseteq [-2\delta,2\delta]$ and $\zeta\equiv 1$ on $[-\delta,\delta]$, and $\eta\colon \mathbb{R}\rightarrow[0,1]$ is smooth such that $\eta=0$ in $(-\infty,-1]$, $\eta=1$ in $[1,\infty)$, $\eta-\frac{1}{2}$ is odd and $\eta'\geq0$ in $\mathbb{R}$. In particular, 
\[
\lim_{\eps\to 0} c_A=\pm 1\quad\text{ uniformly on compact subsets of }\quad \Omega^\pm.
\]
\end{thm}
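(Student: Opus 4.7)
The plan is to follow the de Mottoni--Schatzman strategy outlined in the introduction, adapted to the fractional mobility scaling $m_\eps=\sqrt{\eps}$. First, I would construct an approximate solution $(\ve_A,c_A)$ by rigorous matched asymptotic expansions anchored at the smooth sharp-interface solution $(\ve_0^\pm,\Gamma)$. Second, I would derive an error system for $(\we,u):=(\ve_\eps-\ve_A,c_\eps-c_A)$. Third, I would close this system via a spectral estimate for the Allen--Cahn operator linearised at $c_A$, combined with a Gronwall-type argument on a suitably $\eps$-weighted energy functional.

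The ansatz consists of an outer part in $\Omega^\pm$ starting from $\ve_0^\pm$ and $c_A\approx\pm1$, and an inner part in a tubular neighborhood $\Gamma(2\delta)$ written in a stretched normal variable $\rho_\eps$ with $\eps\rho_\eps=d_\Gamma+O(\sqrt{\eps})$, i.e.\ in the novel $\eps$-dependent coordinates. The leading inner profile is $\theta_0(\rho)$ from \eqref{eq:OptProfile}. Since $m_\eps=\sqrt{\eps}$ injects half-integer powers into \eqref{eq:NSAC3}, the expansion must be a fractional series in $\sqrt{\eps}$, not in $\eps$. Inserting the ansatz into \eqref{eq:NSAC1}--\eqref{eq:NSAC3} and collecting powers of $\sqrt{\eps}$ produces, at each order, linear ODE problems in $\rho$ of the form $-\partial_\rho^2 w+f''(\theta_0)w=g(\rho)$; their solvability against $\theta_0'$ fixes the next interface correction and couples into linearised Stokes/transport problems for the outer velocity and pressure corrections. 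This is exactly the $\eps$-scaled coupled model problem referenced in the abstract. Truncating at order $N$ and gluing inner and outer pieces via the cut-off $\zeta$ yields $(\ve_A,c_A)$ whose residuals in \eqref{eq:NSAC1}--\eqref{eq:NSAC3} are, in the relevant norms, of order $\eps^{N+1/2}$ or better.

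Taylor-expanding $f'$ around $c_A$ gives schematically
\begin{align*}
\partial_t u+\ve_\eps\cdot\nabla u+\we\cdot\nabla c_A-\sqrt{\eps}\,\Delta u+\eps^{-3/2}f''(c_A)u &= R_c+\mathcal{N}_c(u),\\
\partial_t\we+\ve_\eps\cdot\nabla\we-\Div(2\nu(c_\eps)D\we)+\nabla q &= R_v-\eps\,\Div\bigl(\nabla c_A\otimes\nabla u+\nabla u\otimes\nabla c_A\bigr)+\mathcal{N}_v,
\end{align*}
with residuals $R_c,R_v$ of order $\eps^{N+1/2}$ and quadratic remainders $\mathcal{N}_c,\mathcal{N}_v$. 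Testing the first equation with $u$ produces the quadratic form $\int\sqrt{\eps}\,|\nabla u|^2+\eps^{-3/2}f''(c_A)u^2\,dx$; a spectral estimate for $-\Delta+\eps^{-2}f''(c_A)$ at the approximate solution, in the spirit of \cite{AbelsFei,StokesAllenCahn}, bounds this below by $-C\sqrt{\eps}\,\|u\|_{L^2}^2$ plus positive tangential/off-interface gradient contributions, absorbing the destabilising potential term at the price of a Gronwall-absorbable perturbation. Testing the velocity equation with $\we$ and using Korn's inequality controls the viscous dissipation; the capillary coupling $\eps\,\Div(\nabla c_A\otimes\nabla u+\cdots)$ is handled by integration by parts and the cancellation with $\we\cdot\nabla c_A$ in the Allen--Cahn residual, a structural feature of Navier--Stokes/Allen--Cahn systems. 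A Gronwall inequality on the combined $\eps$-weighted energy then yields \eqref{eq_convC1}--\eqref{eq:convVelocityb}, the $\eps^{1/4}$ factors reflecting that the dissipation $\sqrt{\eps}\,\|\nabla u\|_{L^2}^2$ is weakened by the vanishing mobility.

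The central difficulty is the simultaneous handling of (i) the fractional $\sqrt{\eps}$-expansion with its cascade of coupled inner/outer solvability conditions, (ii) the weakened Allen--Cahn dissipation, which no longer absorbs the capillary coupling with the usual slack, and (iii) the $\eps$-dependent coordinates, which distort the tangential/normal decomposition underlying the spectral estimate as well as the transport structure of \eqref{eq:NSAC3}. The main technical obstacle I expect is to establish a version of the spectral estimate that is uniform in the $\eps$-modified coordinates and that interacts cleanly with the $\sqrt{\eps}$-scaled dissipation, so that the Gronwall argument closes and delivers an error of order $\eps^{N+1/2}$ despite the loss of a factor $\eps^{1/4}$ in the gradient control.
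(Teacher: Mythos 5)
Your high-level plan correctly identifies the de Mottoni--Schatzman strategy, the need for a $\sqrt{\eps}$-spaced expansion, and the role of $\eps$-dependent coordinates. But as written, the energy argument would not close, for three interlinked reasons.

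First, the residual estimate you claim is too optimistic. The refined approximate solution in Section~\ref{sec:ApproxSolutions} has a momentum residual $R_\eps$ only of size $\eps^{N+\frac14}(T^{1/2}+\eps^{1/4})$ in a dual norm (see \eqref{estimation:Reps}), not $\eps^{N+1/2}$, and the velocity error is correspondingly only $O(\eps^{N+\frac14})$ (see \eqref{eq:convVelocityb}). This quarter-power loss is structural: the capillary coupling $\eps\nabla c_A\otimes\nabla u:\nabla\we$, estimated via the spectral decomposition of Corollary~\ref{cor:SpectralDecomp} and Lemma~\ref{lem:DivergenceFreeRemainder}, lands at order $\eps^{N+\frac14}$ and cannot be improved. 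Because $\alpha=\tfrac12$ is exactly critical, this loss must be compensated by taking $T_1$ small, which your sketch does not address even though the theorem statement forces it.

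Second, your appeal to a ``cancellation with $\we\cdot\nabla c_A$ in the Allen--Cahn residual, a structural feature of Navier--Stokes/Allen--Cahn systems'' is not the mechanism used, and a direct cancellation of this kind does not survive linearisation at the approximate solution. The paper instead \emph{builds the cancellation into the ansatz}: $c_A$ satisfies a modified equation \eqref{eq:ApproxS3} containing the term $\eps^{N+\frac14}\big((P_M(\ue)+\we_\eps)|_{X_\eps(0,S_\eps,t)}-\we_\eps\big)\cdot\nabla c_A$, and $\ue$ is ultimately chosen to be $\we_1/\eps^{N+\frac14}$, i.e.\ essentially the velocity error itself. The convective error term then takes the form $\big(\we-\we|_{X_\eps(0,S_\eps,t)}\big)\cdot\nabla c_A\, u$, whose vanishing on $\Gamma^\eps$ is exactly what makes Proposition~\ref{prop:ErrorConvectionTerm} give $\eps^{N+\frac34}$. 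Without this self-consistent (fixed-point) construction, $\int\we\cdot\nabla c_A\, u$ is of order $\eps^{-1}\cdot\eps^{N+\frac14}\cdot\eps^{N+\frac12}=\eps^{2N-\frac14}$ multiplied by $\sqrt{\eps}$-factors, and the Gronwall argument breaks.

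Third, and related, you do not explain where the higher-order ansatz terms come from. The crucial point is that the $O(\eps^{N-\frac34})$ correction $\theta_0'(\rho)\bar{h}_{N,\eps}(S_\eps,t)$ and the $O(\eps^{N+\frac14})$ velocity correction $\we_\eps$ are coupled through the linearised two-phase Stokes/interface system \eqref{eq:linTwoPhase1}--\eqref{eq:linTwoPhase5} of Theorem~\ref{thm:LinStokesMCF}, \emph{not} determined by uncoupled ODEs at each order as in previous works. This coupled model problem, with its $\kappa_\eps=\sqrt\eps$-degenerate regularisation (Theorem~\ref{thm:ParabolicEqOnSurface}), is what provides the uniform-in-$\eps$ bounds \eqref{eq:EstimLinStokesMCF} on $\we_\eps$ and $h_{N-\frac34,\eps}$ that are used throughout the error estimates. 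Your proposal describes only uncoupled ODE solvability conditions, which would not give remainders of the required size.

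=== END REVIEW ===
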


\begin{rem}
Note that for strong solutions of \eqref{eq:NSAC1}-\eqref{eq:NSAC5} we have the energy inequality 
\begin{equation}\label{eq:EnergyEstim}
	\sup_{t\in [0,T_0]}\int_\Omega \tfrac12{|\ve_\eps(t)|^2}+\tfrac{\eps}2|\nabla c_\eps(t)|^2 + \tfrac1\eps f(c_\eps(t))\, dx+ \int_0^{T_0}\int_\Omega |D\ve_\eps|^2+ \tfrac1\eps|\mu_\eps|^2 \, dx \, dt \leq E_{0,\eps},
\end{equation}
where $\mu_\eps = -\eps \Delta c_\eps + \tfrac1{\eps} f'(c_\eps)$ and
\begin{equation*}
	E_{0,\eps} := \int_\Omega \tfrac12{|\ve_{0,\eps}|^2} \,dx
	+\int_\Omega (\tfrac{\eps}2|\nabla c_{0,\eps}|^2 + \tfrac1\eps f(c_{0,\eps}))\, dx.
\end{equation*}
Therefore the left-hand side of \eqref{eq:EnergyEstim} is uniformly bounded in $\eps \in (0,1)$ if $\sup_{\eps\in (0,1)} E_{0,\eps}<\infty$. Using a Taylor expansion for $f$ and the form of $c_A$, $\ve_A$ in Section \ref{sec:ApproxSolutions} below, one can show this bound under the assumption \eqref{initial assumption}. 
\end{rem}

Let us comment on the novelty of our contribution. We use a similar strategy as in Abels, Fei \cite{AbelsFei}. Compared to \cite{AbelsFei}, we consider the case of vanishing mobility $m_\eps=\sqrt{\eps}$ in \eqref{eq:NSAC4}, leading to the classical two-phase Navier-Stokes system with surface tension \eqref{eq:Limit1}-\eqref{eq:Limit7} in the sharp interface limit instead of the coupling with mean curvature flow in \eqref{eq:Limit6} obtained in \cite{AbelsFei}. Some remarks on the choice of the scaling for the mobility were included before. Note that our choice turns out to be critical for the arguments we use, and therefore we need to take time small in our result compared to \cite{AbelsFei}. Moreover, we need fractional order expansions with $\sqrt{\eps}$-spacing in the terms, cf.~Section \ref{sec:MatchedAsymptotics} below. Additionally, note that in \cite{AbelsFei} a new type of ansatz in higher orders was introduced based on a linearization idea that simplified the previous works \cite{StokesAllenCahn,AbelsMarquardt1,AbelsMarquardt2}. However, a direct modification with uncoupled equations for the higher order ansatz terms as in \cite{AbelsFei} does not lead to suitable estimates and hence is not enough to close the argument in our case. Therefore we modify this type of ansatz and obtain as model problem a coupled system (and another uncoupled problem in higher order) with suitable scaling in $\eps$, see Section \ref{subsec:ParabolicEq} and Section \ref{sec:ApproxSolutions} below. Moreover, we even have a term at order $O(\sqrt{\eps})$ in the expansion of the distance function which leads to problems when applying spectral estimates within standard tubular neighbourhood coordinates. Therefore we use the novel idea of working with $\eps$-dependent coordinates, in particular as framework for the spectral estimates, cf.~Section \ref{subsec:Coordinates} and Section \ref{subsec:spectral_estimate} below.  

Finally, let us summarize the structure of the paper. Section \ref{sec:Prelim} contains the required preliminaries, i.e., $\eps$-dependent coordinates, estimates of remainder terms, the (coupled and uncoupled) model problems with scalings in $\eps$ as well as spectral estimates based on the $\eps$-scaled coordinates. The asymptotic expansion is done in Section \ref{sec:MatchedAsymptotics}, where the novelty lies in the expansion in integer powers of $\sqrt\eps$ instead of integer powers of $\eps$.  The sophisticated higher order ansatz terms and remainder estimates are the content of Section \ref{sec:ApproxSolutions}. Finally, the main result is proven in Section \ref{sec:main_proof}, where a major part is the control of the error in the velocities in Section \ref{subsec:LeadingErrorVelocity}.

\section{Preliminaries}\label{sec:Prelim}

Throughout the manuscript $\N$ denotes the set of natural numbers (without $0$) and $\N_0=\N\cup\{0\}$. Let $\Omega\subseteq\R^N$ be open, $m\in\N_0$, $p\in[1,\infty]$ and $X$ be a Banach space. Then we denote with $L^p(\Omega;X)$ and $W^m_p(\Omega;X)$ the standard Lebesgue and Sobolev spaces. In the case $X=\R$ we write $L^p(\Omega)$ and $W^m_p(\Omega)$, respectively. Moreover, if $\Omega$ has finite measure, we define for $1\leq q\leq \infty$ and $k\in\N_0$
\begin{align*}
  L^q_{(0)}(\Omega) &:= \left\{f\in L^q(\Omega): \int_\Omega f(x)\, dx=0\right\},
\quad    W^k_{q,(0)}(\Omega) := W^k_q(\Omega)\cap L^q_{(0)}(\Omega). 
\end{align*}

\subsection{Coordinates}\label{subsec:Coordinates}
Let $\Omega\subseteq\R^2$ be a domain, $T_0>0$ and $\Gamma=\bigcup_{t\in[0,T_0]}\Gamma_t\times\{t\}$ be a smooth evolving compact closed curve contained in $\Omega$. Then $\Omega$ is divided into two disjoint connected components $\Omega^\pm_t$ such that $\partial\Omega^+_t=\Gamma_t$ for all $t\in[0,T_0]$. We parametrize $\Gamma_t$ for every $t\in [0,T_0]$ over the torus $\T^1=\R/2\pi\Z$ with an $X_0\colon \T^1\times[0,T_0]\rightarrow\Gamma$ such that $\partial_sX_0(s,t)\neq 0$ for all $s\in\T^1$, $t\in[0,T_0]$. Moreover, we denote the corresponding tubular neighbourhood coordinates with $(d_0,S_0)\colon\overline{\Gamma(3\delta)}\rightarrow[-3\delta,3\delta]\times\T^1$, where $\Gamma(\tilde{\delta}):=\{d_0\in(-\tilde{\delta},\tilde{\delta})\}$ is a relatively open neighbourhood of $\Gamma$ in $\Omega\times[0,T_0]$ for $\tilde{\delta}\in(0,3\delta]$ and $\delta>0$ is small such that $\overline{\Gamma(3\delta)}\subseteq\Omega\times [0,T_0]$. Here $d_0(\cdot ,t)$ is the signed distance function to $\Gamma_t$ for every $t\in[0,T_0]$ and $d_0(x,t)\gtrless 0$ in $\Omega^\pm_t$. We set $\Gamma_t(\tilde{\delta}):=\{x \in \Omega: (x,t)\in \Gamma(\tilde{\delta})\}$ for $\tilde{\delta}\in(0,3\delta]$ and we also write $d_\Gamma:=d_0$. Moreover, we denote with
 \begin{equation*}
\btau(s,t):=\frac{\partial_{s} X_0(s,t)}{|\partial_s X_0(s,t)|}\quad \text{and}\quad \no(s,t):=
\begin{pmatrix}
  0 & -1\\ 1 & 0
\end{pmatrix}
\btau(s,t), \quad\text{ for }(s,t)\in\T^1\times[0,T_0]
\end{equation*}
the unit tangent and normal vectors of $\Gamma_t$ at $X_0(s,t)$, where $X_0$ is chosen such that $\no(s,t)$ is the interior normal with respect to $\Omega^+(t)$ for all $t\in[0,T_0]$. Moreover, we define
\begin{equation*}
\no_{\Gamma_t}(x):= \no (s,t)\quad \text{ for all }~ x=X_0(s,t)\in \Gamma_t,
\end{equation*}
and let $V_{\Gamma_t}$ and $H_{\Gamma_t}$ be the normal velocity and (mean) curvature of $\Gamma_t$ with respect to $\no_{\Gamma_t}$ for $t\in[0,T_0]$. We denote
\begin{equation*}
   V(s,t):=V_{\Gamma_t}|_{X_0(s,t)},\quad H(s,t):= H_{\Gamma_t}|_{X_0(s,t)}\quad \text{for all }(s,t)\in\T^1\times[0,T_0].
 \end{equation*}
 Here and in the following $u|_{X_0}\colon \T^1\times [0,T_0]\to\R$ is defined by
 \begin{equation*}
   u|_{X_0}(s,t):= u|_{X_0(s,t)} := u(X_0(s,t))\quad \text{for all }(s,t)\in\T^1\times[0,T_0]
 \end{equation*}
 for a function $u$ defined on a set containing $\Gamma$.
It is well known that
  \begin{equation*}
    \nabla d_{\G}|_{X_0}=\no,~\quad\partial_t d_{\G}|_{X_0}=-V,\quad \Delta d_{\G}|_{X_0}=-H\quad\text{ on }\T^1\times[0,T_0].
\end{equation*}

Later we will need a suitable $\eps$-perturbation of the standard tubular neighbourhood coordinate system. Therefore we consider for $\eta>0$ and $\eps\in(0,\eps_0]$
\begin{align}\label{eq_d_eps}
	d_\eps(x,t)&:=d_0(x,t)+\eps^\eta\tilde{d}_\eps(x,t),\\
	S_\eps(x,t)&:=S_0(x,t)+\eps^\eta\tilde{S}_\eps(x,t)/2\pi\quad\text{ for }(x,t)\in\overline{\Gamma(3\delta)},\label{eq_S_eps}
\end{align}
where $(\tilde{d}_\eps,\tilde{S}_\eps)\colon\overline{\Gamma(3\delta)}\rightarrow\R^2$ are smooth with $C^k$-norm uniformly bounded with respect to $\eps\in(0,\eps_0]$ for every $k\in\N$, and we assume that
\begin{align}\label{eq_tilde_deps_0}
\tilde{d}_\eps=0\quad\text{ on }\quad \Gamma(3\delta)\backslash\Gamma(\delta')
\end{align}
for some $\delta'\in (0,3\delta)$.
For small $\eps$ these coordinates also have suitable properties similar to a tubular neighbourhood system because of the following theorem.
\begin{theorem}[$\eps$-Coordinates]\label{th_coord}
	For $\eps_1>0$ sufficiently small and every $\eps\in(0,\eps_1]$ the $\eps$-coordinates $(d_\eps,S_\eps,\textup{id}_t)\colon \overline{\Gamma(3\delta)}\rightarrow[-3\delta,3\delta]\times\T^1\times[0,T_0]$ are well-defined and yield a smooth diffeomorphism with inverse $X_\eps$. Moreover, for $\eps_1$ small 
	\begin{align}\label{eq_Gamma_eps}
		\Gamma(\delta)\subseteq \Gamma^\eps(\tfrac{3\delta}{2}) \subseteq \Gamma(2\delta)\subseteq \Gamma^\eps(\tfrac{9\delta}{4}) \subseteq \Gamma(\tfrac{5\delta}2)\subseteq \Gamma^\eps(\tfrac{11\delta}{4}) \subseteq \Gamma(3\delta)
	\end{align}
for all $\eps\in (0,\eps_1]$, where for $\delta'>0$
        \begin{equation*}
          \Gamma^\eps(\delta'):=\big\{(x,t)\in \Gamma(3\delta): d_\eps(x,t)\in(-\delta',\delta')\big\}.
        \end{equation*}
Moreover, for every $k\in\N$ the $C^k$-norms of $d_\eps, S_\eps$ are uniformly bounded.
\end{theorem}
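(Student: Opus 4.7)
I propose to view the map as a small $C^k$-perturbation of the standard tubular coordinates $(d_0,S_0,\textup{id}_t)$ and reduce to a quantitative inverse function argument. Let $X_0$ be the inverse of the latter. Setting $a_\eps := \tilde d_\eps\circ X_0$ and $b_\eps := \tilde S_\eps\circ X_0$, precomposition of $(d_\eps,S_\eps,\textup{id}_t)$ with $X_0$ yields the candidate self-map
\[
\Phi_\eps(r,s,t) := \bigl(r+\eps^\eta a_\eps(r,s,t),\;s+\tfrac{\eps^\eta}{2\pi}b_\eps(r,s,t),\;t\bigr)
\]
of $[-3\delta,3\delta]\times\T^1\times[0,T_0]$, where $a_\eps,b_\eps$ are smooth and $C^k$-bounded uniformly in $\eps$ and, by the support assumption \eqref{eq_tilde_deps_0}, $a_\eps(r,s,t)=0$ whenever $|r|\geq\delta'$. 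It then suffices to show that $\Phi_\eps$ is a smooth diffeomorphism of its domain onto itself; the desired inverse is $X_\eps := X_0\circ\Phi_\eps^{-1}$.

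\textbf{Diffeomorphism property.} Well-definedness of the image is immediate: for $|r|\geq\delta'$ the $r$-coordinate is preserved, while for $|r|<\delta'$ one has $|r+\eps^\eta a_\eps|\leq\delta'+C\eps^\eta<3\delta$ once $\eps_1$ is small, and the $\T^1$-component causes no issue. Writing $\Phi_\eps=\textup{id}+\eps^\eta\Psi_\eps$ with $\Psi_\eps$ uniformly $C^k$-bounded, the Jacobian obeys $\|D\Phi_\eps-I\|\leq C\eps^\eta$, so $D\Phi_\eps$ is invertible everywhere for $\eps_1$ small, and a standard mean-value estimate gives $|z_1-z_2|\leq C\eps^\eta|z_1-z_2|$ from $\Phi_\eps(z_1,t)=\Phi_\eps(z_2,t)$, forcing $z_1=z_2$. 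For surjectivity I argue by fixed point on $T(r,s) := (r^*-\eps^\eta a_\eps(r,s,t),\,s^*-\tfrac{\eps^\eta}{2\pi}b_\eps(r,s,t))$: when $|r^*|\geq\delta'$ the vanishing of $a_\eps$ there forces $r=r^*$ and only the contraction in $s$ remains, while when $|r^*|<\delta'$ the map $T$ contracts $(-3\delta,3\delta)\times\T^1$ into itself for $\eps_1$ small. The inverse function theorem then promotes $\Phi_\eps^{-1}$ to $C^\infty$.

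\textbf{Inclusions and uniform bounds.} Each of the six inclusions in \eqref{eq_Gamma_eps} has the form $\Gamma(\delta_1)\subseteq\Gamma^\eps(\delta_2)$ or $\Gamma^\eps(\delta_2)\subseteq\Gamma(\delta_3)$ with a gap of at least $\delta/4$ between consecutive radii. Since $|d_\eps-d_0|=\eps^\eta|\tilde d_\eps|\leq C\eps^\eta$ uniformly on $\overline{\Gamma(3\delta)}$, shrinking $\eps_1$ so that $C\eps_1^\eta<\delta/4$ yields all six inclusions simultaneously. The uniform $C^k$-bounds on $d_\eps,S_\eps$ follow directly from the assumed uniform $C^k$-bounds on $\tilde d_\eps,\tilde S_\eps$ together with smoothness of the $\eps=0$ tubular coordinates. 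The main (mild) obstacle throughout is the bookkeeping around the support condition on $\tilde d_\eps$: one must use $\tilde d_\eps\equiv 0$ outside $\Gamma(\delta')$ to ensure $\Phi_\eps$ fills the closed slab rather than only its interior near $|r|=3\delta$. Everything else is a routine perturbation argument on top of the known properties of the tubular neighbourhood of $\Gamma$.
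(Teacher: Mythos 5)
Your proof is correct, but it organizes the argument differently from the paper. The paper proves surjectivity by showing the image is open and closed in the connected set $[-3\delta,3\delta]\times\T^1\times[0,T_0]$, after first establishing that $(d_0,S_0,\textup{id}_t)$ is globally bi-Lipschitz (by a compactness-and-extension argument) and that this transfers to $(d_\eps,S_\eps,\textup{id}_t)$ with uniform constants; injectivity is then read off from bi-Lipschitzness. You instead conjugate by $X_0$ so that the problem becomes that of inverting a near-identity self-map $\Phi_\eps=\textup{id}+\eps^\eta\Psi_\eps$ of the flat cylinder, and you handle surjectivity via a Banach fixed-point iteration, using the support condition $\tilde d_\eps\equiv 0$ on $\Gamma(3\delta)\setminus\Gamma(\delta')$ to deal with the boundary of the slab. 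This is a more explicitly quantitative route: it produces the inverse constructively and makes the smallness of $\eps_1$ visible in each step, whereas the paper's open-closed argument is cleaner topologically but less explicit. Both approaches yield the inclusions \eqref{eq_Gamma_eps} from the same elementary bound $|d_\eps-d_0|\leq C\eps^\eta$ together with the gap $\delta/4$ between consecutive radii, and the uniform $C^k$-bounds follow in the same way from the assumed uniform bounds on $\tilde d_\eps,\tilde S_\eps$. One minor phrasing point: in the case $|r^*|\geq\delta'$ it is the vanishing of $a_\eps(r^*,\cdot,\cdot)$ that keeps the iterates at $r=r^*$ (so $T$ restricts to a contraction on $\{r^*\}\times\T^1$); stating it that way removes any ambiguity about which variable the support condition controls.
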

\begin{proof}
	Because of \eqref{eq_tilde_deps_0} we obtain that $(d_\eps,S_\eps,\textup{id}_t):\overline{\Gamma(3\delta)}\rightarrow[-3\delta,3\delta]\times\T^1\times[0,T_0]$ is well-defined and smooth for $\eps>0$ small. Moreover, because of compactness and the definitions it holds that $D(d_\eps,S_\eps,\textup{id}_t)$ is invertible pointwise in $\overline{\Gamma(3\delta)}$ for $\eps>0$ small. Hence $(d_\eps,S_\eps,\textup{id}_t)$ is a local diffeomorphism. Furthermore, a compactness and extension argument shows that $(d_0,S_0,\textup{id}_t)$ is globally bi-Lipschitz. This extends to $(d_\eps,S_\eps,\textup{id}_t)$ with uniform constants for $\eps>0$ small. Injectivity of $(d_\eps,S_\eps,\textup{id}_t)$ directly follows and surjectivity can now be proven by showing that the image is open and closed in the connected space $[-3\delta,3\delta]\times\T^1$. The additional statement is clear from the definitions for $\eps>0$ small.
      \end{proof}

      As before we define     
      $$
      \Gamma^\eps_t(\tilde{\delta}):=\{x \in \Omega: (x,t)\in \Gamma^\eps(\tilde{\delta})\}\quad \text{for }\tilde{\delta}\in(0,3\delta].
      $$
      \begin{rem}
	In order to transform integrals with $X_\eps$ later, we define $J_\eps\colon [-3\delta,3\delta]\times \T^1\times [0,T_0]\to (0,\infty)$ by
	\begin{align}\label{J_eps}
		J_\eps:=|\det DX_\eps| = \frac{1}{\sqrt{|\nabla d_\eps|^2|\nabla S_\eps|^2-(\nabla d_\eps\cdot\nabla S_\eps)^2}} \circ X_\eps.
	\end{align}
\end{rem}

Furthermore, we denote
\begin{equation*}
  \no_\eps (x,t):= \nabla d_\eps(x,t)\qquad \text{for all }(x,t)\in \ol{\Gamma(3\delta)}.
\end{equation*}
For the following we assume that
\begin{equation}\label{eq:condSepsneps}
  |\no_\eps|^2=|\nabla d_\eps|^2=1+O(\eps^2),\quad \no_\eps\cdot \nabla S_\eps= O(\eps^2)  
\end{equation}
in $C^k_b(\overline{\Gamma(3\delta)})$ for all $k\in\N$, which we will assure in the following constuction.
The following identity will be useful in relation with divergence free functions:
      \begin{equation}\label{eq:XepsId}
  \partial_r X_\eps (r,s,t) \otimes \no_\eps(X_\eps(r,s,t),t)+ \partial_s X_\eps (r,s,t)\otimes \nabla S_\eps (X_\eps(r,s,t),t) = I
\end{equation}
for all $r\in (-3\delta,3\delta)$, $s\in \T^1, t\in [0,T_0]$. It is a consequence of differentiating
\begin{equation*}
X_\eps (d_\eps(x,t), S_\eps(x,t))= x\qquad \text{for all }(x,t)\in \Gamma^\eps(3\delta).
\end{equation*}
This motivates to define for suitable $\psi$
\begin{equation}
  \label{eq:defnTauEps}
  \nabla_{\btau_\eps} \psi:= \nabla S_\eps [\partial_s (\psi\circ X_\eps)]{\circ X_\eps^{-1}}.
\end{equation}
Then
\begin{equation*}
  \nabla \psi = \nabla_{\btau_\eps} \psi+ \no_\eps(\partial_r X_\eps \circ X_\eps^{-1}\cdot \nabla) \psi. 
\end{equation*}
Moreover, \eqref{eq:XepsId} implies
\begin{align}\label{eq:XepsNoEps}
  \partial_r X_\eps(r,s,t)&= \no_\eps (X_\eps(r,s,t),t)+O(\eps^2)= \no_\eps (X_\eps(0,s,t),t)+O(\eps^2)
\end{align}
in $C^k_b(\overline{\Gamma(3\delta)})$ for all $k\in\N$ due to \eqref{eq:condSepsneps}
and
\begin{equation*}
  \partial_r \no_\eps (X_\eps (r,s,t))= \nabla \no_\eps (X_\eps(r,s,t))\cdot \partial_r X_\eps(r,s,t)= O(\eps^2)
\end{equation*}
in $C^k_b(\overline{\Gamma(3\delta)})$ for all $k\in\N$. In particular this shows
\begin{equation*}
  (\partial_r X_\eps)\circ X_\eps^{-1}\cdot \nabla u = \partial_{\no_\eps} u+ O(\eps^2)
\end{equation*}
for every sufficiently smooth $u \colon \Gamma(3\delta)\to \R$, where $\partial_{\no_\eps} u:= \no_\eps \cdot \nabla u$. Similarly, by multiplying \eqref{eq:XepsId} with $\nabla S_\eps (X_\eps(r,s,t),t)$ one obtains
\begin{equation*}
  \partial_s X_\eps (r,s,t)= |\nabla S(X_\eps(r,s,t))|^{-2} \nabla S(X_\eps(r,s,t)+ O(\eps^2)
\end{equation*}
in $C^k_b(\overline{\Gamma(3\delta)})$ for all $k\in\N$.

  \begin{rem}
    Let $a\colon\Gamma(3\delta)\to \R$ be smooth in normal direction and assume $a=0$ on $\Gamma$, then $\tilde{a}:\Gamma(3\delta)\to \R$ with
	\begin{equation*}
		\tilde{a}(x,t):=
		\begin{cases}
			\frac{a(x,t)}{d_\Gamma(x,t)} &\text{ for all }(x,t)\in \Gamma(3\delta)\setminus \Gamma,\\
			\partial_{\no} a(x,t) &\text{ for all }(x,t)\in \Gamma
		\end{cases}
	\end{equation*}
	is well-defined, smooth in normal direction and tangential regularity is conserved. In particular $\tilde{a}$ is smooth provided that $a$ is smooth. This can be shown with a Taylor expansion in $d_\Gamma$.
\end{rem}
        A similar statement, based on a Taylor expansion in normal direction for Sobolev functions, is given by the following lemma and will be useful to estimate remainder terms.

\begin{lem}\label{lem:Taylor}
  Let $t\in [0,T_0]$, $\eps\in (0,\eps_1)$ with $\eps_1>0$ as in Theorem~\ref{th_coord}, $\delta'\in [2\delta,3\delta]$ and $a\in W^k_p(\Gamma_t(\delta'))$ for some $k\in\N$, $1< p< \infty$. Then there are $r_{k,\eps,t} \in L^p(\Gamma_t(\delta'))$ such that
  \begin{equation*}
    a(x)= \sum_{j=0}^{k-1} (\partial_{\no_\eps}^j a)(P_\eps(x,t))\frac{d_\eps(x,t)^j}{j!} + d_\eps(x,t)^k r_{k,\eps,t}(x)\quad \text{for all }x\in \Gamma_t(\delta'),
  \end{equation*}
 where $P_\eps(x,t):= X_\eps(0,S_\eps(x,t),t)$, $\no_\eps(x,t)= \nabla d_\eps(x,t)$, and
 \begin{equation}\label{eq:rkepsEstim}
  \|r_{k,\eps,t}\|_{L^p(\Gamma_t(\delta'))}\leq C_k \|a\|_{W^k_p(\Gamma_t(\delta'))}.
 \end{equation}
  for some $C_k$ independent of $\eps,t$, and $a$.
\end{lem}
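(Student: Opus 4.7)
The strategy is to pull back to the $\eps$-coordinates, carry out a one-dimensional Taylor expansion in the $r$-variable along the coordinate curves $r \mapsto X_\eps(r,S_\eps(x,t),t)$, and then transfer the identity and estimate back to $x$-coordinates. By standard density of $C^\infty(\overline{\Gamma_t(\delta')})$ in $W^k_p(\Gamma_t(\delta'))$ (for $1<p<\infty$) and since both sides of the claimed identity depend continuously on $a$ in the $W^k_p$-norm (as will be clear from the estimate), it suffices to establish the lemma for $a\in C^\infty(\overline{\Gamma_t(\delta')})$, and then to close by a limit argument; the bound \eqref{eq:rkepsEstim} then transfers.

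For smooth $a$, set $\tilde a(r,s):=a(X_\eps(r,s,t))$, which is smooth on $[-3\delta,3\delta]\times\T^1$ by Theorem~\ref{th_coord}. The integral form of Taylor's theorem in $r$ gives
\begin{equation*}
\tilde a(r,s)=\sum_{j=0}^{k-1}\frac{r^j}{j!}\,\partial_r^j\tilde a(0,s)+\frac{r^k}{(k-1)!}\int_0^1(1-\sigma)^{k-1}\partial_r^k\tilde a(\sigma r,s)\,d\sigma.
\end{equation*}
Evaluating at $(r,s)=(d_\eps(x,t),S_\eps(x,t))$ and noting that $X_\eps(0,S_\eps(x,t),t)=P_\eps(x,t)$ yields the claimed identity with
\begin{equation*}
r_{k,\eps,t}(x)=\frac{1}{(k-1)!}\int_0^1(1-\sigma)^{k-1}\bigl(\partial_r^k\tilde a\bigr)\bigl(\sigma d_\eps(x,t),S_\eps(x,t)\bigr)\,d\sigma,
\end{equation*}
provided we can identify the Taylor coefficient $\partial_r^j\tilde a(0,s)$ with $(\partial_{\no_\eps}^j a)(P_\eps(x,t))$. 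This identification is where the main care is needed: differentiating $\tilde a$ once via the chain rule gives $\partial_r\tilde a=(\nabla a\cdot\partial_r X_\eps)\circ X_\eps$, and by \eqref{eq:XepsNoEps} together with \eqref{eq:condSepsneps} one has $\partial_r X_\eps=\no_\eps\circ X_\eps+O(\eps^2)$. Hence $\partial_r\tilde a=(\partial_{\no_\eps}a)\circ X_\eps+O(\eps^2)(\nabla a)\circ X_\eps$, and iterating this relation inductively expresses $\partial_r^j\tilde a(0,s)$ as $(\partial_{\no_\eps}^j a)(P_\eps)$ plus a combination of $(\partial_{\no_\eps}^i a)(P_\eps)$ for $i<j$ with coefficients that are $O(\eps^2)$; absorbing these lower-order corrections into the remainder via an induction on $k$ yields exactly the stated form of the expansion.

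To prove \eqref{eq:rkepsEstim} one applies Minkowski's integral inequality to pull the $L^p_x$-norm inside the $\sigma$-integral,
\begin{equation*}
\|r_{k,\eps,t}\|_{L^p(\Gamma_t(\delta'))}\le \frac{1}{(k-1)!}\int_0^1(1-\sigma)^{k-1}\bigl\|(\partial_r^k\tilde a)(\sigma d_\eps(\cdot,t),S_\eps(\cdot,t))\bigr\|_{L^p(\Gamma_t(\delta'))}\,d\sigma,
\end{equation*}
and for each $\sigma\in[0,1]$ changes variables. The map $x\mapsto(\sigma d_\eps(x,t),S_\eps(x,t))$ is obtained by composing the diffeomorphism $(d_\eps,S_\eps)$ with the linear contraction $(r,s)\mapsto(\sigma r,s)$; since the Jacobian $J_\eps$ in \eqref{J_eps} and its inverse are uniformly bounded in $\eps$ by Theorem~\ref{th_coord} (and $\sigma\le 1$ only shrinks the $r$-direction, keeping the image inside the coordinate chart), one obtains an $\eps$-uniform bound. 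Finally, expanding $\partial_r^k\tilde a$ by the chain rule in terms of $D^\alpha a\circ X_\eps$ for $|\alpha|\le k$ with coefficients that are polynomials in the uniformly bounded derivatives of $X_\eps$ allows one to estimate the right-hand side by $C_k\|a\|_{W^k_p(\Gamma_t(\delta'))}$.

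\textbf{Main obstacle.} The delicate step is the identification of $\partial_r^j\tilde a(0,s)$ with $(\partial_{\no_\eps}^j a)(P_\eps)$, since $\partial_r X_\eps$ and $\no_\eps\circ X_\eps$ only agree up to $O(\eps^2)$; the inductive absorption of these errors into the higher-order remainder term (without spoiling the $\eps$-uniform bound on $C_k$) is the essence of the argument. Everything else—the density reduction, the integral Taylor formula, and the Minkowski plus change-of-variables estimate—is routine given the uniform bounds of Theorem~\ref{th_coord}.
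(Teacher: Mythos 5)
Your argument follows the paper's proof essentially step for step: reduce by density to smooth $a$, pull back through $\Phi_{x,t}(r)=X_\eps(r,S_\eps(x,t),t)$, do a one-dimensional Taylor expansion in $r$, and bound the remainder. Your choice of the scaled integral remainder $\int_0^1(1-\sigma)^{k-1}\partial_r^k\tilde a(\sigma d_\eps,s)\,d\sigma$ together with Minkowski's inequality and a change of variables is equivalent to the paper's Cauchy-form remainder combined with Hardy's inequality; both exploit $1<p<\infty$ in the same way, so this is only a cosmetic difference.

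The step you single out as the ``main obstacle'' is indeed delicate, but your proposed resolution does not close it. You correctly note that $\partial_r X_\eps=\no_\eps\circ X_\eps+O(\eps^2)$ by \eqref{eq:XepsNoEps}, so that the Taylor coefficients $\partial_r^j\tilde a(0,s)$ differ from $(\partial_{\no_\eps}^j a)(P_\eps)$; moreover the discrepancy involves the tangential derivative of $a$ (via $\nabla S_\eps\cdot\nabla a$, since $\partial_r X_\eps$ has a small $\nabla S_\eps$-component when $\no_\eps\cdot\nabla S_\eps\neq 0$), not only $\partial_{\no_\eps}^i a$ for $i<j$ as you assert. More importantly, ``absorbing the corrections into the remainder by induction on $k$'' cannot work: each discrepancy $\partial_r^j\tilde a(0,s)-(\partial_{\no_\eps}^j a)(P_\eps)$ is a function of $(s,t)$ alone, independent of $d_\eps$, so in the expansion it multiplies $d_\eps^j/j!$ with $j<k$; such a term is not of the form $d_\eps^k r_{k,\eps,t}$ with $r_{k,\eps,t}\in L^p$ unless the coefficient vanishes identically. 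In fact, the identity in the lemma (with $\partial_{\no_\eps}=\no_\eps\cdot\nabla$ and $\no_\eps=\nabla d_\eps$) is exact only if $\partial_r X_\eps(0,s,t)=\no_\eps(X_\eps(0,s,t),t)$ exactly, i.e.\ if $|\nabla d_\eps|\equiv1$ and $\nabla d_\eps\perp\nabla S_\eps$ on $\Gamma^\eps_t$ rather than merely up to $O(\eps^2)$ as in \eqref{eq:condSepsneps}. The paper's proof asserts $\tfrac{d^j}{dr^j}(a\circ\Phi_{x,t})|_{r=0}=(\partial_{\no_\eps}^j a)(P_\eps)$ without comment and is subject to the same caveat; the natural exact statement replaces $(\partial_{\no_\eps}^j a)(P_\eps)$ by the iterated $r$-derivative of $a\circ X_\eps$ at $r=0$. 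This discrepancy is harmless in the paper's applications, which invoke the lemma only for $k=1$ (where no $\partial_{\no_\eps}$-terms appear), but for $k\ge2$ your absorption step would genuinely fail as written.
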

\begin{proof}
  Since smooth functions are dense in $W^k_p(\Gamma_t(\delta'))$, we can assume that $a$ is smooth. We define the auxiliary function $\Phi_{x,t}(r):= X_\eps(r,S_\eps(x,t),t)$ for all $r\in [-\delta',\delta']$. Then by a one-dimensional Taylor expansion
  \begin{align*}
    a(x)&= a(\Phi_{x,t}(d_\eps(x,t))\\
        &= \sum_{j=0}^{k-1} \frac{d^j}{dr^j} (a(\Phi_{x,t}(r)))|_{r=0} \frac{d_\eps(x,t)^j}{j!} + \int_0^{d_\eps(x,t)}\frac{d^k(a(\Phi_{x,t}(r)))}{dr^k} \frac{(d_\eps(x,t)-r)^{k-1}}{(k-1)!} \,dr\\
          &= \sum_{j=0}^{k-1} (\partial_{\no_\eps}^j a)(P_\eps(x,t))\frac{d_\eps(x,t)^j}{j!} + d_\eps(x,t)^kr_{k,\eps,t}(x)
  \end{align*}
  where
  \begin{equation*}
    r_{k,\eps,t}(x)= \frac1{d_\eps(x,t)}\int_0^{d_\eps(x,t)}\frac{d^k(a(\Phi_x(r)))}{dr^k} \frac{(1-r/d_\eps(x,t))^{k-1}}{(k-1)!} \,dr.
  \end{equation*}
  Now using that by Hardy's inequality
  \begin{equation*}
    \left(\int_{-\delta'}^{\delta'}\left| \frac1{\tilde{r}} \int_0^{\tilde{r}}\left|\frac{d^k(a(\Phi_{x,t}(r)))}{dr^k}\right|\,dr \right|^p\, d\tilde{r}  \right)^{\frac1p}\leq
    C_p \left(\int_{-\delta'}^{\delta'}\left|\frac{d^k(a(\Phi_{x,t}(r)))}{dr^k}\right|^p\,dr  \right)^{\frac1p}
  \end{equation*}
  one easily shows \eqref{eq:rkepsEstim}.
\end{proof}

For the following let $h\colon \T^1\times [0,T_0]\to \R$ be sufficiently smooth.
 Then we have
 \begin{equation}\label{Prelim:1.13}
  \begin{split}
    \nabla^{\Gamma_t^\eps} h(r,s,t) &:= \nabla_x (h(S_\eps(x,t),t) =  \nabla  S_\eps(x,t) \p_{s} h(s,t),\\
  \Delta^{\Gamma_t^\eps} h(r,s,t)&:= \Delta_x (h(S_\eps(x,t),t) =  (\Delta S_\eps)(x,t)\cdot\partial_s h(s,t)+|\nabla S_\eps(x,t)|^2 \p_{s}^2 h(s,t)
  \end{split}
\end{equation}
for all $(x,t)\in\Gamma(3\delta)$, where $r\in (-3\delta,3\delta)$ and $s\in\T^1$ are determined by $x=X_\eps(r,s,t)$. Therefore  we define for every sufficiently smoth $h\colon \T^1\times [0,T_0]\to \R$
 \begin{equation*}
 \begin{split}
  (\nabla_{\Gamma_t^\eps} h)(s,t)&:=(\nabla^{\Gamma_t^\eps} h)(0,s,t),\quad (\Delta_{\Gamma_t^\eps} h)(s,t):=(\Delta^{\Gamma_t^\eps} h)(0,s,t)\quad \text{for all }s\in\T^1,t\in [0,T_0].
 \end{split}
\end{equation*}
We note that coefficients of the differences 
\begin{equation*}
  (\nabla^{\Gamma_t^\eps} h)(r,s,t)-(\nabla_{\Gamma_t^\eps} h)(s,t),\quad (\Delta^{\Gamma_t^\eps} h)(r,s,t)-(\Delta_{\Gamma_t^\eps} h)(s,t)
\end{equation*}
vanish for $r=0$, which corresponds to $x\in\Gamma_t^\eps$.

Finally, let $U_t\subseteq \R^2$, $t\in [0,T]$, be open sets and  $\mathcal{U}:=\bigcup_{t\in [0,T]} U_t$. Then we define for $s\geq 0$ 
\begin{align*}
  L^2(0,T;H^s(U_t)) &:= \left\{g\in L^2(\mathcal{U}): g(\cdot,t)\in H^s(U_t)\text{ for a.e.\ }t\in (0,T),  \|g(\cdot,t)\|_{H^s(U_t)}\in L^2(0,T)\right\},\\
  \|g\|_{L^2(0,T;H^s(U_t))} &:= \left(\int_0^T \|g(\cdot,t)\|_{H^s(U_t)}^2\, dt\right)^{\frac12}, \\
  L^2(0,T;H^s(\Gamma_t)) &:= \left\{g\in L^2(\Gamma): g\circ X_0 \in L^2(0,T;H^s(\T^1)) \right\},\\
  \|g\|_{L^2(0,T;H^s(\Gamma_t))} &:= \|g\circ X_0\|_{L^2(0,T;H^s(\T^1))}.
\end{align*}

\subsection{The Stretched Variable and Remainder Terms}

In the following we will use a ``stretched variable'', which is defined by
\begin{equation}\label{eq:StretchedVariable}
  \rho=\rho_\eps:= \frac{d_\eps(x,t)}\eps \qquad \text{for } (x,t)\in \Gamma(3\delta), \eps \in (0,\eps_1],
\end{equation}
where $d_\eps\colon \Gamma(3\delta)\to \R$ is as in the previous subsection and $\eps_1>0$ is as in Theorem~\ref{th_coord}. In particular, it  satisfies
 \begin{equation*}
   |\nabla d_\eps (x,t)| = 1+ \eps^2b_\eps(x,t) \qquad \text{for all }(x,t)\in \Gamma(3\delta),
 \end{equation*}
 where $b_\eps$ and all its derivatives are uniformly bounded in $\eps \in (0,\eps_1]$ for some $\eps_1>0$ sufficiently small.

For a systematic treatment of the remainder terms, we introduce:
\begin{defn}\label{eq:1.15}
  For any $k\in \R$ and $\alpha>0$,  $\mathcal{R}_{k,\alpha}$ denotes the vector space of family of continuous functions $\tr_\eps\colon \R\times \Gamma(3\delta) \to \R$, indexed by $\eps\in (0,1)$, which are continuously differentiable with respect to $\no_{\Gamma_t}$ for all $t\in [0,T_0]$ such that
  \begin{equation}\label{eq:EstimRkalpha}
    |\partial_{\no_{\Gamma_t}}^j \tr_\eps(\rho,x,t)|\leq Ce^{-\alpha |\rho|}\eps^k\qquad \text{for all }\rho\in \R,(x,t)\in\Gamma(3\delta), j=0,1, \eps \in (0,1)
  \end{equation}
for some $C>0$ independent of $\rho\in \R,(x,t)\in\Gamma(3\delta)$, $\eps\in (0,1)$.  $\mathcal{R}_{k,\alpha}^0$ is the subclass of all $(\tr_\eps)_{\eps\in (0,1)}\in \mathcal{R}_{k,\alpha}$ such that
$\tr_\eps(\rho,x,t)= 0$ for all  $\rho \in\R, x\in\Gamma_t, t\in [0,T_0]$.
\end{defn}

  We remark that $\mathcal{R}_{k,\alpha}$ and $\mathcal{R}^0_{k,\alpha}$ are closed under multiplication and $\mathcal{R}_{k,\alpha}\subset \mathcal{R}_{k-1,\alpha}$.

\begin{lem}\label{lem:rescale2}
  Let $0<\eps\leq \eps_1$, $d_\eps$ be defined as before, $\delta'\in [\tfrac{\delta}2, 3\delta]$ and   $(\tr_\eps)_{0<\eps<1}\in L^2(\T^1;L^1(\R)\cap L^2(\R))$, $(a_\eps)_{\eps\in (0,1)} \subseteq C(\overline{\Gamma(3\delta)})$ such that
  \begin{equation*}
    |a_\eps(x,t)|\leq C|d_\eps (x,t)|^j\qquad \text{for all }(x,t)\in\Gamma(3\delta), \eps\in (0,\eps_1]
  \end{equation*}
  for some $C>0$ independent of $\eps,x,t$ and $j\in\N_0$.
  Then there is some $C>0$, independent of $0<\eps\leq \eps_1$, $\eps_1\in (0,1)$  such that for each $t\in[0,T]$
    \begin{equation*}
      r_\eps (x):= a_\eps(x,t) \tr_\eps\left(\frac{d_\eps(x,t)}\eps, S_\eps(x,t)\right)\qquad \text{for }x\in\Gamma_t(3\delta)
    \end{equation*}
  satisfies
  \begin{align}\label{eq:RemEstim1-2}
    \left\|r_\eps \varphi \right\|_{L^1(\G_t^\eps(\delta'))} &\leq C\eps^{1+j}\|\rho^j\hat{r}_\eps\|_{L^2(\T^1;L^1(\R))}\|\varphi\|_{H^1(\G_t^\eps(\delta'))},\\
  \label{eq:RemEstim2-2}
    \left\| r_\eps \right\|_{L^2(\G_t^\eps(\delta'))} &\leq C\eps^{\frac 12+j} \|\rho^j\hat{r}_\eps\|_{L^2(\T^1;L^2(\R))}
  \end{align}
  uniformly for all $\varphi\in H^1(\Gamma_t^\eps(\delta'))$, $t\in [0,T_0]$, and $\eps\in (0,\eps_1]$.
\end{lem}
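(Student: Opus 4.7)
The plan is to reduce both estimates to one-dimensional integrals in the stretched normal variable $\rho=r/\eps$ by using the $\eps$-dependent coordinates $X_\eps$ from Theorem~\ref{th_coord}. Since $X_\eps$ is a smooth diffeomorphism whose Jacobian $J_\eps$ from \eqref{J_eps} is uniformly bounded above and below in $\eps\in(0,\eps_1]$, the change of variables $x=X_\eps(r,s,t)$ with $r=d_\eps(x,t)$, $s=S_\eps(x,t)$ converts integration over $\Gamma_t^\eps(\delta')$ into integration over $(-\delta',\delta')\times\T^1$, and the substitution $r=\eps\rho$ subsequently produces the $\eps$-factors. The growth hypothesis $|a_\eps(x,t)|\leq C|d_\eps(x,t)|^j$ will convert to $(\eps|\rho|)^j$ and account for the $\eps^j$ gain.

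For \eqref{eq:RemEstim2-2}, I would write
\begin{equation*}
\|r_\eps\|_{L^2(\G_t^\eps(\delta'))}^2\leq C\int_{\T^1}\int_{-\delta'}^{\delta'}r^{2j}\,|\hat r_\eps(r/\eps,s)|^2\,J_\eps\,dr\,ds
=C\eps^{2j+1}\int_{\T^1}\int_{-\delta'/\eps}^{\delta'/\eps}|\rho|^{2j}|\hat r_\eps(\rho,s)|^2 J_\eps\,d\rho\,ds,
\end{equation*}
and bound this by $C\eps^{2j+1}\|\rho^j \hat r_\eps\|_{L^2(\T^1;L^2(\R))}^2$ using the uniform boundedness of $J_\eps$. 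Taking square roots yields the claim.

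For \eqref{eq:RemEstim1-2}, after the same change of variables I would estimate
\begin{equation*}
\|r_\eps\varphi\|_{L^1(\G_t^\eps(\delta'))}\leq C\eps^{j+1}\int_{\T^1}\int_{\R}|\rho|^j|\hat r_\eps(\rho,s)|\,|\varphi(X_\eps(\eps\rho,s,t))|\,J_\eps\,d\rho\,ds,
\end{equation*}
then pair $L^1$ in $\rho$ against $L^\infty$ in $\rho$:
\begin{equation*}
\leq C\eps^{j+1}\int_{\T^1}\|\rho^j\hat r_\eps(\cdot,s)\|_{L^1(\R)}\,\|\varphi(X_\eps(\cdot,s,t))\|_{L^\infty(-\delta',\delta')}\,ds,
\end{equation*}
and apply Cauchy--Schwarz in $s$ so that the first factor becomes exactly $\|\rho^j\hat r_\eps\|_{L^2(\T^1;L^1(\R))}$. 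What remains is to bound $\bigl\|\,\|\varphi(X_\eps(\cdot,s,t))\|_{L^\infty_r}\bigr\|_{L^2(\T^1_s)}$ by $\|\varphi\|_{H^1(\G_t^\eps(\delta'))}$, which follows from the one-dimensional Sobolev embedding $H^1((-\delta',\delta'))\hookrightarrow L^\infty$ applied in the normal variable and then integrated in $s$, combined once more with the uniform bi-Lipschitz property of $X_\eps$ given by Theorem~\ref{th_coord}.

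The only mildly delicate point is this last step: one must verify that the $H^1$ norm in $(r,s)$-coordinates on $(-\delta',\delta')\times\T^1$ is equivalent, with constants independent of $\eps\in(0,\eps_1]$ and $t\in[0,T_0]$, to the $H^1$ norm in $x$-coordinates on $\G_t^\eps(\delta')$. This however is immediate from the uniform control of the $C^k$-norms of $d_\eps$ and $S_\eps$ provided by Theorem~\ref{th_coord} together with the uniform bounds on $J_\eps$. No other estimates are needed, and in particular no use of the smallness of $\delta'$ beyond the fact that $\delta'\in[\delta/2,3\delta]$ is fixed.
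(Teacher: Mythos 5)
Your proof is correct and follows essentially the same route as the paper's: change variables via $X_\eps$, absorb the weight $|a_\eps|\le C|d_\eps|^j$ into the rescaled integral, then for the $L^1$-bound pair $L^1_\rho$--$L^\infty_\rho$ in the normal variable followed by Cauchy--Schwarz in $s$, and control the $L^2_s(L^\infty_r)$ factor by $\|\varphi\|_{H^1}$ using the one-dimensional Sobolev embedding and the uniform bounds on $X_\eps$, $J_\eps$. The paper leaves this last Sobolev-embedding step implicit; your spelling it out is the only difference.
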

\begin{proof}
  With the aid of the change of variables $x= X_\eps(r,s)$, where $r=d_\eps(x,t)$, $s=S_\eps(x,t)$, we obtain
  \begin{align*}
      \left\|r_\eps\varphi \right\|_{L^1(\G_t^\eps(\delta'))}     &=\int_{-\delta'}^{\delta'}\int_{\T^1} |a_\eps(X_\eps(r,s,t),t)| \left|\tr_\eps \left(\tfrac{r}\eps,s,t \right)\right||\varphi(X_\eps(r,s,t))|J_\eps(r,s,t)\,ds\,dr \\
    &\leq C\int_{\T^1}\int_\R |r^j|| \tr_\eps (\tfrac{r}\eps, s,t)|\,dr \sup_{r\in (-\delta',\delta')}|\varphi(X_\eps(r,s,t))|  \,ds\\
    &\leq C \eps^{1+j}\left(\int_{\T^1}\left|\int_{\R} |\rho^j \tr_\eps (\rho,s,t)|\,d\rho\right|^2 ds\right)^{\frac12} \left(\int_{\T^1}\sup_{|r|\leq \delta' }|\varphi(X_\eps(r,s,t))|^2\,ds\right)^{\frac12}\\
    &\leq C\eps^{1+j}\|\rho^j\tr_\eps\|_{L^2(\T^1;L^1(\R))} \|\varphi\|_{H^1(\Gamma^\eps_t(\delta'))}
  \end{align*}
for all  $\eps\in (0,\eps_1]$, $t\in [0,T_0]$, and $\varphi\in H^1(\Omega)$. This proves the first estimate.

In the same way we estimate
  \begin{align*}
    &  \left\|r_\eps\right\|_{L^2(\G_t^\eps(\frac{3\delta}2))}^2 =\int_{-\delta'}^{\delta'}\int_{\T^1} |a_\eps(X_\eps(r,s,t),t)|^2 \left|\tr_\eps \left(\tfrac{r}\eps,s,t \right)\right|^2J_\eps(r,s,t)\,ds\,dr \\
    &\leq C\int_{\T^1}\int_\R |r^j|^2| \tr_\eps (\tfrac{r}\eps, s,t)|^2\,dr\,ds= C\eps^{1+2j}\|\rho^j\tr_\eps\|_{L^2(\T^1\times \R)}^2
  \end{align*}
 for all  $\eps\in (0,\eps_1]$ and $t\in [0,T_0]$, which shows the second estimate.
\end{proof}
\begin{lem}\label{lem:DivergenceFreeRemainder}
  Let $g\in \SD(\R)$ and $\zeta\in C^\infty_0(\R)$ with $\supp \zeta \subseteq [-\tfrac{5\delta}2,\tfrac{5\delta}2]$. Then there is constant $C>0$ such that for all $t\in [0,T_0]$, $a\in H^1(\T^1)$ and $\boldsymbol{\varphi}\in H^1(\Omega)^d\cap L^2_\sigma (\Omega)$ we have
  \begin{equation*}
    \left|\int_{\Gamma_t(3\delta)}\zg g'(\rho_\eps(x,t))a(S_\eps(x,t))\no_{\eps}\otimes \no_{\eps} : \nabla \boldsymbol{\varphi}(x) \, dx\right|\leq C \eps^{\frac32} \|a\|_{H^1(\T^1)}\|\boldsymbol{\varphi}\|_{H^1(3\delta)},
  \end{equation*}
  where $\no_\eps = \nabla d_\eps$.
\end{lem}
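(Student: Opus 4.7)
The plan is to exploit the identity $g'(\rho_\eps)\no_\eps = \eps\,\nabla[g(\rho_\eps)]$, which follows directly from $\no_\eps = \nabla d_\eps$ and $\rho_\eps = d_\eps/\eps$, together with the divergence-free property of $\bfvarphi$. Writing $F := (\zg)\,a(S_\eps)$, the integrand becomes $\eps\, F\, \partial_i[g(\rho_\eps)]\,(\no_\eps)_j\,\partial_j\bfvarphi_i$ (with summation over $i,j$), so a single integration by parts in $x_i$ (no boundary terms because $\zeta$ is supported in $[-\tfrac{5\delta}{2},\tfrac{5\delta}{2}]$ and hence $F(\cdot,t)$ has compact support in $\Omega$) produces
\[
I = -\eps \int_\Omega g(\rho_\eps)\, \partial_i\big[F(\no_\eps)_j\big]\, \partial_j\bfvarphi_i\, dx - \eps \int_\Omega g(\rho_\eps)\, F\, (\no_\eps)_j\, \partial_i\partial_j\bfvarphi_i\, dx.
\]
The second integral is where the divergence-free condition enters: summing over $i$ turns $\partial_i\partial_j\bfvarphi_i$ into $\partial_j(\Div\bfvarphi)$, and since $\Div\bfvarphi = 0$ in $L^2$, reading the expression as a distributional pairing and integrating by parts one more time (legitimate because $g(\rho_\eps)F(\no_\eps)_j\in C^\infty_c(\Omega)$) shows this contribution vanishes.

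It remains to estimate the first integral. The product rule gives
\[
\partial_i\big[F(\no_\eps)_j\big] = \zeta'(d_\Gamma)\,(\partial_i d_\Gamma)\,a(S_\eps)(\no_\eps)_j + \zg\, a'(S_\eps)(\nabla S_\eps)_i(\no_\eps)_j + F\,\partial_i(\no_\eps)_j,
\]
so every summand is a bounded smooth function of $(x,t)$ times either $a(S_\eps)$ or $a'(S_\eps)$. A change of variables $x = X_\eps(r,s,t)$, combined with the Schwartz decay of $g$, yields for any $h\in L^2(\T^1)$
\[
\|g(\rho_\eps)\, h(S_\eps)\|_{L^2(\Gamma_t(3\delta))}^2 = \int_{\T^1}\!\int_{-3\delta}^{3\delta} |h(s)|^2\,|g(r/\eps)|^2\,J_\eps(r,s,t)\, dr\, ds \leq C\eps\,\|h\|_{L^2(\T^1)}^2,
\]
in the spirit of Lemma~\ref{lem:rescale2}. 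Applying Cauchy--Schwarz with $h=a$ and $h=a'$ bounds each resulting contribution by $C\eps\cdot\eps^{1/2}\,\|a\|_{H^1(\T^1)}\,\|\nabla\bfvarphi\|_{L^2}$, yielding the desired bound $C\eps^{3/2}\|a\|_{H^1(\T^1)}\|\bfvarphi\|_{H^1(\Gamma(3\delta))}$.

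The main subtlety I anticipate is the rigorous vanishing of the term involving $\partial_i\partial_j\bfvarphi_i$: since $\bfvarphi$ lies only in $H^1$, this is not literally an integral but a distributional pairing, and one has to shift one derivative onto the smooth factor to reduce to the $L^2$-identity $\Div\bfvarphi = 0$. The assumption $a\in H^1(\T^1)$ is used exactly once, namely to estimate the term containing $a'(S_\eps)$ produced by differentiating $a(S_\eps)$ inside $F$.
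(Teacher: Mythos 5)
Your argument is correct and delivers the claimed $\eps^{3/2}$ bound, but it arranges the algebra differently from the paper. The paper first applies the divergence-free condition to rewrite $\no_\eps\otimes\no_\eps:\nabla\bfvarphi$ as $-|\no_\eps|^2\,\mathbf{P}_\eps:\nabla\bfvarphi$ (with $\mathbf{P}_\eps$ the projection onto the orthogonal complement of $\no_\eps$), then integrates by parts \emph{twice}: once in the normal direction to remove $\eps^{-1}g'$, and once more in the tangential direction, at which point the dangerous second-derivative term is killed by the geometric cancellation $\mathbf{P}_\eps\nabla g(\rho_\eps)=0$. You instead integrate by parts a single time, directly exploiting $g'(\rho_\eps)\no_\eps=\eps\nabla[g(\rho_\eps)]$, and then annihilate the second-derivative term by $\sum_i\partial_i\partial_j\bfvarphi_i=\partial_j(\Div\bfvarphi)=0$. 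Both arguments ultimately rely on the divergence-free property, the same $\eps$-gain from the normal integration-by-parts, and the same rescaling estimate in the spirit of Lemma~\ref{lem:rescale2}; what differs is whether the cancellation happens via the tangential projection $\mathbf{P}_\eps$ or via $\partial_j(\Div\bfvarphi)$ directly. Your route avoids the $\mathbf{P}_\eps$-commutator bookkeeping and is arguably cleaner, while the paper's version produces the remaining error terms $\mathbf{Q}_\eps,\mathbf{R}_\eps$ more explicitly.

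Two rigor points worth tightening. First, the claim that $g(\rho_\eps)F(\no_\eps)_j\in C^\infty_c(\Omega)$ is not quite right: since $a\in H^1(\T^1)$ only, the composition $a(S_\eps)$ is merely $H^1$, so this test function sits in $H^1_0$ with compact support rather than $C^\infty_c$. That is still enough, because $\partial_j(\Div\bfvarphi)$ vanishes as an element of $H^{-1}$ and pairs validly against $H^1_0$, but you should say this rather than invoking smoothness. Second, and more importantly, your initial integration by parts from $\int \partial_i[g(\rho_\eps)]\,w_i\,dx$ to $-\int g(\rho_\eps)\,\partial_i w_i\,dx$ with $w_i=F(\no_\eps)_j\partial_j\bfvarphi_i\in L^2$ is not a textbook IBP, since $w$ is not componentwise weakly differentiable; what you are really using is that $\Div w\in L^2$ (which requires exactly the argument about $\partial_j(\Div\bfvarphi)=0$). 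The cleanest way to make the whole computation airtight is to prove the identity first for smooth divergence-free $\bfvarphi$ (where the two-step argument is elementary) and then pass to the limit, using density of smooth solenoidal fields in $H^1(\Omega)^d\cap L^2_\sigma(\Omega)$ and the fact that every term in the final estimate is continuous in $\|\bfvarphi\|_{H^1}$.
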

\begin{proof}
  We use that
  \begin{equation}
    \label{eq:divtaueps}
  -\frac{\no_{\eps}}{|\no_\eps|}\otimes \frac{\no_{\eps}}{|\no_\eps|} : \nabla \boldsymbol{\varphi}= \underbrace{\left(\mathbf{I}-\frac{\no_{\eps}}{|\no_\eps|}\otimes \frac{\no_{\eps}}{|\no_\eps|}\right)}_{\mathbf{P}_\eps:=} : \nabla \boldsymbol{\varphi}
\end{equation}
since $\Div \boldsymbol{\varphi}=0$.
  To treat the remaining integral, we use integration by parts twice
  to get for all $\eps\in (0,\eps_1)$, $t\in [0,T_0]$
\begin{align*}
  I&:=\left|\int_{\Gamma_t(3\delta)} \zg g'(\rho_\eps(x,t))a(S_\eps(x,t))\no_{\eps}\otimes \no_{\eps} : \nabla \boldsymbol{\varphi} \, dx\right|\\
   &=\left|\int_{\Gamma_t(\frac{5\delta}2)}\zg \eps (\no_\eps\cdot \nabla g(\rho_\eps(x,t)))a(S_\eps(x,t))\frac{\no_{\eps}}{|\no_\eps|}\otimes \frac{\no_{\eps}}{|\no_\eps|} : \nabla \boldsymbol{\varphi} \, dx\right|\\
   &= \left|\int_{\Gamma_t(\frac{5\delta}2)} \eps  g(\rho_\eps(x,t))\no_\eps\cdot \nabla\left(\zg a(S_\eps(x,t))\mathbf{P}_\eps : \nabla \boldsymbol{\varphi}\right) \, dx\right|\\
   &\leq \left|\int_{\Gamma_t(\frac{5\delta}2)} \eps  g(\rho_\eps(x,t))\mathbf{P}_\eps : \nabla\left[\zg a(S_\eps(x,t)) (\no_\eps\cdot \nabla) \boldsymbol{\varphi}\right] \, dx\right|\\
  & \quad +\left|\int_{\Gamma_t(\frac{5\delta}2)} \eps g(\rho_\eps(x,t))(a(S_\eps(x,t))\mathbf{Q}_\eps+(\partial_s a)(S_\eps(x,t))\mathbf{R}_\eps) :\nabla  \boldsymbol{\varphi} \, dx\right|\\
   &\leq \left|\int_{\Gamma_t(\frac{5\delta}2)} \zg \eps \mathbf{P}_\eps(\nabla g(\rho_\eps(x,t)))a(S_\eps(x,t))\cdot (\no_\eps\cdot \nabla)  \boldsymbol{\varphi} \, dx\right|
  \\
 &\quad +\eps C\int_{\Gamma_t^\eps(\tfrac{11\delta}4)} |g(\rho_\eps(x,t))||a(S_\eps(x,t))|+|(\partial_s a)(S_\eps(x,t))||\nabla  \boldsymbol{\varphi}| \, dx
\end{align*}
for some uniformly bounded $\mathbf{Q}_\eps, \mathbf{R}_\eps$, where
\begin{equation*}
  \mathbf{P}_\eps(\nabla g(\rho_\eps(x,t)))= \mathbf{P}_\eps \left(\no_\eps \frac1\eps g'(\rho_\eps(x,t)\right) =0.
\end{equation*}
Now using $g\in \SD(\R)$ and \eqref{eq:RemEstim2-2} 
we obtain
\begin{align*}
I & \leq C\eps^{\frac{3}{2}}\left\Vert a\right\Vert _{H^{1}(\mathbb{T}^{1})}\left\Vert \boldsymbol{\varphi}\right\Vert _{H^{1}\left(\Gamma_{t}(2\delta)\right)}.
\end{align*}
This finishes the proof.
\end{proof}

\subsection{Parabolic Equations on Evolving Hypersurfaces}\label{subsec:ParabolicEq}

For $T\in (0,\infty)$ and $r\in[0,1]$ we shall denote the function spaces	
\begin{align*}
  X_{T,r}:=L^2(0,T;H^{2+r}(\T^1))\cap H^1(0,T;H^{r}(\T^1)),\quad X_T:=X_{T,\frac{1}{2}}.
\end{align*}
We equip $X_T$ with the norm
\begin{equation*}
  \|u\|_{X_T} =\|u\|_{L^2(0,T;H^{5/2}(\T^1))}+\|u\|_{H^1(0,T;H^{1/2}(\T^1))}+ \|u|_{t=0}\|_{H^{3/2}(\T^1)}.
\end{equation*}
Then it holds
\begin{equation}\label{eq:EmbeddingE1}
X_T\hookrightarrow C([0,T]; H^{3/2}(\T^1))\cap L^4(0,T; H^2(\T^1))
\end{equation}
and the operator norm of the embedding is uniformly bounded in $T$.

In the following theorem we derive uniform estimates for a class of degenerate parabolic partial differential equations. 

\begin{thm}\label{thm:ParabolicEqOnSurface}
  Let $0<T\leq T_1 <\infty$ and $\kappa\in (0,1]$, $r\in [0,1]$ and $a_\kappa,b_\kappa,c_\kappa\colon \T^1\times [0,T]\to \R$ be twice continuously differentiable with uniformly bounded $C^2$-norms with respect to $\kappa\in (0,1]$. Moreover, let there be some $c_0>0$, independent of $\kappa$, such that $c_\kappa(s,t) \geq c_0$ for all $(s,t)\in \T^1\times [0,T]$.
  For every $g\in L^2(0,T;H^r(\T^1))$ and $h_0\in H^{1+r}(\T^1)$ there is a unique solution $h\in X_{T,r}$ of
  \begin{alignat}{2}\label{eq:h1}
    \partial_t h+ a_\kappa\partial_s h + b_\kappa h -\kappa c_\kappa\partial_s^2 h  &= g&\qquad& \text{on }\T^1\times [0,T],\\\label{eq:h2}
    h|_{t=0} &=h_0 && \text{on } \T^1.
  \end{alignat}
  Moreover, there is some $C=C(T_1)>0$ independent of $\kappa \in (0,1]$, $T\in (0,T_1]$, $h,  g_\kappa, h_0$ such that
  \begin{align}\label{eq:hEstim1}
    \|h\|_{C([0,T]; H^{r}(\T^1))} + \sqrt{\kappa}\|h\|_{L^2(0,T; H^{1+r}(\T^1))}
    &\leq C\left(\|g\|_{L^1(0,T; H^{r}(\T^1))}+\|h_0\|_{H^{r}(\T^1)}\right)\\
     \label{eq:hEstim2}
    \sqrt{\kappa} \|h\|_{C([0,T]; H^{1+r}(\T^1))} + \kappa \|h\|_{L^2(0,T; H^{2+r}(\T^1))}
    &\leq C\left(\|g\|_{L^2(0,T; H^r(\T^1))}+\|h_0\|_{H^{1+r}(\T^1)}\right).
  \end{align}
\end{thm}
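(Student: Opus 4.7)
The plan is to treat existence and uniqueness first and then extract the two $\kappa$-uniform bounds by running energy identities at the $H^r$ and $H^{1+r}$ levels, using the Bessel-potential operator $\Lambda^r:=(1-\partial_s^2)^{r/2}$ on $\T^1$. For existence and uniqueness at fixed $\kappa\in(0,1]$, the problem \eqref{eq:h1}--\eqref{eq:h2} is a strictly parabolic scalar equation on the closed manifold $\T^1$ with smooth time-dependent coefficients (since $c_\kappa\geq c_0>0$), so classical parabolic theory---e.g.\ a Galerkin approximation combined with the basic $L^2$-energy estimate---produces a unique solution $h\in X_{T,r}$ for $r\in\{0,1\}$; the case $r=\tfrac12$ follows by real interpolation. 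The (non-uniform) $\kappa$-dependence of the constants at this stage is irrelevant, since it is improved by the energy estimates below.

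For \eqref{eq:hEstim1}, the plan is to apply $\Lambda^r$ to \eqref{eq:h1}, pair the result with $\Lambda^r h$ in $L^2(\T^1)$, and integrate by parts. The transport term produces a harmless $-\tfrac12\int(\partial_s a_\kappa)(\Lambda^r h)^2$, the parabolic term yields $+\kappa\int c_\kappa|\partial_s\Lambda^r h|^2\geq \kappa c_0\|\partial_s\Lambda^r h\|_{L^2}^2$ after absorbing the cross-term involving $\partial_s c_\kappa$ via Young's inequality, and the commutators $[\Lambda^r,a_\kappa]\partial_s h$, $[\Lambda^r,b_\kappa]h$, $\kappa[\Lambda^r,c_\kappa]\partial_s^2 h$ are handled by the standard estimate $\|[\Lambda^r,f]u\|_{L^2}\leq C\|f\|_{C^1}\|u\|_{H^{r-1}}$ valid for $r\in[0,1]$ on $\T^1$, where the explicit $\kappa$-prefactor on the third commutator allows it to be absorbed into the parabolic term. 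This produces the differential inequality
\begin{equation*}
\tfrac{d}{dt}\|h\|_{H^r}^2+\kappa c_0\|h\|_{H^{1+r}}^2\leq C\|h\|_{H^r}^2+2\|g\|_{H^r}\|h\|_{H^r}.
\end{equation*}
Discarding the $\kappa$-term, dividing (a regularization of) $\|h\|_{H^r}^2$ by $\|h\|_{H^r}$, and applying Gr\"onwall delivers the $L^\infty_tH^r$-bound in terms of $\|h_0\|_{H^r}+\|g\|_{L^1_tH^r}$; reinserting this into the integrated identity controls $\kappa\|h\|_{L^2_tH^{1+r}}^2$ and completes \eqref{eq:hEstim1}.

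For \eqref{eq:hEstim2} I would repeat the argument one level higher by testing \eqref{eq:h1} against $-\partial_s^2\Lambda^{2r}h$, i.e.\ running the energy identity at the $H^{1+r}$ level. The parabolic part now yields $\kappa c_0\|h\|_{H^{2+r}}^2$ modulo absorbable commutators, and the source pairing is controlled by
$\|g\|_{H^r}\|h\|_{H^{2+r}}\leq \tfrac{1}{\kappa c_0}\|g\|_{H^r}^2+\tfrac{\kappa c_0}{4}\|h\|_{H^{2+r}}^2$.
Absorption followed by Gr\"onwall produces
\begin{equation*}
\|h\|_{L^\infty_tH^{1+r}}^2+\kappa\|h\|_{L^2_tH^{2+r}}^2\leq C\Big(\|h_0\|_{H^{1+r}}^2+\tfrac{1}{\kappa}\|g\|_{L^2_tH^r}^2\Big),
\end{equation*}
and multiplying by $\kappa$, taking square roots, and using $\sqrt\kappa\|h_0\|_{H^{1+r}}\leq\|h_0\|_{H^{1+r}}$ (since $\kappa\leq 1$) yields \eqref{eq:hEstim2}. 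The main obstacle is the careful tracking of $\kappa$-weights in the commutator terms when $r=\tfrac12$ is fractional: every commutator contribution must end up strictly below the leading parabolic order, which is exactly what the Kato--Ponce-type commutator estimate above provides, and the explicit $\kappa$-prefactor on the commutator with $c_\kappa\partial_s^2$ is precisely what permits its absorption.
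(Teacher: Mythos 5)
Your plan is correct in substance but takes a genuinely different route from the paper. The paper avoids fractional operators altogether: it proves both estimates for $r=0$ by testing \eqref{eq:h1} with $h$ (for \eqref{eq:hEstim1}) and with $-\kappa\partial_s^2 h$ (for \eqref{eq:hEstim2}), then for $r=1$ differentiates \eqref{eq:h1} in $s$ so that $\tilde h=\partial_s h$ solves an equation of the same structure, reducing to the $r=0$ case, and finally obtains $r\in(0,1)$ by interpolation. Your approach instead works directly at fractional order, applying $\Lambda^r$ and using Kato--Ponce-type commutator estimates; this is more systematic and avoids the interpolation step, at the cost of invoking pseudodifferential commutator machinery which the paper's more elementary integer-plus-interpolation argument does not need. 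A further structural difference: the paper does \emph{not} use Gr\"onwall for \eqref{eq:hEstim2}; it simply integrates the $-\kappa\partial_s^2 h$ identity in time and feeds the already proved \eqref{eq:hEstim1} into the remaining terms.

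One bookkeeping issue in your sketch of \eqref{eq:hEstim2} deserves a flag. After pairing with $-\partial_s^2\Lambda^{2r}h$, the transport commutator $[\Lambda^r,a_\kappa]\partial_s h$ appears paired against $\partial_s^2\Lambda^r h$. If one estimates this directly as $\|[\Lambda^r,a_\kappa]\partial_s h\|_{L^2}\,\|\partial_s^2\Lambda^r h\|_{L^2}\lesssim \|h\|_{H^r}\|h\|_{H^{2+r}}$ and absorbs by Young's inequality against the $\kappa$-weighted parabolic term, one is left with $\kappa^{-1}\|h\|_{H^r}^2$, and a naive Gr\"onwall at the $H^{1+r}$ level then yields a factor $e^{CT/\kappa}$ that is not uniform in $\kappa$. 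The remedy is routine but must be stated: either integrate this commutator term by parts once more so it pairs $\partial_s\bigl([\Lambda^r,a_\kappa]\partial_s h\bigr)$ with $\partial_s\Lambda^r h$ (both controlled by $\|h\|_{H^{1+r}}$ using $a_\kappa\in C^2$), thus avoiding the $\kappa^{-1}$ entirely and making Gr\"onwall $\kappa$-uniform; or, as the paper does, avoid Gr\"onwall here and instead integrate in time and use the already established estimate \eqref{eq:hEstim1} to control the $\kappa^{-1}\|h\|_{L^2_tH^r}^2$ term (with constants then depending on $T_1$). Your claimed Gr\"onwall output is achievable, but only after this extra step, which you do not mention explicitly; otherwise the $\kappa$-uniformity you need would fail.
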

\begin{rem}
	Note that Theorem \ref{thm:ParabolicEqOnSurface} can be applied for right hand sides $g_\kappa$ depending on $\kappa$.
\end{rem}
\begin{proof*}{of Theorem~\ref{thm:ParabolicEqOnSurface}}
  Existence of a unique solution follows by standard results on linear parabolic equations. Therefore we only need to prove the uniform estimates.

  First we consider the case $r=0$. Then testing \eqref{eq:h1} with $h$ and integrating with respect to $t$ we obtain
  \begin{align*}
    &\sup_{0\leq t\leq T} \|h(t)\|_{L^2(\T^1)}^2 + \frac{\kappa c_0}{2} \int_0^T \int_{\T^1}|\partial_s h|^2\, ds\, dt \\
    &\leq C\sup_{s\in \T^1, t\in [0,T]} \left(|\partial_s a_\kappa(s,t)|+|b_\kappa(s,t)|+ |\partial_sc_\kappa (s,t)|^2 \right)\int_0^T \int_{\T^1}|h|^2\,ds \, dt\\
    &\qquad + \int_0^T \|g(t)\|_{L^2(\T^1)}\,dt \sup_{0\leq t\leq T} \|h(t)\|_{L^2(\T^1)}.
  \end{align*}
   Hence Young's and Gronwall's inequality imply \eqref{eq:hEstim1}.

  Next let $r=1$. Then differentiating \eqref{eq:h1} with respect to $s$ yields for $\tilde{h}=\partial_s h$
  \begin{equation}\label{eq:tildeh}
      \partial_t \tilde{h}+ \tilde{a}_\kappa\partial_s \tilde{h} + \tilde{b}_\kappa \tilde{h} -\kappa c_\kappa\partial_s^2 \tilde{h}  = \partial_s g\qquad \text{on }\T^1\times [0,T],
    \end{equation}
    for some $\tilde{a}_\kappa, \tilde{b}_\kappa\colon \T^1\times [0,T]\to \R$, which are smooth and have $C^1$-norms uniformly bounded in $\kappa\in (0,1]$. Hence the same estimate as before yields
    \begin{equation*}
         \|\partial_sh\|_{C([0,T]; L^2(\T^1))} + \sqrt{\kappa}\|\partial_s h\|_{L^2(0,T; H^1(\T^1))}
    \leq C\left(\|g\|_{L^1(0,T; H^{1}(\T^1))}+\|h_0\|_{H^{1}(\T^1)}\right)
  \end{equation*}
  for some $C>0$ independent of $\kappa\in (0,1]$, $T\in (0,T_1]$ and $g,h_0$. This implies \eqref{eq:hEstim1} in the case $r=1$. Finally, \eqref{eq:hEstim1} for the case $r\in [0,1]$ follows by interpolation.

  In order to prove \eqref{eq:hEstim2} in the case $r=0$ we test \eqref{eq:h1} with $-\kappa \partial_s^2 h$ and obtain
  \begin{align*}
    &\kappa\frac{d}{dt} \int_{\T^1} \frac{|\partial_s h|^2}2\, ds  + \kappa^2 \int_{\T^1} c_\kappa |\partial_s^2h|^2\,d s  = \kappa\int_{\T^1} a_\kappa \partial_s h \partial_s^2 h \, ds -\int_{\T^1} (g+b_\kappa h) \kappa \partial_s^2 h\,ds,
  \end{align*}
  where
  \begin{align*}
    \kappa\int_{\T^1} a_\kappa\partial_s h \partial_s^2 h \, ds &= -\kappa \int_{\T^1} \partial_s a_\kappa\frac{|\partial_s h|^2}2\,ds
                                                                   \leq C\kappa\|h(t)\|_{H^1(\T^1)}^2,\\
    -\int_{\T^1} (g+b_\kappa h) \kappa \partial_s^2 h\,ds&\leq C(\|g(t)\|_{L^2(\T^1)}+\|h(t)\|_{L^2(\T^1)})\kappa \|\partial_s^2 h(t)\|_{L^2(\T^1)}.
  \end{align*}
  Hence integration in time, \eqref{eq:hEstim1} with $r=0$ and Young's inequality finally yield \eqref{eq:hEstim2}.

  In the case $r=1$ we use again that $\tilde{h}= \partial_s h$ solves \eqref{eq:tildeh}. Testing this equation with $-\kappa \partial_s^2\tilde{h}$ yields in the same way as before
  \begin{align*}
    &\kappa\frac{d}{dt} \int_{\T^1} \frac{|\partial_s \tilde{h}|^2}2\, ds  + \kappa^2 \int_{\T^1} c_\kappa |\partial_s^2\tilde{h}|^2\,d s  = \kappa\int_{\T^1} a_\kappa \partial_s \tilde{h} \partial_s^2 \tilde{h} \, ds -\int_{\T^1} (\partial_sg+\tilde{b}_\kappa \tilde{h}) \kappa \partial_s^2 h\,ds
  \end{align*}
  where
  \begin{align*}
    \kappa\int_{\T^1} a_\kappa\partial_s \tilde{h} \partial_s^2 \tilde{h} \, ds &= -\kappa \int_{\T^1} \partial_s a_\kappa\frac{|\partial_s \tilde{h}|^2}2\,ds
                                                                   \leq C\kappa\|\tilde{h}(t)\|_{H^1(\T^1)}^2,\\
    -\int_{\T^1} (\partial_s g+\tilde{b}_\kappa \tilde{h}) \kappa \partial_s^2 \tilde{h}\,ds&\leq C(\|\partial_s g(t)\|_{L^2(\T^1)}+\|\partial_s h(t)\|_{L^2(\T^1)})\kappa \|\partial_s^2 \tilde{h}(t)\|_{L^2(\T^1)}.
  \end{align*}
  Therefore integration in time, \eqref{eq:hEstim1} with $r=1$ and Young's inequality yield \eqref{eq:hEstim2} in the case $r=1$. Finally, the case $r\in (0,1)$ follows again by interpolation.
\end{proof*}

For the construction of the approximate solutions we will essentially use solution to the following linearized system:
    \begin{alignat}{2}\label{eq:linTwoPhase1}
      \partial_t \we_\eps^\pm +\we_\eps^\pm \cdot \nabla \ve_0^\pm+ \ve_0^\pm \cdot \nabla \we_\eps^\pm -\nu^\pm \Delta \we_\eps^\pm + \nabla q_\eps^\pm &= \mathbf{f}^\pm &\quad& \text{in }\Omega_t^{\eps,\pm}, t\in (0,T),\\\label{eq:linTwoPhase2}
      \Div \we_\eps^\pm &= 0 && \text{in }\Omega_t^{\eps,\pm}, t\in (0,T),\\\label{eq:linTwoPhase3}
      \llbracket\we_\eps\rrbracket =0,\quad   \llbracket\nu D\we_\eps-q_\eps\tn{I}\rrbracket\cdot \no_\eps - \sigma (\Delta_{\Gamma_\eps}h_\eps)\circ S_\eps \no_\eps &= 0 & &\text{on }\Gamma_t^\eps, t\in (0,T),\\
      \we_\eps^-|_{\partial\Omega} &= 0 &&\text{on }\partial\Omega\times (0,T),\\\label{eq:linTwoPhase4}
      \we_\eps^\pm|_{t=0} &=\we_0&& \text{in } \Omega^{\eps,\pm}_0,
    \end{alignat}
    together with
      \begin{alignat}{2}\nonumber
        \partial_t h_\eps+(\no_\eps \cdot \we_\eps)\circ X_\eps|_{r=0}+a_\eps \partial_sh_\eps + b_\eps h_\eps  &-\kappa_\eps \Delta_{\Gamma_\eps}h_\eps \\\label{eq:linTwoPhase5}
        &= (\no_\eps\cdot \ue)\circ X_\eps|_{r=0}&\quad &\text{in }\T^1\times (0,T),\\
        h_\eps|_{t=0}& = h_0&&\text{in } \T^1,
  \end{alignat}
  where $\we_\eps^\pm= \we_\eps|_{\Omega^{\eps,\pm}}$, $q_\eps^\pm= q|_{\Omega^{\eps,\pm}}$, $\no_\eps = \nabla d_\eps$ and $\ue\colon \Omega\times (0,T)\to \R^2$ is given. Moreover, $T\in (0,T_0]$
  \begin{alignat*}{2}
    \Omega^{\eps,\pm}_t &:= \big(\Omega_t^\pm\setminus \Gamma_t(3\delta)\big)\cup\{x\in \Gamma_t(3\delta): d_\eps (x,t)\gtrless 0\},\quad \Omega^{\eps,\pm}:= \bigcup_{t\in [0,T]}\Omega^{\eps,\pm}_t\times \{t\},\\
    \Gamma^\eps_t &:= \{x\in \Gamma_t (3\delta): d_\eps(x,t)=0\},\quad \Gamma^\eps:= \bigcup_{t\in [0,T]}\Gamma^\eps_t\times \{t\}
  \end{alignat*}
  and
  \begin{equation*}
    \Delta_{\Gamma_\eps} h_\eps(s,t) :=   (\Delta S_\eps)(x,t)|_{x=X_\eps(0,s,t)}\cdot\partial_s h_\eps(s,t)+|\nabla S_\eps(x,t)|^2|_{x=X_\eps(0,s,t)} \p_{s}^2  h_\eps(s,t)
  \end{equation*}
  for all $s\in \T^1$, $t\in (0,T)$. We note that by chain rule
  \begin{equation*}
    \Delta (h_\eps (S_\eps(x,t),t))= \Delta_{\Gamma_\eps} h_\eps(S_\eps(x,t),t)  \qquad \text{for all }x\in \Gamma(3\delta), t\in (0,T)
  \end{equation*}
  for every sufficiently smooth $h_\eps \colon \T^1\times (0,T)\to \R$.

  More precisely, \eqref{eq:linTwoPhase1}-\eqref{eq:linTwoPhase3} are understood in the following weak sense:
  \begin{align}\nonumber
    &\weight{\partial_t \we_\eps(t),\boldsymbol{\varphi}}_{V(\Omega)',V(\Omega)} + \int_{\Omega} (\we_\eps\cdot \nabla \ve_0+ \ve_0\cdot \nabla \we_\eps) \cdot \boldsymbol{\varphi}\, dx + \sum_{\pm} \int_{\Omega^{\eps,\pm}_t(t)} \nu^\pm D\we_\eps : D\boldsymbol{\varphi}\, dx\\\label{eq:WeakTwoPhaseStokes}
    &\quad = \weight{\mathbf{f}(t),\boldsymbol{\varphi}}_{V(\Omega)',V(\Omega)} + \sigma \int_{\Gamma^\eps_t} (\Delta_{\Gamma_\eps}h_\eps)\circ S_\eps \no_\eps\cdot \boldsymbol{\varphi}\,d\sigma
  \end{align}
  for all $\boldsymbol{\varphi}\in V(\Omega):= H^1_0(\Omega)^2\cap L^2_\sigma(\Omega)$ and almost every $t\in (0,T)$, where as usual
  $$
  L^2_\sigma(\Omega)= \overline{\{\boldsymbol{\varphi}\in C_0^\infty(\Omega)^d: \Div \boldsymbol\varphi=0\}}^{L^2(\Omega)^d}.
  $$

 \begin{thm}\label{thm:LinStokesMCF}
  For $\eps\in (0,1]$ let $\kappa_\eps\in (0,1]$, $a_\eps,b_\eps\colon \T^1\times [0,T]\to \R$ be continuously differentiable with uniformly bounded $C^1$-norms with respect to $\eps\in (0,1]$.
  Then for every $\mathbf{f}\in L^2(0,T;V(\Omega)')^d$, $\ue\in H^1(0,T;V(\Omega)')\cap L^2(0,T;V(\Omega))$, $\we_0\in L^2_\sigma(\Omega)$, and $h_0\in H^1(\T^1)$
  there is a unique solution $\we_\eps\in H^1(0,T;V(\Omega)')\cap L^2(0,T;V(\Omega))$, $h_\eps\in H^1(0,T;L^2(\T^1))\cap L^2(0,T;H^2(\T^1))$ of  \eqref{eq:linTwoPhase4}-\eqref{eq:WeakTwoPhaseStokes}.
  Moreover, there is some $C>0$ independent of $\eps \in (0,1]$, $h$,  $g$, $h_0$, and $T\in (0,T_0]$ such that
  \begin{align}\nonumber
    &\|h_\eps\|_{L^\infty(0,T;H^1)} +\sqrt{\kappa_\eps} \|\partial_s^2 h_\eps\|_{L^2((0,T)\times \T^1)}+\|\we_\eps\|_{H^1(0,T;V(\Omega)')}+ \|\we_\eps\|_{L^2(0,T;H^1)}\\\label{eq:EstimLinStokesMCF1}
    &\ \leq
  C\left(\|\mathbf f\|_{L^2(0,T; V(\Omega)')}+\|\ue\|_{L^2(0,T;H^1)}+\|\partial_t \ue\|_{L^2(0,T;V(\Omega)')}+\|\we_0\|_{L^2}+\|h_0\|_{H^1} \right).
  \end{align}
  Finally, if additionally $\mathbf{f}\in L^2(0,T;L^2(\Omega)^2)$, $\we_0\in V(\Omega)$, and $h_0\in H^{\frac32}(\T^1)$, then
  \begin{align}\nonumber
    &\|h_\eps\|_{H^1(0,T;H^{\frac12})\cap L^2(0,T;H^{\frac52})} +\|\we_\eps\|_{H^1(0,T;L^2)}+ \|\we_\eps\|_{L^2(0,T;H^2(\Omega^{\eps,\pm}_t))}\\\label{eq:EstimLinStokesMCF2}
    &\ \leq
      \frac{C}{\kappa_\eps}\left(\|\mathbf f\|_{L^2((0,T)\times \Omega)}+\|\ue\|_{L^2(0,T;H^1)}+\|\partial_t \ue\|_{L^2(0,T;V(\Omega)')}+\|\we_0\|_{H^1}+\|h_0\|_{H^1} \right)
  \end{align}
\end{thm}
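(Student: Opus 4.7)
The plan is to establish existence and uniqueness by a fixed-point/Galerkin scheme that decouples at each iteration: given $\tilde h_\eps$ one solves the two-phase Stokes-type system \eqref{eq:linTwoPhase1}--\eqref{eq:linTwoPhase3} with the jump datum $\sigma(\Delta_{\Gamma_\eps}\tilde h_\eps)\circ S_\eps\, \no_\eps$ (a standard linear two-phase Stokes problem on $\Omega^{\eps,\pm}_t$ with fixed interface $\Gamma^\eps_t$) to obtain $\we_\eps$, and then feeds $(\no_\eps\cdot \we_\eps)\circ X_\eps|_{r=0}$ into \eqref{eq:linTwoPhase5} and invokes Theorem~\ref{thm:ParabolicEqOnSurface} to update $h_\eps$. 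The a priori estimate \eqref{eq:EstimLinStokesMCF1} will also give contractivity of this map on a short time interval, and iteration in time produces the solution on $[0,T]$. Uniqueness follows from the same energy identity applied to the difference of two solutions.

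The core step is the energy identity underlying \eqref{eq:EstimLinStokesMCF1}. I would test the weak formulation \eqref{eq:WeakTwoPhaseStokes} with $\boldsymbol{\varphi}=\we_\eps(t)\in V(\Omega)$, and the parabolic equation \eqref{eq:linTwoPhase5} with $-\sigma J^\Gamma_\eps\, \Delta_{\Gamma_\eps}h_\eps$, where $J^\Gamma_\eps(s,t):=|\partial_s X_\eps(0,s,t)|$ is the arclength element on $\Gamma^\eps_t$. The surface tension contribution then reads
\begin{equation*}
\sigma\int_{\Gamma^\eps_t}(\Delta_{\Gamma_\eps}h_\eps)\circ S_\eps\, \no_\eps\cdot\we_\eps\, d\sigma
=\sigma\int_{\T^1} \Delta_{\Gamma_\eps}h_\eps\cdot (\no_\eps\cdot\we_\eps)\circ X_\eps|_{r=0}\, J^\Gamma_\eps\,ds,
\end{equation*}
which exactly cancels the corresponding coupling term produced by the test function in \eqref{eq:linTwoPhase5}. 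The remaining contributions assemble, after integration by parts in $s$, into the dissipation
\begin{equation*}
\tfrac12\tfrac{d}{dt}\!\int_{\T^1}\! J^\Gamma_\eps |\nabla_{\Gamma_\eps}h_\eps|^2\,ds+ \sigma\kappa_\eps\!\int_{\T^1}\! J^\Gamma_\eps|\Delta_{\Gamma_\eps}h_\eps|^2\,ds + \sum_{\pm}\!\int_{\Omega^{\eps,\pm}_t}\!\nu^\pm |D\we_\eps|^2\,dx,
\end{equation*}
modulo lower-order terms involving $\partial_t J^\Gamma_\eps$, $a_\eps$, $b_\eps$, and $\nabla \ve_0$. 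These are absorbed using the uniform $C^1$-bounds of Theorem~\ref{th_coord}, Korn's inequality on each phase, and the smoothness of $\ve_0$. A single Gronwall step, together with Young's inequality on the forcings $\weight{\mathbf f,\we_\eps}_{V',V}$ and $\int (\no_\eps\cdot\ue)\circ X_\eps|_{r=0}\, J^\Gamma_\eps\Delta_{\Gamma_\eps}h_\eps\,ds$, yields the claimed $L^\infty(0,T;H^1(\T^1))\cap L^2(0,T;H^1(\Omega^{\eps,\pm}_t))$-bound. The bound on $\partial_t\we_\eps$ in $L^2(0,T;V(\Omega)')$ is then read off from \eqref{eq:WeakTwoPhaseStokes}, using that the jump term pairs against $\boldsymbol{\varphi}\in V(\Omega)$ via the trace as $\|\sigma\Delta_{\Gamma_\eps}h_\eps\|_{L^2(0,T;H^{-1/2}(\Gamma^\eps_t))}\leq C\|h_\eps\|_{L^2(0,T;H^{3/2}(\T^1))}$.

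For the higher-regularity estimate \eqref{eq:EstimLinStokesMCF2}, I would combine maximal $L^2$-regularity for the linearized two-phase Stokes problem with surface tension on the fixed interface $\Gamma^\eps_t$ (available on $\Omega^{\eps,\pm}_t$ with constants uniform in $\eps$, since $\Gamma^\eps_t$ is an $O(\eps)$-$C^2$-perturbation of $\Gamma_t$ by Theorem~\ref{th_coord}) with Theorem~\ref{thm:ParabolicEqOnSurface} at $r=\tfrac12$. The Stokes part delivers
\begin{equation*}
\|\we_\eps\|_{H^1(0,T;L^2)\cap L^2(0,T;H^2(\Omega^{\eps,\pm}_t))}\leq C\bigl(\|\mathbf f\|_{L^2}+\|\ue\|_{L^2(0,T;H^1)}+\|\we_0\|_{H^1}+\|\Delta_{\Gamma_\eps}h_\eps\|_{L^2(0,T;H^{1/2})}\bigr),
\end{equation*}
and Theorem~\ref{thm:ParabolicEqOnSurface} with $r=\tfrac12$ applied to \eqref{eq:linTwoPhase5}, whose right-hand side $(\no_\eps\cdot\ue-\no_\eps\cdot\we_\eps)\circ X_\eps|_{r=0}$ lies in $L^2(0,T;H^{1/2}(\T^1))$ by trace, gives $\kappa_\eps\|h_\eps\|_{L^2(0,T;H^{5/2})}+\sqrt{\kappa_\eps}\|h_\eps\|_{C([0,T];H^{3/2})}\leq C$, so that $\|\Delta_{\Gamma_\eps}h_\eps\|_{L^2(0,T;H^{1/2})}\leq C/\kappa_\eps$. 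Reinsertion into the Stokes bound, using \eqref{eq:EstimLinStokesMCF1} to control the lower-order pieces, yields \eqref{eq:EstimLinStokesMCF2} with the stated $1/\kappa_\eps$ prefactor.

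The main obstacle will be the precise bookkeeping of the cancellation: one must identify the correct surface Jacobian $J^\Gamma_\eps$ so that the boundary coupling terms match exactly and no uncontrolled $\eps$-losses appear, and one must verify that all geometric coefficients produced by integrating by parts in $s$ (derivatives of $J^\Gamma_\eps$, of $\nabla S_\eps$, of $\no_\eps$) are $C^1$-bounded uniformly in $\eps$, which is delivered by Theorem~\ref{th_coord}. A secondary technical point is the uniform maximal-regularity for the two-phase Stokes problem with jump condition on the moving interface $\Gamma^\eps_t$: here one has to verify that the constants depend only on the $C^2$-geometry of $\Gamma^\eps_t$ and not on higher-order derivatives of $d_\eps$, which is achieved by reducing to a fixed reference domain via $X_\eps$ and noting that the pull-back coefficients of the transformed problem are $C^2$-bounded uniformly in $\eps$.
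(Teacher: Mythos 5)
Your proposal is correct, and for the a priori estimates it follows essentially the same route as the paper: the coupled test pair --- $\we_\eps$ in \eqref{eq:WeakTwoPhaseStokes} and (up to sign and $\sigma$-normalisation) $|\partial_s X_\eps(0,\cdot,t)|\,\Delta_{\Gamma_\eps}h_\eps$ in \eqref{eq:linTwoPhase5} --- cancels the surface-tension coupling, and Korn, Young and Gronwall give \eqref{eq:EstimLinStokesMCF1}; for \eqref{eq:EstimLinStokesMCF2} one bootstraps through Theorem~\ref{thm:ParabolicEqOnSurface} at $r=\tfrac12$ (which is where the $1/\kappa_\eps$-loss enters via \eqref{eq:hEstim2}) and then two-phase Stokes maximal $L^2$-regularity on the $\eps$-perturbed interface. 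The one genuine deviation is the existence argument. You propose a decoupled Picard iteration (solve Stokes given $\tilde h_\eps$, then update $h_\eps$), whereas the paper treats $(\we_\eps,h_\eps)$ as a single unknown in the Gelfand triple $V(\Omega)\times H^1(\T^1)\hookrightarrow L^2_\sigma(\Omega)\times L^2(\T^1)\hookrightarrow V(\Omega)'\times H^{-1}(\T^1)$: the very cancellation you exploit for the energy identity is what makes the coupled bilinear form G\aa{}rding-coercive, so Lions' theorem for abstract parabolic evolution equations applies directly and the $H^1(0,T;L^2)\cap L^2(0,T;H^2)$-regularity of $h_\eps$ is a subsequent bootstrap. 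The monolithic route is cleaner here: in the decoupled scheme the Stokes block consumes $\Delta_{\Gamma_\eps}\tilde h_\eps$ (two tangential derivatives of $\tilde h_\eps$), and gaining derivatives back through Theorem~\ref{thm:ParabolicEqOnSurface} costs a factor $\kappa_\eps^{-1/2}$, so the short-time contraction constant threatens to degenerate as $\eps\to 0$; the Galerkin alternative you also mention avoids this and is effectively what the Gelfand-triple argument encodes. Two minor points: for the two-phase Stokes maximal-regularity step, the jump datum $\sigma(\Delta_{\Gamma_\eps}h_\eps)\circ S_\eps$ also needs the time regularity $H^{1/4}(0,T;L^2(\Gamma^\eps_t))$, not just $L^2(0,T;H^{1/2})$; this does follow by interpolation from $h_\eps\in H^1(0,T;H^{1/2})\cap L^2(0,T;H^{5/2})$, but your sketch should state it. Also, the paper first reduces to $\ue\equiv 0$ for \eqref{eq:EstimLinStokesMCF1} by the shift $\we\mapsto\we-\ue$ (which it then cannot repeat for \eqref{eq:EstimLinStokesMCF2}), whereas you absorb the $\ue$-forcing directly via Young's inequality; both are fine.
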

\begin{proof}
  First of all existence of a unique solution $\we_\eps \in H^1(0,T;V(\Omega)')\cap L^2(0,T;V(\Omega))$, $h_\eps \in H^1(0,T; H^{-1}(\T^1))\cap L^2(0,T; H^1(\T^1))$ follows from the standard theory of abstract parabolic evolution equation for the Gelfand triple
  \begin{equation*}
    V= V(\Omega)\times H^1(\T^1),\quad H= L^2_\sigma(\Omega)\times L^2(\T^1),\quad V'= V(\Omega)'\times H^{-1}(\T^1).
  \end{equation*}
  Moreover, $h_\eps \in H^1(0,T;L^2(\T^1))\cap L^2(0,T;H^2(\T^1))$ follows from standard regularity theory since $\no_\eps\cdot \we_\eps \circ X_\eps|_{r=0}\in L^2(0,T; L^2(\T^1))$. Hence it only remains to show the uniform estimates.

  \medskip

  \noindent
  \emph{Proof of \eqref{eq:EstimLinStokesMCF1}:} First of all we can reduce to the case $\ue=0$ simply by replacing $\we$ by $\we-\ue$ in the equations, where $\we_0$ is replaced by $\we_0-\ue|_{t=0}$ and $\mathbf{f}$ has to be replaced by $\mathbf{\tilde{f}}$ defined by
  \begin{align*}
    &\weight{\mathbf{\tilde{f}}(t),\boldsymbol\varphi}:=     \weight{\mathbf{f}(t),\boldsymbol\varphi}\\
    &\quad -
   \weight{\partial_t \ue(t),\boldsymbol{\varphi}}_{V(\Omega)',V(\Omega)} + \int_{\Omega} (\ue\cdot \nabla \ve_0+ \ve_0\cdot \nabla \ue) \cdot \boldsymbol{\varphi}\, dx + \sum_{\pm} \int_{\Omega^{\eps,\pm}_t(t)} \nu^\pm D\ue : D\boldsymbol{\varphi}\, dx
  \end{align*}
  for all $\boldsymbol{\varphi}\in V(\Omega)$ and $t\in (0,T)$.  Adding $\ue$ afterwards to $\we_\eps$ yields the desired solution. Since
  \begin{equation*}
    \|\mathbf{\tilde{f}}\|_{L^2(0,T; V(\Omega)')}\leq \|\mathbf{f}\|_{L^2(0,T; V(\Omega)')}+ C\left(\|\ue\|_{L^2(0,T;H^1)}+\|\partial_t \ue\|_{L^2(0,T;V(\Omega)')}\right)
  \end{equation*}
  one also obtains \eqref{eq:EstimLinStokesMCF1}.

  Now let $\ue\equiv 0$. Choosing $\boldsymbol{\varphi}=\we_\eps$ in \eqref{eq:WeakTwoPhaseStokes} and testing \eqref{eq:linTwoPhase5} with $|\partial_s X_\eps (0,s,t)|\Delta_{\Gamma_\eps} h_\eps$ we obtain
\begin{align*}
  &\frac{d}{dt}\left( \int_\Omega \frac{|\we_\eps(t)|^2}2 \,dx + \int_{\T^1} \frac{|\nabla S_\eps|^2|\partial_s h_\eps(t)|^2}2 |\partial_s X_\eps (0,s,t)|\,ds \right) + \sum_{\pm}\int_{\Omega^\pm} 2\nu^\pm |D\we_\eps|^2\, dx\\
  &\qquad +
  \kappa_\eps  \int_{\T^1} |\partial_s X_\eps (0,s,t)||\nabla S_\eps|^4|\partial_s^2 h_\eps(t)|^2\, ds= \weight{\mathbf{f},\boldsymbol{\varphi}}_{V(\Omega)',V(\Omega)} -\sum_{\pm}\int_{\Omega^\pm(t)}\we^\pm_\eps \cdot \nabla \ve_0^\pm\cdot \we_\eps^\pm\,d x\\
  &\qquad +  \int_{\T^1}(\tilde{a}_\eps \partial_s h_\eps+\tilde{b}_\eps h_\eps)\partial_s^2 h_\eps \, ds +\int_{\T^1} \left(\tilde{c}_\eps\partial_s h_\eps(t)+ \tilde{d}_\eps h_\eps(t)\right)\partial_s h_\eps(t)\, dx\\
  &\quad \leq \left(\|\mathbf{f}\|_{L^2(\Omega)}+ \|\we_\eps\|_{L^2(\Omega)} + \|\mathbf{a}\|_{H^{-\frac12}(\Gamma_t^\eps)} + \|(h_\eps,\partial_s h_\eps)\|_{L^2(\T^1)} \right)\left(\|\we_\eps\|_{H^1(\Omega)}+ \|\partial_s h_\eps\|_{L^2(\T^1)}\right)
\end{align*}
for some smooth and uniformly bounded $\tilde{a}_\eps, \tilde{b}_\eps, \tilde{c}_\eps, \tilde{d}_\eps \colon \T^1\times [0,T]\to \R$, where we note that
\begin{align*}
  \int_{\Gamma_t^\eps} \Delta_{\Gamma_\eps} h_\eps \circ S_\eps \no_\eps \cdot \we_\eps \,d\sigma&=
                                                                                                   \int_{\T^1}(\no_\eps \cdot \we_\eps)(X_\eps(0,s,t)) \Delta_{\Gamma_\eps} h_\eps(s,t)   |\partial_s X_\eps (0,s,t)| \,ds \quad\text{and}\\
  \int_{\T^1}(\tilde{a}_\eps \partial_s h_\eps+\tilde{b}_\eps h_\eps)\partial_s^2 h_\eps \, ds &= -\int_{\T^1}\left((\partial_s\tilde{a}_\eps) \frac{|\partial_s h_\eps|^2}2+\partial_s(\tilde{b}_\eps h_\eps)\partial_s h_\eps\right) \, ds.
  \end{align*}
  Hence Young's and Gronwall's inequality yield the desired estimate \eqref{eq:EstimLinStokesMCF1}.

  \medskip
  
  \noindent
  \emph{Proof of \eqref{eq:EstimLinStokesMCF2}:} Now assume additionally that $\mathbf{f}\in L^2(0,T;L^2(\Omega)^2)$, $\we_0\in V(\Omega)$ and $h_0\in H^{\frac32}(\T^1)$. (Note that we do not reduce to the case $\ue\equiv 0$ in this case since this is not compatible with the assumed regularity for $\ue$.)  The estimate of $\|h_\eps\|_{H^1(0,T;H^{\frac12}(\T^1))\cap L^2(0,T;H^{\frac52}(\T^1))}$ follows directly from
  \eqref{eq:hEstim2} for $r=\frac12$ and \eqref{eq:EstimLinStokesMCF1} using the equation \eqref{eq:linTwoPhase5}. Hence
  \begin{align*}
    &\|\Delta_{\Gamma_\eps}h\circ S_\eps\|_{H^{\frac14}(0,T;L^2(\Gamma_t^{\eps}))\cap L^2(0,T;H^\frac12(\Gamma_t^{\eps})) }\\
    &\quad \leq \frac{C}{\kappa_\eps}\left(\|\mathbf f\|_{L^2((0,T)\times \Omega)}+\|g\|_{H^{\frac14}(0,T;L^2)\cap L^2(0,T;H^\frac12)}+\|\we_0\|_{H^1}+\|h_0\|_{H^1} \right)
  \end{align*}
  Now the estimate of $\|\we_\eps\|_{H^1(0,T;L^2)}+ \|\we_\eps\|_{L^2(0,T;H^2(\Omega^{\eps,\pm}_t))}$ follows from standard estimates for the two-phase Stokes system, cf.~e.g.~\cite{PruessSimonettMovingInterfaces} for the case that the interface $\Gamma^\eps_t$ is independent of $t\in (0,T)$. The result in the present case that $\Gamma_t^\eps$ evolves smoothly with respect to $t$ can be shown by the same perturbation argument as in the proof of Theorem~\ref{solvingkorder} in the appendix.
\end{proof}

\subsection{Spectral Estimate}\label{subsec:spectral_estimate}
For the spectral estimate in $\eps$-coordinates as in Section \ref{subsec:Coordinates} let $\eps_1>0$ be as in Theorem~\ref{th_coord} and assume that \eqref{eq:condSepsneps} hold true.
Moreover, we consider the rescaled variable
\begin{align}
	\rho_\eps(x,t):=\frac{d_\eps(x,t)}{\eps}\quad\text{ for }(x,t)\in\overline{\Gamma(3\delta)}.
\end{align}
Finally, we assume the following structure of the approximate solution: let
\begin{align}
	\tilde{c}^A_\eps(x,t):=\theta_0(\rho_\eps(x,t))+ \eps^\mu p_\eps(S_\eps,t)\theta_1(\rho_\eps)+O(\eps^2),
\end{align}
where $\mu\in[1,2)$ and $p_\eps:\T^1\times[0,T_0]\rightarrow\R$ is measurable with $\|p_\eps\|_\infty \leq C$ and $\theta_1\in L^\infty(\R)$ with
\begin{align}
\int_\R f'''(\theta_0)(\theta_0')^2\theta_1\,d\rho=0.
\end{align}
We set
\begin{align}\label{eq_cA_eps_def}
c^A_\eps:=\zeta(\frac{d_0}{\delta}) \tilde{c}^A_\eps+ (1-\zeta(\frac{d_0}{\delta}))(\chi_{\Omega^+(t)}-\chi_{\Omega^-(t)})\quad\text{ in }\Omega,
\end{align}
where $\zeta:\R\rightarrow\R$ is a smooth cutoff-function with $\zeta=1$ for $|r|\leq 2$ and $\zeta=0$ for $|r|\geq \frac52$.

The following spectral estimate will be a key ingredient for the proof of convergence.
\begin{lem}[Spectral Estimate]\label{thm:Spectral}~\\
Let the above assumptions in this section hold. Then there are some uniform $C_L,c_L>0$, $\eps_0\in (0,\eps_1]$ such that for every $\psi\in H^1(\Om)$, $t\in[0,T_0]$, and $\eps\in (0,\eps_0]$ we have
  \begin{equation*}
    \int_{\Om}\left(|\nabla\psi|^2+ \frac{f''(c^A_\eps(.,t))}{\eps^2}\psi^2\right)\,dx\geq -C_L\|\psi\|_{L^2(\Om)}^2 + \|\nabla \psi\|_{L^2(\Om\backslash\Gamma^\eps_t(\frac{3\delta}{2}))}^2 + c_L\|\nabla_{{\btau_\eps}}\psi\|_{L^2(\Gamma^\eps_t(\frac{3\delta}{2}))}^2,
  \end{equation*}
where $\nabla_{\btau_\eps} \psi$ is as in \eqref{eq:defnTauEps} and $\Gamma^\eps_t(\frac{3\delta}{2}):=\Gamma^\eps(\frac{3\delta}{2}))\cap\left(\R^2\times\{t\}\right)$ with $X_\eps$, $\Gamma^\eps(\frac{3\delta}{2}))$, and $\eps_1$ are as in Theorem \ref{th_coord}.
\end{lem}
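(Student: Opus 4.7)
The plan is to adapt the classical spectral estimate strategy (Chen, extended in Abels--Liu) to the new $\eps$-dependent coordinates. First, I split $\Omega$ into an outer region $\Omega\setminus \Gamma^\eps_t(\tfrac{3\delta}{2})$ and the inner region $\Gamma^\eps_t(\tfrac{3\delta}{2})$. On the outer region, Theorem~\ref{th_coord} ensures $\Gamma_t(\delta)\subseteq \Gamma^\eps_t(\tfrac{3\delta}{2})$, so that $|d_0|\geq \delta$ there. Consequently $|\rho_\eps|\gtrsim \delta/\eps$, and the exponential decay of $\theta_0\mp 1$ forces $c^A_\eps$ to be uniformly close to $\pm 1$; hence $f''(c^A_\eps)\geq \alpha^2/2 > 0$ for $\eps$ small. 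The term $\|\nabla\psi\|^2_{L^2(\text{outer})}$ is picked up trivially, and the huge coefficient $\alpha^2/(2\eps^2)$ on $\|\psi\|^2_\text{outer}$ more than absorbs any $-C_L\|\psi\|^2$.

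On the inner region I pass to $(r,s)$ coordinates via $X_\eps$, writing $\tilde\psi(r,s,t)=\psi(X_\eps(r,s,t))$. Using \eqref{eq:condSepsneps} to control the Jacobian and the angle between $\nabla d_\eps$ and $\nabla S_\eps$, one gets
\begin{equation*}
|\nabla\psi|^2 = |\partial_r\tilde\psi|^2 + |\nabla_{\btau_\eps}\psi|^2 + O(\eps^2)\bigl(|\partial_r\tilde\psi|^2+|\partial_s\tilde\psi|^2\bigr).
\end{equation*}
I extract $c_L\|\nabla_{\btau_\eps}\psi\|^2_{L^2(\text{inner})}$ with $c_L\in(0,1)$ sufficiently small to absorb the $O(\eps^2)$ error terms in $\partial_s\tilde\psi$, and reduce the task to showing
\begin{equation*}
I := \int_{\text{inner}} \Bigl((1-c_L)|\partial_r\tilde\psi|^2 + \tfrac{f''(c^A_\eps)}{\eps^2}\tilde\psi^2\Bigr)J_\eps\, ds\, dr \geq -C\|\psi\|^2_{\text{inner}}.
\end{equation*}

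Now I introduce the stretched variable $\rho=r/\eps$ and set $\Phi(\rho,s,t):=\tilde\psi(\eps\rho,s,t)$ (extended by $0$). Expanding $f''(c^A_\eps(\eps\rho,s))= f''(\theta_0(\rho))+ \eps^\mu p_\eps(s)\theta_1(\rho)f'''(\theta_0(\rho))+ O(\eps^{\min(2,2\mu)})$ and decomposing fiberwise $\Phi(\cdot,s)=\alpha(s)\theta_0' + \Phi^\perp(\cdot,s)$ with $\langle \Phi^\perp,\theta_0'\rangle_{L^2(\R)}=0$, the spectral gap of the 1D operator $-\partial_\rho^2+f''(\theta_0)$ (whose kernel is spanned by $\theta_0'$) yields
\begin{equation*}
\int_{\R}\bigl(|\partial_\rho\Phi|^2+f''(\theta_0)\Phi^2\bigr)d\rho \geq \lambda_1\|\Phi^\perp\|^2_{L^2(\R)}.
\end{equation*}
The dangerous $\alpha^2$-contribution of the $\eps^\mu$-perturbation disappears by the assumed orthogonality $\int f'''(\theta_0)(\theta_0')^2\theta_1\,d\rho=0$, leaving only a cross term $\eps^\mu|\alpha|\,\|\Phi^\perp\|$ and a quadratic term in $\Phi^\perp$. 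Young's inequality bounds the former by $\tfrac{\lambda_1}{4}\|\Phi^\perp\|^2+C\eps^{2\mu}\alpha^2$ and the latter is absorbed for small $\eps$. Integrating in $s$ against $J_\eps$ and using $\int\alpha(s)^2\,ds\leq C\|\Phi\|^2_{L^2(\R\times\T^1)}\leq C\eps^{-1}\|\psi\|^2_\text{inner}$ produces an error $-C\eps^{2\mu-2}\|\psi\|^2$, which is bounded by $-C\|\psi\|^2$ exactly because $\mu\geq 1$. The slow variation $J_\eps(\eps\rho,s)=J_\eps(0,s)+O(\eps\rho)$ and the residual $O(\eps^2)$ difference between $c^A_\eps$ and $\tilde c^A_\eps$ enter as further small perturbations of the fiberwise estimate.

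The main obstacle is precisely the treatment of the $\eps^\mu$ correction term in $c^A_\eps$: naively it forces an $\eps^{\mu-2}$ blow-up. The orthogonality $\int f'''(\theta_0)(\theta_0')^2\theta_1 d\rho=0$ kills the dominant $\alpha^2$-piece, after which the spectral gap bilinearly controls the remaining terms via Young. A secondary technical point is that in the $\eps$-coordinates one works with $d_\eps$, not $d_0$, but \eqref{eq:condSepsneps} ensures the coordinate geometry (orthogonality of $\no_\eps$ to $\nabla S_\eps$, unit length of $\no_\eps$, slow variation of $J_\eps$) holds up to $O(\eps^2)$ corrections, so that the whole machinery from standard tubular neighbourhood coordinates carries over.
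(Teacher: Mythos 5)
Your argument follows essentially the same route as the paper's proof: split off the outer region where $f''(c^A_\eps)\geq 0$, transform the inner region to $(r,s)$-coordinates via $X_\eps$, use \eqref{eq:condSepsneps} together with Young to absorb the $O(\eps^2)$ cross term and extract a fraction of $\|\nabla_{\btau_\eps}\psi\|^2$, then reduce to the one-dimensional operator $-\partial_\rho^2+f''(\theta_0)$ in the stretched variable. Where the paper delegates the final 1D spectral step to \cite[Section~5.1.3]{MoserAdv} and Chen \cite{ChenSpectrumAC}, you write it out explicitly via the Chen decomposition $\Phi=\alpha(s)\theta_0'+\Phi^\perp$ and correctly identify the role of the orthogonality hypothesis $\int f'''(\theta_0)(\theta_0')^2\theta_1\,d\rho=0$ in killing the $\eps^{\mu-2}\alpha^2$ contribution of the first-order perturbation of $c^A_\eps$; this is exactly the content that the cited abstract 1D estimates package up, so the two proofs coincide in substance.
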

\begin{rem}
	Because of \eqref{eq_Gamma_eps} and $|\nabla\psi|\geq c|\nabla_{\tau_\eps}\psi|$ (see e.g.~the proof below), the result also holds for $\Gamma_t(2\delta)$ instead of $\Gamma^\eps_t(\frac{3\delta}{2})$ with a possibly smaller constant $c_L$.
\end{rem}
\begin{proof*}{of Lemma~\ref{thm:Spectral}}
	First, due to \eqref{eq_Gamma_eps} and the definition \eqref{eq_cA_eps_def} of $c^A_\eps$, we obtain that for $\eps>0$ small it holds
	\[
	f''(c^A_\eps)\geq 0\quad\text{ in }\Gamma_\eps(\tfrac{3\delta}{2}).
	\]
	Therefore let us first consider the integral over $\Gamma_\eps(\frac{3\delta}{2})$: we can transform it into $(d_\eps,S_\eps)$-coordinates and get
	\[
	\int_{\Gamma_\eps(\frac{3\delta}{2})}\left(|\nabla\psi|^2+ \frac{f''(c^A_\eps)}{\eps^2}\psi^2\right)\sd x=\int_{\T^1}\int_{-\frac{3\delta}{2}}^{\frac{3\delta}{2}}\left[|\nabla\psi|^2\circ X_\eps + \frac{f''(\tilde{c}^A_\eps)}{\eps^2}\psi_\eps^2\right] J_\eps\,dr\,ds,
	\]
	where we have set $\psi_\eps:=\psi\circ X_\eps$ and $J_\eps$ is defined in \eqref{J_eps}. Via the chain rule we have the following transformation identity:
	\begin{align}
		|\nabla\psi|^2\circ X_\eps = (\nabla_{(r,s)}\psi_\eps)^\top
		\left.\begin{pmatrix}
			|\nabla d_\eps|^2 & \nabla d_\eps\cdot\nabla S_\eps\\
			\nabla d_\eps\cdot\nabla S_\eps & |\nabla S_\eps|^2
		\end{pmatrix}\right|_{X_\eps}
	\nabla_{(r,s)}\psi_\eps.
	\end{align}
    Therefore the asymptotics \eqref{eq:condSepsneps} together with Young's inequality yields for $\eps$ small
    \[
    |\nabla\psi|^2\circ X_\eps \geq (1-C\eps^2)|\partial_r\psi_\eps|^2+\frac{1}{2}|\nabla_{\tau_\eps}\psi|^2\circ X_\eps.
    \]
    Altogether we obtain
    \begin{align}\notag
    \int_{\Omega}\left(|\nabla\psi|^2+ \frac{f''(c^A_\eps(.,t))}{\eps^2}\psi^2\right)\sd x&\geq \|\nabla\psi\|_{L^2(\Omega\backslash\Gamma_t^\eps(\frac{3\delta}{2}))}^2+ \frac{1}{2} \|\nabla_{\tau_\eps}\psi\|_{L^2(\Gamma_t^\eps(\frac{3\delta}{2}))}^2 - C\|\psi\|^2_{L^2(\Gamma_t^\eps(\frac{3\delta}{2}))}+\\
    + (1-C\eps^2) &\int_{\T^1}\int_{-\frac{3\delta}{2}}^{\frac{3\delta}{2}}\left[|\partial_r\psi_\eps|^2 + \frac{f''(\tilde{c}^A_\eps(.,t))}{\eps^2}\psi_\eps^2\right] J_\eps\,dr\,ds.\label{eq_spectral_proof}
    \end{align}
    The last term on the right hand side of \eqref{eq_spectral_proof} can be treated by well-known scaling and perturbation arguments as well as the spectral properties of differential operators on the real line similar to Chen \cite{ChenSpectrumAC}. More precisely, except for the dependency of $J_\eps$ on $\eps$ (which is not a problem because it is uniform in $\eps$) the abstract 1D-spectral estimates in Moser~\cite[Section 5.1.3]{MoserAdv} are applicable after rescaling and yield the desired estimate. This shows the spectral estimate in Lemma \ref{thm:Spectral}.
\end{proof*}

	Furthermore, we need more refined estimates of spectral decomposition type:
\begin{cor}\label{cor:SpectralDecomp}
 Let the previous assumptions be valid and let
$t\in\left[0,T_0\right]$ and $\psi\in H^1(\Gamma_t^\eps(\frac{3\delta}{2}))$, where $\Gamma_t^\eps(\frac{3\delta}{2})=\{x\in \Omega:(x,t)\in\Gamma^\eps(\frac{3\delta}{2})\}$ with $\Gamma^\eps(\frac{3\delta}{2})$ from Theorem \ref{th_coord}. Moreover, let $\Lambda_{\eps}\in\mathbb{R}$ be such that
\begin{equation}
\label{def Lambda energy}
\int_{\Gamma_t^\eps(\frac{3\delta}{2})} \left|\nabla\psi\right|^{2}+\frac{f''\left(c^A_\eps(.,t)\right)}{\eps^2}\psi^{2}\sd x\leq\frac{\Lambda_{\eps}}{\eps}
\end{equation}
and denote $I_{\eps}:=\left(-\frac{3\delta}{2\eps},\frac{3\delta}{2\eps}\right)$. Then, for $\eps>0$ small enough, there exist functions $Z_\eps\in H^{1}(\T^1)$,
$\psi^{\mathbf{R}}_\eps\in H^{1}(\Gamma_t^\eps(\frac{3\delta}{2}))$
and smooth $\Psi_\eps\colon I_{\eps}\times\T^1\to \R$ such that
\begin{equation}
\psi(x)=\eps^{-\frac{1}{2}}Z_\eps(s)\left(\beta_\eps\theta_{0}'(\rho)+\Psi_\eps(\rho,s)\right)+\psi^{\mathbf{R}}_\eps(x) \quad \text{ for all } x\in\Gamma^\eps_t(\tfrac{3\delta}{2}) \label{decompose u}
\end{equation}
where
\begin{equation}\label{eq_variable_abbrev}
  s=S_\eps(x,t),\quad \rho= \frac{d_\eps(x,t)}\eps
\end{equation}
for almost all $x\in \Gamma_t^\eps(\frac{3\delta}{2})$ 
and $\beta_\eps:=\|\theta_0'\|_{L^2(I_\eps)}^{-1}$. Moreover,
\begin{equation}
\Vert \psi^{\mathbf{R}}_\eps\Vert _{L^{2}\left(\Gamma_t^\eps(\frac{3\delta}{4})\right)}^{2}\leq C\left(\eps\Lambda_{\eps}+\eps^{2}\left\Vert \psi\right\Vert _{L^{2}\left(\Gamma_t^\eps(\frac{3\delta}{2})\right)}^{2}\right),\label{f2u estimate-1}
\end{equation}
\begin{equation}\label{f2u estimate-2}
\Vert Z_\eps\Vert _{H^{1}(\T^1)}^{2}+\Vert \nabla_{\btau_\eps}\psi\Vert _{L^{2}(\Gamma_t^\eps(\frac{3\delta}{2}))}^{2}+\Vert \psi^{\mathbf{R}}_\eps\Vert _{H^{1}\left(\Gamma_t^\eps(\frac{3\delta}{2})\right)}^{2}
\le C\left(\left\Vert \psi\right\Vert _{L^{2}\left(\Gamma_t^\eps(\frac{3\delta}{2})\right)}^{2}+\frac{\Lambda_{\eps}}{\eps}\right),
\end{equation}
and with $J_\eps$ from \eqref{J_eps}
\begin{equation}\label{f1u estimate}
\sup_{s\in\T^1}\left(\int_{I_{\eps}}\left(\Psi_\eps(\rho,s)^{2}+\partial_\rho\Psi_\eps(\rho,s)^{2}\right)J_\eps\left(\eps\rho,s,t\right)\sd \rho\right)\leq C\eps^{2}.
\end{equation}
\end{cor}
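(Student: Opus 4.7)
The plan is to perform the decomposition after changing variables via the $\eps$-coordinate diffeomorphism $X_\eps$ from Theorem~\ref{th_coord}. Setting $\phi_\eps(\rho,s) := \psi(X_\eps(\eps\rho,s,t))$ for $(\rho,s) \in I_\eps \times \T^1$, the problem reduces to analyzing, for each fixed $s$, the one-dimensional Schrödinger-type operator $L_0 := -\partial_\rho^2 + f''(\theta_0)$, which is known to have simple zero eigenvalue spanned by $\theta_0'$ and a positive spectral gap $\lambda_1>0$ on its orthogonal complement. The true potential $f''(\tilde c^A_\eps)$ appearing in \eqref{def Lambda energy} is a perturbation of $f''(\theta_0)$ of order $\eps^\mu$, and transforming the integrals produces the weight $J_\eps(\eps\rho,s,t) = J_\eps(0,s,t) + O(\eps\rho)$; both effects have to be tracked to obtain the sharp $\eps$-powers in \eqref{f2u estimate-1}--\eqref{f2u estimate-2}.

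For each $s \in \T^1$, I would construct $\Psi_\eps(\cdot,s)$ as the unique solution in $H^1(I_\eps) \cap (\theta_0')^\perp$ of a linear ODE of the form $L_0 \Psi_\eps = g_\eps(\cdot,s)$, where $g_\eps$ is exponentially decaying in $\rho$ and captures, at leading order, the correction $-\eps^\mu p_\eps(s) f'''(\theta_0)\theta_0'\theta_1$ coming from the perturbation of the potential, together with the weight-induced contribution from $J_\eps$. The solvability condition $\int_\R g_\eps(\cdot,s)\,\theta_0'\,d\rho=0$ at order $\eps^\mu$ is ensured precisely by the assumed identity $\int_\R f'''(\theta_0)(\theta_0')^2\theta_1\,d\rho=0$, and the construction yields $\Psi_\eps=O(\eps)$ in the weighted norm of \eqref{f1u estimate} by exponential decay of $\theta_0'$ and standard ODE estimates on $I_\eps$. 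Then I would set
\begin{equation*}
Z_\eps(s) := \eps^{1/2}\beta_\eps^{-1}\,\frac{\int_{I_\eps}\phi_\eps(\rho,s)\bigl(\beta_\eps\theta_0'(\rho)+\Psi_\eps(\rho,s)\bigr)J_\eps(\eps\rho,s,t)\,d\rho}{\int_{I_\eps}\bigl(\beta_\eps\theta_0'(\rho)+\Psi_\eps(\rho,s)\bigr)^{2}J_\eps(\eps\rho,s,t)\,d\rho},
\end{equation*}
and define $\psi_\eps^{\mathbf R}$ as the residual determined by \eqref{decompose u}. By construction $\psi_\eps^{\mathbf R}\circ X_\eps(\eps\,\cdot,s,t)$ is then orthogonal in the $J_\eps$-weighted inner product to the improved ground state direction $\beta_\eps\theta_0'+\Psi_\eps$ at every $s$.

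For the estimates, the $L^2$-bound \eqref{f2u estimate-1} follows by rewriting \eqref{def Lambda energy} in $(\rho,s)$-coordinates, inserting the decomposition, noting that the ground state part carries zero $L_0$-energy, and invoking the 1D spectral gap of $L_0$ on $(\theta_0')^\perp$; the $O(\eps^\mu)$ deviation of the potential and the $O(\eps)$ correction from $\Psi_\eps$ are absorbed by standard perturbation arguments and yield precisely the $\eps^2 \Vert\psi\Vert_{L^2}^2$ term on the right-hand side. The $H^1$-bound \eqref{f2u estimate-2} on $\psi_\eps^{\mathbf R}$ and the tangential-gradient term on $\psi$ come from Lemma~\ref{thm:Spectral} combined with the $L^2$-bound, while the $H^1(\T^1)$-bound on $Z_\eps$ is obtained by differentiating the projection formula in $s$ and observing that $\partial_s\phi_\eps$ corresponds, after inverting $X_\eps$, to $\nabla_{\btau_\eps}\psi$, which is again controlled by the same lemma. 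The main technical obstacle is the calibration of $\Psi_\eps$: it must be chosen accurately enough that the residual $\psi_\eps^{\mathbf R}$ inherits the sharp spectral gap $\lambda_1$ of the unperturbed operator $L_0$ rather than a smaller, perturbation-degraded gap, and this is exactly what the solvability hypothesis on $\theta_1$ enforces at the leading perturbation order.
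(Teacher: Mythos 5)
Your proposal follows the same overall strategy as the paper: transform to the $\eps$-coordinates $(d_\eps,S_\eps)$ from Theorem~\ref{th_coord} and reduce to the spectral theory of the one-dimensional operator $-\partial_\rho^2+f''(\theta_0)$ in the spirit of Chen and Abels--Marquardt, tracking the $J_\eps$-weight and the $O(\eps^\mu)$ potential perturbation. The paper itself only sketches this by citing \cite[Corollary~2.12]{AbelsMarquardt1}, \cite{ChenSpectrumAC} and \cite[Section~5.1.3]{MoserAdv} and then explaining how to remove the weight $J_\eps(0,\cdot,t)^{1/2}$ from the $H^1(\T^1)$-norm of $Z_\eps$; your write-up supplies a compatible and plausible outline of the details behind that reference. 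One small remark: rather than solving $L_0\Psi_\eps=g_\eps$ perturbatively, the cited sources take $\beta_\eps\theta_0'+\Psi_\eps$ to be (essentially) the normalized first eigenfunction of the weighted $1$D operator on $I_\eps$, so orthogonality of $\psi^{\mathbf R}_\eps$ to the ground-state direction is exact rather than approximate; and the vanishing integral $\int f'''(\theta_0)(\theta_0')^2\theta_1\,d\rho=0$ is not needed for solvability of the correction ODE (one can always add the correct multiple of $\theta_0'$) but rather to ensure that the leading-order eigenvalue shift vanishes, which is what keeps the spectral lower bound $O(1)$ after dividing by $\eps^2$. Neither of these points is a gap in your argument; they are just the standard way the references organize the same computation.
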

\begin{proof}
	One can proceed similar to Abels, Marquardt \cite[Corollary~2.12]{AbelsMarquardt1}. Here one uses the transformation into the $\eps$-coordinates from Theorem \ref{th_coord} and spectral properties of 1D-differential operators on the real line similar to Chen \cite{ChenSpectrumAC}, cf.~also Moser \cite[Section 5.1.3]{MoserAdv}. This yields the result with $\|Z_\eps J_\eps(0,.,t)^{\frac{1}{2}}\|_{H^1(\T^1)}$ instead of $\|Z_\eps\|_{H^1(\T^1)}$ on the left hand side of \eqref{f2u estimate-2}. However, the additional factor is not a problem, because one can control $J_\eps(0,.,t)^{-\frac{1}{2}}$ in $C^1(\T^1)$ independent of $\eps$ using the form \eqref{J_eps} and the assumptions on $d_\eps$ and $S_\eps$. Hence with the chain rule we obtain $\|Z_\eps\|_{H^1(\T^1)}\leq C \|Z_\eps J_\eps(0,.,t)^{\frac{1}{2}}\|_{H^1(\T^1)}$ and the result follows.
\end{proof}

\begin{rem}
  For $u\in H^1(\Gamma^\eps_t(\tfrac{3\delta}{2}))$ let us introduce the $\eps$-dependent norms
  \begin{align*}
    \|u\|_{V_\eps} = &\inf\left\{ \|Z\|_{H^1(\T^1)}+ \|v\|_{H^1(\Gamma^\eps_t(\frac{3\delta}{2}))}+\frac{1}{\eps}\|v\|_{L^2(\Gamma^\eps_t(\frac{3\delta}{2}))}: Z\in H^1(\T^1), v\in H^1(\Gamma^\eps_t(\tfrac{3\delta}{2})),\right.\\
    &\quad \qquad \left. u(x)=\frac{1}{\sqrt{\eps}} Z(s)\theta_0'(\rho) + v(x)\text{ for all }x\in\Gamma^\eps_t(\tfrac{3\delta}{2}) \right\},
  \end{align*}
  with the abbreviations from \eqref{eq_variable_abbrev}. Corollary~\ref{cor:SpectralDecomp} yields
  \begin{equation*}
    \|u\|_{V_\eps}^2+\|\nabla_{\tau_\eps} u \|_{L^2(\Gamma^\eps_t(\frac{3\delta}{2}))}^2 \leq C\left(\int_{\Gamma^\eps_t(\frac{3\delta}{2})} |\nabla u|^2 +\frac1{\eps^2} f''(c^A_\eps(.,t))u^2\, dx + \|u\|_{L^2(\Gamma^\eps_t(\frac{3\delta}{2}))}^2 \right).
  \end{equation*}
  Here note that $\beta_\eps$ from Corollary~\ref{cor:SpectralDecomp} is bounded uniformly for $\eps$ small and in order to obtain the estimate one has to take care of the $\Psi_\eps$-term from Corollary \ref{cor:SpectralDecomp}. However, this can be done by using the estimates in Corollary \ref{cor:SpectralDecomp} and a rescaling argument.

  We note that for every $\eps>0$ the norm $\|.\|_{V_\eps}$ is equivalent to the standard norm in $H^1(\tfrac{3\delta}{2}))$ (with $\eps$-dependent constants). For the estimates of some critical remainder terms the choice of this norm will be essential. To estimate such remainder terms the following lemma will be used.
\end{rem}
\begin{lem}\label{lem:EstimMeanValueFree}
  Fix $t\in[0,T_0]$. Let $u\in H^1(\Gamma^\eps_t(\frac{3\delta}{2}))$ and $r_\eps\colon \Gamma^\eps_t(\frac{3\delta}{2})\to \R$ be a finite sum of terms of the form
    \begin{equation*}
    a(\rho) w_\eps(S_\eps),
    \end{equation*}
	where $a\in \mathcal{R}_{0,\alpha}$, $w_\eps\in L^2(\T^1)$ and such that
    \begin{equation}\label{eq:MeanValueFree}
    \int_\R r_\eps(\rho,s,t) \theta'_0(\rho) \, d\rho=0\qquad \text{for all }s\in\T^1.
  \end{equation}
    Then there are constants $C>0$, $\eps_0\in (0,\eps_1]$ independent of $t\in [0,T_0]$ such that for $\eps\in (0,\eps_0]$
  \begin{equation*}
  \left|\int_{\Gamma^\eps_t(\frac{3\delta}{2})} r_\eps u\,dx\right| \leq C\eps^{\frac{3}{2}}\|w_\eps\|_{L^2(\T^1)}\|u\|_{V_\eps}.
  \end{equation*}
\end{lem}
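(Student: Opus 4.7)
The plan is to exploit the decomposition afforded by the $V_\eps$-norm. Choose $Z\in H^{1}(\T^{1})$ and $v\in H^{1}(\Gamma^{\eps}_{t}(\tfrac{3\delta}{2}))$ realizing $\|u\|_{V_\eps}$ up to a factor two, so that
\begin{equation*}
	u(x) = \tfrac{1}{\sqrt{\eps}} Z(S_\eps(x,t))\theta_{0}'\bigl(\tfrac{d_\eps(x,t)}{\eps}\bigr) + v(x),\qquad \|Z\|_{H^1(\T^1)}+\|v\|_{H^1}+\tfrac1\eps\|v\|_{L^2}\leq 2\|u\|_{V_\eps}.
\end{equation*}
Splitting $\int r_\eps u\,dx$ accordingly, the $v$-contribution is handled directly by Lemma~\ref{lem:rescale2}. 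Writing the hypothesis as $r_\eps(\rho,s)=\sum_i a_i(\rho)w_{i,\eps}(s)$ with $a_i\in\mathcal{R}_{0,\alpha}$ (which gives $a_i\in L^2(\R)$ with uniform norm by exponential decay), the lemma applied with $j=0$ gives $\|r_\eps\|_{L^{2}(\Gamma^{\eps}_{t}(\frac{3\delta}{2}))}\leq C\eps^{1/2}\|w_\eps\|_{L^{2}(\T^{1})}$. Combining with $\|v\|_{L^2}\leq \eps\|u\|_{V_\eps}$ via Cauchy--Schwarz yields the desired $O(\eps^{3/2})$-bound for this piece.

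The critical piece is the $Z\theta_0'$-contribution. Transforming via $X_\eps$ and rescaling $r=\eps\rho$, it takes the form
\begin{equation*}
	\sqrt{\eps}\int_{\T^{1}}Z(s)\int_{I_\eps}r_\eps(\rho,s,t)\,\theta_{0}'(\rho)\,J_\eps(\eps\rho,s,t)\,d\rho\,ds,\qquad I_\eps=(-\tfrac{3\delta}{2\eps},\tfrac{3\delta}{2\eps}).
\end{equation*}
The key trick is to Taylor expand the Jacobian in the rescaled normal variable: $J_\eps(\eps\rho,s,t)=J_\eps(0,s,t)+\eps\rho\,\tilde{J}_\eps(\eps\rho,s,t)$, where $\tilde{J}_\eps(\eps\rho,s,t):=\int_0^1 (\partial_r J_\eps)(\sigma\eps\rho,s,t)\,d\sigma$ is smooth and uniformly bounded since $J_\eps$ is smooth with uniformly bounded derivatives. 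In the zeroth-order term the inner integral factors as $J_\eps(0,s,t)\int_{I_\eps}r_\eps\theta_0'\,d\rho$, and by hypothesis \eqref{eq:MeanValueFree} this equals $-J_\eps(0,s,t)\int_{\R\setminus I_\eps}r_\eps\theta_0'\,d\rho$; since both $a_i$ and $\theta_0'$ decay like $e^{-\alpha|\rho|}$, this is $O(e^{-c/\eps})$ uniformly in $s$, so Cauchy--Schwarz in $s$ produces a contribution that is $O_N(\eps^N)\|Z\|_{L^2(\T^1)}\|w_\eps\|_{L^2(\T^1)}$ for every $N\in\N$. In the first-order term the factor $\eps\rho$ pulled out delivers the desired $\eps^{3/2}$ prefactor; since $\rho\, a_i(\rho)\theta_0'(\rho)\in L^1(\R)$ uniformly, the inner integral is bounded by $C\sum_i|w_{i,\eps}(s)|$, and Cauchy--Schwarz in $s$ together with $\|Z\|_{L^2(\T^1)}\leq \|Z\|_{H^1(\T^1)}\leq 2\|u\|_{V_\eps}$ gives the bound $C\eps^{3/2}\|w_\eps\|_{L^2(\T^1)}\|u\|_{V_\eps}$, completing the proof.

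The main obstacle is precisely this $Z\theta_0'$-piece: naively estimating by Cauchy--Schwarz without exploiting \eqref{eq:MeanValueFree} only yields $O(\eps^{1/2})$, one order worse than needed. The entire extra power of $\eps$ has to be extracted by combining the vanishing of $\int_{\R}r_\eps\theta_0'\,d\rho$ (which kills the leading behavior of $J_\eps$ at the zero-level set $r=0$) with a first-order Taylor expansion of the Jacobian in the rescaled variable. This is the standard mechanism by which ``mean-value-free'' remainders gain an order in $\eps$, but tailored here to the weaker $V_\eps$-norm, which is what makes the estimate useful against the critical error terms appearing later.
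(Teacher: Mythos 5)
Your proof is correct and is, up to presentation, the argument the paper points to via \cite[Lemma~2.11]{AbelsFei}: pick a near-optimal $V_\eps$-decomposition $u=\frac1{\sqrt\eps}Z(S_\eps)\theta_0'(\rho)+v$, dispatch the $v$-part by Cauchy--Schwarz together with the $L^2$-estimate \eqref{eq:RemEstim2-2} (which together with $\|v\|_{L^2}\lesssim\eps\|u\|_{V_\eps}$ gives $O(\eps^{3/2})$), and treat the $Z\theta_0'$-part by transforming to $(r,s)$-coordinates, rescaling $r=\eps\rho$, and Taylor-expanding $J_\eps$ around $r=0$ so that the leading term is killed by \eqref{eq:MeanValueFree} (up to an exponentially small tail from $I_\eps\subsetneq\R$) and the first-order remainder $\eps\rho\,\tilde J_\eps$ delivers the missing power of $\eps$. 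All the ingredients you invoke --- the uniform $C^k$-bounds on $d_\eps,S_\eps$ and hence on $J_\eps$ from Theorem~\ref{th_coord}, the exponential decay built into $\mathcal{R}_{0,\alpha}$, and the fact that the mean-value-free condition applies to the full finite sum while the remaining estimates are term-by-term --- are used exactly where needed, so there are no gaps.
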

\begin{proof}
  This can be done in the analogous way as in \cite[Lemma 2.11]{AbelsFei}.
\end{proof}
\begin{rem}
  If we define dual norm
  \begin{equation*}
    \|f\|_{V_\eps'} := \sup_{\|\varphi\|_{V_\eps}\leq 1} |\weight{f,\varphi}|\qquad \text{for }f\in (H^1(\tfrac{3\delta}{2})))',
  \end{equation*}
  the lemma states that
  \begin{equation*}
    \left\|\int_{\Gamma^\eps_t(\frac{3\delta}{2})} g_\eps \cdot\,dx\right\|_{V_\eps'} \leq C\eps^{\frac{3}{2}}\|w_\eps\|_{L^2(\T^1)}.
  \end{equation*}
\end{rem}

\section{Formally Matched Asymptotics}\label{sec:MatchedAsymptotics}

In this section we will discuss the construction of the approximate solutions except some higher order terms, which will be added in the next section. In comparision with previous works the main difference is that we obtain an expansion in terms of integer powers of $\eps^{\frac12}$, which means we consider expansions in terms of $\eps^k$ with $k\in \frac12\N_0$.

First of all we note that
\begin{align*}
\Div(\nabla c_\e\otimes\nabla c_\e)=\frac{1}{2}\nabla\big(|\nabla c_\e|^2\big)+\Delta c_\e\nabla c_\e.
\end{align*}
Therefore we  can rewrite \eqref{eq:NSAC1}-\eqref{eq:NSAC3} as
\begin{alignat}{2}\label{eq:NSAC1-new}
  \partial_t \ve_\e +\ve_\e\cdot \nabla \ve_\e-\Div(2\nu(c_\e)D\ve_\e)  +\nabla p_\varepsilon & = -\varepsilon\Delta c_\e\nabla c_\e,\\\label{eq:NSAC2-new}
  \Div \ve_\e& = 0,
  \\ \label{eq:NSAC3-new}
 \partial_t c_\e +\ve_\e\cdot \nabla c_\e & =\e^{\frac{1}{2}}\Delta c_{\epsilon}-\e^{-\frac{3}{2}}f'(c_\e)
\end{alignat}
in $\Omega\times (0,T_0)$ by replacing $p_\eps$ by $p_\eps+\frac12|\nabla c_\eps|^2$.

\subsection{The Outer Expansion \label{subsec:The-Outer-Expansion}}

We assume that in $\Omega^{\pm}\backslash\Gamma$
the solutions of  (\ref{eq:NSAC1-new})-(\ref{eq:NSAC3-new})
have the expansions
\begin{align*}
  c_{\eps}(x,t)  &\approx\sum_{k\in\frac12\N_0}\eps^{k}c_{k}^{\pm}(x,t), \  \ \mathbf{v}_{\eps}(x,t) \approx\sum_{k\in\frac12\N_0}\eps^{k}\mathbf{v}_{k}^{\pm}(x,t),\ \\
  p_{\eps}(x,t)  &\approx\sum_{k\in\frac12\N_{-2}}\eps^{k}p_{k}^{\pm}(x,t),
\end{align*}
where $\N_{-2}=\N_0\cup \{-1,-2\}$ and  $c_{k}^{\pm}$, $\mathbf{v}_{k}^{\pm}$ and
$p_{k}^{\pm}$ are smooth functions defined in $\Omega^{\pm}$. Here $\varphi_\eps(x,t)\approx \sum_{k\geq 0,k\in\frac12\N_0} \eps^k \varphi_k^\pm (x,t)$ for $\varphi_\eps=c_\eps, \ve_\eps$ (analogously for $p_\eps$) is understood in the sense that for any $N\in \frac12\N_0$ we have
\begin{equation*}
  \varphi_\eps(x,t)- \sum_{ k\in\frac12\N_0,k\leq N} \eps^k\varphi^\pm_k(x,t) = O(\eps^{N+\frac12}) \qquad \text{in }\Omega^\pm
\end{equation*}
and the same if $\varphi_\eps$ and $\varphi_k$ are replaced by $\partial_t^j\partial_x^\alpha \varphi_\eps$ and $\partial_t^j\partial_x^\alpha \varphi_k^\pm$, respectively, for any $j\in\N_0$, $\alpha \in \N_0^n$.

Plugging this ansatz into \eqref{eq:NSAC1-new}-\eqref{eq:NSAC3-new} and \eqref{eq:NSAC4}, using a Taylor expansion for $f'$ and $\nu$, the Dirichlet boundary condition for $c_\eps$, and matching the $\mathcal{O}(\eps^{-1}),\mathcal{O}(\eps^{-\frac12})$ terms one obtains in a standard manner (cf. e.g. \cite[Appendix]{AbelsFei})
\begin{equation}
c_{0}^{\pm}=\pm1, \quad c_k^\pm = 0 \quad \text{for }k\geq \frac12, \quad\nabla p_{-1}^{\pm}=\nabla p_{-\frac12}^{\pm}  =0\qquad \text{in }\Omega^\pm\label{eq:c0out}
\end{equation}
and
\begin{alignat}{2}
 \partial_t\mathbf{v}_{k}^{\pm}+\mathbf{v}_{0}^{\pm}\cdot\nabla \mathbf{v}_{k}^{\pm}+\mathbf{v}_{k}^{\pm}\cdot\nabla \mathbf{v}_{0}^{\pm}  -\nu^{\pm}\Delta\ve_{k}^{\pm}+\nabla p_{k}^{\pm}  &=-\sum_{j\in\frac12 \N, \frac12\leq j\leq k-\frac12}\mathbf{v}_{j}^{\pm}\cdot\nabla \mathbf{v}_{k-j}^{\pm}&\ & \text{in }\Omega^\pm, \label{eq:stokes-Outerk}\\
 \operatorname{div}\mathbf{v}_{k}^{\pm} & =0&\ & \text{in }\Omega^\pm,\label{eq:Outervpdefine}\\
 \label{bccondition:vek}
{\ve_k^-|_{\partial\Omega}}&= 0&\ & \text{on }\partial\Omega
\end{alignat}
for every $k\geq 0$.
For simplicity we take
\begin{align}
 p_{-1}^{\pm}= p_{-\frac12}^{\pm} =0.\label{expression:pressure-Outer-1}
\end{align}

\begin{rem}
  \label{Outer-Rem}
  As in \cite{AbelsFei,StokesAllenCahn} we extend $(c_{k}^{\pm},\mathbf{v}_{k}^{\pm},p_{k}^{\pm})$, $k\geq 0$, defined on $\Omega^{\pm}$, to $\Omega^{\pm}\cup\Gamma(3\delta)$ such that $\Div \ve_k^{\pm}=0$ in $\Omega^{\pm}\cup\Gamma(3\delta)$ for all $k\geq 0$. We refer to \cite[Remark~A.1]{AbelsFei} for the details.
\end{rem}

For the following we define
\begin{align}
  \mathbf{W}_{k}^{\pm}(x,t) & =\partial_t\mathbf{v}_{k}^{\pm}(x,t)+\sum_{j\in \frac12 \N_0, j\leq k}\mathbf{v}_{k-j}^{\pm}\cdot\nabla \mathbf{v}_{j}^{\pm}-\nu^\pm \Delta\mathbf{v}_{k}^{\pm}(x,t)+\nabla p_{k}^{\pm}(x,t) 
  \label{eq:Wkpm}
\\ \mathbf{W}^{\pm}  &=\sum_{k\in \frac12\N_0}\eps^{k}\mathbf{W}_{k}^{\pm},\label{eq:Wpm}
\end{align}
for $(x,t)\in\Omega^{\pm}\cup\Gamma(2\delta)$.
Because of \eqref{eq:stokes-Outerk}, it holds $\mathbf{W}_{k}^{\pm}(x,t)=0$ for all $(x,t)\in\overline{\Omega^{\pm}}.$

\subsection{The Inner Expansion \label{subsec:The-Inner-Expansion}}

Close to the interface $\Gamma$ we introduce a stretched variable
\begin{equation}
\rho=\rho(x,t):=\frac{d_\eps(x,t)}{\eps}\qquad \text{for all }(x,t)\in\Gamma(3\delta)\label{eq:rhoeps}
\end{equation}
for $\eps\in\left(0,1\right)$, where $d_{\varepsilon}\colon\Gamma(3\delta)\to \R$ satisfies
\begin{equation}
  |\nabla d_\eps(x,t)|^2 \approx 1\qquad \text{for all }(x,t)\in\Gamma(3\delta),\label{distanceapp}
\end{equation}
which has to be understood as $|\nabla d_\eps(x,t)|^2=1+ O(\eps^{N+\frac12})$ for any $N\in \frac12\N_0$ similary as before.
Formally, $d_\eps$ is the signed distance function to $\Gamma^\varepsilon$, which is the $0$-level set of $c_{\eps}$. Moreover, we assume the asymptotic expansion
\begin{equation*}
  d_\eps(x,t) \approx \sum_{k\in\frac12\N_0} \eps^kd_k(x,t) \qquad \text{for }(x,t)\in\Gamma(3\delta)
\end{equation*}
understood in the same way as before, where $d_0(x,t)=d_{\Gamma_t}(x)$ for all $(x,t)\in\Gamma(3\delta)$. Here and in the following we assume already that $(\ve_0^\pm,p_0^\pm,\Gamma)$ is a smooth solution of \eqref{eq:Limit1}-\eqref{eq:Limit7}, although these equations can also be derived throughout the formal expansion.
Since for the asymptotic expansion as $\eps \to 0$ only small values of $\eps>0$ matter, we may assume that
\begin{equation}\label{assump:deps}
|d_\eps(x,t)-d_0(x,t)-\sqrt{\eps} d_{1/2}(x,t)|\leq M_0\eps\qquad\text{for all }(x,t)\in \Gamma(3\delta)
\end{equation}
for some $M_0>0$.
Moreover, we choose $\eta\colon \mathbb{R}\rightarrow[0,1]$ such
that $\eta=0$ in $(-\infty,-1]$, $\eta=1$ in $[1,\infty)$, $\eta-\frac{1}{2}$ is odd
and $\eta'\geq0$ in $\mathbb{R}$. Then we have by integration by parts
\begin{equation}\label{eq:defbarnu}
  \int_{-\infty}^{+\infty}\nu(\theta_0)\eta'(\rho)d\rho = \left.\left[\nu(\theta_0(\rho)) \left(\eta(\rho)-\frac12\right)\right]\right|_{\rho=-\infty}^\infty= \frac{\nu^++\nu^-}2=: \overline{\nu}
\end{equation}
since $\nu'(\theta_0)$ is even by the assumptions on $\nu'$. Furthermore, we define
\begin{align*}
\eta^{\eps,\pm}(\rho, x,t)&=\eta(-M-1\pm\rho\mp \eps^{-\frac12}d_{1/2}(x,t))\quad \text{for }\rho\in\R, (x,t)\in \Gamma(3\delta).
\end{align*}
\begin{rem}
  \label{WU}  In the following we will insert terms $\mathbf{W}^{\pm}\eta^{\eps,\pm}$ in the equation to ensure some matching conditions.
We have to make sure that these terms vanish if $\rho= \frac{d_\eps(x,t)}\eps$.
Because of \eqref{assump:deps}, we have for $\rho=\frac{d_{\eps}(x,t)}{\eps}$
and $(x,t)\in\Gamma(3\delta)$ with $d_{\Gamma}(x,t)\geq0$
that $\rho-\eps^{-\frac12}d_{1/2}(x,t)\geq-M$. Hence $\eta^{\eps,-}(\rho,x,t)=0$
and, since $(x,t)\in\overline{\Omega^{+}}$, we have $\mathbf{W}^{+}(x,t)=0$. Altogether
\[
\mathbf{W}^{+}\eta^{\eps,+}+\mathbf{W}^{-}\eta^{\eps,-}=0\quad \text{in }\overline{\Omega^+}.
\]
In the same way one shows this in  $\overline{\Omega^-}$.
\end{rem}

For the inner expansion we use the ansatz
\begin{align*}
  c_{\eps}(x,t)  =\hat{c}_{\eps}\big(\rho,x,t\big),\quad
\mathbf{v}_{\eps}(x,t)=\hat{\mathbf{v}}_{\eps}\big(\rho,x,t\big),\quad
p_{\eps}(x,t)  =\hat{p}_{\eps}\big(\rho,x,t\big)
\end{align*}
for $(x,t)\in \Gamma(3\delta)$, where $\rho\in\R $ is as in \eqref{eq:rhoeps}, and use it in the expansion of \eqref{eq:NSAC1-new}-\eqref{eq:NSAC3-new}
for smooth $\hat{c}_{\eps},\hat{p}_{\eps}\colon \mathbb{R}\times\Gamma(3\delta)\rightarrow\mathbb{R}$,
$\hat{\mathbf{v}}_{\eps}\colon \mathbb{R}\times\Gamma(3\delta)\rightarrow\mathbb{R}^{2}$.

We use that
\begin{align}\nonumber
&\Div(2\nu(c_\eps)D\ve_\eps)= \eps^{-2}\partial_{\rho}\big(\nu(\hat{c}_\eps)\partial_{\rho}\hat{\ve}_\eps\big)-\eps^{-1}\partial_{\rho}\big(\nu(\hat{c}_\eps)\Div\hat{\ve}_\eps\big)\nabla d_{\eps}\\
&\qquad+\eps^{-1}\partial_{\rho}\big(2\nu(\hat{c}_\eps)D\hat{\ve}_\eps\big)\big)\cdot\nabla d_{\eps}
+\eps^{-1}\Div\big(2\nu(\hat{c}_\eps)D_d\hat{\ve}_\eps\big)+\Div\big(2\nu(\hat{c}_\eps)D\hat{\ve}_\eps\big),\label{divexpress}
\end{align}
provided \eqref{eq:NSAC2-new} holds,  where
\begin{align*}
 D\hat{\ve}_\eps&=\frac 12(\nabla\hat{\ve}_\eps+(\nabla\hat{\ve}_\eps)^T),\quad D_d\hat{\ve}_\eps=\frac 12\big(\partial_\rho\hat{\ve}_\eps\otimes \nabla d_{\eps}+(\partial_\rho\hat{\ve}_\eps\otimes\nabla d_{\eps})^T\big).
\end{align*}
Moreover, we note that
\begin{align*}
\nabla c_{\varepsilon}&=\varepsilon^{-1}\partial_{\rho}\cte\nabla d_{\varepsilon}+\nabla \cte,
    \\ \Delta c_{\varepsilon}&=\varepsilon^{-2}\partial_{\rho\rho}\cte
+2\varepsilon^{-1}\nabla \partial_{\rho}\cte\cdot\nabla d_{\varepsilon}+\varepsilon^{-1}\partial_{\rho}\cte\Delta d_{\varepsilon}+\Delta\cte,
\end{align*}
where we have used \eqref{distanceapp}. This yields
\begin{align}
\varepsilon\Delta c_{\varepsilon}\nabla c_{\varepsilon}
 =\varepsilon^{-2}\partial_{\rho}\cte\partial_{\rho\rho}\cte\nabla d_{\varepsilon}
+\varepsilon^{-1}\mathbb{A}_{\varepsilon}+\mathbb{B}_{\varepsilon}+\varepsilon\mathbb{C}_{\varepsilon},\label{delta-nabla}
\end{align}
where
\begin{align}
\mathbb{A}_{\varepsilon}&=\partial_{\rho\rho}\cte\nabla \cte+2\nabla \partial_{\rho}\cte\cdot\nabla d_{\varepsilon}\partial_{\rho}\cte\nabla d_{\varepsilon}
+\big(\partial_{\rho}\cte\big)^2\Delta d_{\varepsilon}\nabla d_{\varepsilon},\nonumber \\
\mathbb{B}_{\varepsilon}&=2\nabla \partial_{\rho}\cte\cdot\nabla d_{\varepsilon}\nabla\cte+\partial_{\rho}\cte\Delta d_{\varepsilon}\nabla \cte+\partial_{\rho}\cte\Delta \cte\nabla d_{\varepsilon},
\nonumber \\
\mathbb{C}_{\varepsilon}&=\Delta\cte\nabla\cte.\nonumber
\end{align}
Hence for $\rho$ as in \eqref{eq:rhoeps} the system  (\ref{eq:NSAC1-new})-(\ref{eq:NSAC3-new}) is equivalent to
\begin{align}
\partial_{\rho}\big(\nu(\cte)\partial_{\rho}\hat{\ve}_\eps\big)=& 2\partial_{\rho}\cte\partial_{\rho\rho}\cte\nabla d_{\eps}
+ \eps\bigg(\partial_{\rho}\hat{\ve}_\eps\partial_{t}d_{\eps}+\hat{\ve}_\eps\cdot\nabla d_{\eps}\partial_{\rho}\hat{\ve}_\eps+\partial_{\rho}\hat{p}_\eps\nabla d_{\eps}+\mathbb{A}_{\eps}\nonumber\\
&\quad-\partial_{\rho}\big(2\nu(\hat{c}_\eps)D\hat{\ve}_\eps\big)\cdot\nabla d_{\eps}
-\Div\big(2\nu(\hat{c}_\eps)D_d\hat{\ve}_\eps\big)+\partial_{\rho}\big(\nu(\hat{c}_\eps)\Div\hat{\ve}_\eps\big)\nabla d_{\eps}\bigg)
              \nonumber\\
 &+\eps^2\bigg(\partial_{t}\hat{\ve}_\eps+\hat{\ve}_\eps\cdot\nabla \hat{\ve}_\eps +\nabla\hat{p}_\eps
 -\Div\big(2\nu(\hat{c}_\eps)D\hat{\ve}_\eps\big)+\mathbf{W}^{+}\eta^{\eps,+}+\mathbf{W}^{-}\eta^{\eps,-}\bigg)
              \nonumber\\&+\eps^2\mathbb{B}_{\eps}+\eps^3\mathbb{C}_{\eps} +\big(\nu(\theta_0)\eta'(\rho)\big)'\mathbf{u}_{\eps}\big(d_\eps-\eps\rho\big) +\eps\mathbf{l}_{\eps}\eta'(\rho)\big(d_\eps-\eps \rho\big),\label{modieq:innerstokes} \\
\partial_{\rho}\hat{\ve}_\eps\cdot\nn_\eps=&-\eps\Div_x\hat{\ve}_\eps+\mathbf{u_{\eps}}\cdot \nabla d_{\eps}\eta'(\rho)\big(d_{\eps}-\eps \rho\big),\label{eq:innerdiv}\\
\partial^2_{\rho}\hat{c}_\eps-f'(\cte)=&\eps^{\frac12}\bigg(\partial_{\rho}\hat{c}_\eps\partial_t d_{\eps}+\partial_{\rho}\hat{c}_\eps\hat{\ve}_\eps\cdot\nabla d_{\eps}\bigg)
-\eps\bigg(\partial_{\rho}\hat{c}_\eps\Delta d_{\eps}+2\nabla\partial_{\rho}\hat{c}_\eps\cdot\nabla d_{\eps}\bigg)
\nonumber\\&+\eps^{\frac32}\bigg(\partial_t\hat{c}_\eps+\hat{\ve}_\eps\cdot\nabla\hat{c}_\eps\bigg)
-\eps^2\Delta\hat{c}_\eps+\varepsilon^{\frac{1}{2}}\hat{g}_{\eps}\left(d_{\varepsilon}-\eps\rho\right).\label{modieq:innerAL}
\end{align}
We note that by the definition of $\mathbf{W}^\pm$ the right-hand side of \eqref{modieq:innerstokes} converges exponentially to zero as $|\rho|\to\infty$.
 Here  we introduced
$\mathbf{u}_{\eps}(x,t)$
and $\mathbf{l}_{\eps}(x,t)$ for $(x,t)\in\Gamma(3\delta)$ in a similar manner as in \cite{AlikakosLimitCH}. $\mathbf{l}_\eps$ will ensure  the compatibility
conditions in $\Gamma(3\delta)\backslash\Gamma$ for \eqref{modieq:innerstokes} and $\mathbf{u}_\eps$ will ensure the matching conditions for $\ve_\eps$ on $\Gamma(3\delta)\backslash\Gamma$. Moreover, an  auxiliary function $\hat{g}_{\eps}(\rho,x,t)=\eps g_{\eps}\eta'(\rho)-\theta_0'(\rho)\hat{\phi}_\eps$ is introduced, which is used to satisfy the compatibility
conditions in $\Gamma(3\delta)\backslash\Gamma$ for \eqref{modieq:innerAL}. Note that the extra terms vanish on the relevant set
 \[
   S^{\eps}:=\big\{ (\rho,x,t)\in\mathbb{R}\times\Gamma(3\delta):\rho=\tfrac{d_{\varepsilon}(x,t)}{\eps}\big\}.
 \]
By the definition of $\hat{g}_\eps$ we have
\begin{align*}
\varepsilon^{\frac{1}{2}}\hat{g}_{\eps}\left(d_{\varepsilon}-\eps\rho\right)=-\varepsilon^{\frac{5}{2}}\eta'(\rho)\rho g_{\eps}+\varepsilon^{\frac{3}{2}}\big(\eta'(\rho) g_{\eps}d_{\varepsilon}+\theta_0'(\rho)\rho\phi^\varepsilon\big)-\varepsilon^{\frac{1}{2}}\theta_0'(\rho)\hat{\phi}^\varepsilon d_{\varepsilon}.
\end{align*}
More precisely,  we choose the auxiliary
functions to have expansions of the form
\begin{align*}
\hat{\phi}_\eps(\rho,x,t)=\hat{\phi}_{0}(\rho,x,t)+\eps^{\frac{1}{2}}\hat{\phi}_{\frac{1}{2}}(\rho,x,t)\quad \text{in }\Gamma(3\delta)
 \end{align*} with
  \begin{align}\hat{\phi}_0&=\frac{\partial_t d_{\Gamma}+\hat{\ve}_0\cdot\nabla d_{\Gamma}}{ d_{\Gamma}},\label{definition:phi0} \\
  \hat{\phi}_{\frac{1}{2}}&=\frac{\partial_t d_{\frac{1}{2}}+\hat{\ve}_0\cdot\nabla d_{\frac{1}{2}}+\hat{\ve}_{\frac{1}{2}}\cdot\nabla d_{\Gamma}-\hat{\phi}_0d_{\frac{1}{2}}-\Delta d_{\Gamma}}{ d_{\Gamma}},\label{definition:phi12}
  \end{align}
  and
\begin{align}
\mathbf{u}_{\eps}(x,t) \approx\sum_{k\in \frac12\N_0}\mathbf{u}_{k}(x,t)\eps^{k},\quad  g_{\eps}(x,t)  \approx\sum_{k\in \frac12\N_0}g_{k}(x,t)\eps^{k}, \quad
\mathbf{l}_{\eps}(x,t) \approx\sum_{k\in \frac12\N_0}\mathbf{l}_{k}(x,t)\eps^{k}\label{eq:zusatzerme}
\end{align}
for $(x,t)\in\Gamma(3\delta)$. We note that  $\hat{\phi}_{0},\hat{\phi}_{\frac{1}{2}}$ are defined on $\Gamma$ as limits $d_\Gamma\to 0$, which exist due to \eqref{eq:Limit5}, \eqref{evolution law:d12}, and since it will turn out that $\hat{\ve}|_{\Gamma}=\ve_0^\pm|_{\Gamma}$, cf.\ \eqref{inn-0-ve-express} below. In particular,
\begin{alignat}{2}
  \hat{\phi}_0&=\nabla\big(\partial_t d_{\Gamma}+\hat{\ve}_0\cdot\nabla d_{\Gamma}\big)\cdot\nabla d_{\Gamma}=\nabla (\hat{\ve}_0\cdot\nabla d_{\Gamma})\cdot\nabla d_{\Gamma}&\quad &\text{on}\ \Gamma,\label{gamma definition:phi0} \\
 \hat{\phi}_{\frac{1}{2}}&=\nabla\big(\partial_t d_{\frac{1}{2}}+\hat{\ve}_0\cdot\nabla d_{\frac{1}{2}}+\hat{\ve}_{\frac{1}{2}}\cdot\nabla d_{\Gamma}-\hat{\phi}_0d_{\frac{1}{2}}-\Delta d_{\Gamma}\big)\cdot\nabla d_{\Gamma}&\quad &\text{on}\ \Gamma\label{gamma definition:phi12}
\end{alignat}
since $\nabla d_\Gamma \cdot \nabla \partial_t d_\Gamma = \frac12 \partial_t |\nabla d_\Gamma|^2=0 $.
 Here, in order to obtain \eqref{eq:NSAC1-new}-\eqref{eq:NSAC3-new} (approximately), the equations above only have to hold in
 $
   S^{\eps}=\big\{ (\rho,x,t)\in\mathbb{R}\times\Gamma(3\delta):\rho=\tfrac{d_{\varepsilon}(x,t)}{\eps}\big\}.
 $
But in the following we consider them as ordinary differential equations in $\rho\in\mathbb{R}$,
where $(x,t)\in \Gamma(3\delta)$ are seen as fixed parameters. Thus we require from now on that   \eqref{modieq:innerstokes}-\eqref{modieq:innerAL}
are fulfilled even for all $(\rho,x,t)\in \mathbb{R}\times\Gamma(3\delta)$.

Furthermore, we assume that we have the expansions
\begin{align*}
  \hat{c}_{\eps}(\rho,x,t)  &\approx\sum_{k\in\frac12\N_0}\eps^{k}\hat{c}_{k}(\rho,x,t),\quad
  \hat{\mathbf{v}}_{\eps}(\rho,x,t) \approx\sum_{k\in\frac12\N_0}\eps^{k}\hat{\mathbf{v}}_{k}(\rho,x,t),\quad\\
\hat{p}_{\eps}(\rho,x,t) &\approx\sum_{k\in\frac12\N_{-2}}\eps^{k}\hat{p}_{k}(\rho,x,t).
\end{align*}
understood in the same way as before. Actually, in the expansion it turns out that $\hat{c}_0=\theta_0$ and $\hat{c}_{\frac{1}{2}}=\hat{c}_1=0$. To simplify the following presentation we already assume $\hat{c}_{\frac{1}{2}}=\hat{c}_1=0$.
As usual we normalize $\hat{c}_{k}$
such that
\begin{align}
  \hat{c}_{k}(0,x,t)=0\qquad \text{for all }(x,t)\in\Gamma(3\delta), k\geq 0.\label{innnormalize}
\end{align}

In order to match the inner and outer expansions, we require that
for all $k$ the so-called \emph{inner-outer matching conditions}
\begin{align}
\sup_{(x,t)\in\Gamma(3\delta)}\left|\partial_{x}^{m}\partial_{t}^{n}\partial_{\rho}^{l}\left(\varphi (\pm\rho,x,t)-\varphi^{\pm}(x,t)\right)\right| & \leq Ce^{-\alpha\rho},\label{eq:matchcon}
\end{align}
where $\varphi=\hat{c}_{k},\hat{\mathbf{v}}_{k}$ with $k\geq0$ and $\hat{p}_{k}$ with $k\geq-1$
hold for constants $\alpha,C>0$ and all $\rho>0$, $m,n,l\geq0$.

For the non-linear terms $\Phi=\nu,f'$ we use the expansion
\begin{align}
\Phi(c_\eps)& =\Phi(\hat{c}_{0})+\sum_{k=\frac32}^{N+2}\eps^k \Phi'(\hat{c}_{0})\hat{c}_{k}
+\sum_{k=\frac{5}{2}}^{N+2}\eps^{k}\Phi_{k-1}(\hat{c}_{0},\hat{c}_{\frac{3}{2}},\ldots,\hat{c}_{k-1})
\nonumber\\&\quad+\eps^{N+\frac52}\Phi_{N+\frac{3}{2}}(c_\eps, \hat{c}_{0},\hat{c}_{\frac{3}{2}},\ldots,\hat{c}_{N+\frac{3}{2}})
\label{inner:nu}
\end{align}
where we have used $\hat{c}_{1/2}=\hat{c}_1=0$. Here $\Phi_{k-1}(\hat{c}_{0},\hat{c}_{\frac{3}{2}},\ldots,\hat{c}_{k-1})$ and $\Phi_{N+\frac{3}{2}}(c_\eps, \hat{c}_{0},\hat{c}_{\frac{3}{2}},\ldots,\hat{c}_{N+\frac{3}{2}})$ are polynomials in $\hat{c}_{\frac32},\ldots, \hat{c}_{k-1}$ with coefficients that depend smoothly on $\hat{c}_0$ and $(\hat{c}_0,c_\eps)$, respectively.

Using this expansion in \eqref{distanceapp} we obtain 
\begin{align*}
  |\nabla d_\eps|^2\approx &\underbrace{|\nabla d_0|^2}_{=1} + 2\eps^{\frac12} \nabla d_0\cdot \nabla d_{1/2}
  +\sum\limits_{ k\geq1,k\in\frac12\N_0}\eps^{k}\sum\limits_{0\leq i\leq k,i\in\frac12\N_0}\nabla d_{i}\cdot \nabla d_{k-i}.
\end{align*}
Hence, in order to satisfy \eqref{distanceapp} (up to higher order terms in $\eps$)  in ${\Gamma(3\delta)}$ we choose $d_k$, $k=\frac12, 1, \frac32,\ldots$ successively such that
\begin{alignat}{2}\label{eq:d12}
  \nabla d_0\cdot \nabla d_{1/2} &=0,&\qquad&\text{in }\Gamma(3\delta),\\  \label{eq:dk}
  \nabla d_0\cdot \nabla d_{k} &=-\frac{1}{2}\sum\limits_{\frac{1}{2}\leq i\leq k-\frac{1}{2},i\in\frac12\N_0}\nabla d_{i}\cdot \nabla d_{k-i}\quad \text{for }  k\geq1,\  k\in\frac12\N_0,&\qquad&\text{in }\Gamma(3\delta).
\end{alignat}
Furthermore,  we choose $d_{1/2}$ such that
\begin{align}
 \partial_t d_{\frac{1}{2}}+\hat{\ve}_0\cdot\nabla d_{\frac{1}{2}}+\hat{\ve}_{\frac{1}{2}}\cdot\nabla d_{\Gamma}-\hat{\phi}_0d_{\frac{1}{2}}-\Delta d_{\Gamma}&=0\qquad \text{on } \Gamma,\label{evolution law:d12}
\end{align}
which  will ensure that \eqref{definition:phi12} is well-defined and we can choose $\hat{c}_{1}=0$, cf.\ \eqref{modieq:innerAL1} below. In what follows (see Corollary \ref{independent-1} below) we will find that $\hat{\ve}_0,\hat{\ve}_{\frac{1}{2}}\cdot\nabla d_{\Gamma}$ and $\hat{\phi}_0$ will be independent of $\rho$ on $\Gamma$.

To proceed we use that for $\mathbb{D}= \mathbb{A}, \mathbb{B}, \mathbb{C}$
\begin{equation*}
  \mathbb{D}_\eps(\rho,x,t)=\sum_{k\in\frac12\N_0}\eps^{k}\mathbb{D}_{k}(\rho,x,t)\quad \text{for all } \rho\in\R, (x,t)\in \Gamma(3\delta)
  \end{equation*}
since $\hat{c}_\eps$, $d_\eps$ and their derivatives have a corresponding expansion.

Matching the
$O(\eps^0)$-terms in the  Allen-Cahn equation \eqref{modieq:innerAL}, we find
\begin{equation}
  \label{eq:c0}
\hat{c}_{0}(\rho,x,t)=\theta_0(\rho)\quad \text{for all }\rho \in\R, (x,t)\in \Gamma(3\delta).
\end{equation}
 Matching the
$O(\eps^0)$-terms in the transformed momentum equation \eqref{modieq:innerstokes} and the divergence equation \eqref{eq:innerdiv}, we derive the following ordinary differential equations
in $\rho$:
\begin{align}
  \partial_{\rho}\bigg(\nu(\theta_0)\big(\partial_{\rho}\hat{\ve}_0-\mathbf{u}_0d_{\Gamma}\eta'\big)\bigg)&=\big(2\theta_0'\theta_0''+\partial_{\rho}\hat{p}_{-1}\big)\nabla d_{\Gamma},\label{modieq:innerstokes0}\\
   \big(\partial_{\rho}\hat{\ve}_0-\mathbf{u}_0d_{\Gamma}\eta'\big)\cdot\nabla d_{\Gamma}&=0,\label{modieq:innerdiv0}
\end{align}
where the right-hand side vanishes for the choice $p_{-1}(\rho)= \frac12\theta_0'(\rho)^2$.
Matching the
$O(\eps^{\frac{1}{2}})$-order terms in the  Allen-Cahn equation \eqref{modieq:innerAL}, we find
\begin{align*}
\partial^2_{\rho}\hat{c}_{\frac{1}{2}}-f''(\theta_0)\hat{c}_{\frac{1}{2}}=\theta_0'\big(\partial_t d_{\Gamma}+\hat{\ve}_0\cdot\nabla d_{\Gamma}-\hat{\phi}_0d_{\Gamma}\big)=0,
\end{align*}
which is compatible with the choice
$\hat{c}_{\frac{1}{2}}=0$ above. Here we have used \eqref{definition:phi0} and \eqref{innnormalize}.
Then matching the
$O(\eps^{\frac{1}{2}})$-order terms in the transformed momentum equation and the divergence equation, we obtain the following ordinary differential equations with respect to $\rho$:
  \begin{align}
\partial_{\rho}\bigg(\nu(\theta_0)\big(\partial_{\rho}\hat{\ve}_{\frac{1}{2}}-(\mathbf{u}_{\frac{1}{2}}d_{\Gamma}+\mathbf{u}_0d_{\frac{1}{2}})\eta'\big)\bigg)&=\left(2\theta_0'\theta_0''+\partial_{\rho }\hat{p}_{-1}\right)\nabla d_{\frac{1}{2}}+\partial_{\rho }\hat{p}_{-\frac{1}{2}}\nabla d_{\Gamma},\label{modieq:innerstokes12}\\
\big(\partial_{\rho}\hat{\ve}_{\frac{1}{2}}-(\mathbf{u}_{\frac{1}{2}}d_{\Gamma}+\mathbf{u}_0d_{\frac{1}{2}})\eta'\big)\cdot\nabla d_{\Gamma}&=\left(-\partial_{\rho}\hat{\ve}_{0}+\mathbf{u}_{0}d_{\Gamma}\eta'\right)\cdot\nabla d_{\frac{1}{2}}.\label{modieq:innerdiv12}
\end{align}
Furthermore comparing the
$O(\eps)$-order terms in the  Allen-Cahn equation \eqref{modieq:innerAL}, we have
\begin{align}
\partial^2_{\rho}\hat{c}_{1}-f''(\theta_0)\hat{c}_{1}=\theta_0'\big(\partial_t d_{\frac{1}{2}}+\hat{\ve}_0\cdot\nabla d_{\frac{1}{2}}+\hat{\ve}_{\frac{1}{2}}\cdot\nabla d_{\Gamma}-\hat{\phi}_0d_{\frac{1}{2}}-\hat{\phi}_{\frac{1}{2}}d_{\Gamma}-\Delta d_{\Gamma}\big)=0,\label{modieq:innerAL1}
\end{align}
which again justifies the choice
$\hat{c}_{1}=0$. 

Then matching the
$O(\eps)$-order terms in the transformed momentum equation and the divergence equation, we obtain the following ordinary differential equations
  \begin{align}
&\partial_{\rho}\bigg(\nu(\theta_0)\big(\partial_{\rho}\hat{\ve}_{1}-(\mathbf{u}_{1}d_{\Gamma}+\mathbf{u}_{0}d_{1})\eta'\big)\bigg)=2\theta_0'\theta_0''\nabla d_{1}+\partial_{\rho}\hat{\ve}_0\partial_{t}d_{\Gamma}+\hat{\ve}_0\cdot\nabla d_{\Gamma}\partial_{\rho}\hat{\ve}_0\nonumber\\
&\quad-\partial_{\rho}\big(2\nu(\theta_0)D\hat{\ve}_0\big)\cdot\nabla d_{\Gamma}
-\Div\big(2\nu(\theta_0)D_d\hat{\ve}_0\big)+\partial_{\rho}\big(\nu(\theta_0)\Div\hat{\ve}_0\big)\nabla d_{\Gamma}\nonumber\\
    &\quad +\sum\limits_{i\in\{-1,-\frac{1}{2},0\}}\partial_{\rho}\hat{p}_i\nabla d_{-i}+\mathbb{A}_0+\nabla \hat{p}_{-1}
    +\big(\nu(\theta_0)\eta'\big)'\big(\mathbf{u}_{\frac{1}{2}}d_{\frac{1}{2}}-\mathbf{u}_0\rho\big)+\mathbf{l}_{0}d_{\Gamma}\eta',\label{modieq:innerstokes1}
  \end{align}
and
  \begin{align}
&\big(\partial_{\rho}\hat{\ve}_{1}-(\mathbf{u}_{1}d_{\Gamma}+\mathbf{u}_{0}d_{1})\eta'\big)\cdot\nabla d_{\Gamma}=-\sum\limits_{i\in\{0,\frac{1}{2}\}}\partial_{\rho}\hat{\ve}_{i}\nabla d_{1-i}-\Div_x\hat{\ve}_0-\mathbf{u}_0\cdot \nabla d_{\Gamma}\eta'\rho\nonumber\\&\qquad+\eta'\sum\limits_{i\in\{\frac{1}{2},1\}}\mathbf{u}_{0}\cdot \nabla d_{i}d_{1-i}+\eta'\sum\limits_{i\in\{0,\frac{1}{2}\}}\mathbf{u}_{\frac{1}{2}}\cdot\nabla d_{i}d_{\frac{1}{2}-i}.\label{modieq:innerdiv1}
\end{align}
Similarly, comparing the
$O(\eps^{k})$-order terms for $k\geq\frac{3}{2}$ in the transformed momentum equation \eqref{modieq:innerstokes}, the divergence equation \eqref{eq:innerdiv} and the  Allen-Cahn equation \eqref{modieq:innerAL}, we obtain the following ordinary differential equations
\begin{align}
  &\partial_{\rho}\left(\nu(\theta_0)\big(\partial_{\rho}\hat{\ve}_k-(\mathbf{u}_kd_{\Gamma}+\mathbf{u}_0d_{k})\eta'\big)\right)\nonumber\\
  &=-\sum\limits_{i\in\{k-1,k-\frac{1}{2},k\}}\partial_\rho\big(\hat{\nu}_i\partial_{\rho}\hat{\ve}_{k-i}\big)
+2\partial_{\rho}(\partial_{\rho}\hat{c}_k\theta_0')\nabla d_{\Gamma}+2\partial_{\rho}(\partial_{\rho}\hat{c}_{k-\frac{1}{2}}\theta_0')\nabla d_{\frac{1}{2}}+2\partial_{\rho}(\partial_{\rho}\hat{c}_{k-1}\theta_0')\nabla d_{1}
\nonumber\\&\quad+\sum\limits_{i\in\{0,k-1\}}\partial_{\rho}\hat{\ve}_i\partial_td_{k-1-i}+\partial_{\rho}\hat{\ve}_0\sum\limits_{i\in\{0,k-1\}}\hat{\ve}_i\cdot\nabla d_{k-1-i}+\partial_{\rho}\hat{\ve}_{k-1}\hat{\ve}_0\cdot\nabla d_{\Gamma}
\nonumber\\&\quad+\theta_0''\nabla \hat{c}_{k-1}+(\theta_0')^2\big(\Delta d_{\Gamma}\nabla d_{k-1}+\Delta d_{k-1}\nabla d_{\Gamma}\big)+\eta'(\rho)\sum\limits_{i\in\{0,k-1\}}\mathbf{l}_{i}d_{k-1-i}
\nonumber\\&\quad-2\nabla d_{\Gamma}\cdot\sum\limits_{i\in\{0,k-1\}}\partial_\rho(\hat{\nu}_i D\hat{\ve}_{k-1-i})-2\nabla d_{k-1}\cdot\partial_\rho(\hat{\nu}_0 D\hat{\ve}_{0})-2\Div(\hat{\nu}_0 {(D_d\hat{\ve})_{k-1}})-2\Div(\hat{\nu}_{k-1} D_d\hat{\ve}_{0})
\nonumber\\&\quad+\nabla d_{\Gamma}\sum\limits_{i\in\{0,k-1\}}\partial_\rho(\hat{\nu}_i \Div\hat{\ve}_{k-1-i})+\nabla d_{k-1}\partial_\rho(\hat{\nu}_0 \Div\hat{\ve}_{0})+\sum\limits_{i\in\{0,k-1\}}\partial_\rho \hat{p}_i\nabla d_{k-1-i}
\nonumber\\&\quad+\partial_\rho\big(\nu(\theta_0)\eta'\big)\sum\limits_{i\in\{\frac{1}{2},1,k-\frac{1}{2},k-1\}}\mathbf{u}_{i}d_{k-i}-\partial_\rho\big(\nu(\theta_0)\eta'\big)\rho\mathbf{u}_{k-1}
  +\mathcal{R}_{1,k-\frac{3}{2}},\label{modieq:innerstokesk}
\end{align}
\begin{align}
  &\big(\partial_{\rho}\hat{\ve}_k-\eta'(\mathbf{u}_kd_{\Gamma}+\mathbf{u}_0d_{k}))\cdot\nabla d_\Gamma\nonumber\\
  & \quad =-\sum\limits_{i\in\{0,\frac{1}{2},1,k-1,k-\frac{1}{2}\}}\partial_{\rho}\hat{\ve}_{i}\nabla d_{k-i}-\Div\hat{\ve}_{k-1}+\eta'\mathbf{u}_0\cdot\sum\limits_{i\in\{\frac{1}{2},1,k-1,k-\frac{1}{2},k\}}\nabla d_{i}d_{k-i}
\nonumber\\&\qquad+\eta'\mathbf{u}_{\frac{1}{2}}\cdot\sum\limits_{i\in\{\frac12,k-1,k-\frac12\}}\nabla d_{i}d_{k-\frac{1}{2}-i}+\eta'\mathbf{u}_1\cdot\sum\limits_{i\in\{0,k-1\}}\nabla d_{i}d_{k-1-i}
\nonumber\\&\qquad+\eta'\mathbf{u}_{k-\frac{1}{2}}\cdot\sum\limits_{i\in\{0,\frac12\}}\nabla d_{i}d_{\frac12-i}+\eta'\mathbf{u}_{k-1}\cdot\sum\limits_{i\in\{0,1\}}\nabla d_{i}d_{1-i}
\nonumber\\&\qquad-\rho\eta'\sum\limits_{i\in\{0,k-1\}}\mathbf{u}_i\cdot\nabla d_{k-1-i}
  +\mathcal{R}_{2,k-\frac{3}{2}},\label{modieq:innerdivk}
\end{align}
and
\begin{align}
  &\partial^2_{\rho}\hat{c}_{k}-f''(\theta_0)\hat{c}_{k}=\hat{f}'_{k-1}+\theta_0'\partial_t d_{k-\frac{1}{2}}+\partial_{\rho}\hat{c}_{k-\frac{1}{2}}\partial_t d_{\Gamma}+\partial_{\rho}\hat{c}_{k-1}\partial_t d_{\frac{1}{2}}\nonumber\\
  &\quad +\theta_0'\sum\limits_{i\in\{0,\frac{1}{2},k-1,k-\frac{1}{2}\}}\hat{\ve}_i\cdot\nabla d_{k-\frac{1}{2}-i}
+\partial_{\rho}\hat{c}_{k-\frac{1}{2}}\hat{\ve}_0\cdot\nabla d_{\Gamma}+\partial_{\rho}\hat{c}_{k-1}\sum\limits_{i\in\{0,\frac12\}}\hat{\ve}_i\cdot\nabla d_{\frac{1}{2}-i}\nonumber\\&\quad
-\sum\limits_{i\in\{0,k-1\}}\partial_{\rho}\hat{c}_i\Delta d_{k-1-i} - 2\nabla\partial_{\rho}\hat{c}_{k-1}\cdot\nabla d_{\Gamma}-\theta_0'\sum\limits_{i\in\{0,\frac12\}}\hat{\phi}_id_{k-\frac{1}{2}-i}
\nonumber\\&\quad+\eta'g_{k-\frac{3}{2}}d_{\Gamma}+\mathcal{R}_{3,k-\frac{3}{2}},\label{modieq:innerALk}
\end{align}
where \begin{align*}
  {\big(D_d\hat{\ve}\big)_{k-1}=\frac 12\big(\partial_\rho\hat{\ve}_{k-1}\otimes \nabla d_{\Gamma}+(\partial_\rho\hat{\ve}_{k-1}\otimes\nabla d_{\Gamma})^T\big)+\frac 12\big(\partial_\rho\hat{\ve}_0\otimes \nabla d_{k-1}+(\partial_\rho\hat{\ve}_0\otimes\nabla d_{k-1})^T\big)}
\end{align*}
and $\mathcal{R}_{1,k-\frac{3}{2}}$, $\mathcal{R}_{2,k-\frac{3}{2}}$, $\mathcal{R}_{3,k-\frac{3}{2}}$ depend on the terms up to $k-\frac{3}{2}$ order and converge exponentially to zero as $|\rho|\to\infty$ because of the choice of $W^\pm_k$ and $\Div \ve_k^\pm =0$ for all $k\in\frac12\N_0$.

\subsection{Existence of Expansion Terms}
The following two lemmas are used to solve the ordinary differential equations with respect to $\rho$ and can be found in  \cite[Lemma A.2 and Lemma A.3]{AbelsFei}.
\begin{lem}\label{ODEsolver}
Let $U\subset \mathbb{R}^n$ be an open subset and let $A:\mathbb{R}\times U\rightarrow \mathbb{R},(\rho,x)\mapsto A(\rho,x)$ be given and smooth. Assume that there exists $A^\pm(x)$ such that the decay property $A(\pm\rho,x)-A^\pm(x)=O(e^{-\alpha\rho})$ as $\rho\rightarrow\infty$ is fulfilled. Then for every $x\in U$
the system
\begin{equation}\label{eq:ODEsolver}
\begin{split}
&w_{\rho\rho}(\rho,x)-f'(\theta_0(\rho))w(\rho,x)=A(\rho,x)\quad \text{for all } \rho\in\mathbb{R},\\
&w(0,x)=0,\ w(\cdot,x)\in L^\infty(\mathbb{R})
\end{split}
\end{equation}
has a smooth and bounded  solution if and only if
\begin{equation}\label{compatiblity:ODEsolver}
  \int_{\mathbb{R}}A(\rho,x)\theta_0'(\rho)d\rho=0.
\end{equation}
In addition, if the solution exists, then it is unique and satisfies for all $x\in U$
\begin{equation}\label{decay1:ODEsolver}
\partial_\rho^\ell\bigg(w(\pm\rho,x)+\frac{A^\pm(x)}{f''(\pm1)}\bigg)=O(e^{-\alpha\rho})\quad \text{as}~\rho\to \infty,\ l=0,1,2.
\end{equation}
Furthermore, if $A(\rho,x)$ satisfies for all $x\in U$
\begin{align*}
\partial_x^m\partial_\rho^\ell\bigg(A(\pm\rho,x)-A^\pm(x)\bigg)=O(e^{-\alpha\rho})\quad \text{as}~ \rho\rightarrow\infty
\end{align*}
for all $m\in\{0,\cdots,M\}$ and $l\in\{0,\cdots,L\}$,
then
\begin{equation}\label{decay2:ODEsolver}
\partial_x^m\partial_\rho^\ell\bigg(w(\pm\rho,x)+\frac{A^\pm(x)}{f''(\pm1)}\bigg)=O(e^{-\alpha\rho})\quad \text{as}~\rho\to\infty
\end{equation}
for all $m\in\{0,\cdots,M\}$ and $l\in\{0,\cdots,L\}$.
\end{lem}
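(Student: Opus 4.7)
The plan is to view the equation \eqref{eq:ODEsolver} as $Lw=-A$, where $L:=-\partial_\rho^2+f''(\theta_0)$ is the self-adjoint Sturm--Liouville operator obtained by linearizing at the optimal profile (reading $f''$ rather than the apparent $f'$ in the statement as a typo), and to set up a Fredholm alternative by hand over $L^\infty(\mathbb{R})$. The first task is to classify $\ker L$ on bounded functions. Differentiating \eqref{eq:OptProfile} yields $L\theta_0'=0$, and since $\theta_0$ is monotone with $\theta_0(\pm\infty)=\pm 1$, one has $\theta_0'>0$ together with exponential decay of $\theta_0'$ and all its derivatives. Reduction of order produces a second, linearly independent homogeneous solution
\[
\psi(\rho):=\theta_0'(\rho)\int_0^{\rho}\frac{d\tau}{\theta_0'(\tau)^2},
\]
whose growth at $\pm\infty$ is of order $e^{\alpha|\rho|}$; hence the bounded part of $\ker L$ is exactly the line spanned by $\theta_0'$.

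For necessity of the compatibility condition I would multiply the equation by $\theta_0'$, integrate over $\mathbb{R}$, and integrate by parts twice: the boundary contributions vanish by exponential decay of $\theta_0'$ together with boundedness of $w$, leaving $\int_{\mathbb{R}} A\,\theta_0'\,d\rho=0$. For sufficiency I would construct a bounded particular solution via variation of parameters from the pair $(\theta_0',\psi)$ (Wronskian $\equiv 1$). The resulting expression carries a factor $\psi(\rho)$; its exponential growth at $\pm\infty$ is cancelled precisely by the matching integral vanishing in the limits, and this cancellation is the compatibility condition itself. An additive multiple of $\theta_0'$ then enforces the normalization $w(0,x)=0$, and uniqueness is immediate: any difference of bounded solutions lies in $\ker L\cap L^\infty(\mathbb{R})=\operatorname{span}(\theta_0')$, and this is killed by $w(0,x)=0$ together with $\theta_0'(0)\neq 0$.

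For the asymptotic decay \eqref{decay1:ODEsolver} I would linearize at $\pm\infty$ by setting $\tilde w:=w+A^\pm/f''(\pm 1)$, which satisfies
\[
\tilde w_{\rho\rho}-f''(\pm 1)\tilde w=(A-A^\pm)+(f''(\theta_0)-f''(\pm 1))w=:R^\pm(\rho,x),
\]
and where $R^\pm(\rho,x)=O(e^{-\alpha\rho})$ as $\rho\to\pm\infty$ by hypothesis on $A$ and by the exponential convergence $\theta_0\to\pm 1$. Convolving $R^\pm$ with the bounded Green kernel of $-\partial_\rho^2+f''(\pm 1)$ on the corresponding half line (equivalently, writing the solution via the exponentials $e^{\pm\sqrt{f''(\pm 1)}\rho}$) then gives $\tilde w(\rho,x)=O(e^{-\alpha\rho})$ on $\rho>0$ and analogously on $\rho<0$; decay of $\partial_\rho\tilde w$ and $\partial_\rho^2\tilde w$ at the same rate is then read off directly from the ODE.

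For the joint $x$-regularity \eqref{decay2:ODEsolver} I would differentiate the equation $m$ times in $x$ and $\ell$ times in $\rho$. The quantity $\partial_x^m\partial_\rho^\ell w$ satisfies an ODE of the same type with right-hand side built from $\partial_x^m A$ and lower-order derivatives of $w$ that are already controlled by induction, and the compatibility condition for $\partial_x^m A$ is obtained by differentiating $\int A\theta_0'\,d\rho=0$ in $x$, which is legitimate by dominated convergence using exponential decay of $\theta_0'$. Proceeding inductively in $m$ and $\ell$ reduces everything to the scalar case already established. The main obstacle I expect is the sufficiency step: producing an explicit bounded particular solution whose growth at both $+\infty$ and $-\infty$ is killed simultaneously by the \emph{single} compatibility condition requires a careful choice of limits in variation of parameters (equivalently, constructing the generalized Green's function orthogonal to $\theta_0'$), and then reading off the sharp exponential rates for $\tilde w$ and its first two $\rho$-derivatives from the same representation.
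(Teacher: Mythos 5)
The paper does not prove this lemma itself but cites Lemma~A.2 of \cite{AbelsFei}; your argument is the standard route and is in essence what that proof contains. You correctly flag that $f'(\theta_0)$ in \eqref{eq:ODEsolver} is a typo for $f''(\theta_0)$ --- otherwise $\theta_0'$ would not lie in the kernel and \eqref{compatiblity:ODEsolver} would have no meaning --- and the outline (bounded kernel equals $\operatorname{span}\theta_0'$ by reduction of order; necessity by pairing against $\theta_0'$; sufficiency by variation of parameters with the compatibility condition cancelling the growth of $\psi$ at both ends simultaneously; uniqueness from $\theta_0'(0)\neq 0$; exponential asymptotics by linearizing at $\pm\infty$; $x$-regularity by bootstrapping the differentiated equation) is exactly right. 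Two points of hygiene. In the necessity step you also need $w_\rho$ bounded before dropping boundary terms; this follows from boundedness of $w$ and of $w_{\rho\rho}$ (via the equation), but should be stated. And the decay of $\partial_\rho\tilde w$ is not read off ``directly from the ODE'' --- the ODE only controls $\tilde w_{\rho\rho}$ in terms of $\tilde w$; one should either differentiate the Green-kernel representation, or note that boundedness of $\tilde w$ together with exponential decay of $\tilde w_{\rho\rho}$ forces $\tilde w_\rho(\pm\infty)=0$, after which integrating $\tilde w_{\rho\rho}$ from $\rho$ out to $\pm\infty$ gives the claimed decay. There is also the usual resonance caveat: if $\alpha$ equals $\sqrt{f''(\pm 1)}$ exactly, the constant-coefficient convolution picks up a polynomial factor, so one strictly gets $O(e^{-\alpha'\rho})$ only for $\alpha'<\alpha$; this imprecision is already present in the statement as cited and is harmless for its uses in the paper.
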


\begin{lem}\label{modified version of Lemma 2.4}
Let $U\subset \mathbb{R}^n$ be an open subset and let $B:\mathbb{R}\times U\rightarrow \mathbb{R},(\rho,x)\mapsto B(\rho,x)$ be given and smooth. Assume that for all $x\in U$ the decay property $B(\pm\rho,x)=O(e^{-\alpha\rho})$ as $\rho\rightarrow\infty$ is fulfilled.
Then for each $x\in U$ the problem
\begin{align}
\partial_{\rho}\big(\nu(\theta_0)\partial_\rho w(\rho,x)\big)=B(\rho,x)\quad \text{for all } \rho\in \mathbb{R}\label{modified version of Lemma 2.4-1}
\end{align}
has a solution $w(\cdot,x)\in C^2(\mathbb{R})\cap L^\infty(\mathbb{R})$ if and only if
\begin{align}
\int_{\mathbb{R}}B(\rho,x)d\rho=0.\label{modified version of Lemma 2.4-2}
\end{align}
Furthermore, if $w_{\ast}(\rho,x)$ is such a solution, then all the solutions can be written as
\begin{align}
 w(\rho,x)=w_{\ast}(\rho,x)+c(x),\ \rho\in \mathbb{R}\label{modified version of Lemma 2.4-3}
\end{align}
where $c:U\rightarrow \mathbb{R}$ is an arbitrary function.
In particular, if \eqref{modified version of Lemma 2.4-2} holds,
\begin{align}
w_{\ast}(\rho,x)=\int_{0}^\rho\frac{1}{\nu(\theta_0)}\int_{-\infty}^r B(s,x)ds dr,\ \rho\in \mathbb{R}\label{modified version of Lemma 2.4-4}
\end{align}
is a solution.
Additionally, if \eqref{modified version of Lemma 2.4-2} holds for all $x\in U$ and there exist $M,L\in \mathbb{N}$ such that
\begin{align}
\partial_x^m\partial_\rho^lB(\pm\rho,x)=O(e^{-\alpha\rho}) \ \text{as}\ \rho\rightarrow+\infty\label{modified version of Lemma 2.4-5}
\end{align}
for all $m\in\{0,\cdots,M\}$ and $l\in\{0,\cdots,L\}$, then there exists smooth functions $w^+(x)$ and  $w^-(x)$ such that
\begin{align}
D_x^mD_\rho^l\big(w(\pm\rho,x)-w^{\pm}(x)\big)=O(e^{-\alpha\rho}) \ \text{as} \ \rho\rightarrow+\infty\label{modified version of Lemma 2.4-6}
\end{align}
for all $m\in\{0,\cdots,M\}$ and $l\in\{0,\cdots,L+2\}$.
\end{lem}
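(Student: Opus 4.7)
The plan is to exploit that $\nu(\theta_0)$ is smooth, uniformly bounded, and uniformly positive on $\mathbb{R}$, so that the ODE \eqref{modified version of Lemma 2.4-1} can be integrated twice explicitly and solvability in $C^2(\mathbb{R})\cap L^\infty(\mathbb{R})$ is controlled entirely by the total mass of $B$. The central mechanism is that a single integration produces $\nu(\theta_0(\rho))\partial_\rho w$ in terms of an antiderivative of $B$, whose behaviour at $\pm\infty$ is dictated by the compatibility condition.

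For necessity of \eqref{modified version of Lemma 2.4-2}, I would integrate \eqref{modified version of Lemma 2.4-1} to obtain
\[
\nu(\theta_0(\rho))\partial_\rho w(\rho,x) = K(x) + \int_0^\rho B(s,x)\,ds
\]
for some $K(x)\in\mathbb{R}$. By exponential decay of $B$, the right-hand side has finite limits $K(x)+\int_0^{\pm\infty}B(s,x)\,ds$ as $\rho\to\pm\infty$; since $\nu(\theta_0(\pm\infty))=\nu(\pm 1)>0$, also $\partial_\rho w$ has limits $\ell_\pm(x)$. If either $\ell_\pm(x)\neq 0$, then $w(\cdot,x)$ would be asymptotically linear, violating $w(\cdot,x)\in L^\infty(\mathbb{R})$. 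Hence $\ell_\pm(x)=0$ and subtracting the two resulting identities gives $\int_\mathbb{R} B(\cdot,x)\,d\rho=0$.

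Conversely, assuming \eqref{modified version of Lemma 2.4-2}, I would set $F(\rho,x):=\int_{-\infty}^\rho B(s,x)\,ds = -\int_\rho^\infty B(s,x)\,ds$, which inherits exponential decay at $\pm\infty$ from $B$ together with the compatibility condition. The explicit formula \eqref{modified version of Lemma 2.4-4} then reads $w_*(\rho,x)=\int_0^\rho F(r,x)/\nu(\theta_0(r))\,dr$, satisfies the ODE by direct differentiation, and is bounded with exponentially attained limits $w^\pm(x):=\lim_{\rho\to\pm\infty} w_*(\rho,x)$ because $F/\nu(\theta_0)\in L^1(\mathbb{R})$ with exponential tails. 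For the characterisation \eqref{modified version of Lemma 2.4-3} of all bounded solutions, if $w_1,w_2$ are two such, then $v:=w_1-w_2$ satisfies $\partial_\rho(\nu(\theta_0)\partial_\rho v)=0$, so $\nu(\theta_0)\partial_\rho v\equiv K(x)$ and $v(\rho,x)-v(0,x)=K(x)\int_0^\rho \nu(\theta_0(r))^{-1}\,dr$; boundedness in $\rho$ forces $K(x)=0$ (since $\nu(\theta_0)^{-1}$ is bounded below by $\|\nu\|_{L^\infty([-1,1])}^{-1}>0$), hence $v\equiv c(x)$.

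For the refined regularity and decay \eqref{modified version of Lemma 2.4-6}, I would differentiate under the integral in the formula for $w_*$: assumption \eqref{modified version of Lemma 2.4-5} permits commuting $D_x^m$ with both $\rho$-integrations, and differentiating the identity $\int_\mathbb{R} B(\rho,x)\,d\rho\equiv 0$ under the integral (justified by the exponential decay) yields $\int_\mathbb{R} D_x^m B\,d\rho=0$, so $D_x^m F$ still decays exponentially at $\pm\infty$. The tail representation
\[
w_*(\rho,x)-w^+(x) = -\int_\rho^\infty \frac{F(r,x)}{\nu(\theta_0(r))}\,dr\qquad(\rho>0)
\]
and its $D_x^m$-analogue then give the $l=0$ decay; the $l=1$ case is immediate from the explicit formula for $\partial_\rho w_*$, and the cases $l\geq 2$ follow by expressing $\partial_\rho^{l}w_*$ via the ODE itself as a polynomial in $\partial_\rho^{l-2}B$ and lower derivatives divided by powers of $\nu(\theta_0)$, which accounts for the two extra $\rho$-derivatives in $L+2$. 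The main subtlety throughout is propagating the exponential decay at both ends simultaneously; this is precisely what makes \eqref{modified version of Lemma 2.4-2} both necessary and sufficient.
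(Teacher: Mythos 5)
The paper does not give its own proof of this lemma but refers to \cite[Lemma A.3]{AbelsFei}. Your argument is the standard double-integration proof and it is correct: the one-time integration gives $\nu(\theta_0)\partial_\rho w=K+\int_0^\rho B$, and since $\nu\circ\theta_0$ is bounded between two positive constants, boundedness of $w$ forces $\partial_\rho w\to 0$ at both ends, which is exactly the two equations whose difference is $\int_{\mathbb R}B=0$; conversely the primitive $F(\rho,x)=\int_{-\infty}^\rho B=-\int_\rho^\infty B$ decays exponentially at both ends precisely because of \eqref{modified version of Lemma 2.4-2}, so \eqref{modified version of Lemma 2.4-4} is bounded and solves the ODE. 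The uniqueness modulo $c(x)$ and the decay of $D_x^mD_\rho^l$ for $l\le L+2$ (by differentiating under the integral for $l\in\{0,1\}$, and bootstrapping with the ODE $\partial_\rho^2 w=\bigl(B-\nu'(\theta_0)\theta_0'\partial_\rho w\bigr)/\nu(\theta_0)$ for $l\ge 2$) are handled correctly, and this is almost certainly the same mechanism as in the cited reference.
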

\begin{rem}
  We note the statements on the asymptotics ``$O(e^{-\alpha\rho})$ as $\rho\to \infty$'' in Lemma~\ref{ODEsolver} and Lemma~\ref{modified version of Lemma 2.4} hold true uniformly with respect to $x\in U$ provided
  $A(\pm\rho,x)-A^\pm(x)=O(e^{-\alpha\rho})$ and $B(\pm\rho,x)=O(e^{-\alpha\rho})$ as $\rho\rightarrow\infty$ hold true uniformly with respect to $x\in U$.
\end{rem}

\subsubsection{Solving lower order terms}

{\it{Solving $\hat{p}_{-1}$, $\ve_0$, $\ue_0$, and $\ve_0^\pm$:}}
Firstly,  using the matching conditions  \eqref{eq:matchcon} and \eqref{innnormalize} we get $\hat{c}_{0}=\theta_0$ satisfying  \eqref{eq:OptProfile}.
Moreover, it follows from \eqref{modieq:innerstokes0} multiplied with $\nabla d_\Gamma$ and \eqref{modieq:innerdiv0} that
\begin{align*}
0=\partial_{\rho}\left(\nu(\theta_0)(\hat{\ve}_0-\mathbf{u}_0d_{\Gamma}\eta'(\rho))\cdot \nabla d_\Gamma\right)=2\theta_0'\theta_0''+\partial_{\rho}\hat{p}_{-1}.
\end{align*}
Hence
\begin{align}
\hat{p}_{-1}(\rho,x,t)&=-(\theta'_0(\rho))^2\quad\text{and} \label{inn-0-ve-2}\\\nonumber
\hat{\ve}_0(\rho,x,t)&=\overline{\ve}_0(x,t)+\mathbf{u}_0(x,t)d_{\Gamma}\big(\eta(\rho)-\tfrac{1}{2}\big)
\end{align}
for all $\rho\in\R$, $(x,t)\in \Gamma(3\delta)$ and some function $\overline{\ve}_0\colon \Gamma(3\delta)\to \R^2$ due to Lemma~\ref{modified version of Lemma 2.4}.
Using the matching conditions  we obtain
\begin{align*}
\mathbf{v}_{0}^+(x,t)=\overline{\ve}_0(x,t)+\frac{1}{2}\mathbf{u}_0(x,t)d_{\Gamma}(x,t),\ \ \mathbf{v}_{0}^-(x,t)=\overline{\ve}_0(x,t)-\frac{1}{2}\mathbf{u}_0(x,t)d_{\Gamma}(x,t)
\end{align*}
 and therefore
\begin{align*}
\overline{\ve}_0(x,t)=\frac{1}{2}\big(\mathbf{v}_{0}^+(x,t)+\mathbf{v}_{0}^-(x,t)\big),\quad \mathbf{u}_0(x,t)d_{\Gamma}(x,t)=\mathbf{v}_{0}^+(x,t)-\mathbf{v}_{0}^-(x,t)
\end{align*}
for all $(x,t)\in \Gamma(3\delta)$. The latter is consistent with \eqref{eq:Limit4} and yields
\begin{align}
\hat{\ve}_0(\rho,x,t)=\mathbf{v}_{0}^+(x,t)\eta(\rho)+\mathbf{v}_{0}^-(x,t)(1-\eta(\rho)),\label{inn-0-ve-express}
\end{align}
and
\begin{eqnarray}\label{formula:u0}
\mathbf{u}_0(x,t)=\left \{
\begin {array}{ll}
\frac{\mathbf{v}_{0}^+(x,t)-\mathbf{v}_{0}^-(x,t)}{d_{\Gamma}(x,t)}& \text{if }(x,t)\in\Gamma(3\delta)\backslash\Gamma,\\
\\
\nn\cdot\nabla\big(\mathbf{v}_{0}^+(x,t)-\mathbf{v}_{0}^-(x,t)\big)& \text{if }(x,t)\in\Gamma.
\end{array}
\right.
\end{eqnarray}
Therefore
\begin{align}
\partial_t d_{\Gamma}+\hat{\ve}_0\cdot\nabla d_{\Gamma}=\partial_t d_{\Gamma}+\mathbf{v}_{0}^-\cdot\nabla d_{\Gamma}+\big(\mathbf{v}_{0}^+-\mathbf{v}_{0}^-\big)\cdot\nabla d_{\Gamma}\eta(\rho)\label{interface equality}
\end{align}
which vanishes on $\Gamma$ due to \eqref{eq:Limit4} and \eqref{eq:Limit5}.

\medskip

\noindent
{\it{Solving $\hat{p}_{-\frac{1}{2}}$ and the $O(\varepsilon^{\frac{1}{2}})$-order terms}:}
It follows from \eqref{modieq:innerstokes12} multiplied with $\nabla d_\Gamma$, \eqref{modieq:innerdiv12}, and \eqref{inn-0-ve-2} that
\begin{align*}
\partial_{\rho}\hat{p}_{-\frac{1}{2}}(\rho,x,t)=0\qquad \text{for all }\rho\in\R,  (x,t)\in \Gamma(3\delta). 
\end{align*}
Hence we can choose $\hat{p}_{-\frac{1}{2}}\equiv 0$. Using Lemma~\ref{modified version of Lemma 2.4} we obtain
\begin{align*}
\hat{\ve}_{\frac{1}{2}}(\rho,x,t)=\overline{\ve}_{\frac{1}{2}}(x,t)+\big(\mathbf{u}_{\frac{1}{2}}(x,t)d_{\Gamma}(x,t)+\mathbf{u}_{0}(x,t)d_{\frac{1}{2}}(x,t)\big)\big(\eta(\rho)-\tfrac{1}{2}\big)
\end{align*}
for all $\rho\in\R$, $(x,t)\in \Gamma(3\delta)$ and some function $\overline{\ve}_{\frac{1}{2}}\colon \Gamma(3\delta)\to \R^2$.
Because of the matching conditions we  get
\begin{align*}
\mathbf{v}_{\frac{1}{2}}^+=\overline{\ve}_{\frac{1}{2}}+\frac{1}{2}\big(\mathbf{u}_{\frac{1}{2}}d_{\Gamma}+\mathbf{u}_{0}d_{\frac{1}{2}}\big),\ \ \mathbf{v}_{\frac{1}{2}}^-=\overline{\ve}_{\frac{1}{2}}-\frac{1}{2}\big(\mathbf{u}_{\frac{1}{2}}d_{\Gamma}+\mathbf{u}_{0}d_{\frac{1}{2}}\big)\quad \text{in }\Gamma(3\delta)
\end{align*}
as well as
\begin{align}
\overline{\ve}_{\frac{1}{2}}=\frac{1}{2}\big(\mathbf{v}_{\frac{1}{2}}^++\mathbf{v}_{\frac{1}{2}}^-\big),\ \ \mathbf{u}_{\frac{1}{2}}d_{\Gamma}+\mathbf{u}_{0}d_{\frac{1}{2}}=\mathbf{v}_{\frac{1}{2}}^+-\mathbf{v}_{\frac{1}{2}}^-\label{modieq:innerstokes12-4}
\end{align}
which immediately imply
\begin{align}
\hat{\ve}_{\frac{1}{2}}(\rho,x,t)=\mathbf{v}_{\frac{1}{2}}^+(x,t)\eta(\rho)+\mathbf{v}_{\frac{1}{2}}^-(x,t)(1-\eta(\rho))\label{modieq:innerstokes12-5}
\end{align}
and
\begin{eqnarray}\label{formula:u12}
\mathbf{u}_{\frac{1}{2}}(x,t)=\left \{
\begin {array}{ll}
\frac{\mathbf{v}_{\frac{1}{2}}^+(x,t)-\mathbf{v}_{\frac{1}{2}}^-(x,t)-\mathbf{u}_{0}(x,t)d_{\frac{1}{2}}(x,t)}{d_{\Gamma}(x,t)}&\text{if } (x,t)\in\Gamma(3\delta)\backslash\Gamma,\\
\\
\nn\cdot\nabla\big(\mathbf{v}_{\frac{1}{2}}^+(x,t)-\mathbf{v}_{\frac{1}{2}}^-(x,t)-\mathbf{u}_{0}(x,t)d_{\frac{1}{2}}(x,t)\big)&\text{if }  (x,t)\in\Gamma
\end{array}
\right.
\end{eqnarray}
for all $(x,t)\in\Gamma(3\delta)$, $\rho\in\R$.

\medskip

\noindent {\it{Solving the $O(\varepsilon)$-order terms}:}
To proceed we give the  following  proposition, which can be found in \cite[Proposition A.5]{AbelsFei}.
\begin{prop}\label{prop:1}
There hold
\begin{align}
\int_{-\infty}^{\infty}\hat{\ve}_0\cdot\nabla d_{\Gamma}\partial_{\rho}\hat{\ve}_{0}d\rho&=\frac{1}{2}\big(\ve_0^++\ve_0^-\big)\cdot\nabla d_{\Gamma}\big(\ve_0^+-\ve_0^-\big)=0 &\quad&\text{on } \Gamma,\label{preeq:inner1-4}\\
\int_{-\infty}^{\infty}\Div\big(2\nu(\theta_0)D_d\hat{\ve}_0\big)\big)d\rho&=\overline{\nu}\widetilde{\Div}\hat{\ve}_0=\overline{\nu}\mathbf{u}_0 &&\text{on } \Gamma,\label{preeq:inner1-5}\\
\int_{-\infty}^{\infty}\mathbb{A}_0d\rho&=\sigma\Delta d_{\Gamma}\nabla d_{\Gamma}= -\sigma H \no &&\text{on }\Gamma,\label{preeq:inner1-6}\\
\mathbf{u}_0\cdot\nn&=0 &&\text{on } \Gamma,\label{preeq:inner1-66}
\end{align}
where
$
\widetilde{\Div}\hat{\ve}_0=\left((\mathbf{v}_{0}^+-\mathbf{v}_{0}^-)\cdot\nabla\right)\nabla d_{\Gamma}+(\mathbf{v}_{0}^+-\mathbf{v}_{0}^-)\Delta d_{\Gamma}+(\nabla d_{\Gamma}\cdot\nabla)(\mathbf{v}_{0}^+-\mathbf{v}_{0}^-)
$
and $\ol{\nu}$ is defined as in \eqref{eq:defbarnu}.
\end{prop}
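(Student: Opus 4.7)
The plan is to verify each of the four identities directly from the explicit formulas \eqref{inn-0-ve-express} for $\hat{\ve}_0$ and \eqref{eq:c0} for $\hat{c}_0$, together with the interface conditions \eqref{eq:Limit4}--\eqref{eq:Limit5}, the bulk divergence-free condition $\Div\ve_0^\pm = 0$, and the symmetry properties of $\eta$.

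For \eqref{preeq:inner1-4} I would substitute $\partial_\rho\hat{\ve}_0 = (\ve_0^+-\ve_0^-)\eta'(\rho)$ into the integrand and use that $\eta - \tfrac12$ is odd and $\eta'$ is even to compute $\int_\R \eta\eta'\,d\rho = \int_\R(1-\eta)\eta'\,d\rho = \tfrac12$. This immediately gives $\tfrac12(\ve_0^++\ve_0^-)\cdot\nabla d_\Gamma\,(\ve_0^+-\ve_0^-)$, and the final zero on $\Gamma$ is \eqref{eq:Limit4}. For \eqref{preeq:inner1-6} the key observation is that $\hat{c}_0 = \theta_0(\rho)$ is independent of $x$, so the first two contributions to $\mathbb{A}_0$ (which involve $\nabla_x \hat{c}_0$ and $\nabla_x \partial_\rho \hat{c}_0$) drop out, leaving only $(\theta_0'(\rho))^2\Delta d_\Gamma\,\nabla d_\Gamma$; integrating in $\rho$ produces the surface tension $\sigma$, and on $\Gamma$ one uses $\nabla d_\Gamma|_\Gamma = \nn$ together with $\Delta d_\Gamma|_\Gamma = -H$. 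Finally \eqref{preeq:inner1-66} follows by decomposing the identity $\Div(\ve_0^+-\ve_0^-)=0$ in the orthonormal frame $(\nn,\btau)$ on $\Gamma$: since $\ve_0^+ - \ve_0^- \equiv 0$ on $\Gamma$ by \eqref{eq:Limit4}, the tangential-derivative contribution $\btau\cdot\partial_\btau(\ve_0^+-\ve_0^-)$ vanishes, so the normal-component part $\nn\cdot \partial_\nn(\ve_0^+-\ve_0^-)=\mathbf{u}_0\cdot\nn$ must vanish as well.

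The identity \eqref{preeq:inner1-5} requires the most bookkeeping. I would start from $2\nu(\theta_0)D_d\hat{\ve}_0 = \nu(\theta_0)\bigl(\partial_\rho\hat{\ve}_0\otimes\nabla d_\Gamma + \nabla d_\Gamma\otimes\partial_\rho\hat{\ve}_0\bigr)$. Since $\nu(\theta_0(\rho))$ carries no $x$-dependence and $\partial_\rho\hat{\ve}_0 = (\ve_0^+-\ve_0^-)\eta'(\rho)$, commuting $\Div_x$ with $\int_\R\cdot\,d\rho$ and using $\int_\R \nu(\theta_0)\eta'\,d\rho = \bar{\nu}$ from \eqref{eq:defbarnu}, one gets
\begin{equation*}
\int_\R \Div\bigl(2\nu(\theta_0)D_d\hat{\ve}_0\bigr)\,d\rho = \bar{\nu}\bigl[\Div\bigl((\ve_0^+-\ve_0^-)\otimes \nabla d_\Gamma\bigr) + \Div\bigl(\nabla d_\Gamma\otimes (\ve_0^+-\ve_0^-)\bigr)\bigr].
\end{equation*}
Expanding the two divergences via the product rule and discarding the term $\nabla d_\Gamma\,\Div(\ve_0^+-\ve_0^-)$ using $\Div\ve_0^\pm=0$ leaves precisely $\bar{\nu}\,\widetilde{\Div}\hat{\ve}_0$. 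On $\Gamma$ the factor $\ve_0^+-\ve_0^-$ vanishes by \eqref{eq:Limit4}, killing the second and third summands of $\widetilde{\Div}\hat{\ve}_0$, and only $(\nabla d_\Gamma\cdot\nabla)(\ve_0^+-\ve_0^-) = \mathbf{u}_0$ (by \eqref{formula:u0}) survives, which produces $\bar{\nu}\mathbf{u}_0$ as claimed.

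No serious analytic obstacle is expected since each identity reduces to an elementary computation; the only subtle points are the careful tracking of the tensor divergences and the consistent use of the two distinct facts $\ve_0^+=\ve_0^-$ on $\Gamma$ and $\Div \ve_0^\pm = 0$ in $\Omega^\pm$ to eliminate the right terms at the right moment, particularly in \eqref{preeq:inner1-5}.
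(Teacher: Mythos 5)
Your proof is correct and complete. The paper itself does not give a proof of this proposition but simply cites \cite[Proposition~A.5]{AbelsFei}; your direct verification fills in the content of that reference. All four steps go through exactly as you argue. For \eqref{preeq:inner1-4} the substitution $\partial_\rho\hat{\ve}_0=(\ve_0^+-\ve_0^-)\eta'$ and the elementary evaluations $\int_\R\eta\eta'\,d\rho=\int_\R(1-\eta)\eta'\,d\rho=\tfrac12$ give the middle expression, which vanishes on $\Gamma$ by $\llbracket\ve_0^\pm\rrbracket=0$. For \eqref{preeq:inner1-6} the $x$-independence of $\hat{c}_0=\theta_0(\rho)$ kills the first two summands of $\mathbb{A}_0$, and $\int_\R(\theta_0')^2\,d\rho=\sigma$ together with $\nabla d_\Gamma|_\Gamma=\no$, $\Delta d_\Gamma|_\Gamma=-H$ gives the claim. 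For \eqref{preeq:inner1-5}, pulling out the $\rho$-dependent factor $\nu(\theta_0)\eta'$ (which integrates to $\overline{\nu}$), applying $\Div(a\otimes b+b\otimes a)=(\Div a)b+(a\cdot\nabla)b+(\Div b)a+(b\cdot\nabla)a$ and discarding $\nabla d_\Gamma\,\Div(\ve_0^+-\ve_0^-)=0$ reproduces $\overline{\nu}\widetilde{\Div}\hat{\ve}_0$; on $\Gamma$ the vanishing of $\ve_0^+-\ve_0^-$ leaves only the $(\nabla d_\Gamma\cdot\nabla)(\ve_0^+-\ve_0^-)=\ue_0$ contribution. For \eqref{preeq:inner1-66} your frame decomposition of $0=\Div(\ve_0^+-\ve_0^-)$ on $\Gamma$ is the right argument: the tangential part vanishes because $\ve_0^+-\ve_0^-\equiv0$ along $\Gamma$, so the normal part $\ue_0\cdot\no$ must vanish too. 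The only remark is cosmetic: for $\int_\R\eta\eta'\,d\rho=\tfrac12$ one does not actually need the symmetry of $\eta-\tfrac12$, since the antiderivative $\eta^2/2$ gives it directly; but your symmetry argument is equally valid.
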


We need to point out that the first equalities in \eqref{preeq:inner1-4}-\eqref{preeq:inner1-6} hold  not only on $\Gamma$ but also in $\Gamma(3\delta)$. Moreover, because of \eqref{eq:Limit4} 
\begin{equation}\label{formula:secondderivative d}
  (\llbracket\ve_{0}^\pm\rrbracket\cdot\nabla)\nabla d_{\Gamma}=\big(\llbracket\ve_{0}^\pm\rrbracket\cdot\nn\big)\Delta d_{\Gamma}\nn=0,\quad \text{on }\Gamma.
\end{equation}
\begin{cor}\label{independent-1}
  $\hat{\ve}_0,\hat{\ve}_{\frac{1}{2}}\cdot\nabla d_{\Gamma}$ and $\hat{\phi}_0$ are independent of $\rho$ on $\Gamma$. Moreover, we can rewrite the evolution law  \eqref{evolution law:d12}  for $d_{1/2}$ as
  \begin{align}
    \partial_t d_{\frac{1}{2}}+\ve_0^{\pm}\cdot\nabla d_{\frac{1}{2}}+\ve_{\frac{1}{2}}^{\pm}\cdot\nabla d_{\Gamma}&-\left((\nabla d_{\Gamma}\cdot\nabla)\mathbf{v}_{0}^{\pm}\cdot\nabla  d_{\Gamma}\right)d_{\frac{1}{2}}-\Delta d_{\Gamma}=0\qquad \text{on } \Gamma.\label{newevolution law:d12}
  \end{align}
\end{cor}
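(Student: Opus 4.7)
My plan is to treat the two assertions separately. The independence of $\hat{\ve}_0$, $\hat{\ve}_{\frac{1}{2}}\cdot\nabla d_{\Gamma}$ and $\hat{\phi}_0$ on $\rho$ along $\Gamma$ will follow by evaluating the explicit formulas already established in the matched asymptotic expansion at points with $d_\Gamma=0$. The rewritten evolution law will then fall out by substituting these on-$\Gamma$ values into \eqref{evolution law:d12} and simplifying the resulting expression for $\hat{\phi}_0$ using that $|\nabla d_\Gamma|\equiv 1$ in $\Gamma(3\delta)$.

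For $\hat{\ve}_0$ I simply appeal to \eqref{inn-0-ve-express}, which writes it as $\ve_0^+\eta(\rho)+\ve_0^-(1-\eta(\rho))$; on $\Gamma$ the jump condition \eqref{eq:Limit4} collapses this to $\ve_0^\pm|_\Gamma$, so the $\rho$-dependence disappears. For $\hat{\phi}_0$ the identity \eqref{gamma definition:phi0} already gives $\hat{\phi}_0|_\Gamma=\nabla(\hat{\ve}_0\cdot\nabla d_\Gamma)\cdot\nabla d_\Gamma$, and hence $\hat{\phi}_0|_\Gamma$ inherits $\rho$-independence from $\hat{\ve}_0|_\Gamma$. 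The slightly subtler case is $\hat{\ve}_{\frac{1}{2}}\cdot\nabla d_\Gamma$: by \eqref{modieq:innerstokes12-5} it equals $(\ve_{\frac{1}{2}}^+\cdot\nabla d_\Gamma)\eta(\rho)+(\ve_{\frac{1}{2}}^-\cdot\nabla d_\Gamma)(1-\eta(\rho))$, so I need $\llbracket\ve_{\frac{1}{2}}^\pm\rrbracket\cdot\no=0$ on $\Gamma$. Evaluating the algebraic relation \eqref{modieq:innerstokes12-4} at points where $d_\Gamma=0$ gives $\ve_{\frac{1}{2}}^+-\ve_{\frac{1}{2}}^-=\mathbf{u}_0\,d_{\frac{1}{2}}$ on $\Gamma$, and taking the inner product with $\no=\nabla d_\Gamma|_\Gamma$ makes the right-hand side vanish by the tangentiality $\mathbf{u}_0\cdot\no=0$ recorded in \eqref{preeq:inner1-66}.

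To derive the equivalent form of the evolution law, I substitute into \eqref{evolution law:d12} the three $\rho$-independent values just identified: $\hat{\ve}_0|_\Gamma=\ve_0^\pm|_\Gamma$, $(\hat{\ve}_{\frac{1}{2}}\cdot\nabla d_\Gamma)|_\Gamma=(\ve_{\frac{1}{2}}^\pm\cdot\nabla d_\Gamma)|_\Gamma$, and $\hat{\phi}_0|_\Gamma=\nabla(\ve_0^\pm\cdot\nabla d_\Gamma)\cdot\nabla d_\Gamma$. Expanding the last by the product rule produces $(\nabla d_\Gamma\cdot\nabla)\ve_0^\pm\cdot\nabla d_\Gamma$ together with $\ve_0^\pm\cdot\bigl((\nabla d_\Gamma\cdot\nabla)\nabla d_\Gamma\bigr)$, and the latter term vanishes since $(\nabla d_\Gamma\cdot\nabla)\nabla d_\Gamma=\tfrac12\nabla|\nabla d_\Gamma|^2=0$ in $\Gamma(3\delta)$. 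This recovers exactly the claimed equation \eqref{newevolution law:d12}. The only place requiring genuine care is extracting $\llbracket\ve_{\frac{1}{2}}^\pm\rrbracket\cdot\no=0$ in the second step, which uses both $d_\Gamma=0$ on $\Gamma$ and the tangentiality of $\mathbf{u}_0$; everything else is routine substitution and the well-known identity $(\nabla d_\Gamma\cdot\nabla)\nabla d_\Gamma=0$ for a signed distance function.
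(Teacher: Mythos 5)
Your identification of the key ingredient, $\mathbf{u}_0\cdot\no=0$ on $\Gamma$ from Proposition~\ref{prop:1}, and your use of it for the $\hat{\ve}_{\frac12}\cdot\nabla d_\Gamma$ step matches the paper exactly, and your final simplification via the product rule and $(\nabla d_\Gamma\cdot\nabla)\nabla d_\Gamma=0$ is also fine.

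However, there is a genuine gap in the treatment of $\hat{\phi}_0$. You assert that ``$\hat{\phi}_0|_\Gamma$ inherits $\rho$-independence from $\hat{\ve}_0|_\Gamma$,'' but this inference is not valid: the formula
\[
\hat{\phi}_0|_\Gamma=\nabla(\hat{\ve}_0\cdot\nabla d_\Gamma)\cdot\nabla d_\Gamma\big|_\Gamma
\]
contains a \emph{normal} derivative of $\hat{\ve}_0\cdot\nabla d_\Gamma$, which depends on the behaviour of $\hat{\ve}_0$ \emph{off} $\Gamma$, not merely on its trace. Writing $\hat{\ve}_0=\ve_0^-+(\ve_0^+-\ve_0^-)\eta(\rho)$ and differentiating in $x$ (with $\rho$ fixed) gives
\[
\nabla(\hat{\ve}_0\cdot\nabla d_\Gamma)\cdot\nabla d_\Gamma\big|_\Gamma
=\nabla(\ve_0^-\cdot\nabla d_\Gamma)\cdot\nabla d_\Gamma\big|_\Gamma
+\eta(\rho)\,\nabla\big((\ve_0^+-\ve_0^-)\cdot\nabla d_\Gamma\big)\cdot\nabla d_\Gamma\big|_\Gamma,
\]
and the second term does not disappear merely because $\ve_0^+=\ve_0^-$ on $\Gamma$. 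Using $\ve_0^+-\ve_0^-=\mathbf{u}_0\,d_\Gamma$ and the product rule, the normal derivative of $(\ve_0^+-\ve_0^-)\cdot\nabla d_\Gamma=(\mathbf{u}_0\cdot\nabla d_\Gamma)\,d_\Gamma$ evaluated on $\Gamma$ reduces (using $|\nabla d_\Gamma|^2=1$ and $d_\Gamma|_\Gamma=0$) to $\mathbf{u}_0\cdot\nn|_\Gamma$, which vanishes precisely by \eqref{preeq:inner1-66}. This is exactly what the paper's proof records via the intermediate identity \eqref{independent-3}, and it is the same tangentiality fact you invoke for $\hat{\ve}_{\frac12}\cdot\nabla d_\Gamma$; you need to invoke it here as well. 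Without it the jump $\llbracket(\nabla d_\Gamma\cdot\nabla)\ve_0^\pm\cdot\nabla d_\Gamma\rrbracket|_\Gamma$ is not a priori zero, and your substitution $\hat{\phi}_0|_\Gamma=\nabla(\ve_0^\pm\cdot\nabla d_\Gamma)\cdot\nabla d_\Gamma$ is unjustified. Once this is inserted, the rest of the argument goes through and agrees with the paper.
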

\begin{proof} It follows from \eqref{inn-0-ve-express} that \begin{align}
\hat{\ve}_0(\rho,x,t)=\mathbf{v}_{0}^+(x,t)=\mathbf{v}_{0}^-(x,t)\quad \text{on}\ \Gamma.\label{independent-2}
\end{align}
By \eqref{modieq:innerstokes12-4} and \eqref{modieq:innerstokes12-5} one has
\begin{align*}
\hat{\ve}_{\frac{1}{2}}(\rho,x,t)=\mathbf{u}_{0}d_{\frac{1}{2}}\eta(\rho)+\mathbf{v}_{\frac{1}{2}}^-(x,t)\quad \text{on}\ \Gamma.
\end{align*}
Together with \eqref{preeq:inner1-66} this leads to
\begin{align}
\hat{\ve}_{\frac{1}{2}}(\rho,x,t)\cdot\nabla d_{\Gamma}=\mathbf{v}_{\frac{1}{2}}^-(x,t)\cdot\nabla d_{\Gamma}\quad \text{on}\ \Gamma.\label{independent-3}
\end{align}
 According to \eqref{gamma definition:phi0} one has
 \begin{align}
 \hat{\phi}_0&=\mathbf{u}_{0}\cdot\nabla d_{\Gamma}\eta(\rho)+(\nabla d_{\Gamma}\cdot\nabla)\mathbf{v}_{0}^-(x,t)\cdot\nabla  d_{\Gamma}= (\nabla d_{\Gamma}\cdot\nabla)\mathbf{v}_{0}^-(x,t)\cdot\nabla  d_{\Gamma}\quad \text{on}\ \Gamma.\label{independent-3}
 \end{align}
This implies the statement.
\end{proof}
\begin{rem}\label{rem:Solvabilityd12}
  We note that solvability of \eqref{newevolution law:d12} together with a system for $\ve_{\frac12}$ is given by Theorem~\ref{solvingkorder} in the appendix and will be discussed later.
\end{rem}

In order to apply Lemma \ref{modified version of Lemma 2.4} to \eqref{modieq:innerstokes1} the equation
\begin{align}\label{formula:lg0}
&\big(p_0^+-p_0^-+\sigma\Delta d_{\Gamma}\big)\nabla d_{\Gamma}+\big(\ve_0^+-\ve_0^-\big)\partial_t d_{\Gamma}+\frac{1}{2}\big(\ve_0^++\ve_0^-\big)\cdot\nabla d_{\Gamma}\big(\ve_0^+-\ve_0^-\big)\nonumber\\&\quad-2\big(\nu^+ D\ve_0^+-\nu^- D\ve_0^-\big)\cdot\nabla d_{\Gamma}-\overline{\nu}\widetilde{\Div}\hat{\ve}_0+\mathbf{l}_{0}d_{\Gamma}+\overline{\nu}\mathbf{u}_0=0
\end{align}
has to be satisfied on $\Gamma(3\delta)$ because of Proposition~\ref{prop:1}.
Using \eqref{preeq:inner1-4}-\eqref{preeq:inner1-6} we obtain that  \eqref{formula:lg0} on $\Gamma$ is equivalent to
\begin{align}\label{formula:lg0000}
2\llbracket \nu^\pm D\ve_0^\pm \rrbracket\nn -\llbracket  p_0^\pm \rrbracket\nn =\sigma\Delta d_{\Gamma}\nn,\ \ \text{on}\ \Gamma,
\end{align}
i.e., the balance of normal stresses \eqref{eq:Limit3} has to hold, which is true by our assumptions.  In order to obtain \eqref{formula:lg0} on $\Gamma(3\delta)$ we define
\begin{eqnarray}\label{formula:l0}
\mathbf{l}_{0}=\left \{
\begin {array}{ll}
\frac{\big(p_0^--p_0^+-\sigma\Delta d_{\Gamma}\big)\nabla d_{\Gamma}+\big(\ve_0^--\ve_0^+\big)\partial_t d_{\Gamma}-\frac{1}{2}\big(\ve_0^++\ve_0^-\big)\cdot\nabla d_{\Gamma}\big(\ve_0^+-\ve_0^-\big)}{d_{\Gamma}}\\ \qquad+\frac{2\big(\nu^+ D\ve_0^+-\nu^- D\ve_0^-\big)\cdot\nabla d_{\Gamma}+\overline{\nu}\big(\widetilde{\Div}\hat{\ve}_0-\mathbf{u}_0\big)}{d_{\Gamma}}& \text{in }\Gamma(3\delta)\backslash\Gamma,\\
\\
\nn\cdot\nabla\bigg(\big(p_0^--p_0^+-\sigma\Delta d_{\Gamma}\big)\nabla d_{\Gamma}+\big(\ve_0^--\ve_0^+\big)\partial_t d_{\Gamma}\\ \qquad -\frac{1}{2}\big(\ve_0^++\ve_0^-\big)\cdot\nabla d_{\Gamma}\big(\ve_0^+-\ve_0^-\big)+2\big(\nu^+ D\ve_0^+-\nu^- D\ve_0^-\big)\cdot\nabla d_{\Gamma}\\ \qquad+\overline{\nu}\big(\widetilde{\Div}\hat{\ve}_0-\mathbf{u}_0\big)\bigg)& \text{on }\Gamma.
\end{array}
\right.
\end{eqnarray}
Then the solution of \eqref{modieq:innerstokes1} is given by
\begin{align}\label{formula:ve1}
\hat{\ve}_1-(\mathbf{u}_0d_{1}+\mathbf{u}_1d_{\Gamma})\eta(\rho)&=\ve_1^-+\mathbf{u}_{\frac{1}{2}}d_{\frac{1}{2}}\eta(\rho)+\int_{-\infty}^\rho\frac{1}{\nu(\theta_0(r))}\int_{-\infty}^r\mathbf{V}^{0}(s,x,t)\,ds\,dr\nonumber\\
&=:\ve_1^-+\mathbf{W}^{0}\qquad \text{in }\R\times \Gamma(3\delta)
\end{align}
because of the matching conditions, where $\mathbf{V}^{0}$ consists of the terms up to zero order. Passing to the limit $\rho\to \infty$ yields
\begin{align}
\mathbf{u}_0d_{1}+\mathbf{u}_1d_{\Gamma}+\mathbf{u}_{\frac{1}{2}}d_{\frac{1}{2}}+\mathbf{W}^{0}|_{\rho=+\infty}=\mathbf{v}_{1}^+-\mathbf{v}_{1}^-,\label{modieq:innerstokes1-4}
\end{align}
which immediately implies that
\begin{align}
\hat{\ve}_{1}(\rho,x,t)=\mathbf{v}_{1}^+(x,t)\eta(\rho)+\mathbf{v}_{1}^-(x,t)(1-\eta(\rho))+\mathbf{W}^{0}(\rho,x,t)-\mathbf{W}^{0}(x,t,+\infty)\eta(\rho)\label{modieq:innerstokes1-5}
\end{align}
for all $\rho\in\R$, $(x,t)\in\Gamma(3\delta)$ and
\begin{eqnarray}\label{formula:u1}
\mathbf{u}_{1}=\left \{
\begin {array}{ll}
\frac{\mathbf{v}_{1}^+-\mathbf{v}_{1}^--\mathbf{u}_{0}d_{1}-\mathbf{u}_{\frac{1}{2}}d_{\frac{1}{2}}-\mathbf{W}^{0}|_{\rho=+\infty}}{d_{\Gamma}}& \text{in }\Gamma(3\delta)\backslash\Gamma,\\
\\
\nn\cdot\nabla\big(\mathbf{v}_{1}^+-\mathbf{v}_{1}^--\mathbf{u}_{0}d_{1}-\mathbf{u}_{\frac{1}{2}}d_{\frac{1}{2}}-\mathbf{W}^{0}|_{\rho=+\infty}\big)&\text{on }  \Gamma.
\end{array}
\right. 
\end{eqnarray}
In order to determine $\hat{p}_0$ we multiply \eqref{modieq:innerstokes1} with $\nabla d_\Gamma$, use \eqref{modieq:innerdiv1}, and obtain
\begin{align}
  &\partial_{\rho}\bigg(\hat{p}_0+\hat{\ve}_0\cdot\nabla d_{\Gamma}\partial_t d_{\Gamma}+\hat{\ve}_0\cdot\nabla d_{\Gamma}\hat{\ve}_{0}\cdot\nabla d_{\Gamma} -\big(2\nu(\theta_0
    )D\hat{\ve}_0:\nabla d_{\Gamma}\otimes\nabla d_{\Gamma}\big)\nonumber\\&\qquad-\int_{-\infty}^\rho\Div\big(2\nu(\theta_0(r))D_d\hat{\ve}_0(r,\cdot)\big)\cdot\nabla d_{\Gamma}\,dr
+\nu(\theta_0)\Div\hat{\ve}_0\nonumber\\&\qquad+\mathbf{l}_0d_{\Gamma}\eta\cdot\nabla d_{\Gamma}
  -\nu(\theta_0)\mathcal{A}^0+\int_{-\infty}^\rho\widetilde{\mathbf{V}}^0(r,\cdot)\cdot\nabla d_{\Gamma}dr\bigg)=0,
\end{align}
where $\mathcal{A}^{k-1}$ and $\widetilde{\mathbf{V}}^{k-1}$ consist of some terms up to $0$ order. Integrating this equation on $(-\infty,\rho)$ and using the matching condition for $\hat{p}_0$ determines $\hat{p}_0$.

In summary the equations for $(\hat{\ve}_1, \ue_1, \hat{p}_0)$ are solvable if $(\ve_0^\pm, p_0^\pm, (\Gamma_t)_{t\in [0,T_0]})$ solves the sharp interface limit system \eqref{eq:Limit1}-\eqref{eq:Limit5} and we have:
\begin{lem}[The zeroth order terms]
  \label{zeroorder}~\\
Let $\hat{p}_{-1}= (\theta_0')^2$, let $(\ve_0^\pm, p_0^\pm, (\Gamma_t)_{t\in [0,T_0]})$ be the solution of \eqref{eq:Limit1}-\eqref{eq:Limit5}, and let $(\mathbf{v}_0^{\pm},p_0^{\pm})$ be extended to $\Omega^{\pm}\cup\Gamma(3\delta)$ as in Remark~\ref{Outer-Rem}. Moreover, we define
$c_{0}^{\pm}(x,t)=\pm1$ for all $(x,t)\in \Omega^{\pm}\cup\Gamma(3\delta)$ and $\hat{c}_{0},\hat{\mathbf{v}}_{0}, \ue_0, \mathbf{l}_0$ by \eqref{eq:c0}, \eqref{inn-0-ve-express}, \eqref{formula:u0}, and \eqref{formula:l0}, respectively.
Then the outer equations \eqref{eq:c0out},  \eqref{eq:stokes-Outerk},
\eqref{eq:Outervpdefine} (for $k=0$), the inner equations \eqref{modieq:innerstokesk},
\eqref{modieq:innerdivk}, \eqref{modieq:innerALk}
(for $k=0$), the inner-outer matching conditions \eqref{eq:matchcon}
(for $k=0$) are satisfied on $\Gamma(3\delta)$. Finally, the compatibility condition for \eqref{modieq:innerstokes1}, which is equivalent to \eqref{formula:lg0}, is satisfied on $\Gamma(3\delta)$.
\end{lem}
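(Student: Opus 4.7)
\begin{proof*}{ (plan) of Lemma~\ref{zeroorder}}
The statement is a consolidation of the derivations carried out in Section~\ref{subsec:The-Outer-Expansion} and Section~\ref{subsec:The-Inner-Expansion}, so the plan is to verify each of the four claims in turn and to be explicit about which compatibility conditions get absorbed into the definitions of $\ue_0$ and $\mathbf{l}_0$.

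First I would dispatch the outer equations. The choice $c_0^\pm = \pm 1$ trivially satisfies \eqref{eq:c0out} and is consistent with $f'(\pm 1)=0$, so the outer Allen--Cahn equation at order $\eps^0$ holds. For \eqref{eq:stokes-Outerk}--\eqref{bccondition:vek} with $k=0$ this is exactly the two-phase Navier--Stokes system \eqref{eq:Limit1}--\eqref{eq:Limit2} and \eqref{eq:Limit6} for $(\ve_0^\pm,p_0^\pm)$, which is assumed; the extension of these data into $\Gamma(3\delta)$ preserving $\Div\ve_0^\pm = 0$ is provided by Remark~\ref{Outer-Rem}.

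Next I would verify the $O(\eps^0)$ inner equations. The Allen--Cahn equation \eqref{modieq:innerALk} with $k=0$ reads $\partial_\rho^2\hat{c}_0 - f'(\hat{c}_0) = 0$, so the unique bounded monotone odd normalization $\hat{c}_0 = \theta_0$ from \eqref{eq:OptProfile} is a solution and clearly fulfils \eqref{eq:matchcon}. For \eqref{modieq:innerstokesk}--\eqref{modieq:innerdivk} at $k=0$, i.e.\ \eqref{modieq:innerstokes0}--\eqref{modieq:innerdiv0}, the choice $\hat{p}_{-1} = (\theta_0')^2$ eliminates the right-hand side of \eqref{modieq:innerstokes0} since $2\theta_0'\theta_0'' + \partial_\rho\hat{p}_{-1} = 0$, and Lemma~\ref{modified version of Lemma 2.4} then yields $\hat{\ve}_0 = \overline{\ve}_0 + \ue_0 d_\Gamma(\eta-\tfrac12)$ up to a constant (in $\rho$) vector field. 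The matching conditions \eqref{eq:matchcon} as $\rho\to\pm\infty$ identify $\overline{\ve}_0 = \tfrac12(\ve_0^+ + \ve_0^-)$ and $\ue_0 d_\Gamma = \ve_0^+ - \ve_0^-$, which is exactly the definition \eqref{formula:u0} (the value on $\Gamma$ being obtained by the jump condition \eqref{eq:Limit4}, so that $\ue_0$ extends smoothly). This gives both \eqref{inn-0-ve-express} and consistency with \eqref{modieq:innerdiv0} since $\ue_0 d_\Gamma \cdot \nabla d_\Gamma = 0$ on $\Gamma$ by \eqref{preeq:inner1-66}, which in turn follows from \eqref{eq:Limit4}. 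The matching conditions then hold by construction with exponential rate inherited from $\eta - \chi_{\rho\gtrless 0}$ and $\theta_0 \mp 1$.

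Finally I would turn to the compatibility condition for \eqref{modieq:innerstokes1}, which is the heart of the lemma. By Lemma~\ref{modified version of Lemma 2.4} solvability of this $O(\eps)$ momentum equation in $\rho$ requires that the integral of its right-hand side over $\R$ vanish. Using Proposition~\ref{prop:1} (equations \eqref{preeq:inner1-4}--\eqref{preeq:inner1-6}), together with the already computed $\hat{\ve}_0, \hat{p}_{-1}$ and the vanishing of $\hat{p}_{-\frac12}$, the integrated equation is exactly \eqref{formula:lg0}, which on $\Gamma$ reduces to the normal-stress balance \eqref{formula:lg0000}, i.e.\ \eqref{eq:Limit3}, and therefore holds there by assumption. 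Off $\Gamma$, the identity \eqref{formula:lg0} is then \emph{enforced} by the definition \eqref{formula:l0} of $\mathbf{l}_0$, which is precisely the quotient of the left-hand side of \eqref{formula:lg0} by $d_\Gamma$; the value on $\Gamma$ is obtained as a normal derivative and is well-defined thanks to \eqref{formula:lg0000} (the numerator vanishes on $\Gamma$), so $\mathbf{l}_0$ is smooth in $\Gamma(3\delta)$. The one step that needs slightly more care is checking that the $\rho$-dependent factors (notably the contribution from $\eta-\tfrac12$ in the convective and viscous terms) integrate correctly; this is exactly the content of Proposition~\ref{prop:1}, which I would invoke directly. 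With this, all hypotheses of Lemma~\ref{modified version of Lemma 2.4} for the $O(\eps)$ momentum equation are in place, completing the verification.
\end{proof*}
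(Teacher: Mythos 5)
Your proof plan follows exactly the route the paper takes: Lemma~\ref{zeroorder} is not given a separate proof in the manuscript but is stated as a consolidation of the derivations in Sections~\ref{subsec:The-Outer-Expansion} and \ref{subsec:The-Inner-Expansion} (paragraphs ``Solving $\hat{p}_{-1}$, $\ve_0$, $\ue_0$, and $\ve_0^\pm$'' and ``Solving the $O(\varepsilon)$-order terms''), and you reconstruct that derivation faithfully. Two small imprecisions are worth flagging. First, your step ``$\hat{p}_{-1}=(\theta_0')^2$ eliminates the right-hand side since $2\theta_0'\theta_0''+\partial_\rho\hat{p}_{-1}=0$'' is arithmetically inconsistent: with $\hat{p}_{-1}=(\theta_0')^2$ one has $\partial_\rho\hat{p}_{-1}=2\theta_0'\theta_0''$ and the sum is $4\theta_0'\theta_0''$, not $0$; the correct choice is $\hat{p}_{-1}=-(\theta_0')^2$ as in \eqref{inn-0-ve-2}. (The sign in the lemma statement itself appears to be a typo, and the paper elsewhere writes $\frac12(\theta_0')^2$; only $-(\theta_0')^2$ actually kills the right-hand side.) Second, you justify \eqref{modieq:innerdiv0} via $\ue_0\cdot\nabla d_\Gamma=0$ from \eqref{preeq:inner1-66}, but that identity holds only on $\Gamma$, whereas \eqref{modieq:innerdiv0} must hold throughout $\Gamma(3\delta)$; the actual reason is more elementary: with $\hat{\ve}_0=\ol{\ve}_0+\ue_0 d_\Gamma(\eta-\tfrac12)$ one has $\partial_\rho\hat{\ve}_0=\ue_0 d_\Gamma\eta'$ pointwise, so the bracket $\partial_\rho\hat{\ve}_0-\ue_0 d_\Gamma\eta'$ vanishes identically and \eqref{modieq:innerdiv0} holds trivially. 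Neither slip affects the overall correctness of the argument.
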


\subsubsection{Solving the Higher Order Terms}

\noindent
{\it Determining $\hat{c}_{\frac{3}{2}}$, the evolution law of $d_1$ on $\Gamma$, and $g_0$:} Taking $k=\frac{3}{2}$ in \eqref{modieq:innerALk} one has
\begin{align}
\partial^2_{\rho}\hat{c}_{\frac{3}{2}}-f''(\theta_0)\hat{c}_{\frac{3}{2}}=&\theta_0'(\rho)\partial_t d_{1}+\theta_0'(\rho)\big(\hat{\ve}_0\cdot\nabla d_{1}+\hat{\ve}_{\frac{1}{2}}\cdot\nabla d_{\frac{1}{2}}+\hat{\ve}_1\cdot\nabla d_{\Gamma}\big)-\theta_0'(\rho)\Delta d_{\frac{1}{2}}
\nonumber\\&-\theta_0'(\rho)\big(\hat{\phi}_0d_{1}+\hat{\phi}_{\frac{1}{2}}d_{\frac{1}{2}}\big)+\eta'(\rho)g_0d_{\Gamma}+\theta_0'(\rho)\rho\hat{\phi}_{0}.\label{modieq:innerAL32}
\end{align}
The compatibility condition \eqref{compatiblity:ODEsolver} for \eqref{modieq:innerAL32}  is equivalent to
\begin{align}
\mathcal{D}_1+\sigma^{-1}\sigma_1g_0d_{\Gamma}=0\qquad \text{in }\Gamma(3\delta),\label{evolution law:d1}
\end{align}
where $\sigma_1:=\int_{-\infty}^{\infty}\theta_0'(\rho)\eta'(\rho)d\rho$
\begin{align*}
\mathcal{D}_1:=\partial_t d_{1}+\frac{\ve_0^{+}+\ve_0^{-}}{2}\cdot\nabla d_{1}+\frac{\ve_1^{+}+\ve_1^{-}}{2}\cdot\nabla d_{\Gamma}-\sigma^{-1}\int_{-\infty}^{+\infty}\big(\theta_0'(\rho)\big)^2\hat{\phi}_0(\rho,\cdot)\, d\rho \, d_{1}+D_{\frac{1}{2}}
\end{align*}
and $D_{\frac{1}{2}}$ depends on the terms up to order $\frac{1}{2}$, which were determined before. Here we have used the definition of $\hat{\ve}_0$ and $\int_{\R}\theta'(\rho)(\eta(\rho)-\tfrac12)\, d\rho=0$ since $\eta-\tfrac12$ is odd.
On $\Gamma$ the latter equation is satisfied if and only if $d_1$ solves the evolution equation
\begin{align}
\partial_t d_{1}+\ve_0^{\pm}\cdot\nabla d_{1}+\frac{\ve_1^{+}+\ve_1^{-}}{2}\cdot\nabla d_{\Gamma}-\hat{\phi}_0 d_{1}+D_{\frac{1}{2}}=0\ \ \text{on}\ \Gamma.\label{evolution law:d1gamma}
\end{align}
 In order to satisfy the compatibility condition on $\Gamma(3\delta)\setminus \Gamma$ we define
\begin{eqnarray}\label{formula:g0}
g_0=\left \{
\begin {array}{ll}
-\frac{\sigma\sigma_1^{-1}\mathcal{D}_1}{d_{\Gamma}}& \text{in }\Gamma(3\delta)\backslash\Gamma,\\
\\
-\sigma\sigma_1^{-1}\nn\cdot\nabla\mathcal{D}_1& \text{on }  \Gamma.
\end{array}
\right.
\end{eqnarray}

\noindent
{\it Determining $\hat{c}_{k}$ for $k \geq 2$, the evolution law of $d_{k-\frac{1}{2}}$ on $\Gamma$, and $g_{k-\frac{3}{2}}$:} First of all, we  rewrite \eqref{modieq:innerALk} as
\begin{align}
&\partial^2_{\rho}\hat{c}_{k}-f''(\theta_0)\hat{c}_{k}=\mathcal{S}_{1,k-\frac{3}{2}}(\theta_0,\hat{c}_{\frac{3}{2}},\cdots,\hat{c}_{k-\frac{1}{2}},d_0,\cdots,d_{k-1},\hat{\ve}_0,\cdots,\hat{\ve}_{k-\frac{3}{2}},g_0,\cdots,g_{k-2},\hat{\phi}_0,\hat{\phi}_{\frac{1}{2}})
\nonumber\\&\quad +\theta_0'\partial_t d_{k-\frac{1}{2}}+\theta_0'\sum\limits_{j=0,j\in\frac12\N_0}^{k-\frac{1}{2}}\hat{\ve}_j\cdot\nabla d_{k-\frac12-j}-\theta_0'\Delta d_{k-1}-\theta_0'\hat{\phi}_0d_{k-\frac{1}{2}}+\eta'g_{k-\frac{3}{2}}d_{0},\label{modieq:innerALk-high}
\end{align}
where $\mathcal{S}_{1,k-\frac{3}{2}}$ depends on lower order terms which are known by the induction hypothesis.
Then the compatibility condition \eqref{compatiblity:ODEsolver} for \eqref{modieq:innerALk-high}  is equivalent to
\begin{align}
\mathcal{D}_{k-\frac{1}{2}}+\sigma^{-1}\sigma_1g_{k-\frac{3}{2}}d_{\Gamma}=0\qquad \text{on }\Gamma(3\delta),\label{evolution law:h0}
\end{align}
where
\begin{align*}
\mathcal{D}_{k-\frac{1}{2}}&=\sigma^{-1}\int_{-\infty}^{+\infty}\theta_0'(\rho)\mathcal{S}_{1,k-\frac{3}{2}}\, d\rho+\partial_t d_{k-\frac{1}{2}}-\Delta d_{k-1}
\nonumber\\&\quad +\sigma^{-1}\int_{-\infty}^{+\infty}\big(\theta_0'(\rho)\big)^2\bigg(\sum\limits_{j=0,j\in\frac12\N_0}^{k-\frac{1}{2}}\hat{\ve}_j\cdot\nabla d_{k-\frac12-j}-\hat{\phi}_0d_{k-\frac{1}{2}}\bigg)d\rho.
\end{align*}
It is satisfied if $d_{k-\frac{1}{2}}$ solves
\begin{align}
&\partial_t d_{k-\frac{1}{2}}+\sigma^{-1}\int_{-\infty}^{+\infty}\big(\theta_0'(\rho)\big)^2\bigg(\sum\limits_{j=0,j\in\frac12\N_0}^{k-\frac{1}{2}}\hat{\ve}_j\cdot\nabla d_{k-\frac12-j}-\hat{\phi}_0d_{k-\frac{1}{2}}\bigg)d\rho\nonumber\\&-\Delta d_{k-1}+\sigma^{-1}\int_{-\infty}^{+\infty}\theta_0'(\rho)\mathcal{S}_{1,k-\frac{3}{2}}d\rho=0\quad \text{on }\Gamma\label{evolution law:dk12gamma}
\end{align}
and
\begin{eqnarray}\label{formula:gk32}
g_{k-\frac{3}{2}}=\left \{
\begin {array}{ll}
-\frac{\sigma\sigma_1^{-1}\mathcal{D}_{k-\frac{1}{2}}}{d_{\Gamma}}& \text{in } \Gamma(3\delta)\backslash\Gamma,\\
\\
-\sigma\sigma_1^{-1}\nn\cdot\nabla\mathcal{D}_{k-\frac{1}{2}}& \text{on }\Gamma.
\end{array}
\right.
\end{eqnarray}

\noindent
{\it Determining $\hat{\ve}_{k}$, $k\geq \frac32$, the jump conditions on $\Gamma$ and $\mathbf{l}_{k-1}$:}
Firstly, the compatibility condition \eqref{modified version of Lemma 2.4-2} for  \eqref{modieq:innerstokesk} is equivalent to
\begin{align}
  & \big(\ve_{0}^+- \ve_{0}^-\big)\partial_td_{k-1}+\sigma\Delta d_{k-1}\nabla d_\Gamma+\big(\ve_{0}^+- \ve_{0}^-\big)\frac{\ve_{0}^++\ve_{0}^-}{2}\cdot\nabla d_{k-1}
    \nonumber\\
 &\quad+\big(\sigma\Delta d_\Gamma+p_{0}^+- p_{0}^--2\big(\nu^+ D\ve_{0}^+- 2\nu^- D\ve_{0}^-\big)\big)\nabla d_{k-1}
 +\big(\ve_{0}^+- \ve_{0}^-\big)\int_{-\infty}^{\infty}\eta'\hat{\ve}_{k-1}\cdot\nabla d_{\Gamma}d\rho\nonumber\\
 &\quad+\int_{-\infty}^{\infty}\partial_{\rho}\hat{\ve}_{k-1}\big(\partial_td_{\Gamma}+\hat{\ve}_0\cdot\nabla d_{\Gamma}\big)d\rho-\int_{-\infty}^{\infty}\Div\big(2\nu(\theta_0){\big(D_d\hat{\ve}\big)_{k-1}}\big)\big)d\rho
   \nonumber\\
 &\quad-\big( 2\nu^+ D\ve_{k-1}^+- 2\nu^- D\ve_{k-1}^-\big)\nabla d_{\Gamma}+(p_{k-1}^+-p_{k-1}^-)\nabla d_{\Gamma}+\mathbf{l}_{0}d_{k-1}+\mathbf{l}_{k-1}d_{\Gamma}\nonumber\\
 & =\mathcal{S}_{2,k-\frac{3}{2}}(\theta_0,\hat{c}_{\frac{3}{2}},\cdots,\hat{c}_{k-1},d_0,\cdots,d_{k-2},\hat{\ve}_0,\cdots,\hat{\ve}_{k-\frac{3}{2}},\mathbf{l}_0,\cdots,\mathbf{l}_{k-\frac{3}{2}}).\label{oldmodieq:innerstokes1-solvability-k2}
\end{align}
Here
 $\mathcal{S}_{2,k-\frac{3}{2}}$ depends on low order terms, which were determined before.

If it is satisfied,
the solution to \eqref{modieq:innerstokesk} is given by
\begin{align}\label{formula:vek}
\hat{\ve}_k-(\mathbf{u}_0d_{k}+\mathbf{u}_kd_{\Gamma})\eta&=\ve_k^-+(\mathbf{u}_{\frac{1}{2}}d_{k-\frac{1}{2}}+\mathbf{u}_{k-\frac{1}{2}}d_{\frac{1}{2}})\eta+\int_{-\infty}^\rho\frac{1}{\nu(\theta_0(r))}\int_{-\infty}^r\mathbf{V}^{k-1}(s,x,t)\,ds\,dr\nonumber\\
&=:\ve_k^-+(\mathbf{u}_{\frac{1}{2}}d_{k-\frac{1}{2}}+\mathbf{u}_{k-\frac{1}{2}}d_{\frac{1}{2}})\eta+\mathbf{W}^{k-1},
\end{align}
where $\mathbf{V}^{k-1}$ consists of terms up to $k-1$ order.
By taking $\rho\to+\infty$ in \eqref{formula:vek} and the matching conditions for $\hat{\ve}_k$ we obtain
\begin{align*}
\ve_k^+-\ve_k^-=\mathbf{u}_0d_{k}+\mathbf{u}_kd_{\Gamma}+\mathbf{u}_{\frac{1}{2}}d_{k-\frac{1}{2}}+\mathbf{u}_{k-\frac{1}{2}}d_{\frac{1}{2}}+\mathbf{W}^{k-1}|_{\rho=+\infty},
\end{align*}
which yields
\begin{align}\label{jumpcondition:vek}
{\llbracket \ve_k^{\pm}\rrbracket\cdot\nn}&=\big(\mathbf{u}_{\frac{1}{2}}d_{k-\frac{1}{2}}+\mathbf{u}_{k-\frac{1}{2}}d_{\frac{1}{2}}+\mathbf{W}^{k-1}|_{\rho=+\infty}\big)\cdot\nn+\mathbf{u}_0\cdot\nn d_{k}=:\hat{a}_{k-\frac{1}{2}}\quad \text{on}\ \Gamma,
\end{align}
 where we have used $\mathbf{u}_0\cdot\nn|_{\Gamma}=0$ due to Proposition~\ref{prop:1},  and
\begin{align}\label{jumpcondition:vek-0}
\llbracket \ve_k^{\pm}\rrbracket\cdot\btau&=\big(\mathbf{u}_{\frac{1}{2}}d_{k-\frac{1}{2}}+\mathbf{u}_{k-\frac{1}{2}}d_{\frac{1}{2}}+\mathbf{W}^{k-1}|_{\rho=+\infty}\big)\cdot\btau+\mathbf{u}_0\cdot\btau d_{k}
=:\check{a}_{k-\frac{1}{2}}+\mathbf{u}_0\cdot\btau d_{k}
\end{align}
on $\Gamma$.
Since by the induction hypothesis one assumes that the compatibility condition \eqref{oldmodieq:innerstokes1-solvability-k2} for $k-1$ instead of $k$ is already satisfied, one obtains 
\begin{align}\label{newformula:vek-1}
\hat{\ve}_{k-1}=&\ve_{k-1}^+\eta(\rho)+\ve_{k-1}^-(1-\eta(\rho))+(1-\eta(\rho))\int_{-\infty}^\rho\frac{1}{\nu(\theta_0(r))}\int_{-\infty}^r\mathbf{V}^{k-2}(s,\cdot)\,ds\,dr
\nonumber\\&-\eta(\rho)\int_\rho^{+\infty}\frac{1}{\nu(\theta_0(r))}\int_{-\infty}^r\mathbf{V}^{k-2}(s,\cdot)\,ds\,dr,
\end{align}
where $\mathbf{V}_{-1}\equiv 0$.
Inserting this we can rewrite  \eqref{oldmodieq:innerstokes1-solvability-k2} as
\begin{align}
\mathcal{J}_{k-1}+\mathbf{l}_{k-1}d_{\Gamma}=\tilde{\mathcal{S}}_{k-\frac{3}{2}},\label{newmodieq:innerstokes1-solvability-k2}
\end{align}
where
\begin{align}
\mathcal{J}_{k-1}&=\big(\ve_{0}^+- \ve_{0}^-\big)\partial_td_{k-1}+\sigma\Delta d_{k-1}\nabla d_\Gamma+\big(\ve_{0}^+- \ve_{0}^-\big)\frac{\ve_{0}^++\ve_{0}^-}{2}\cdot\nabla d_{k-1}
    \nonumber\\
 &+\big(\sigma\Delta d_\Gamma+p_{0}^+- p_{0}^--2\big(\nu^+ D\ve_{0}^+- 2\nu^- D\ve_{0}^-\big)\big)\nabla d_{k-1}+\mathbf{l}_{0}d_{k-1}
 \nonumber\\
 &+\big(\ve_{0}^+- \ve_{0}^-\big)\frac{\ve_{k-1}^++\ve_{k-1}^-}{2}\cdot\nabla d_{\Gamma}+ \big(\ve_{k-1}^+- \ve_{k-1}^-\big)\partial_td_{\Gamma}+\big(\ve_{k-1}^+- \ve_{k-1}^-\big)\frac{\ve_{0}^++\ve_{0}^-}{2}\cdot\nabla d_{\Gamma}
\nonumber\\&-\overline{\nu}\left(( \ve_{k-1}^+- \ve_{k-1}^-)\cdot\nabla\right) \nabla d_{\Gamma}-\overline{\nu}\big( \ve_{k-1}^+- \ve_{k-1}^-\big)\Delta d_{\Gamma}-\overline{\nu}(\nabla d_{\Gamma}\cdot \nabla)\big( \ve_{k-1}^+- \ve_{k-1}^-\big)
  \nonumber\\&{-\overline{\nu}\left(( \ve_{0}^+- \ve_{0}^-)\cdot\nabla \right)\nabla d_{k-1}-\overline{\nu}\big( \ve_{0}^+- \ve_{0}^-\big)\Delta d_{k-1}-\overline{\nu}\nabla\big( \ve_{0}^+- \ve_{0}^-\big)\nabla d_{k-1}}
 \nonumber\\&-\big( 2\nu^+ D\ve_{k-1}^+- 2\nu^- D\ve_{k-1}^-\big)\nabla d_{\Gamma}+(p_{k-1}^+-p_{k-1}^-)\nabla d_{\Gamma}.
\end{align}
Here we have used
\begin{align*}
 \int_{-\infty}^{\infty}\Div\big(2\nu(\theta_0)\big(D_d\hat{\ve}\big)_{k-1}\big)\big)d\rho&=\overline{\nu}\left(( \ve_{k-1}^+- \ve_{k-1}^-)\cdot\nabla\right) \nabla d_{\Gamma}+\overline{\nu}\big( \ve_{k-1}^+- \ve_{k-1}^-\big)\Delta d_{\Gamma}\nonumber\\&\quad +\overline{\nu}(\nabla d_{\Gamma}\cdot \nabla)\big( \ve_{k-1}^+- \ve_{k-1}^-\big)
 +\overline{\nu}\left(( \ve_{0}^+- \ve_{0}^-)\cdot\nabla \right)\nabla d_{k-1} \nonumber\\&\quad +\overline{\nu}\big( \ve_{0}^+- \ve_{0}^-\big)\Delta d_{k-1}+\overline{\nu}\nabla\big( \ve_{0}^+- \ve_{0}^-\big)\nabla d_{k-1},
\end{align*}
which is shown in the same way as \eqref{preeq:inner1-5}.

Therefore \eqref{oldmodieq:innerstokes1-solvability-k2} is satisfied if
\begin{align}
\tilde{\mathcal{S}}_{k-\frac{3}{2}}=&\sigma\Delta d_{k-1}\nabla d_\Gamma+\big(\sigma\Delta d_\Gamma+\llbracket p_{0}^\pm\rrbracket-2\llbracket\nu^\pm D\ve_{0}^\pm\rrbracket{-\overline{\nu}\llbracket\nabla\ve_{0}^\pm\rrbracket\big)\nabla d_{k-1}}+\mathbf{l}_{0}d_{k-1}
 \nonumber\\
 &+ \llbracket\ve_{k-1}^\pm\rrbracket\big(\partial_td_{\Gamma}+\ve_{0}^\pm\cdot\nabla d_{\Gamma}-\overline{\nu}\Delta d_{\Gamma}-\overline{\nu}\nabla^2d_{\Gamma}\big)
\nonumber\\&-\overline{\nu}\llbracket\nabla\ve_{k-1}^\pm\rrbracket\cdot\nabla d_{\Gamma}-2\llbracket\nu^\pm D\ve_{k-1}^\pm\rrbracket\nabla d_{\Gamma}+\llbracket p_{k-1}^\pm\rrbracket\nabla d_{\Gamma} \quad \text{on }\Gamma
\label{jumpcondition:lk-1}
\end{align}
and
\begin{eqnarray}\label{formula:lk-1}
\mathbf{l}_{k-1}=\left \{
\begin {array}{ll}
\frac{\tilde{\mathcal{S}}_{k-\frac{3}{2}}-\mathcal{J}_{k-1}}{d_{\Gamma}}& \text{in } \Gamma(3\delta)\backslash\Gamma,\\
\\
\nn\cdot\nabla\big(\tilde{\mathcal{S}}_{k-\frac{3}{2}}-\mathcal{J}_{k-1}\big)& \text{on } \Gamma.
\end{array}
\right.
\end{eqnarray}
In order to satisfy the matching conditions for $\hat{\ve}_k$ we define  $\mathbf{u}_{k}$ by
\begin{eqnarray}\label{formula:uk}
\mathbf{u}_k=\left \{
\begin {array}{ll}
\frac{\mathbf{v}_k^{+}-\mathbf{v}_k^{-}-\mathbf{u}_{0}d_{k}-\mathbf{u}_{\frac{1}{2}}d_{k-\frac{1}{2}}-\mathbf{u}_{k-\frac{1}{2}}d_{\frac{1}{2}}-\mathbf{W}^{k-1}|_{\rho=+\infty}}{d_{\Gamma}}&\ \text{on } \Gamma(3\delta)\setminus\Gamma,\\
\nn\cdot\nabla\big(\mathbf{v}_k^{+}-\mathbf{v}_k^{-}-\mathbf{u}_{0}d_{k}-\mathbf{u}_{\frac{1}{2}}d_{k-\frac{1}{2}}-\mathbf{u}_{k-\frac{1}{2}}d_{\frac{1}{2}}-\mathbf{W}^{k-1}|_{\rho=+\infty}\big)&\ \text{on } \Gamma.
\end{array}
\right.
\end{eqnarray}
Then
\begin{align}\label{newformula:vek}
\hat{\ve}_k(\rho,x,t)=&\ve_k^+(x,t)\eta(\rho)+\ve_k^-(x,t)(1-\eta(\rho))+(1-\eta(\rho))\int_{-\infty}^\rho\frac{1}{\nu(\theta_0(r))}\int_{-\infty}^r\mathbf{V}^{k-1}(s,x,t)\,ds\,dr
\nonumber\\&-\eta\int_\rho^{+\infty}\frac{1}{\nu(\theta_0(r))}\int_{-\infty}^r\mathbf{V}^{k-1}(s,x,t)\,ds\,dr,
\end{align}
satisfies the inner-outer matching conditions \eqref{eq:matchcon}.

In order to determine $\hat{p}_{k-1}$ we multiply \eqref{modieq:innerstokesk} by $\nabla d_{\Gamma}$ and use \eqref{modieq:innerdivk}. This yields
\begin{align}
  &\partial_{\rho}\bigg(\hat{p}_{k-1}+\hat{\ve}_{k-1}\cdot\nabla d_{\Gamma}\partial_t d_{\Gamma}+\hat{\ve}_{k-1}\cdot\nabla d_{\Gamma}\hat{\ve}_{0}\cdot\nabla d_{\Gamma} -\big(2\nu(\theta_0
    )D\hat{\ve}_{k-1}:\nabla d_{\Gamma}\otimes\nabla d_{\Gamma}\big)\nonumber\\&\qquad-\int_{-\infty}^\rho\Div\big(2\nu(\theta_0(r)){\big(D_d\hat{\ve}\big)_{k-1}}\big)(r,\cdot)\big)\cdot\nabla d_{\Gamma}\,dr
+\nu(\theta_0)\Div\hat{\ve}_{k-1}\nonumber\\&\qquad+\mathbf{l}_{k-1}d_{\Gamma}\eta\cdot\nabla d_{\Gamma}
  -\nu(\theta_0)\mathcal{A}^{k-1}+\int_{-\infty}^\rho\widetilde{\mathbf{V}}^{k-1}(r,\cdot)\cdot\nabla d_{\Gamma}dr\bigg)=0,\label{modieq:innerstokesk-new}
\end{align}
where $\mathcal{A}^{k-1}$ and $\widetilde{\mathbf{V}}^{k-1}$ consist of some terms up to $k-1$ order.
Thus
\begin{align}
\hat{p}_{k-1}&=\nu(\theta_0(\rho))\mathcal{A}^{k-1}-\nu(\theta_0(\rho))\Div\hat{\ve}_{k-1}
               +\big(\ve_{k-1}^+\eta+\ve_{k-1}^-(1-\eta)\big)\cdot\nabla d_{\Gamma}\partial_t d_{\Gamma}\nonumber\\&\ -\hat{\ve}_{k-1}\cdot\nabla d_{\Gamma}\partial_t d_{\Gamma} +2\nu(\theta_0(\rho))D\hat{\ve}_{k-1}:\nabla d_{\Gamma}\otimes\nabla d_{\Gamma}+p_{k-1}^+\eta+p_{k-1}^-(1-\eta)\nonumber\\
&\  -\left(2\nu^+ D\ve_{k-1}^+\eta+ 2\nu^- D\ve_{k-1}^-(1-\eta)\right):\nabla d_{\Gamma}\otimes\nabla d_{\Gamma}
\nonumber\\&\ +\eta\int_{+\infty}^\rho\Div\big(2\nu(\theta_0(r)){\big(D_d\hat{\ve}\big)_{k-1}}\big)\big)\cdot\nabla d_{\Gamma}\,dr+(1-\eta)\int_{-\infty}^\rho\Div\big(2\nu(\theta_0(r){\big(D_d\hat{\ve}\big)_{k-1}}\big)\big)\cdot\nabla d_{\Gamma}\,dr
\nonumber\\&\ +\eta\int_\rho^{+\infty}\widetilde{\mathbf{V}}^{k-1}(r,\cdot)\cdot\nabla d_{\Gamma}\, dr-(1-\eta)\int_{-\infty}^\rho\widetilde{\mathbf{V}}^{k-1}(r,\cdot)\cdot\nabla d_{\Gamma}\,dr \quad \text{in }\Gamma(3\delta),
\label{modieq:innerstokesk-new1}
\end{align}
which satisfies  the inner-outer matching conditions \eqref{eq:matchcon}. In summary we have:

\begin{lem}[The $k$-th order terms]
 \label{k-thorder}~\\
Let $k\geq\frac12$  and  all functions with negative index be supposed to be zero.  Then there are
smooth functions
\[
\hat{\mathbf{v}}_{k},\mathbf{v}_{k}^{\pm},\mathbf{u}_{k},\mathbf{l}_{k-1},\hat{c}_{k},c_{k}^{\pm},g_{k-\frac{3}{2}},d_{k},\hat{p}_{k-1},p_{k}^{\pm},
\]
which are bounded on their respective domains, such that for the $k$-th
order the outer equations  \eqref{eq:c0out},  \eqref{eq:stokes-Outerk},
\eqref{eq:Outervpdefine}, the inner equations \eqref{modieq:innerstokesk},
\eqref{modieq:innerdivk}, \eqref{modieq:innerALk},
the inner-outer matching conditions \eqref{eq:matchcon} are satisfied. Moreover, $(\mathbf{v}_{k}^{\pm},
p_{k}^{\pm},d_{k})$ satisfies
\begin{alignat}{2}
\partial_t\mathbf{v}_{k}^{\pm}-\nu^\pm\Delta\ve_{k}^{\pm}+\nabla p_{k}^{\pm}  &=0  &\ &\text{in }\Omega^{\pm},\label{system:korder-1}\\
\operatorname{div}\mathbf{v}_{k}^{\pm} & =0   &\ &\text{in }\Omega^{\pm},\label{system:korder-2}\\
{\llbracket \ve_k^\pm\rrbracket\cdot\nn}& =\hat{a}_{k-\frac{1}{2}}   &\ &  \text{on }\Gamma,\label{system:korder-3}\\
{\llbracket \ve_k^\pm\rrbracket\cdot\btau}& =\check{a}_{k-\frac{1}{2}} -\mathbf{u}_0\cdot\btau d_{k} &\ &  \text{on }\Gamma,\label{system:korder-33}\\
\llbracket 2\nu^\pm D\ve_{k}^\pm -p_{k}^\pm \tn{I}\rrbracket\no_{\Gamma_t}+\ol{\nu}\llbracket \nabla\ve_{k}^\pm \rrbracket\no_{\Gamma_t} &+\ol{\nu}\llbracket \ve_{k}^\pm \rrbracket\Delta d_{\Gamma}  &&\nonumber\\+\ol{\nu}\llbracket \ve_k^\pm\rrbracket\cdot\nn_{\Gamma_t}\Delta d_{\Gamma}\nn_{\Gamma_t}&= \sigma\Delta d_{k}\no_{\Gamma_t} -\overline{\nu}\llbracket\nabla\ve_{0}^\pm\rrbracket\nabla d_{k}+\mathbf{l}_{0}d_{k} &\ &  \text{on }\Gamma,\label{system:korder-4}\\
\partial_t d_{k}+\ve_0^{\pm}\cdot\nabla d_{k}+\tfrac{\ve_k^{+}+\ve_k^{-}}{2}\cdot\no_{\Gamma_t}-\phi_0d_{k}&=b_{k-\frac{1}{2}}&\quad& \text{on}\ \Gamma, \label{system:korder-5}\\
\mathbf{v}_k^{-} & =\ol{a}_k\no_{\partial\Omega},  &\ & \text{on }\partial\Omega,\label{system:korder-6}
\end{alignat}
where $\ol{a}_k = \frac1{|\partial\Omega|}\int_{\Gamma_t} (\check{a}_{k-\frac{1}{2}} -\mathbf{u}_0\cdot\btau d_{k})\,d\sigma $,
as well as \eqref{eq:dk}.
Here  \eqref{system:korder-4} and \eqref{system:korder-5} come from  \eqref{jumpcondition:lk-1} and \eqref{evolution law:dk12gamma}(with $k$ instead of  $k-1$, $k-\frac12$, respectively) and $\hat{a}_{k-\frac12}$, $\check{a}_{k-\frac12}$, $b_{k-\frac{1}{2}}$ depend only on the terms up to $k-\frac{1}{2}$ order.
Furthermore, the compatibility condition \eqref{jumpcondition:lk-1} is satisfied for $k$ instead of $k-1$ and $\mathbf{v}_{k}^{\pm}$,  $c_{k}^{\pm}$ and
$p_{k}^{\pm}$ are extended onto $\Omega^{\pm}\cup\Gamma(3\delta)$ as in Remark~\ref{Outer-Rem}.
\end{lem}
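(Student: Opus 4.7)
The plan is to proceed by induction on $k\in\tfrac12\N$. The cases $k=0$ and $k=\tfrac12$ have already been settled (Lemma \ref{zeroorder} and the explicit construction at order $\eps^{1/2}$ in Section \ref{subsec:The-Inner-Expansion}). Assume all terms of orders strictly less than $k$ have been constructed with the stated properties, and in particular that the compatibility condition \eqref{jumpcondition:lk-1} has been verified at the previous order, so that all quantities $\hat a_{k-\frac12}, \check a_{k-\frac12}, b_{k-\frac12}, \tilde{\mathcal S}_{k-\frac32}, \mathcal{D}_{k-\frac12}$ appearing as data below are smooth and have the correct exponential $\rho$-decay.

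First I would handle the Allen--Cahn profile equation \eqref{modieq:innerALk-high}: by Lemma \ref{ODEsolver} it admits a unique bounded solution $\hat c_k$ normalized by \eqref{innnormalize} and satisfying the matching condition \eqref{eq:matchcon} if and only if the compatibility \eqref{evolution law:h0} holds. I split this condition into its $\Gamma$-part, which is the evolution equation \eqref{evolution law:dk12gamma} for $d_{k-\frac12}$, and its off-$\Gamma$ part, which I enforce by defining $g_{k-\frac32}$ through \eqref{formula:gk32}; the boundary value of $g_{k-\frac32}$ on $\Gamma$ by the normal derivative is smooth precisely because the numerator of \eqref{formula:gk32} vanishes on $\Gamma$ thanks to \eqref{evolution law:dk12gamma}. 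Next I treat the momentum ODE \eqref{modieq:innerstokesk}: applying Lemma \ref{modified version of Lemma 2.4} requires the compatibility condition \eqref{oldmodieq:innerstokes1-solvability-k2}, which I again split into a jump condition on $\Gamma$ and an identity on $\Gamma(3\delta)\setminus\Gamma$. The latter is enforced by defining $\mathbf{l}_{k-1}$ via \eqref{formula:lk-1}, and smoothness at $\Gamma$ reduces to the trace identity \eqref{jumpcondition:lk-1}, which is part of the induction hypothesis.

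The remaining unknowns $(\ve_k^\pm, p_k^\pm, d_k)$ (together with $d_{k-\frac12}$ and $\ve_{k-\frac12}^\pm$ from the previous step) must solve the coupled linear two-phase Stokes--transport system \eqref{system:korder-1}--\eqref{system:korder-6}, supplemented by the eikonal-type relation \eqref{eq:dk} and the Dirichlet boundary data. This is the \emph{main obstacle}: the transmission condition \eqref{system:korder-4} is non-standard, containing the curvature-like term $\sigma\Delta d_k\no_{\Gamma_t}$ (a dynamic surface-tension contribution coupled to the surface equation for $d_k$) and additional $\overline\nu$-terms reflecting the viscosity jump and the anisotropic divergence corrections coming from $D_d\hat\ve$. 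Solvability is guaranteed by Theorem \ref{solvingkorder} in the appendix, which is established by perturbation around the stationary-interface two-phase Stokes maximal regularity theory (cf.~\cite{PruessSimonettMovingInterfaces}); the perturbation argument mirrors the one used in the proof of Theorem \ref{thm:LinStokesMCF}. After solving, I extend $(\ve_k^\pm,p_k^\pm,c_k^\pm)$ from $\Omega^\pm$ to $\Omega^\pm\cup\Gamma(3\delta)$ as in Remark \ref{Outer-Rem}, preserving $\Div\ve_k^\pm=0$.

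With $\ve_k^\pm$ and $d_k$ in hand, I define $\mathbf{u}_k$ by \eqref{formula:uk}, recover $\hat{\ve}_k$ through the explicit formula \eqref{newformula:vek}, and determine $\hat p_{k-1}$ by integrating \eqref{modieq:innerstokesk-new} in $\rho$, which gives \eqref{modieq:innerstokesk-new1}. The matching conditions \eqref{eq:matchcon} for $\hat c_k,\hat{\ve}_k,\hat p_{k-1}$ follow from the explicit formulas and the exponential decay estimates in Lemmas \ref{ODEsolver} and \ref{modified version of Lemma 2.4}, together with the exponential decay of $\mathbf{W}^\pm\eta^{\eps,\pm}$ ensured by Remark \ref{WU}. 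It remains only to verify the compatibility condition \eqref{jumpcondition:lk-1} with $k$ in place of $k-1$, which is needed to advance the induction. This reduces to an algebraic identity on $\Gamma$ obtained by evaluating the leading $\Gamma$-trace of \eqref{oldmodieq:innerstokes1-solvability-k2} at the next order and using \eqref{system:korder-4}--\eqref{system:korder-5}, exactly analogous to the derivation of \eqref{formula:lg0000} in the zeroth-order case. This closes the induction and completes the construction.
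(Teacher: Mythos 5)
Your proposal follows the same overall induction strategy as the paper (Allen--Cahn profile, momentum ODE, coupled two-phase system, matching), and the choices of auxiliary functions $g_{k-\frac32}$, $\mathbf{l}_{k-1}$, $\mathbf{u}_k$ via the quotient formulas are correct. However, there is a genuine ordering gap in the middle of the argument that makes it circular as written.

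The issue concerns $\hat{p}_{k-1}$. You postpone its determination to your fourth paragraph, \emph{after} invoking Theorem \ref{solvingkorder} to solve the coupled system \eqref{system:korder-1}--\eqref{system:korder-6}. But the jump data for that system is not available without $\hat{p}_{k-1}$. Concretely, $\hat{a}_{k-\frac12}$ and $\check{a}_{k-\frac12}$ in \eqref{system:korder-3}--\eqref{system:korder-33} are built (via \eqref{jumpcondition:vek}--\eqref{jumpcondition:vek-0}) from $\mathbf{W}^{k-1}|_{\rho=+\infty}$, and $\mathbf{W}^{k-1}$ from \eqref{formula:vek} is the double $\rho$-integral of $\mathbf{V}^{k-1}$, which is the right-hand side of \eqref{modieq:innerstokesk}. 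That right-hand side contains the term $\sum_{i\in\{0,k-1\}}\partial_\rho\hat{p}_i\,\nabla d_{k-1-i}$, in particular $\partial_\rho\hat{p}_{k-1}\,\nabla d_\Gamma$. Without $\hat{p}_{k-1}$, the integrand defining $\mathbf{W}^{k-1}$, hence the jump data, is not specified, and Theorem \ref{solvingkorder} cannot be applied. The correct order (as in the paper) is: after fixing $\mathbf{l}_{k-1}$ via \eqref{formula:lk-1}, immediately read off $\hat{p}_{k-1}$ from the integrated normal momentum equation \eqref{modieq:innerstokesk-new}, i.e., the explicit formula \eqref{modieq:innerstokesk-new1}, which only requires $\mathbf{l}_{k-1}$ and terms of order $\le k-1$ from the induction hypothesis. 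Only then are $\mathbf{V}^{k-1}$, $\mathbf{W}^{k-1}$, $\hat{a}_{k-\frac12}$, $\check{a}_{k-\frac12}$, $b_{k-\frac12}$ all determined, so that $(\ve_k^\pm, p_k^\pm, d_k)$ can be solved for, and afterwards $d_k$ is extended by integrating \eqref{eq:dk}, $\mathbf{u}_k$ is defined by \eqref{formula:uk}, and $\hat{\ve}_k$ is recovered from \eqref{newformula:vek}. Once you move the determination of $\hat{p}_{k-1}$ up to this point, the rest of your argument goes through and matches the paper's five-step proof.
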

\begin{proof}
  The lemma is proved by mathematical induction with respect to $k\in \frac12\N$, where the beginning of the induction is given by Lemma~\ref{zeroorder}. In Theorem \ref{solvingkorder} in the appendix we will show solvability of the system \eqref{system:korder-1}-\eqref{system:korder-6}, which will be smooth due to Remark~\ref{rem:Solvability}. In the induction hypothesis we assume that
\begin{align*}
\{
(\hat{\mathbf{v}}_{i},\mathbf{v}_{i}^{\pm},\mathbf{u}_{i},\mathbf{l}_{i-1},\hat{c}_{i},g_{i-\frac{3}{2}},d_{i},\hat{p}_{i-1},p_{i}^{\pm}): 0\leq i\leq k-\tfrac12
\}
\end{align*}
are known and satisfy the statements of the lemma with $i$ instead of $k$ for all $0\leq i\leq k-\frac12$.
Then we obtain the terms for  $i=k$  by the following four steps:

\smallskip

\noindent\textbf{Step 1:} By the induction hypothesis $\mathcal{S}_{1,k-\frac32}$ is known. Since $d_{k-\frac12}$ solves \eqref{system:korder-5} with $k-\frac12$ instead of $k$, the compatibility condition for \eqref{modieq:innerALk} on $\Gamma$ is satisfied. Moreover, defining $g_{k-\frac32}$ by \eqref{formula:gk32} the  compatibility condition for \eqref{modieq:innerALk} are satisfied on $\Gamma(3\delta)$.  Hence we can determine $\hat{c}_k$ as the solution of \eqref{modieq:innerALk} for all $\rho\in\R$, $(x,t)\in \Gamma(3\delta)$.

\smallskip

\noindent\textbf{Step 2:} We have seen that the compatibility condition \eqref{oldmodieq:innerstokes1-solvability-k2} for solving \eqref{modieq:innerstokesk} is equivalent to \eqref{jumpcondition:lk-1} and \eqref{formula:lk-1}. Here \eqref{jumpcondition:lk-1} is satisfied since $(\ve^\pm_{k-1}, p^\pm_{k-1}, d_{k-1})$ solve \eqref{system:korder-1}-\eqref{system:korder-6} by assumption in which  we have used \eqref{formula:secondderivative d} to rewrite  \eqref{jumpcondition:lk-1} as \eqref{system:korder-4}. Moreover, if we define $\mathbf{l}_{k-1}$ by \eqref{formula:lk-1}, the compatibility conditions for \eqref{modieq:innerstokesk} are satisfied. Now we can determine $\hat{\ve}_{k}$ by \eqref{modieq:innerstokesk} on $\Gamma$ uniquely. (Note that \eqref{modieq:innerstokesk} determines $\hat{\ve}_k$ on $\Gamma(3\delta)\setminus \Gamma$ up to $\ue_k$, which is not determined yet.) Moreover, this determines $\hat{p}_{k-1}$ by \eqref{modieq:innerstokesk-new1} on $\Gamma(3\delta)$ since \eqref{formula:lk-1} holds and $\mathcal{A}^{k-1}$, $\widetilde{\mathbf{V}}^{k-1}$ are known.

\smallskip

\noindent\textbf{Step 3:} Since $\hat{p}_{k-1}$ is determined, $\mathcal{S}_{1,k-1}$ is known on $\Gamma$ and we can determine $(\ve^\pm_k, p^\pm_k, d_k)$ as solution of \eqref{system:korder-1}-\eqref{system:korder-6}, cf.\ Theorem~\ref{solvingkorder} in the appendix.

\smallskip

\noindent\textbf{Step 4:} Using that $(\ve_k^\pm, p^\pm_k)$ are known, $\ue_k$ is now determined uniquely by \eqref{formula:uk} and we can determine $\hat{\ve}_k$ by \eqref{modieq:innerstokesk} on $\Gamma(3\delta)$ uniquely.

\smallskip

\noindent\textbf{Step 5:} Using that $d_k$ is determined on $\Gamma$, one can integrate \eqref{eq:dk} in normal direction to determine $d_k$ uniquely on $\Gamma(3\delta)$.

Finally, we note that $(\hat{\ve}_k, \hat{p}_{k-1}, \hat{c}_k)$ satisfy the matching conditions on $\Gamma(3\delta)$ by construction, in particular because of the choice of $\ue_k$.
\end{proof}

\subsection{Summary of the Construction}
The result of this section can be summarized as follows:
\begin{thm}\label{thm:Approx1}
Let $N\in \frac12\N$. Then there are smooth $(\tilde{c}_A^{in},\tilde{\ve}_A^{in},\tilde{p}_A^{in})$ defined in $\Gamma(3\delta)$ and smooth $(c_A^\pm, \tilde{\ve}_A^\pm, \tilde{p}_A^\pm)$ defined on $\ol{\Omega}\times [0,T_0]$ such that:
\begin{enumerate}
\item \emph{Inner expansion:} In $\Gamma(3\delta)$ we have
    \begin{alignat}{2}\nonumber
      \partial_t \tilde{\ve}_A^{in}+ \tilde{\ve}_A^{in}\cdot \nabla \tilde{\ve}_A^{in} -\Div (2\nu(\tilde{c}_A^{in})D \tilde{\ve}_A^{in}) +\nabla \tilde{p}_A^{in}&= -\eps \Div (\nabla \tilde{c}_A^{in}\otimes \nabla \tilde{c}_A^{in}) + R_\eps,
      \\\nonumber
      \Div \tilde{\ve}_A^{in} &=\,  
      G_\eps,\\
      \partial_t \tilde{c}_A^{in} + \tilde{\ve}_A^{in}\cdot \nabla \tilde{c}_A^{in} &= \eps^{\frac12}\Delta \tilde{c}_A^{in} -\eps^{-\frac32} f'(\tilde{c}_A^{in})+ s_\eps,
\end{alignat}
where
\begin{alignat}{2}\label{eq:RemainderApprox1}
  \|(R_\eps, \partial_tG_\eps , s_\eps)\|_{L^\infty(\Gamma(3\delta)))}&\leq C\eps^{N+1},
  \\\label{eq:RemainderApprox2}
 \|G_\eps \|_{L^\infty(\Gamma(3\delta))}&\leq C\eps^{N+2}.
  \end{alignat}
\item \emph{Outer expansion:}  In $\Omega^\pm$ we have $c_A^\pm \equiv \pm 1$ and
  \begin{alignat}{2}
  \partial_t \tilde\ve_A^\pm+ \tilde\ve_A^\pm\cdot \nabla \tilde\ve_A^\pm -\nu^\pm \Delta \tilde\ve_A^\pm +\nabla \tilde{p}_A^\pm&= R^\pm_\eps,
\\\nonumber
\Div \tilde\ve_A^\pm &=\, 0,\\
\tilde\ve_A^\pm|_{\partial\Omega} &=\, \ol{a}_\eps \no_{\partial\Omega}\quad \text{on }\partial\Omega\times [0,T_0],
\end{alignat}
where $\ol{a}_\eps\colon [0,T]\to \R$ is smooth and
\begin{equation*}
  \|R_\eps^\pm \|_{L^\infty(\Omega\times [0,T_0])}\leq C\eps^{N+2}\qquad \text{for all }\eps\in (0,1).
\end{equation*}
\item \emph{Matching condition:} For every $\beta\in \N_0^n$ we have for some  $\alpha>0$, $C(M)>0$
    \begin{equation*}
  \begin{split}
    &\| \p_x^{\beta}( \tilde\ve_A^{in}-\tilde\ve_A^{+}
  \chi_+-\tilde\ve_A^{-}\chi_-)\|_{L^\infty(\Gamma(3\delta)\setminus \Gamma(\delta) )}\leq C(M) e^{-\frac{\alpha\delta}{2\eps}},\\
  &\| \p_x^{\beta}(  \tilde{p}_A^{in}-\tilde{p}_A^{+}
  \chi_+-\tilde{p}_A^{-}\chi_-)\|_{L^\infty(\Gamma(3\delta)\setminus \Gamma(\delta) )}\leq C(M) e^{-\frac{\alpha\delta}{2\eps}},\\
  &\|   \p_x^{\beta}(\tilde{c}_A^{in}-c_A^{+}
  \chi_+-c_A^{-}\chi_-)\|_{L^\infty(\Gamma(3\delta)\setminus \Gamma(\delta) )}\leq C(M) e^{-\frac{\alpha\delta}{2\eps}}
  \end{split}
\end{equation*}
for all $\eps \in (0,1)$.
\end{enumerate}
  \end{thm}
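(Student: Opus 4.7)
The plan is to assemble the approximate solutions directly from the terms constructed in Lemma~\ref{zeroorder} and Lemma~\ref{k-thorder} by truncating the formal series at a sufficiently high order. Concretely, I would fix an integer $K=K(N)$ (say $K = N+2$) and set
\[
\tilde{c}_A^{in}(x,t) := \sum_{k\in\frac12\N_0,\, k\leq K} \eps^{k}\hat{c}_k(\rho_\eps,x,t),\ \
\tilde{\ve}_A^{in}(x,t) := \sum_{k\in\frac12\N_0,\, k\leq K}\eps^{k}\hat{\ve}_k(\rho_\eps,x,t),
\]
with $\rho_\eps = d_\eps(x,t)/\eps$ and $d_\eps = \sum_{k\leq K}\eps^k d_k$, and analogously for $\tilde{p}_A^{in}$ (with pressure starting at $k=-1$). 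The outer functions $c_A^\pm$, $\tilde{\ve}_A^\pm$, $\tilde{p}_A^\pm$ are defined as the corresponding truncations of the outer series (recalling $c_0^\pm=\pm1$ and $c_k^\pm=0$ for $k\geq \tfrac12$, so $c_A^\pm\equiv \pm1$), extended to $\Omega^\pm\cup\Gamma(3\delta)$ via Remark~\ref{Outer-Rem}. The normal velocity $\bar a_\eps$ on $\partial\Omega$ is the corresponding sum of the $\bar a_k$ from \eqref{system:korder-6}.

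The verification of the inner and outer equations is essentially a bookkeeping exercise carried out by substituting the truncated ansatz into the system \eqref{modieq:innerstokes}--\eqref{modieq:innerAL} and \eqref{eq:stokes-Outerk}--\eqref{bccondition:vek}, respectively, and expanding everything (including the nonlinear contributions $\nu(\cdot)$ and $f'(\cdot)$, using \eqref{inner:nu}) into integer powers of $\sqrt\eps$. By Lemmas~\ref{zeroorder} and \ref{k-thorder}, each coefficient up to order $\eps^{K-1}$ (and similarly up to $\eps^{K-1/2}$ after multiplication by $\eps$ in the momentum equation) cancels, so that the residuals $R_\eps$, $s_\eps$ collect only the terms of order $\eps^{K}$ and higher. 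The crucial point for the sharper estimate \eqref{eq:RemainderApprox2} on $G_\eps$ is that every $\ve_k^\pm$ is divergence-free in $\Omega^\pm\cup\Gamma(3\delta)$ (Remark~\ref{Outer-Rem}), so the only contributions to $\Div\tilde{\ve}_A^{in}$ come from the inner correctors $\hat{\ve}_k-(\ve_k^+\eta+\ve_k^-(1-\eta))$, which by \eqref{modieq:innerdivk} already carry one extra power of $\eps$; this gains the additional factor of $\eps$ compared to $R_\eps$. The extra auxiliary terms involving $\ue_\eps$, $\mathbf{l}_\eps$, $g_\eps$, $\hat{\phi}_\eps$, and $\mathbf{W}^\pm\eta^{\eps,\pm}$ all vanish identically on $S^\eps$, i.e.\ after substituting $\rho=d_\eps/\eps$, by their definitions and by Remark~\ref{WU}, so they do not contribute to the residuals.

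The matching conditions are a direct consequence of \eqref{eq:matchcon} together with the lower bound $|d_\eps(x,t)|\geq \delta/2$ on $\Gamma(3\delta)\setminus\Gamma(\delta)$ for $\eps$ sufficiently small (using \eqref{assump:deps}); then $|\rho_\eps|\geq \delta/(2\eps)$, and evaluating the difference $\hat{c}_k(\rho_\eps,\cdot)-c_k^\pm$ etc.\ together with all derivatives produces the factor $e^{-\alpha\delta/(2\eps)}$, which dominates any polynomial loss from differentiating the cutoffs. The main obstacle I expect is the careful tracking of which orders enter the residuals after the $\sqrt\eps$-spaced expansion, in particular making sure the compatibility conditions \eqref{jumpcondition:lk-1} and \eqref{evolution law:dk12gamma} (which only hold on $\Gamma$) extend to the full tubular neighbourhood with the desired error, and this is exactly what the auxiliary functions $\mathbf{l}_{k-1}$, $g_{k-\frac32}$ were constructed for via the quotient definitions \eqref{formula:lk-1}, \eqref{formula:gk32}; their smoothness follows from a Taylor expansion in $d_\Gamma$, which is why the constructions in Lemma~\ref{k-thorder} produce smooth bounded functions on $\Gamma(3\delta)$.
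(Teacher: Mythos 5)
Your construction coincides with the paper's: truncate the formal series from Lemmas~\ref{zeroorder} and \ref{k-thorder} at $k\leq N+2$, define the inner and outer pieces accordingly (with $d_A$ as in \eqref{def:dA}), and argue that the constructed equations imply the claimed remainder bounds — the paper's own proof is exactly this definition plus a one-line deferral to the verification in \cite{AbelsMarquardt2}. Your comments about the role of $\Div\ve_k^\pm=0$ on $\Gamma(3\delta)$ (Remark~\ref{Outer-Rem}), the vanishing of the auxiliary terms on $S^\eps$ (Remark~\ref{WU}), and the exponential matching estimate from \eqref{eq:matchcon} combined with $|d_\eps|\geq\delta/2$ on $\Gamma(3\delta)\setminus\Gamma(\delta)$ are all the right ingredients.

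The one place where your write-up is imprecise, and where I would push back, is the $\eps$-bookkeeping sentence: you say coefficients "up to order $\eps^{K-1}$" cancel and the residuals "collect only the terms of order $\eps^K$ and higher." This is off for two reasons. First, the cancellation from Lemma~\ref{k-thorder} extends to the $\eps^K$-coefficient of the \emph{scaled} ODE system \eqref{modieq:innerstokes}--\eqref{modieq:innerAL}, so the scaled residual starts at the first uncancelled order $\eps^{K+\frac12}$, not $\eps^{K}$ (and not $\eps^{K-1}$). Second, $R_\eps$, $G_\eps$, $s_\eps$ are the residuals of the \emph{original} (unscaled) equations, so one must account for the scaling factors linking them to the inner ODE system — the momentum equation carries a factor $\eps^{2}$, the divergence equation $\eps$, and the Allen--Cahn equation $\eps^{3/2}$. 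It is exactly this unscaling that produces the asymmetric powers in \eqref{eq:RemainderApprox1}--\eqref{eq:RemainderApprox2}; your explanation attributes the gain in $G_\eps$ solely to the outer divergence-free extension, but $\Div(\ve_k^+\eta+\ve_k^-(1-\eta))$ contains the nonzero term $\eps^{-1}\eta'(\rho)\,\nabla d_\eps\cdot(\ve_k^+-\ve_k^-)$ which does not vanish in $\Gamma(3\delta)$ and is cancelled by the corrector structure encoded in \eqref{modieq:innerdivk}, not simply absent. The conclusion you reach is correct, but the derivation as written claims a stronger order than is actually produced (and stronger than the theorem states), which suggests the unscaling was not tracked. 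Since the argument is otherwise the paper's own, this is a point to tighten rather than a fatal flaw.
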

\begin{proof}
We define
\begin{align*}
  c_A^{\pm}(x,t)  &=\sum_{k\in \frac12 \N_0, k\leq N+2}\eps^{k}c_{k}^{\pm}(x,t),\quad \tilde{\mathbf{v}}_A^{\pm}(x,t) =\sum_{k\in \frac12 \N_0, k\leq N+2}\eps^{k}\mathbf{v}_{k}^{\pm}(x,t), \\
  \tilde{p}_A^{\pm}(x,t) &= \sum_{k\in \frac12 \N_{-1}, k\leq N+2}\eps^{k}p_{k}^{\pm}(x,t),
\end{align*}
and
\begin{align*}
  \tilde{c}_A^{in}(x,t)  &=\sum_{k\in \frac12 \N_0, k\leq N+2}\eps^{k}\hat{c}_{k}(\tfrac{d_\eps}{\eps},x,t),\quad \tilde{\mathbf{v}}_A^{in}(x,t) =\sum_{k\in \frac12 \N_0, k\leq N+2}\eps^{k}\hat{\mathbf{v}}_{k}(\tfrac{d_\eps}{\eps},x,t), \\
  \tilde{p}_A^{in}(x,t) &= \sum_{k\in \frac12 \N_{-1}, k\leq N+2}\eps^{k}\hat{p}_{k}(\tfrac{d_\eps}{\eps},x,t),
\end{align*}
where
\begin{equation}
  d_A(x,t)=\sum_{k\in \frac12 \N_0, k\leq N+2}\eps^{k}d_k(x,t).\label{def:dA}
\end{equation}
as well as $\ol{a}_\eps(t) = \sum_{k\in \frac12 \N_0, k\leq N+2}\eps^{k}\ol{a}_{k}(t)$.
From the construction one can verify the statements of Theorem~\ref{thm:Approx1} in the same way as e.g.\ in \cite[Section~4]{AbelsMarquardt2}.
\end{proof}
\begin{rem}
  We note that $d_A$ defined in \eqref{def:dA} satisfies
  \begin{equation}\label{eq:ErrordA}
    |\nabla d_A|^2= 1+ O(\eps^{N+3})\qquad \text{as }\eps\to 0
  \end{equation}
  with respect to $C^k(\Gamma(3\delta))$ for every $k\in\N$ by the construction \eqref{eq:dk}.
\end{rem}

\section{Refined Approximate Solutions}\label{sec:ApproxSolutions}

In this section we refine the approximate solutions constructed in the previous section by adding a few terms to obtain:
\begin{thm}\label{thm:approx}
  Let $M>0$, $\ue=\ue(\eps)\in L^2(0,T_\eps;H^1(\Omega)^2\cap L^2_\sigma(\Omega))$ be given for some $T_\eps \in (0,T_0]$, $\eps\in (0,1)$. Moreover, let
  \begin{align*}
    P_M(\ue) =
    \begin{cases}
      \ue &\text{if } \|\ue \|_{L^2(0,T_\eps; H^1(\Omega))}+ \|\ue \|_{H^{\frac12}(0,T_\eps; L^2(\Omega))} \leq M,\\
      \frac{M \ue}{\|\ue \|_{L^2(0,T_\eps; H^1)}+ \|\ue \|_{H^{\frac12}(0,T_\eps; L^2)}} &\text{else}.
    \end{cases}
  \end{align*}
Then there are $c_A\in H^1(0,T_\eps;L^2(\Omega))\cap L^2(0,T_\eps;H^2(\Omega))$, $p_A\in L^2(0,T_\eps;H^1(\Omega))$, and $\ve_A\in H^1(0,T_\eps; V(\Omega)')\cap L^2(0,T_\eps;H^1(\Omega)^2)$, $\we_\eps\in H^1(0,T_\eps;L^2(\Omega))\cap L^2 (0,T_\eps;H^1(\Omega))$ such that
  \begin{alignat}{1}\label{eq:ApproxS1}
    \partial_t \ve_A+ \ve_A\cdot \nabla \ve_A -\Div (2\nu(c_A)D \ve_A) +\nabla p_A&= -\eps \Div (\nabla c_A\otimes \nabla c_A) + R_\eps,
\\\label{eq:ApproxS2}
      \Div \ve_A &=\,                          G_\eps,\\\label{eq:ApproxS3}
      \partial_t c_A + \big(\ve_A+\eps^{N+\frac14}((P_M(\ue)+\we_\eps)|_{X_\eps(0,S_\eps,t)}-\we_\eps)\big)\cdot \nabla c_A &= \eps^{\frac12}\Delta c_A -\eps^{-\frac32} f'(c_A)
      +s_\eps,\\
      (\ve_A,c_A)|_{\partial\Omega}&= (0,-1),
\end{alignat}
where $s_\eps=s_\eps^1+s_\eps^2$ with $\supp s^2_\eps \subseteq \Gamma(2\delta)$, $\Div\we_\eps=0$ and 
\begin{alignat}{2}
  \|R_\eps\|_{L^2(0,T; (H^1(\Omega))'))}&\leq C(M) \eps^{N+\frac14}(T^{\frac12}+\eps^{\frac14}),\label{estimation:Reps}\\
  \|G_\eps\|_{H^1(0,T_\eps;L^2(\Omega))}&\leq C(M)\eps^{N+1},\label{estimation:Geps}\\
  \|\we_\eps\|_{L^2(0,T_\eps;H^1(\Omega))}+\|\we_\eps\|_{L^\infty(0,T_\eps;L^2(\Omega))}&\leq C(M)\label{eq_estimate_wA}\\
  \|s_\eps^1\|_{L^2(\Omega\times (0,T_\eps))}\leq C(M)\eps^{N+\frac12},\quad  \|s_\eps^2\|_{L^2(0,T_\eps; (X_\eps)')}&\leq C(M)\eps^{N+\frac34},\label{estimation:Seps}\\\label{estimation:Seps'}
  \|s_\eps^2\|_{L^2(\Omega\times(0,T_\eps))}&\leq C(M)\eps^{N-\frac14},
  \end{alignat}
uniformly in $T\in (0,T_\eps]$  for some $C(M)>0$ independent of $\eps\in (0,1)$.
  Moreover, $c_A\equiv \pm 1$ in $\Omega^\pm\setminus \Gamma(3\delta)$, $\partial_t c_A$, $\nabla c_A$ are supported in $\ol{\Gamma(3\delta)}$ and $\supp s_\eps \subseteq \ol{\Gamma(5\delta/2)}$. 
\end{thm}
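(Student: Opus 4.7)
The strategy is to refine the approximate solution produced by Theorem \ref{thm:Approx1} by adding two coupled correction terms: a velocity perturbation $\we_\eps$ and an interface perturbation $h_\eps$ that together solve the linear system of Theorem \ref{thm:LinStokesMCF}. These corrections are tuned so that the residual in the Allen--Cahn equation, although not small in $L^2$, acquires a structure that is orthogonal to $\theta_0'$ in the sense required by Lemma \ref{lem:EstimMeanValueFree}, which in turn yields the critical dual-norm bound \eqref{estimation:Seps}.

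\medskip

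\noindent\textbf{Step 1 (base approximation).} Fix a cutoff order $N'\ge N+\tfrac32$ and apply Theorem \ref{thm:Approx1} to obtain $\tilde c_A^{in},\tilde\ve_A^{in},\tilde p_A^{in}$ on $\Gamma(3\delta)$ together with $c_A^\pm,\tilde\ve_A^\pm,\tilde p_A^\pm$ on $\Omega^\pm$. Glue them via the cutoff $\zeta(d_\Gamma/\delta)$ of Theorem \ref{thm:main} to define base profiles $c_A^{(0)},\ve_A^{(0)},p_A^{(0)}$ on $\ol\Omega\times[0,T_0]$; the matching estimates ensure that the gluing produces only exponentially small errors in $\Gamma(3\delta)\setminus\Gamma(\delta)$, and the overall residuals are $O(\eps^{N'+1})$ in $L^\infty$. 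In particular $\supp\partial_tc_A^{(0)}\cup\supp\nabla c_A^{(0)}\subseteq\ol{\Gamma(3\delta)}$.

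\medskip

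\noindent\textbf{Step 2 (identification of the leading residual on the interface).} The convection term in \eqref{eq:ApproxS3} involves the coupled input $P_M(\ue)+\we_\eps$, so the obstruction to the Allen--Cahn residual being of size $\eps^{N+\frac12}$ in $L^2$ is a term of the form
\begin{equation*}
  \theta_0'(\rho_\eps)\bigl[\partial_th_\eps+(\no_\eps\cdot(P_M(\ue)+\we_\eps))\circ X_\eps|_{r=0}+a_\eps\partial_sh_\eps+b_\eps h_\eps-\kappa_\eps\Delta_{\Gamma_\eps}h_\eps\bigr]
\end{equation*}
coming from the Taylor expansion (Lemma \ref{lem:Taylor}) of the profile around the perturbed distance $d_\eps+\eps^{N+\frac12-1}h_\eps$. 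Choosing $\kappa_\eps=\sqrt\eps$ (matching the mobility) and appropriate smooth coefficients $a_\eps,b_\eps$ shows that precisely \eqref{eq:linTwoPhase5} has to be enforced in order to make the bracket vanish. The remaining bulk error in the momentum equation produced by Step~1 is absorbed into a source $\mathbf f\in L^2(0,T_\eps;V(\Omega)')$ of size $O(\eps^{N+\frac12})$.

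\medskip

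\noindent\textbf{Step 3 (coupled linear problem).} Define $(\we_\eps,q_\eps,h_\eps)$ as the unique solution of \eqref{eq:linTwoPhase1}--\eqref{eq:linTwoPhase5} with this $\mathbf f$, with coupling input $P_M(\ue)$ and zero initial data. Theorem \ref{thm:LinStokesMCF} gives the bound
\begin{equation*}
  \|\we_\eps\|_{H^1(0,T_\eps;V(\Omega)')\cap L^2(0,T_\eps;H^1)}+\|h_\eps\|_{L^\infty(0,T_\eps;H^1)}+\sqrt{\kappa_\eps}\|h_\eps\|_{L^2(0,T_\eps;H^2)}\le C(M),
\end{equation*}
where the $M$-dependence enters only through the $P_M$-truncation. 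This produces \eqref{eq_estimate_wA} and the required regularity of $\we_\eps$.

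\medskip

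\noindent\textbf{Step 4 (refined approximate solution and residual estimates).} Set
\begin{align*}
  c_A&:=\zeta(d_0/\delta)\,\theta_0\bigl(\rho_\eps-\eps^{N-\frac12}h_\eps(S_\eps,t)\bigr)+(\text{higher order inner terms from Step 1})\pm(1-\zeta(d_0/\delta)),\\
  \ve_A&:=\ve_A^{(0)}+\eps^{N+\frac14}\we_\eps,\qquad p_A:=p_A^{(0)}+\eps^{N+\frac14}q_\eps,
\end{align*}
together with the additional convective correction $\eps^{N+\frac14}\bigl((P_M(\ue)+\we_\eps)|_{X_\eps(0,S_\eps,t)}-\we_\eps\bigr)\cdot\nabla c_A$ absorbed into \eqref{eq:ApproxS3}. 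The residual $s_\eps$ decomposes as $s_\eps^1+s_\eps^2$, where $s_\eps^1$ collects bulk-type contributions and is bounded in $L^2$ by construction, while $s_\eps^2=a_\eps(\rho_\eps)w_\eps(S_\eps,t)$ is, after the coupling in Step 3 is imposed, a finite sum of terms orthogonal to $\theta_0'$ in the sense of \eqref{eq:MeanValueFree}. Consequently Lemma \ref{lem:EstimMeanValueFree} yields the improved dual bound $\|s_\eps^2\|_{L^2(0,T_\eps;V_\eps')}\le C(M)\eps^{N+\frac34}$, i.e.\ \eqref{estimation:Seps}, while Lemma \ref{lem:rescale2} gives the $L^2$-bound \eqref{estimation:Seps'}. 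The Navier--Stokes residual $R_\eps$ splits into a pressure-type capillary term of the form $\eps\,\Div(\nabla c_A\otimes\nabla c_A)$ error whose critical $\no_\eps\otimes\no_\eps$-contribution is controlled in $(H^1)'$ via Lemma \ref{lem:DivergenceFreeRemainder}, giving the extra $\eps^{1/2}$ factor; together with the $O(\eps^{N+\frac14}\sqrt{T})$ contribution from convecting $\we_\eps$ against $\nabla \ve_A^{(0)}$ this yields \eqref{estimation:Reps}. Finally, the divergence error $G_\eps$ comes only from the higher-order inner ansatz terms and satisfies \eqref{estimation:Geps}.

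\medskip

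The main obstacle is Step 2--Step 3: one must simultaneously (a) achieve the Fredholm orthogonality of the leading Allen--Cahn remainder against $\theta_0'$, (b) preserve the jump conditions for the two-phase Stokes system, and (c) do so uniformly in the small parameter $\kappa_\eps=\sqrt\eps$. A decoupled scalar equation for $h_\eps$ (as used in \cite{AbelsFei}) fails here because the capillary-induced normal stress $\sigma(\Delta_{\Gamma_\eps}h_\eps)\no_\eps$ feeds back into $\we_\eps$ at an order that is no longer subcritical once $m_\eps=\sqrt\eps$; this is precisely why the coupled linear problem \eqref{eq:linTwoPhase1}--\eqref{eq:linTwoPhase5} with matching scaling $\kappa_\eps=\sqrt\eps$ is indispensable, and why the bounds in Theorem \ref{thm:LinStokesMCF} have to be uniform in $\eps$ (which is the reason for the $\sqrt{\kappa_\eps}$ factor in the $L^2H^2$-norm of $h_\eps$ there).
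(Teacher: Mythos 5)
Your high-level strategy matches the paper's: glue the base expansion from Theorem~\ref{thm:Approx1}, correct the velocity at order $\eps^{N+\frac14}$ by a coupled linearized two-phase Stokes/interface problem, and exploit orthogonality to $\theta_0'$ via Lemma~\ref{lem:EstimMeanValueFree} to gain the crucial $\eps^{\frac32}$ in the dual norm for $s_\eps^2$. Your closing remark on why a decoupled $h_\eps$-equation fails under $m_\eps=\sqrt\eps$ is also exactly the right mechanism. But there are three gaps, one of them fatal as written.

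\textbf{(i) Wrong exponent for the $c_A$-correction, and internal inconsistency.} In Step~2 you perturb $d_\eps$ by $\eps^{N-\frac12}h_\eps$, which is a $\rho$-shift of order $\eps^{N-\frac32}$; in Step~4 you shift $\rho_\eps$ directly by $\eps^{N-\frac12}h_\eps$. These do not agree, and neither matches what is needed. Since you (correctly) take $\ve_A=\ve_A^{(0)}+\eps^{N+\frac14}\we_\eps$, the new advection contribution in the Allen--Cahn equation is $\eps^{N+\frac14}(\we_\eps+P_M(\ue))\cdot\nabla c_A$, and $\nabla c_A\sim\eps^{-1}\theta_0'(\rho_\eps)\no_\eps$ near $\Gamma$, so the leading obstruction sits at order $\eps^{N-\frac34}$. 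To kill it, the $c_A$-correction must be $\eps^{N-\frac34}\theta_0'(\rho_\eps)\bar h_{N,\eps}(S_\eps,t)$ (the paper's \eqref{eq:innerexpan'}). With $\eps^{N-\frac12}$ (or $\eps^{N-\frac32}$) the cancellation against the $\eps^{N-\frac34}$ convection term fails, and the residual from the Allen--Cahn equation is $O(\eps^{N-\frac34})$ without the orthogonality structure you need for \eqref{estimation:Seps}.

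\textbf{(ii) Missing second correction $h_{N-\frac14,\eps}$.} With only a single $h_\eps$ solving the coupled problem, after cancelling the $\eps^{N-\frac34}$ order the leftover terms sit at $\eps^{N-\frac14}$ (they come from $\partial_td_\eps+\tilde\ve_A^{in}\cdot\nabla d_\eps-\sqrt\eps\Delta d_\eps=O(\eps)$, the $O(\eps^2)$ pieces of $|\nabla d_\eps|^2-1$ and $\nabla d_\eps\cdot\nabla S_\eps$, and the fractional spacing of the expansion), and they do not automatically satisfy \eqref{eq:MeanValueFree}. The paper resolves this by $\bar h_{N,\eps}=h_{N-\frac34,\eps}+\sqrt\eps\,h_{N-\frac14,\eps}$, with $h_{N-\frac14,\eps}$ solving the \emph{uncoupled} degenerate parabolic equation of Theorem~\ref{thm:ParabolicEqOnSurface} for $\kappa=\sqrt\eps$, $r=0$; the careful point (which forces the $\theta_0'$-prefactor) is that the $\partial_t$ and $\partial_s$ terms in that model scale worse by $\eps^{\frac14}$ than the rest, so they must carry the same $\rho$-prefactor as the residual. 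Without this second layer you cannot reach the form $\mathcal R_\eps=\eps^{N-\frac34}g_\eps+O(\eps^{N+\frac12})$ of Theorem~\ref{thm:hN12}, and the $(X_\eps)'$-bound in \eqref{estimation:Seps} does not follow.

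\textbf{(iii) Momentum-equation cancellation unaddressed.} For \eqref{estimation:Reps} the key is not Lemma~\ref{lem:DivergenceFreeRemainder} alone: adding $\eps^{N+\frac14}\we_\eps$ introduces the distributional source $-\sigma\,\delta_{\Gamma_t^\eps}\otimes\Delta_{\Gamma_t^\eps}h_{N-\frac34,\eps}\circ S_\eps\,\no_\eps$ through \eqref{eq:ApproxStokesMCF2}, and this must be matched exactly (up to $O(\eps^{N+\frac14}(\sqrt T+\eps^{\frac14}))$ in $(H^1)'$) by the $\bar h_{N,\eps}$-part of $\eps\Div(\nabla c_A^{in}\otimes\nabla c_A^{in})-\eps\Div(\nabla\tilde c_A^{in}\otimes\nabla\tilde c_A^{in})$, after peeling off a gradient (absorbed into $p_A$) and the terms controlled by $\no_\eps\cdot\nabla\no_\eps=O(\eps^2)$, $\no_\eps\cdot\nabla S_\eps=O(\eps^2)$. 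You assert the $(H^1)'$-smallness of $R_\eps$ but do not exhibit this cancellation, which is where the scaling $\eps^{N-\frac34}$ vs.\ $\eps^{N+\frac14}$ is used again; without it the capillary remainder is only $O(\eps^{N-\frac34})$ and \eqref{estimation:Reps} is out of reach.
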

\begin{rem}\label{th_rem_U}
	Later we will choose $\ue=\frac{\we_1}{\eps^{N+\frac14}}$, cf.~also \eqref{def:ue}. 
\end{rem}

Let $(\tilde{c}_A^{in},\tilde{\ve}_A^{in},\tilde{p}_A^{in})$, $(c_A^\pm, \tilde{\ve}_A^\pm, \tilde{p}_A^\pm)$, and $\overline{a}_\eps (t)$ be as in Theorem~\ref{thm:Approx1}, i.e., the inner and outer pieces of the approximate solution of the Navier-Stokes/Allen-Cahn system constructed in the previous section. Moreover, let $d_A$ be as in \eqref{def:dA} and define
\begin{equation}
  \label{eq:deps}
  d_\eps(x,t)= d_{\Gamma}(x,t)+ \theta(d_\Gamma(x,t)) (d_A(x,t)-d_{\Gamma}(x,t))\quad \text{for all }(x,t)\in \Gamma(3\delta),
\end{equation}
where $\theta\in C^\infty_0(\R)$ with $\theta(r)=1$ if $r\in [-\tfrac{5\delta}2, \tfrac{5\delta}2]$ and $\supp \theta \subseteq (-3\delta,3\delta)$. Moreover, let
\begin{equation}
  \label{eq:Seps}
  S_\eps(x,t)= S_0(x,t)+ \eps^{\frac12} S_{1/2}(x,t)+ \eps S_1(x,t)+ \eps^{\frac32} S_{3/2}(x,t)\quad \text{for all }(x,t)\in \Gamma(3\delta),
\end{equation}
where $S_{1/2}, S_1, S_{3/2}$ are determined such that
\begin{equation*}
  \nabla S_\eps (x,t)\cdot \nabla d_\eps(x,t)= O(\eps^2) \qquad \text{in }\Gamma(\tfrac{5\delta}2)
\end{equation*}
with respect to any $C^k$-norm, $k\in\N$. Since $\nabla S_0\cdot \nabla d_\Gamma=0$, this leads to the system of first order partial differential equations
\begin{align*}
  \nabla S_j\cdot \no_{\Gamma} &= -\sum_{k=0,\ldots, j-\frac12} \nabla S_k\cdot \nabla d_{k-j}\qquad \text{in } \Gamma(\tfrac{5\delta}2)
\end{align*}
for $j=\frac12,1,\frac32$, which can be solved together with $S_j|_{\Gamma}=0$ by integration in normal direction/the method of characteristics. Moreover, we extend $S_j$ to $S_j\colon \Gamma(3\delta)\to \T^1$ such that $\supp S_j \subseteq \Gamma(\delta')$ for some $\delta'\in (\tfrac{5\delta}2,3\delta)$.  
Then the assumptions \eqref{eq_d_eps}-\eqref{eq_tilde_deps_0} are satisfied with $\eta=\frac12$. Moreover, let $\rho$ be defined as in \eqref{eq:StretchedVariable} in the following. Since $d_\eps=d_A$ in $\Gamma(\tfrac{5\delta}2)$, the definition of $\rho$ coincides with the definition of $\rho$ in Section~\ref{sec:MatchedAsymptotics}, proof of Theorem~\ref{thm:Approx1}, respectively, in $\Gamma(\tfrac{5\delta}2)$. In the following we will only use the identities from  Section~\ref{sec:MatchedAsymptotics} in the latter domain.

We will now define the refined approximate solution as
\begin{alignat*}{1}
  c_{A}(x,t)&=\zg c_A^{in}(x,t)+(1-\zg )\(c_A^+
  \chi_+ +c_A^-\chi_- \),\\
     \ve_A(x,t)&=\zg \ve_A^{in}(x,t)+(1-\zg )\left(\ve_A^{+}(x,t)
  \chi_+ +\ve_A^-(x,t)\chi_-\right){- \mathbf{N}\bar{a}_\eps(t)},\\
  p_A(x,t)&=\zg p_A^{in}(x,t)+(1-\zg )
  \left(p_A^{+}(x,t)
  \chi_+ +p_A^{-}(x,t)\chi_- \right),
\end{alignat*}
    where $\zeta\colon \R\to [0,1]$ is smooth such that $\supp \zeta \subseteq [-\frac{5\delta}2,\frac{5\delta}2]$ and $\zeta\equiv 1$ on $[-2\delta,2\delta]$, $\mathbf{N}\colon \Omega \to \R^2$ is a smooth vector field such that $\mathbf{N}|_{\partial \Omega}= \no_{\partial\Omega}$ and $\supp \mathbf{N}\cap \Gamma(3\delta)=\emptyset$,  $c_A^{\pm}=\pm 1$ (as before) and $\chi_\pm=\chi_{\O^\pm_t}(x)$, and we use the following refined ansatz for the inner and outer expansion
\begin{equation}\label{eq:innerexpan'}
\begin{split}
  c_A^{in} (x,t)&=
  \tilde{c}_A^{in}(\rho,x,t)+\eps^{N-\frac34} \theta_0'(\rho) \bar{h}_{N,\eps}(S_\eps(x,t),t),\\
  \bar{h}_{N,\eps}(s,t)&=h_{N-\frac34,\eps}(s,t)+\sqrt{\eps}h_{N-\frac14,\eps}(s,t),\\
  \ve_A^{in} (x,t)&=\tilde{\ve}_A^{in}(x,t) + \eps^{N+\frac14}\left(\we_\eps^+(x,t)\chi_{\Omega^+}+\we_\eps^-(x,t)\chi_{\Omega^-}\right)\\
  p_A^{in} (x,t)&=\tilde{p}_A^{in}(x,t)+ \eps^{N+\frac14}\left(q_\eps^+(x,t)\chi_{\Omega^+}+q_\eps^-(x,t)\chi_{\Omega^-}\right),\\
    \ve_A^{\pm} (x,t)&=\tilde{\ve}_A^{\pm}(x,t) +\eps^{N+\frac14}\we^\pm_\eps(x,t),\quad
  p_A^{\pm} (x,t)=\tilde{p}_A^{\pm}(x,t)+\eps^{N+\frac14}q^\pm_\eps(x,t).
\end{split}
\end{equation}
We note that we do not use $\rho$-dependent terms in the extra-terms in $\ve_A^{in}$ and $p_A^{in}$ of order $\eps^{N+\frac14}$. This ansatz differs significantly from the construction of the other terms. It turned out that it not only simplifies the treatment of several remainder terms. It also provides sufficiently good remainder estimates for our analysis, which we could not obtain before.
Here it is essential that $(\we_\eps,q_\eps, h_{N-\frac34,\eps})$ solve the following linearized two-phase flow system
    \begin{alignat}{2}\label{eq:ApproxStokesMCF1}
      \partial_t \we_\eps^\pm +\we_\eps^\pm \cdot \nabla \ve_0^\pm+ \ve_0^\pm \cdot \nabla \we_\eps^\pm -\nu^\pm \Delta \we_\eps^\pm &+ \nabla q_\eps^\pm = 0 &\quad & \text{in }\Omega_t^{\eps,\pm}, t\in (0,T_\eps),\\
      \Div \we_\eps^\pm &= 0 &\ & \text{in }\Omega_t^{\eps,\pm}, t\in (0,T_\eps),\\\label{eq:ApproxStokesMCF2}
      \llbracket\we_\eps\rrbracket =0,\quad   \llbracket\nu D\we_\eps-q_\eps\tn{I}\rrbracket\cdot \no_{\Gamma^\eps_t} &= \sigma \Delta^{\Gamma_\eps}h_{N-\frac34,\eps} \no_{\Gamma^\eps_t} &\ &\text{on }\Gamma_t^\eps, t\in (0,T_\eps),\\\label{eq:ApproxStokesMCF3}
      \we_\eps^-|_{\partial\Omega} &= 0 &&\text{on }\partial\Omega\times (0,T_\eps),\\\label{eq:ApproxStokesMCF4}
      \we_\eps^\pm|_{t=0} &=0&& \text{in } \Omega^\pm_0 
    \end{alignat}
    together with
      \begin{align}\label{eq:ApproxStokesMCF5}
        &\partial_t h_{N-\frac34,\eps}+\no_{\Gamma^\eps_t} \cdot \we_\eps|_{X_\eps(0,S_\eps,t)}-(\partial_tS_\eps+v_0\cdot\nabla S_\eps)|_{X_\eps(0,S_\eps,t)} \partial_sh_{N-\frac34,\eps}\\\nonumber
        &\qquad -\sqrt{\eps} |\nabla S_0|^2|_{X_\eps(0,S_\eps,t)}\partial_s^2h_{N-\frac34,\eps}
  +a_\eps(S_\eps,t) h_{N-\frac34,\eps} = - (P_M(\ue)\cdot\nabla d_\eps)|_{X_\eps(0,S_\eps,t)}
  \end{align}
  on $\T^1\times (0,T_\eps)$ and $h_{N-\frac34,\eps}|_{t=0}=0$, where $\we_\eps^\pm= \we_\eps|_{\Omega^{\eps,\pm}}$, $q_\eps^\pm= q|_{\Omega^{\eps,\pm}}$. Here $a_\eps$ is determined in the proof of Theorem~\ref{thm:hN12} below. This system can be considered as a linearization of \eqref{eq:Limit1}-\eqref{eq:Limit6} if $\sqrt{\eps} H_{\Gamma_t}$ was added to the right-hand side of \eqref{eq:Limit5} and $\Omega^\pm_t, \Gamma_t$ was replaced by $\Omega^{\eps,\pm}_t, \Gamma_t^\eps$. The function $h_{N-\frac14,\eps}$ will be determined in the proof of Theorem~\ref{thm:hN12} below.

  As in Section~\ref{sec:MatchedAsymptotics} we extend $\we^\pm_\eps$ and $q_\eps^\pm$ to $\Omega\times (0,T_\eps)$ such that $\Div \we_\eps^\pm =0$ in $\Omega\times (0,T_\eps)$ and $\we_\eps^\pm \in H^1(0,T_\eps;L^2(\Omega))\cap L^2 (0,T_\eps;H^2(\Omega))$, $q_\eps^\pm\in L^2(0,T_\eps;H^1(\Omega))$ in a bounded manner.
  Because of Theorem~\ref{thm:LinStokesMCF}, we have the uniform bounds
    \begin{align}\nonumber
      &\|h_{N-\frac34,\eps}\|_{L^\infty(0,T_\eps;H^1(\T^1))}  + \eps^{\frac12} \|h_{N-\frac34,\eps}\|_{H^1(0,T_\eps;H^{\frac12})\cap L^2(0,T_\eps;H^{\frac52})}\\\nonumber
      &+\eps^{\frac14} \|\partial_s^2 h_{N-\frac34,\eps}\|_{L^2((0,T_\eps)\times \T^1)}+\|\we_\eps\|_{H^1(0,T_\eps;V(\Omega)')}+ \|\we_\eps\|_{L^2(0,T_\eps;H^1(\Omega))}\\\label{eq:EstimLinStokesMCF}
     & +\eps^{\frac12}\|\we_\eps\|_{H^1(0,T_\eps;L^2(\Omega))}+ \eps^{\frac12} \|\we_\eps\|_{L^2(0,T_\eps;H^2(\Omega^{\eps,\pm}_t))}
    \qquad\leq C(M),
    \end{align}
    where $C(M)$ does not depend on $T_\eps$.
  The estimate \eqref{eq_estimate_wA} follows from the well-known embedding $L^2(0,T_\eps;V(\Om))\cap H^1(0,T_\eps;V(\Om)')\hookrightarrow C^0([0,T_\eps],L^2(\Om))$, where the embedding constant is uniform due to \eqref{eq:ApproxStokesMCF4}. 

For the following we denote $u_A^{in}:= c_A^{in}- \tilde{c}_A^{in}$ and use $\ve_A^{in}=\tilde{\ve}_A^{in}+\eps^{N+\frac14}\we_\eps$, where $\we_\eps = \we_\eps^+\chi_{\Omega^{\eps,+}}+\we_\eps^-\chi_{\Omega^{\eps,-}}$. Then we obtain in a straight forward manner
\begin{align}\nonumber
  &  \partial_t c_A^{in}+ \ve_A^{in}\cdot \nabla c_A^{in}-\sqrt{\eps}\Delta c_A^{in} + \eps^{-\frac32}f'(c_A^{in})\\\nonumber
  &= \partial_t u_A^{in}+ \ve_A^{in}\cdot \nabla u_A^{in}-\sqrt{\eps}\Delta u_A^{in} + \eps^{-\frac32}f''(\tilde{c}_A^{in}) u_A^{in}+\tilde{s}_A^\eps+ \eps^{N+\frac14}\we_\eps\cdot \nabla \tilde{c}_A^{in}+ O(\eps^{N+\frac12})\\\label{eq:cAIdentity}
    &= \partial_t u_A^{in}+ \ve_A^{in}\cdot \nabla u_A^{in}-\sqrt{\eps}\Delta u_A^{in} + \eps^{-\frac32}f''(\tilde{c}_A^{in}) u_A^{in}+ \eps^{N+\frac14}\we_\eps\cdot \nabla \tilde{c}_A^{in} + O(\eps^{N+\frac12})
\end{align}
in $L^2(0,T_\eps;L^2(\Gamma_t(2\delta))):=L^2(\Gamma(2\delta)\cap((0,T_\eps)\times\R^2))$. Here $\tilde{s}_A^\eps$ is a term that is quadratic in $u_A^{in}$ times $\eps^{-\frac32}$. Hence $\tilde{s}_A^\eps$ is $O(\eps^{2N-\frac32-\frac32+\frac12})=O(\eps^{N+\frac12})$ in $L^2(0,T_\eps;L^2(\Gamma_t(2\delta)))$ if $N\geq 3$ due to \eqref{eq:EstimLinStokesMCF}.

For the first terms we have:
\begin{thm}\label{thm:hN12}
	Let $u_A^{in}=\eps^{N-\frac34} \theta_0'(\rho) \bar{h}_{N,\eps}(S_\eps(x,t),t)$ and $h_{N-\frac34,\eps}$  be as before and define
	\[
	\mathcal{R}_\eps:=\partial_t u_A^{in}+ \ve_A^{in}\cdot \nabla u_A^{in}-\sqrt{\eps}\Delta u_A^{in} + \eps^{-\frac{3}{2}}f''(\tilde{c}_A^{in}) u_A^{in}+\eps^{N+\frac14}(P_M(\ue)+\we_\eps)|_{X_\eps(0,S_\eps,t)}\cdot \nabla \tilde{c}_A^{in},
	\]
	where $\ue=\ue(\eps)$ is uniformly bounded in $L^2(0,T_\eps,H^1(\O)^2)\cap H^{\frac{1}{2}}(0,T_\eps;L^2(\O)^2)$ for small $\eps$, cf.~Remark \ref{th_rem_U}. Then there is a choice of $h_{N-\frac{1}{4},\eps}\in X_{T_\eps,0}$ with bounds as in Theorem \ref{thm:ParabolicEqOnSurface} for $\kappa=\sqrt{\eps}$, $r=0$ such that
	\[
	\mathcal{R}_\eps=\eps^{N-\frac34} g_\eps(\rho,S_\eps,t) +O(\eps^{N+\frac{1}{2}})\quad\text{ in }L^2(0,T_\eps;L^2(\Gamma_t(2\delta))),
	\]
	where $g_\eps$ satisfies the conditions of Lemma \ref{lem:EstimMeanValueFree}.
\end{thm}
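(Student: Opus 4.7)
The plan is to plug the ansatz $u_A^{in}=\eps^{N-3/4}\theta_0'(\rho)\bar h_{N,\eps}(S_\eps(x,t),t)$ into the definition of $\mathcal{R}_\eps$ and expand the four constituent terms in powers of $\sqrt\eps$, treating $\rho$ and $(x,t)$ as independent variables. The structural reason the proof works is that $\theta_0'$ lies in the kernel of the linearized Allen--Cahn operator $L:=-\partial_\rho^2+f''(\theta_0)$: differentiating \eqref{eq:OptProfile} gives $-\theta_0'''+f''(\theta_0)\theta_0'=0$. Combined with $|\nabla d_\eps|^2=1+O(\eps^2)$ and $\nabla d_\eps\cdot\nabla S_\eps=O(\eps^2)$ from \eqref{eq:condSepsneps}, this forces the most singular contributions (of order $\eps^{N-9/4}$) coming from $-\sqrt\eps\,\Delta u_A^{in}$ and $\eps^{-3/2}f''(\tilde c_A^{in})u_A^{in}$ to cancel identically, so that $\mathcal{R}_\eps$ is a formal series in $\sqrt\eps$ starting at order $\eps^{N-7/4}$.

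\textbf{Order-by-order analysis.} At each order $\eps^{N-7/4+j/2}$ ($j=0,1,\dots$) I decompose the coefficient (as a function of $\rho$) into its projection onto $\theta_0'$ in $L^2_\rho(\R)$ and its orthogonal complement. The orthogonal component already has the mean-value-free structure of Lemma~\ref{lem:EstimMeanValueFree} and will be absorbed into $g_\eps$ at order $\eps^{N-3/4}$; the component parallel to $\theta_0'$, by contrast, must be annihilated by choosing $\bar h_{N,\eps}=h_{N-3/4,\eps}+\sqrt\eps\,h_{N-1/4,\eps}$ appropriately. Using \eqref{interface equality}, the evolution law \eqref{newevolution law:d12} for $d_{1/2}$, the identity $(\partial_t+\hat{\ve}_0\cdot\nabla)d_\Gamma=d_\Gamma\hat{\phi}_0$, and integration by parts in $\rho$, the first solvability condition reduces to the transport equation \eqref{eq:ApproxStokesMCF5} for $h_{N-3/4,\eps}$; this simultaneously fixes the coefficient $a_\eps$. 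The second solvability condition yields a degenerate parabolic problem on $\T^1$ of the form
\begin{equation*}
\partial_t h_{N-1/4,\eps}+a_\eps\partial_s h_{N-1/4,\eps}+b_\eps h_{N-1/4,\eps}-\sqrt\eps\,c_\eps\partial_s^2 h_{N-1/4,\eps}=\tilde g_\eps,\qquad h_{N-1/4,\eps}|_{t=0}=0,
\end{equation*}
whose right-hand side $\tilde g_\eps$ is built from $h_{N-3/4,\eps}$, $\partial_s^2 h_{N-3/4,\eps}$, $\we_\eps$, $P_M(\ue)$ and the known lower-order expansion terms. The bounds \eqref{eq:EstimLinStokesMCF} together with the definition of $P_M$ show $\tilde g_\eps\in L^2(0,T_\eps;L^2(\T^1))$ uniformly in $\eps$, so Theorem~\ref{thm:ParabolicEqOnSurface} with $\kappa=\sqrt\eps$ and $r=0$ supplies $h_{N-1/4,\eps}\in X_{T_\eps,0}$ with precisely the advertised bounds.

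\textbf{Remainder estimate and main obstacle.} After these two choices, the $\theta_0'$-projection of the $\eps^{N-3/4}$-coefficient of $\mathcal{R}_\eps$ vanishes by design; what survives at that order is precisely $\eps^{N-3/4}g_\eps(\rho,S_\eps,t)$ with $g_\eps$ of the form required by Lemma~\ref{lem:EstimMeanValueFree}. All further contributions---from $|\nabla d_\eps|^2-1=O(\eps^2)$, from the Taylor expansion of $f''$ beyond linear order, from $\hat{\ve}_{1/2}$, $\hat c_{3/2}$, and from the $\eps^{N+1/4}\we_\eps\cdot\nabla\tilde c_A^{in}$ term---are estimated directly in $L^2(0,T_\eps;L^2(\Gamma_t(2\delta)))$ by $C(M)\eps^{N+1/2}$, using the exponential decay of $\theta_0^{(k)}$ in $\rho$ and the $\sqrt\eps$-gain from rescaling (Lemma~\ref{lem:rescale2}). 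The hardest part of the argument is coordinating the two solvability conditions: equation \eqref{eq:ApproxStokesMCF5} for $h_{N-3/4,\eps}$ is coupled via the trace $\no_{\Gamma^\eps_t}\cdot\we_\eps|_{X_\eps(0,S_\eps,t)}$ to the linear Stokes system \eqref{eq:ApproxStokesMCF1}--\eqref{eq:ApproxStokesMCF4}, so $a_\eps$ must be chosen to be simultaneously the coefficient that kills the $\theta_0'$-projection at order $\eps^{N-3/4}$ \emph{and} the one for which the coupled system is well-posed in the sense of Theorem~\ref{thm:LinStokesMCF}. The additional subtlety, specific to $m_\eps=\sqrt\eps$, is that the $\sqrt\eps\,d_{1/2}$ piece in the expansion of $d_A$ creates genuine $\eps^{N-3/4}$-corrections (from the terms involving $D_t d_\eps$, $\Delta d_\eps$, and the $\hat c_{3/2}$-correction of $f''(\tilde c_A^{in})$) that are absent in the constant-mobility setting of \cite{AbelsFei} and must be reabsorbed either into the equation for $h_{N-1/4,\eps}$ or into the mean-value-free remainder $g_\eps$.
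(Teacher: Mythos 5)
Your proposal captures the right skeleton of the proof: the $\theta_0'''=f''(\theta_0)\theta_0'$ cancellation for the most singular order, the use of \eqref{eq:condSepsneps} to kill the cross terms, the dichotomy between $\theta_0'$-projections (resolved by choosing $h_{N-3/4,\eps}$ and $h_{N-1/4,\eps}$ equations) and mean-value-free remainders that feed into $g_\eps$, and the observation that the first solvability condition couples to the linearized Stokes system via \eqref{eq:ApproxStokesMCF5}. This is essentially the path the paper takes.

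However, there is a genuine gap in the remainder estimate for $h_{N-1/4,\eps}$, which the paper flags as the central technical subtlety and which your last two sentences gloss over. With $\kappa=\sqrt\eps$, $r=0$, the bounds \eqref{eq:hEstim1}--\eqref{eq:hEstim2} only give $\|\partial_s h_{N-1/4,\eps}\|_{L^\infty(0,T_\eps;L^2(\T^1))}=O(\eps^{-1/4})$ and, via the equation, $\|\partial_t h_{N-1/4,\eps}\|_{L^2((0,T_\eps)\times\T^1)}=O(\eps^{-1/4})$; these do \emph{not} scale as the right-hand side $\tilde g_\eps$. So a term like $\eps^{N-1/4}\theta_0'(\rho)\partial_t h_{N-1/4,\eps}$ cannot simply be ``estimated directly in $L^2$'': Lemma~\ref{lem:rescale2} gives $\eps^{N-1/4}\cdot\eps^{1/2}\cdot\eps^{-1/4}=\eps^N$, which is short of the required $O(\eps^{N+1/2})$. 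The paper's way around this is to insist that every occurrence of $\partial_t h_{N-1/4,\eps}$ and $\partial_s h_{N-1/4,\eps}$ at order $\eps^{N-1/4}$ carries the \emph{same} $\rho$-prefactor $\theta_0'$, so that they can be grouped with $-\sqrt\eps\Delta_{\Gamma_\eps}h_{N-1/4,\eps}$ and the lower-order terms into exactly the combination appearing on the left-hand side of the $h_{N-1/4,\eps}$-equation, which is then replaced by $\tilde g_\eps$ and \emph{cancels identically} rather than being estimated. Terms with a different $\rho$-prefactor are shown to sit at order $\eps^N$ or higher and, crucially, to involve only $h_{N-1/4,\eps}$ or $\sqrt\eps\,\partial_s^2 h_{N-1/4,\eps}$ (which are $O(1)$ in $L^2$), not $\partial_t h_{N-1/4,\eps}$ or $\partial_s h_{N-1/4,\eps}$ alone. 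Without this structural bookkeeping the argument breaks down at precisely the critical order. A secondary inaccuracy: you list $\hat{\ve}_{1/2}$ and $\hat c_{3/2}$ among contributions ``estimated directly in $L^2$ by $C(M)\eps^{N+1/2}$'', but the $\hat{\ve}_{1/2}$-contribution at order $\eps^{N-5/4}$ in fact cancels via the evolution law \eqref{newevolution law:d12} for $d_{1/2}$ and the relation $d_\Gamma=\eps\rho-\sqrt\eps\,d_{1/2}+O(\eps)$; it never reaches the estimation stage, whereas the $\hat c_{3/2}$-correction to $f''$ lands at order $\eps^{N-3/4}$ and belongs in $g_\eps$ (subject to the orthogonality $\int f'''(\theta_0)(\theta_0')^2\theta_1\,d\rho=0$), not in the $O(\eps^{N+1/2})$ remainder.
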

\begin{proof}
  First of all, let us recall where all the appearing terms come from. By \eqref{eq:innerexpan'}, it holds $c_A^{in}=\tilde{c}_A^{in}+u_A^{in}$ and $\ve_A^{in}=\tilde{\ve}_A^{in}+\eps^{N+\frac14}\hat{\we}_{\eps}$, where the terms
  \[
  \tilde{c}_A^{in}=\theta_0(\rho)+\eps^{3/2}c_{3/2}(\rho,x,t)+\eps^2 c_2+...
  \] 
  and $\tilde{\ve}_A^{in}=\hat\ve_0(\rho,x,t)+\eps^{1/2}\hat\ve_{1/2}+...$ stem from the inner expansion in Section \ref{subsec:The-Inner-Expansion} and are smooth.
  Here $u_A^{in}=\eps^{N-\frac34} \theta_0'(\rho) \bar{h}_{N,\eps}(S_\eps(x,t),t)$ is as in \eqref{eq:innerexpan'} with the expansion $\bar{h}_{N,\eps}=h_{N-\frac34,\eps}+\eps^{\frac12}h_{N-\frac14,\eps}$. Note that $h_{N-\frac34,\eps}$ will be determined by a coupled equation as in Theorem \ref{thm:LinStokesMCF} and $h_{N-\frac14,\eps}$ will be determined by Theorem \ref{thm:ParabolicEqOnSurface} for $\kappa_\eps=\sqrt{\eps}$. Thus these functions will satisfy the uniform estimates in there for $\kappa=\sqrt{\eps}$. Hence we can already assume the latter estimates to hold and disregard some unimportant higher order terms in the following. Finally, recall the properties and expansion form of $d_\eps,S_\eps$ from above. 
  
  We compute all terms with the chain rule and use Taylor for the $f''$-term. This yields
  \begin{align*}
  	\mathcal{R}_\eps&=\eps^{N-\frac34}\left(\theta_0''(\rho)\frac{\partial_td_\eps}{\eps} \bar{h}_{N,\eps}+\theta_0'(\rho)(\partial_t\bar{h}_{N,\eps}+\partial_tS_\eps \partial_s\bar{h}_{N,\eps})\right)\\
  	&+\eps^{N-\frac34}\tilde{\ve}_A^{in}\cdot\left(\theta_0''(\rho)\frac{\nabla d_\eps}{\eps}\bar{h}_{N,\eps}+ \theta_0'(\rho)\nabla S_\eps \partial_s\bar{h}_{N,\eps}\right)\\
                        &-\eps^{N-\frac14}\left(\theta_0'''(\rho)\frac{|\nabla d_\eps|^2}{\eps^2}\bar{h}_{N,\eps}+2\theta_0''(\rho)\frac{\nabla d_\eps\cdot\nabla S_\eps}{\eps}\partial_s\bar{h}_{N,\eps}
                          +\theta_0''(\rho)\frac{\Delta d_\eps}{\eps} \bar{h}_{N,\eps} + \theta_0'(\rho)\Delta_{\Gamma_\eps}\bar{h}_{N,\eps}\right)\\
  	&+ \eps^{-\frac32}\eps^{N-\frac34}\theta_0'(\rho) \bar{h}_{N,\eps} \left(f''(\theta_0)+f'''(\theta_0)\left[\eps^\frac32 c_\frac32+\eps^2 c_2\right]  \right)\\
  	&+\eps^{N+\frac14}(P_M(\ue)+\we_\eps)|_{X_\eps(0,S_\eps,t)}\cdot \frac{\nabla d_\eps}{\eps} \theta_0'(\rho) +O(\eps^{N+\frac12})\quad\text{ in }L^2(\Gamma(2\delta)\cap((0,T_\eps)\times\R^2)),
  \end{align*}
  where we used that terms of the form $\eps^N a(\rho)b_\eps(S_\eps,t)$ with $a\in \mathcal{R}_{0,\alpha}$ and $b_\eps\in L^2(\T^1\times(0,T_\eps))$ uniformly bounded with respect to small $\eps$ are $O(\eps^{N+\frac12})$ in $L^2(\Gamma(2\delta))$ with Lemma~\ref{lem:rescale2}. 
  Moreover, we used that $(P_M(\ue)+\we_\eps)|_{X_\eps(0,.)}$ is bounded in $L^2((0,T_\eps)\times\T^1)$, the estimates for $h_{N-\frac34,\eps}$ and $h_{N-\frac14,\eps}$ as well as $N\geq2$ to replace $\ve_A^{in}$ by $\tilde{\ve}_A^{in}$ up to the error in $L^2(\Gamma(2\delta))$ above. 
  
  Note that the remaining $\ue$-term is critical and sits at order $O(\eps^{N-\frac34})$. Actually, the point of the theorem is to generate this term. Therefore the prefactor of $\bar{h}_{N,\eps}$ was chosen to be $\eps^{N-\frac34}$ such that the contribution is also in that order. There are several problems one has to overcome. First, there are contributions of $d_\eps$-terms into orders below the critical one. Since $|\nabla d_\eps|^2=1+O(\eps^2)$, we can cancel the lowest order contribution of this term with the $f''(\theta_0)$-term. Moreover, $\nabla d_\eps\cdot \nabla S_\eps$ is of order $O(\eps^2)$. By taking a close look at Section \ref{subsec:The-Inner-Expansion}, one can infer that $\partial_td_\eps+\tilde{\ve}_A^{in}\cdot\nabla d_\eps-\sqrt{\eps}\Delta d_\eps$ only gives a contribution of order $O(\eps)$ because there is some cancellation. These terms are of sum structure with $\sqrt{\eps}$-spacing. Furthermore, one has to expand some remaining terms in $\mathcal{R}_\eps$ depending on $(x,t)$ or $(\rho,x,t)$ into $(\rho,S_\eps,t)$. This can be done by transforming the $(x,t)$-part with $X_\eps$, using Taylor expansion in the first variable and $d_\eps=\eps\rho$. Therefore we use $\hat\ve_0(\rho,x,t)=\ve_0^-(x,t)+(\ve_0^- -\ve_0^+)(x,t)\eta(\rho)$. Because of $\ve_0^-=\ve_0^+$ on $\Gamma$, the second part is improved by the order $\sqrt{\eps}$ due to a Taylor expansion. Finally, the functions in the expansion of $\bar{h}_{N,\eps}$ should be obtained by solving equations of the form mentioned above, of course with the goal to just leave remainders as stated in the theorem. The goal to have a remainder $r_\eps$ suitable for Lemma \ref{lem:EstimMeanValueFree} as stated in the theorem leads to the desired equation for $h_{N-\frac34,\eps}$ in order to resolve the order $\eps^{N-\frac34}$. The remaining terms contribute formally to the order $\eps^{N-\frac14}$. For this order we intend to use $h_{N-\frac14,\eps}$. However, one has to take care since not all terms in \eqref{eq:h1} for $h_{N-\frac14,\eps}$ scale as the right hand side with respect to $\kappa=\sqrt{\eps}$ in the $L^2$-norm. More precisely, the first two terms with $\partial_t$ and $\partial_s$ are scaling worse on their own (but only at the amount of $\eps^{\frac14}$), the others are fine. Hence in the application here we need the same prefactor (depending on $\rho$) for those two terms in the equations we require. By having a look at $\mathcal{R}_\eps$, we see that $\theta_0'$ is the desired prefactor. Hence all terms with derivatives of $h_{N-\frac14,\eps}$ either have the same $\rho$-prefactor $\theta_0'$ or contribute to the order $O(\eps^N)$ or higher. Thus we obtain an equation of the form as in Theorem \ref{thm:ParabolicEqOnSurface} for $h_{N-\frac14,\eps}$. More precisely we have
  \begin{align*}
  \partial_t h_{N-\frac14,\eps}-(\partial_tS_\eps+\ve_0\cdot\nabla S_\eps)|_{X_\eps(0,S_\eps,t)} \partial_sh_{N-\frac14,\eps} &-\sqrt{\eps} \Delta_{\Gamma_\eps}h_{N-\frac14,\eps}+\tilde{a}_\eps(S_\eps,t) h_{N-\frac14,\eps} = \tilde{g}_\eps,
  \end{align*}
  where $\tilde{a}_\eps$ is smooth (uniformly bounded with respect to small $\eps$, derivatives as well) and $\tilde{g}_\eps$ is uniformly bounded in $L^2(\Gamma(2\delta)\cap((0,T_\eps)\times\R^2))$. By rewriting the $\partial_t h_{N-\frac14,\eps}$ and $\partial_sh_{N-\frac14,\eps}$-terms with the above equation, and estimating the $O(\eps^N)$ remainders with the aid of Lemma~\ref{lem:rescale2}, 
  we finally get remainders as stated in the theorem. Altogether, this yields the claim.
\end{proof}

  \medskip
    
  \noindent
  \begin{proof*}{of Theorem~\ref{thm:approx}}
    First, we estimate $\overline{a}_\eps\colon (0,T_0)\to \R$. To this end we define
    \begin{equation*}
      \tilde{\ve}_A(x,t)=\zg \tilde{\ve}_A^{in}(x,t)+(1-\zg )\left(\tilde{\ve}_A^{+}(x,t)
        \chi_+ +\tilde{\ve}_A^-(x,t)\chi_-\right).
    \end{equation*}
    Then
    \begin{align*}
      \Div \tilde{\ve}_A &= \zg G_\eps + O(e^{-\frac{\alpha\delta}{2\eps}}),\quad
      \partial_t \Div \tilde{\ve}_A = \zg (\partial_t G_\eps) + O(e^{-\frac{\alpha\delta}{2\eps}})
    \end{align*}
    because of the matching conditions and $\Div \tilde{\ve}_A^\pm =0$. Since $\overline{a}_\eps = \no_{\partial\Omega} \cdot \tilde{\ve}_A|_{\partial\Omega}$ only depends on $t$,
    \begin{align*}
      \overline{a}_\eps = \frac1{\HS^{d-1}(\partial\Omega)}\int_{\Omega} \Div \tilde{\ve}_A \, dx =O(\eps^{N+2})\quad \text{and}\\
      \partial_t \overline{a}_\eps = \frac1{\HS^{d-1}(\partial\Omega)}\int_{\Omega} \partial_t\Div \tilde{\ve}_A \, dx =O(\eps^{N+1})
    \end{align*}
    in $L^\infty(0,T_0)$ due to \eqref{eq:RemainderApprox1}-\eqref{eq:RemainderApprox2}. 
    The rest of the proof is split into three parts.

    \medskip
    
    \noindent{\bf Part 1: Error in the divergence equation:} Because of $\llbracket\we_\eps\rrbracket=0$ and $\Div \we_\eps^\pm =0$ in $\Omega^{\eps,\pm}$, we have
    \begin{align}\nonumber
      &\Div (\ve_A(x,t) + \mathbf{N}\bar{a}_\eps(t)) \\\nonumber
      &= \zeta(d_\Gamma(x,t)) G_\eps  + \zeta'(d_\Gamma(x,t))\nabla d_\Gamma(x,t)\cdot \left(\ve_A^{in}(x,t) - \ve^+_A(x,t)\chi_{\Omega^+(t)}(x) - \ve^-_A(x,t)\chi_{\Omega^-(t)}(x)\right)\\\label{eq:DivError}
      &= O(\eps^{N+1}) 
    \end{align}
    in $H^1(0,T_\eps;L^2(\Omega))$ 
    because of \eqref{eq:RemainderApprox1}-\eqref{eq:RemainderApprox2} and the matching condition in Theorem~\ref{thm:Approx1}.
 Together with the previous estimates for $\overline{a}_\eps$ this shows \eqref{eq:ApproxS2} for some (different) $G_\eps\colon \Omega \times (0,T_\eps)\to \R$, which is given by the sum of the right-hand side of \eqref{eq:DivError} and $-\Div (\mathbf{N}\bar{a}_\eps(t))$.

     \medskip

     \noindent{\bf Part 2: Error in the linear momentum equation:}
    First of all,
    \begin{align*}
      &\partial_t \ve_A +\ve_A\cdot \nabla \ve_A -\Div(2\nu(c_A)D\ve_A)+ \nabla p_A\\
      &= \zeta(d_\Gamma) \left(\partial_t \ve_A^{in} +\ve_A^{in}\cdot \nabla \ve_A^{in} -\Div(2\nu(c_A^{in})D\ve_A^{in})+ \nabla p_A^{in}\right)\\
      &\quad        + (1-\zeta(d_\Gamma))\sum_{\pm} \left(\partial_t \ve_A^{\pm} +\ve_A^\pm\cdot \nabla \ve_A^{\pm} -\Div(2\nu(c_A^\pm)D\ve_A^{\pm})+ \nabla p_A^{\pm}\right)\chi_\pm+O(\eps^{N+1}) 
    \end{align*}
    in $L^2(\Omega\times (0,T_\eps))^2$ 
    because of the matching conditions and $\partial_t \overline{a}_\eps(t)\mathbf{N}=O(\eps^{N+1})$ in $L^\infty(0,T_0)$. Since by the construction
  \begin{equation*}
    \partial_t \ve_A^{\pm} +\ve_A^\pm\cdot \nabla \ve_A^{\pm} -\Div(2\nu(c_A^\pm)D\ve_A^{\pm})+ \nabla p_A^{\pm} = O(\eps^{N+\frac54}) \quad \text{in }L^2(\Omega^\pm\setminus \Gamma(2\delta)),
  \end{equation*}
  we only have to consider the terms from the inner expansion.
  
Next by the construction of $\hat{\we}_\eps$ and \eqref{eq:ApproxStokesMCF2} we have 
   \begin{align*}
     &-\Div (2\nu(\theta_0(\rho))D\we_\eps) + \nabla q_\eps = -\Div (2\nu(\theta_0(\rho))(\we^+_\eps\chi_{\Omega^{\eps,+}}+\we^-_\eps\chi_{\Omega^{\eps,-}}))+ \nabla q_\eps\\
                                                 &\quad = 2\Div (\nu^+\we^+_\eps\chi_{\Omega^{\eps,+}}+\nu^-\we^-_\eps\chi_{\Omega^{\eps,-}})+ \nabla q_\eps\\
     &\qquad + 2\Div ((\nu(\theta_0)-\nu^+)\chi_{\Omega^{\eps,+}} D\we_\eps^+) + 2\Div ((\nu(\theta_0)-\nu^-)\chi_{\Omega^{\eps,-}} D\we_\eps^-) \\
     &\quad = (-\nu^+\Delta \we_\eps^++\nabla q_\eps^+)\chi_{\Omega^{\eps,+}}+(-\nu^-\Delta \we_\eps^-+\nabla q_\eps^-)\chi_{\Omega^{\eps,-}}
       -\sigma \delta_{\Gamma^\eps_t}\otimes \Delta_{\Gamma_t^\eps} h_\eps\circ S_\eps \no_\eps + O(\eps^{\frac14})
   \end{align*}
   in $L^2(0,T_\eps;(H^1(\Gamma_t(3\delta))^2)')$, where
   \begin{equation*}
     \weight{\delta_{\Gamma^\eps_t}\otimes \Delta_{\Gamma_t^\eps} h_\eps\circ S_\eps \no_\eps,\boldsymbol\varphi} := \int_{\Gamma_t^\eps} \Delta_{\Gamma_t^\eps} h_\eps(S_\eps(x,t))\no_\eps \cdot \boldsymbol\varphi (x)\, d\sigma(x)\quad \text{for all }\boldsymbol\varphi\in H^1(\Omega)^2.
   \end{equation*}
   Here we have used for the second equality that
   \begin{align*}
     \|(\nu(\theta_0)-\nu^\pm) D\we_\eps^\pm\|_{L^2(\Omega^{\eps,\pm})}&\leq \|\nu(\theta_0)-\nu^\pm\|_{L^2(\R_\pm)} \|\sup_{r\in [0,2\delta]}D\we_\eps^\pm(X_\eps(r,.)\|_{L^2((0,T)\times \T^1)}\\
     &\leq C\sqrt{\eps} \|\we_\eps\|_{L^2(0,T;H^1(\Omega^{\eps,\pm}_t))}^{\frac12}\|\we_\eps\|_{L^2(0,T;H^2(\Omega^{\eps,\pm}_t))}^{\frac12} = O(\eps^{\frac14})
   \end{align*}
   due to \eqref{eq:EstimLinStokesMCF1}-\eqref{eq:EstimLinStokesMCF2} as well as $\no_\eps = \no_{\Gamma_t^\eps}+ O(\eps^{N+3})$ due to \eqref{eq:ErrordA}.
   Hence we conclude 
      \begin{align*}
        &\partial_t \we_\eps+\we_\eps\cdot \nabla \ve_0 +\ve_0 \cdot \nabla \we_\eps -\Div (2\nu(\theta_0(\rho))D\we_\eps)+\nabla q_\eps\\
        &=  -\sigma \delta_{\Gamma^\eps_t}\otimes \Delta_{\Gamma_t^\eps} h_\eps\circ S_\eps \no_\eps\\
        &\quad +\sum_{\pm} \chi_{\Omega^{\eps,\pm}} \left(\partial_t \we_\eps^\pm+\we_\eps^\pm \cdot \nabla \ve_0^\pm+ \ve_0^\pm \cdot \nabla \we_\eps^\pm -\nu^\pm \Delta \we_\eps^\pm + \nabla q_\eps^\pm \right)  +O(\eps^{\frac14})  \\
        &=  -\sigma \delta_{\Gamma^\eps_t}\otimes \Delta_{\Gamma_t^\eps} h_\eps\circ S_\eps \no_\eps+O(\eps^{\frac14})
      \end{align*}
      in $L^2(0,T_\eps; (H^1(3\delta))')$.
            Moreover, we have because of Lemma~\ref{lem:Taylor} for $\boldsymbol{\varphi}\in H^1(\Gamma_t(3\delta))^2$
      \begin{alignat*}{1}
        &\left|\sigma\weight{\delta_{\Gamma_t^\eps}\otimes(\Delta_{\Gamma_t^\eps} h_{N-\frac34,\eps})\circ S_\eps \no_\eps, \boldsymbol{\varphi}}- \int_{\Gamma_t^\eps(\frac{11}4\delta)} \frac{\theta'_0(\rho)^2}\eps(\Delta_{\Gamma_t^\eps} h_{N-\frac34,\eps})\circ S_\eps\no_\eps\cdot \boldsymbol\varphi\, dx \right|\\
        & \leq \frac1\eps\int_{-\frac{11}4\delta}^{\frac{11}4\delta}\int_{\T^1}| \theta_0'(\tfrac{r}\eps)^2\no_\eps\cdot \left(\boldsymbol{\varphi}(X_\eps(r,s,t))- \boldsymbol{\varphi}(X_\eps(0,s,t)) \right)  \Delta_{\Gamma_t^\eps} h_{N-\frac34,\eps}(s,t)|J_\eps(r,s,t)\, ds\, dr\\
        &\qquad + C\eps\|h_{N-\frac34,\eps}\|_{H^2(\T^1)}\|\boldsymbol{\varphi}\|_{H^1(\Gamma_t(3\delta))}\\
        & \leq C\left(\int_{-\frac{11}4\delta}^{\frac{11}4\delta}|\tfrac{r}\eps \theta_0'(\tfrac{r}\eps)^2|^2\, dr\right)^{\frac12}\|\boldsymbol{\varphi}\|_{H^1(\Gamma_t(3\delta))}\|\Delta_{\Gamma^\eps_t} h_{N-\frac34,\eps}\|_{L^2(\T^1)} + C(M)\eps\|h_{N-\frac34,\eps}\|_{H^2(\T^1)}\|\boldsymbol{\varphi}\|_{H^1(\Gamma_t(3\delta))}\\
       &\leq C\sqrt{\eps}\|\boldsymbol{\varphi}\|_{H^1(\Gamma_t(3\delta))}\| h_{N-\frac34,\eps}\|_{H^2(\T^1)} + C(M)\eps\|\boldsymbol{\varphi}\|_{H^1(\Gamma_t(3\delta))}\| h_{N-\frac34,\eps}\|_{H^2(\T^1)}
      \end{alignat*}
      since
      $$
      \frac1\eps\int_{-\frac{11}4\delta}^{\frac{11}4\delta}\theta_0'(\tfrac{r}\eps)^2J_\eps(r,s,t)\, dr = \frac{\sigma}{|\nabla S_\eps (X_\eps(0,s,t))|} + O(\eps)
      $$
      due to \eqref{J_eps}, $|\nabla d_\eps|^2= 1 + O(\eps^2)$, $\nabla d_\eps\cdot \nabla S_\eps=O(\eps^2)$, $|\nabla S_\eps(X_\eps(0,s,t)|= |\partial_s X_\eps(0,s,t)|^{-1}+O(\eps^2)$,
      and a Taylor expansion around $r=0$.
      Here $\|\partial_s^2 h_{N-\frac34}\|_{L^2(\T^1\times(0,T_\eps))}=O(\eps^{-\frac14})$ due to \eqref{eq:EstimLinStokesMCF1}.
      Hence we obtain
    \begin{align*}
      &\partial_t \ve_A + \ve_A\cdot \nabla \ve_A -\Div (2\nu(c_A) D\ve_A) + \nabla p_A\\
      &= -\zeta(d_\Gamma)\left(  \eps\Div (\nabla \tilde{c}_A^{in}\otimes \nabla \tilde{c}_A^{in}) +\eps^{N-\frac34}(\theta_0'(\rho))^2(\Delta_{\Gamma_\eps} h_{N-\frac34})\circ S_\eps\right)+ O(\eps^{N+\frac12})
    \end{align*}
    in $L^2(0,T_\eps; (H^1(\Omega)^2)')$
    by using \eqref{eq:RemainderApprox1} and the matching condition in Theorem~\ref{thm:Approx1}.

    Now we use that
    \begin{align*}
      &\eps \nabla c_A^{in}\otimes \nabla c_A^{in} -\eps \nabla \tilde{c}_A^{in}\otimes \nabla \tilde{c}_A^{in} \\
      &= \eps^{N-\frac74}2\theta_0''(\rho)\theta_0'(\rho)\no_\eps\otimes \no_\eps \ol{h}_{N,\eps}(S_\eps(x,t),t) \\
           &\quad + \eps^{N-\frac34}\theta_0'(\rho)^2\left(\nabla^{\Gamma^\eps} \ol{h}_{N,\eps}(S_\eps(x,t),t)\otimes \no_\eps+ \no_\eps\otimes\nabla^{\Gamma^\eps} \ol{h}_{N,\eps}(S_\eps(x,t),t)\right)  \\
      &\quad + \eps^{N-\frac14} \mathbf{r}_\eps(\rho,x,t)\cdot \mathfrak{a}_\eps(s,t)\qquad \text{in } \Gamma(\tfrac52\delta)
    \end{align*}
    for some $\mathbf{r}_\eps \in (\mathcal{R}_{0,\alpha})^N$, $\mathfrak{a}_\eps \in L^\infty(0,T_\eps;L^2(\T^1))^N$ and $N\in\N$ with uniformly bounded norms in $\eps\in (0,\eps_0)$. Here one uses that $\sqrt{\eps}h_{N-\frac14,\eps}, \sqrt{\eps}h_{N-\frac34,\eps} \in L^\infty(0,T_\eps; W^1_4(\T^1))$ are bounded (because of \eqref{eq:EstimLinStokesMCF2} and \eqref{eq:hEstim2}) and that $\partial_s h_{N-\frac14,\eps}, h_{N-\frac34,\eps}$ enter at most quadratically.
    Hence we obtain
     \begin{align*}
       &  -\eps \Div (\nabla c_A^{in}\otimes \nabla c_A^{in}) +\eps \Div (\nabla \tilde{c}_A^{in}\otimes \nabla \tilde{c}_A^{in})\\
       &\quad =  -\eps^{N-\frac{11}4} \partial_\rho (2\theta_0''(\rho)\theta_0'(\rho))(\no_\eps \cdot \no_\eps) \no_\eps \ol{h}_{N,\eps} -\eps^{N-\frac{7}4} 2\theta_0''(\rho)\theta_0'(\rho)\no_\eps\cdot \nabla^{\Gamma_\eps} \ol{h}_{N,\eps} \\
       &\qquad -\eps^{N-\frac74} 2\theta_0''(\rho)\theta_0'(\rho)\Div (\no_\eps\otimes\no_\eps) \ol{h}_{N,\eps} -\eps^{N-\frac{7}4} 2\theta_0''(\rho)\theta_0'(\rho)\left(\no_\eps\cdot \no_\eps \nabla^{\Gamma_\eps} \ol{h}_{N,\eps}+ \no_\eps \cdot \nabla^\Gamma \ol{h}_{N,\eps}\no_\eps \right)\\
       &\qquad - \eps^{N-\frac34}\theta_0'(\rho)^2\Div \left(\nabla^{\Gamma_\eps} \ol{h}_{N,\eps}(S_\eps(x,t),t)\otimes \no_\eps+ \no_\eps\otimes\nabla^{\Gamma_\eps} \ol{h}_{N,\eps}(S_\eps(x,t),t)\right)\\
       &\qquad + O(\eps^{N+\frac14}(\sqrt{T}+\eps^{\frac14}))\\
       &\quad = \nabla \pi_\eps-\eps^{N-\frac74} 2\theta_0''(\rho)\theta_0'(\rho)\Div (\no_\eps)\no_\eps \ol{h}_{N,\eps} \\
       &\qquad - \eps^{N-\frac34}\theta_0'(\rho)^2\Div \left(\nabla^{\Gamma_\eps} \ol{h}_{N,\eps}(S_\eps(x,t),t)\otimes \no_\eps+ \no_\eps\otimes\nabla^{\Gamma_\eps} \ol{h}_{N,\eps}(S_\eps(x,t),t)\right)\\
       &\qquad + O(\eps^{N+\frac14}(\sqrt{T}+\eps^{\frac14}))\\
       &\quad = \nabla \pi_\eps - \eps^{N-\frac34}\theta_0'(\rho)^2\Delta^{\Gamma_\eps} h_{N-\frac34,\eps}(S_\eps(x,t),t)\no_\eps+ O(\eps^{N+\frac14}(\sqrt{T}+\eps^{\frac14}))
     \end{align*}
     in $L^2(0,T; (H^1(\Gamma_t(\tfrac52\delta))'))$ for all $T\in (0,T_\eps]$ since $\no_\eps \cdot \no_\eps = 1+O(\eps^2)$, $\no_\eps\cdot \nabla \no_\eps =O(\eps^2)$, $\no_\eps \cdot \nabla S_\eps = O(\eps^2)$, and $\ol{h}_{N-\frac34,\eps}\in L^\infty(0,T_\eps, H^1(\T^1))$ and $\eps^{\frac38}\ol{h}_{N-\frac34,\eps}\in L^4(0,T_\eps, H^2(\T^1))$ are uniformly bounded, where $\pi_\eps =\eps^{N-\frac74} 2\theta''_0(\rho)\theta_0 (\rho) \ol{h}_{N,\eps}$.
     Now replacing $p_A$ by $p_A+\pi_\eps$ we obtain \eqref{eq:ApproxS1}.
     
\medskip

\noindent{\bf Part 3: Error in the Allen-Cahn equation:} Since $c_A^\pm \equiv \pm 1$, the equation \eqref{eq:ApproxS3} together with \eqref{eq_estimate_wA} and \eqref{estimation:Seps}, follows in a straight forward manner from \eqref{eq:cAIdentity}, Theorem~\ref{thm:hN12} and the matching conditions.
\end{proof*}

\section{Sharp Interface Limit}\label{sec:main_proof}

The proof of our main result Theorem~\ref{thm:main} follows the same steps as in \cite[Section~4]{AbelsFei}. But there are several careful adaptions needed since for our choice of mobility certain estimates ``degenerate''/give worse estimates compared to \cite{AbelsFei} and the construction of the approximate solution is different.

\subsection{The Leading Error in the Velocity}\label{subsec:LeadingErrorVelocity}

For the following let $(c_A, \ve_A, p_A)$ and $(\tilde{c}_A,\tilde{\ve}_A, \tilde{p}_A)$ are given as in Section~\ref{sec:ApproxSolutions}, where $(c_A, \ve_A, p_A)$ still depends on the choice of $\mathbf{u}$, which will be chosen in the following, but $(\tilde{c}_A,\tilde{\ve}_A, \tilde{p}_A)$ are independent of $\ue$.
Moreover, we define $\tilde{\we}:= \ve_\eps-\ve_A$.
Hence we obtain
\begin{equation}\label{eq:w}
\begin{split}
  \partial_t \tilde{\we}{+ \ve_\eps \cdot \nabla \tilde{\we}}-\Div(2\nu(c_\eps) D\tilde{\we})+\nabla q&={- \tilde{\we}\cdot \nabla \ve_A}+\Div(2(\nu(c_\eps)-\nu(c_A)) D\ve_A)\\
  &\quad-\eps\Div (\nabla u\otimes^s \nabla c_A)-\eps\Div (\nabla u\otimes  \nabla u)- R_\eps,\\
  \Div \tilde{\we}&= -G_\eps,\\
  \tilde{\we}|_{t=0}&=\ve_{0,\eps} - \ve_{A}|_{t=0},\\
    \tilde{\we}|_{\partial\Omega}&= 0,
\end{split}
\end{equation}
for some $q\colon \Om\times [0,T_0]\to \R$. Here $u=c_\eps-c_A$, $a\otimes^s b=a\otimes b+b\otimes a$ and $R_\eps$, $G_\eps$ are as in Theorem~\ref{thm:approx}.

In the following we consider the estimates
\begin{subequations}\label{assumptions}
  \begin{align}\label{assumptions1}
    \sup_{0\leq t\leq \tau} \|c_\eps(t) -c_A(t)\|_{L^2(\Omega)} + {\eps^{\frac{1}{4}}}\|\nabla (c_\eps -c_A)\|_{L^2(\Omega\times(0,\tau)\setminus \Gamma^\eps(\frac{3\delta}2))} &\leq R\eps^{\order+\frac12},\\\label{assumptions2}
    {\eps^{\frac{1}{4}}}\|\nabla_{{\btau_\eps}}(c_\eps -c_A)\|_{L^2(\Omega\times(0,T_\eps)\cap \Gamma^\eps(\frac{3\delta}{2}))} +\eps\|\partial_{\no_\eps}(c_\eps -c_A)\|_{L^2(\Omega\times(0,\tau)\cap \Gamma^\eps(\frac{3\delta}{2}))} &\leq R\eps^{\order+\frac12},\\\label{assumptions3}
    \|\nabla(c_\eps -c_A)\|_{L^\infty(0,\tau;L^2(\Omega))}+\eps^{\frac{1}{4}}\|\Delta(c_\eps -c_A)\|_{L^2(\Omega\times(0,\tau))} &\leq R\eps^{\order-\frac32},\\\label{assumptions4}
        \int_0^\tau\int_{\Gamma^\eps_t(\frac{3\delta}{2})}|\nabla u|^2 +\eps^{-2} f''(c_A(x,t)) u^2\, \sd x\, \sd t &\leq R^2\eps^{2N+\frac12}
\end{align}
\end{subequations}
for some $\tau=\tau(\eps) \in (0,T_0]$, $\eps_0\in (0,1]$, and all $\eps \in (0,\eps_0]$, where $R>0$ is chosen such that
 \begin{equation}\label{initial assumption-0}
  \|c_{0,\eps}-c_A|_{t=0}\|_{L^2(\Omega)}^2+ \varepsilon^4\|\nabla(c_{0,\eps}-c_A|_{t=0})\|_{L^2(\Omega)}^2+\|\ve_{0,\eps}-\ve_A|_{t=0}\|_{L^2(\Omega)}^2\leq \frac{R^2}{4}\eps^{2\order+1}e^{-C_LT_0}
\end{equation}
for all $\eps\in (0,1]$, where $C_L>0$ is the constant from the spectral estimate in Theorem~\ref{thm:Spectral}. We note that compared to \cite[Estimates (4.5)]{AbelsFei} there is an additional factor $\eps^{\frac14}$ in front of the norms for $\nabla (c_\eps-c_A)$ in \eqref{assumptions1}, for $\nabla_{\btau_\eps} (c_\eps-c_A)$ in \eqref{assumptions2}, and for $\Delta (c_\eps-c_A)$ in \eqref{assumptions3} as well as a loss by $\sqrt{\eps}$ in \eqref{assumptions4}.

As in \cite[Section~4]{AbelsFei}, we define
\begin{align}
T_{\varepsilon}:=\sup\{\tau\in[0,T_0]: \eqref{assumptions}  \ \text{holds true}\}.\label{def:Teps}
\end{align}
and have $T_{\varepsilon}>0$ because of \eqref{initial assumption-0}.

The main goal of this subsection is to obtain the following bound for the error $\tilde{\we}$ in the velocity, which again is by a factor $\eps^{\frac14}$ worse than the corresponding result in \cite{AbelsFei}:
\begin{theorem}\label{thm:ErrorVelocity}
Let $M>0$, $c_A, \tilde{\we}$ be as in \eqref{eq:w} and $u$ satisfy \eqref{assumptions} for some $R>0$ and $\tau = T_\eps \in (0,T_0]$  and $N\geq 3$. Then there are some $C(R,M)>0$, $C_0(R)$ independent of $\eps \in (0,\eps_1)$, where $\eps_1$ is as in Theorem~\ref{th_coord}, and $T\in (0,T_\varepsilon]$ and $\tilde{\we}=\we_1-\we_0$, where $\we_1\in C([0,T];L^2_\sigma(\Omega))\cap L^2(0,T;H^1_0(\Omega)^2)\cap H^{\frac12}(0,T;L^2_\sigma(\Omega))$, $\we_0\in L^2(0,T;H^2(\Omega)^2)\cap H^1(0,T;L^2(\Omega)^2)$ satisfy
  \begin{align}
    &\|\partial_t \we_1\|_{L^2(0,T;V(\Omega)')}+\|\we_1\|_{H^{\frac{1}{2}}(0,T;L^2(\Omega))}+\|\we_1\|_{L^\infty(0,T;L^2(\Omega))}+\|\we_1\|_{L^2(0,T;H^1(\Omega))}\nonumber\\
    &\qquad\leq C_0(R)\eps^{\order+\frac14}+ C(R,M) \eps^{\order+\frac{1}{4}}(T^{\frac12}+ \eps^{\frac14})\label{w1estimate},\\
    \label{est:w0}
    &\|\we_0\|_{L^2(0,T;H^2(\Omega))}+ \|\partial_t\we_0\|_{L^2(\Omega\times (0,T)))}\leq C(R,M) \eps^{N+1}
  \end{align}
  provided that $\|\ve_{0,\eps}-\ve_A|_{t=0}\|_{H^1(\Omega)}\leq C \eps^{N+\frac12}$ for some $C>0$.
\end{theorem}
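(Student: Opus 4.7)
The overall strategy is the standard ``energy estimate for a perturbed Stokes system'' combined with a careful reduction to the divergence-free setting and a spectral-type decomposition of the leading capillary-stress error. I would proceed in four steps.

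\emph{Step 1 (handling the divergence constraint).} Since $\Div\tilde{\we}=-G_\eps$ with $G_\eps\in H^1(0,T;L^2(\Omega))$ of size $\eps^{N+1}$ (Theorem~\ref{thm:approx}), I would first construct $\we_0\in H^1(0,T;L^2)\cap L^2(0,T;H^2\cap H^1_0)$ with $\Div\we_0=G_\eps$ and $\we_0|_{\partial\Omega}=0$, for instance via Bogovski\u{\i}'s operator applied time-slicewise (using $\int_\Omega G_\eps\, dx=0$, which follows from $\tilde{\we}|_{\partial\Omega}=0$, or equivalently from the construction of $\bar a_\eps$). This yields \eqref{est:w0}, with the $\partial_t\we_0$-bound obtained by differentiating in time and exploiting $\|\partial_t G_\eps\|_{L^2(L^2)}\lesssim \eps^{N+1}$. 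Set $\we_1:=\tilde{\we}+\we_0$, so that $\Div\we_1=0$, $\we_1|_{\partial\Omega}=0$, and $\we_1|_{t=0}=\ve_{0,\eps}-\ve_A|_{t=0}+\we_0|_{t=0}$, whose $L^2$-norm is $O(\eps^{N+1/2})$ by assumption. The equation for $\we_1$ has the same structure as \eqref{eq:w} with an additional inhomogeneity $\partial_t\we_0+\ve_\eps\cdot\nabla\we_0-\Div(2\nu(c_\eps)D\we_0)$, which is controlled in $L^2(0,T;V(\Omega)')$ by $C(R,M)\eps^{N+1}$.

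\emph{Step 2 (energy estimate for $\we_1$).} Test the equation for $\we_1$ against $\we_1$ itself. The convective term $\ve_\eps\cdot\nabla\we_1\cdot\we_1$ vanishes by divergence freeness; the dissipation gives $\int 2\nu(c_\eps)|D\we_1|^2$ which controls $\|\we_1\|_{H^1}$ via Korn and the lower bound on $\nu$. The standard lower-order terms $-\tilde{\we}\cdot\nabla\ve_A\cdot\we_1$ and $(\nu(c_\eps)-\nu(c_A))D\ve_A:\nabla\we_1$ are absorbed using $\|c_\eps-c_A\|_{L^2}\leq R\eps^{N+1/2}$ from \eqref{assumptions1} together with Gronwall. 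The quadratic stress $\eps\Div(\nabla u\otimes\nabla u)$ is genuinely lower order in our regime: $\eps\|\nabla u\otimes\nabla u\|_{L^1(L^1)}$ is controlled via \eqref{assumptions1}--\eqref{assumptions3} and a splitting into $\Gamma^\eps(3\delta/2)$ and its complement, producing a factor $\eps^{2N+1/2}\cdot\eps^{-1/2}$ after duality with $\we_1$. The inhomogeneity $R_\eps$ is of size $\eps^{N+1/4}(T^{1/2}+\eps^{1/4})$ in $L^2(V')$ by \eqref{estimation:Reps}.

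\emph{Step 3 (the critical capillary-stress term --- main difficulty).} The decisive term is
\[
-\eps\int_0^t\int_\Omega\Div(\nabla u\otimes^s\nabla c_A)\cdot\we_1\,dx\,d\tau=\eps\int_0^t\int_\Omega(\nabla u\otimes^s\nabla c_A):\nabla\we_1\,dx\,d\tau,
\]
where to leading order $\nabla c_A\approx \eps^{-1}\theta_0'(\rho)\no_\eps$ inside $\Gamma^\eps(2\delta)$. This is the place where the choice $\ue=\we_1/\eps^{N+1/4}$ (cf.~Remark~\ref{th_rem_U}) becomes essential: with this choice the refined ansatz \eqref{eq:ApproxStokesMCF5} tailors $h_{N-3/4,\eps}$ so that the normal traces couple correctly with $\no_\eps\cdot\we_1$ on $\Gamma^\eps_t$. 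I would first replace $\no_\eps\otimes\no_\eps:\nabla\we_1$ by $-\mathbf{P}_\eps:\nabla\we_1$ using $\Div\we_1=0$ (as in the proof of Lemma~\ref{lem:DivergenceFreeRemainder}), then split $u$ via Corollary~\ref{cor:SpectralDecomp} into $u=\eps^{-1/2}Z_\eps(s)(\beta_\eps\theta_0'(\rho)+\Psi_\eps)+u^{\mathbf R}_\eps$. The $\theta_0'$-component contracted with $\theta_0'(\rho)\no_\eps$ produces an integral $\sigma=\int(\theta_0')^2\,d\rho$ that, after Lemma~\ref{lem:Taylor} to pull $\no_\eps\cdot\we_1$ off the interface, reduces to a boundary pairing that is cancelled by the surface-tension contribution in \eqref{eq:ApproxStokesMCF2}--\eqref{eq:ApproxStokesMCF5}; the residual is then controlled using \eqref{f2u estimate-1}--\eqref{f2u estimate-2} and the spectral energy bound \eqref{assumptions4}, which via the decomposition gives $\|\nabla_{\btau_\eps}u\|_{L^2}\lesssim R\eps^{N+1/4}$. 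Lemma~\ref{lem:EstimMeanValueFree} applied to the mean-value free remainder produces the $\eps^{3/2}$ gain needed for the desired order. This is the step where the $\eps^{1/4}$ loss compared to \cite{AbelsFei} appears, caused precisely by the $\sqrt{\eps}$-mobility and the corresponding $\kappa=\sqrt\eps$ scaling of \eqref{eq:hEstim1}--\eqref{eq:hEstim2}.

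\emph{Step 4 (closing the estimate and time regularity).} Combining Steps 2--3, Young's inequality and Gronwall on $[0,T]$ give
\[
\|\we_1\|_{L^\infty(0,T;L^2)}^2+\|\we_1\|_{L^2(0,T;H^1)}^2\leq C_0(R)^2\eps^{2N+1/2}+C(R,M)^2\eps^{2N+1/2}(T+\eps^{1/2}),
\]
i.e.\ the $L^\infty(L^2)\cap L^2(H^1)$-bound in \eqref{w1estimate}. The bound on $\partial_t\we_1$ in $L^2(0,T;V(\Omega)')$ follows by reading off the equation and estimating each term in $V'$ using the just obtained $H^1$-bound on $\we_1$; the $H^{1/2}$-in-time bound then follows from the standard interpolation $L^2(V)\cap H^1(V')\hookrightarrow H^{1/2}(H)$ (Lions--Magenes), using $\|\we_1(0)\|_{L^2}\lesssim\eps^{N+1/2}$. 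The only delicate point is that the $H^{1/2}(L^2)$-embedding constant is independent of $T$, which holds because the embedding has a uniform constant after the extension by zero in time, and the initial value $\we_1|_{t=0}$ can be absorbed into the constant $C_0(R)$. This yields \eqref{w1estimate} and completes the proof.
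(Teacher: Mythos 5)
Your overall strategy matches the paper's: lift the divergence constraint to obtain a divergence-free $\we_1$, run an energy estimate by testing with $\we_1$, identify the capillary cross-term $\eps\int\nabla u\otimes^s\nabla c_A:\nabla\we_1$ as the critical contribution, and close with Gronwall and interpolation for the $H^{1/2}$-in-time bound. Step~1 differs slightly (Bogovski\u{\i} versus a Stokes system with initial datum $(\ve_A-\ve_\eps)|_{t=0}$, which the paper uses so that $\we_1|_{t=0}=0$ exactly), but your variant is workable since $\we_1|_{t=0}$ then has size $O(\eps^{N+\frac12})$, which is small enough. Steps~2 and~4 are fine.

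The genuine problem is in Step~3, and it is not merely expository. You claim that after the spectral decomposition the leading $\theta_0'$-block, pulled to the interface with Lemma~\ref{lem:Taylor}, produces a boundary pairing that is \emph{cancelled by the surface-tension contribution in \eqref{eq:ApproxStokesMCF2}--\eqref{eq:ApproxStokesMCF5}}. There is nothing to cancel against here: that surface-tension contribution is internal to the construction of $\ve_A$ (Theorem~\ref{thm:approx}, Part~2), where it precisely offsets the $O(\eps^{N-\frac34})$ part of $\eps\Div(\nabla c_A^{in}\otimes\nabla c_A^{in})$ and is already absorbed into $R_\eps$, bounded once and for all by \eqref{estimation:Reps}. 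The cross term $\eps\int\nabla u\otimes\nabla c_A:\nabla\we_1$ involving the genuine error $u=c_\eps-c_A$ is a separate quantity that must be estimated directly, and invoking the surface-tension term again would double-count it. The actual mechanism is not a cancellation but the divergence-free integration-by-parts trick of Lemma~\ref{lem:DivergenceFreeRemainder}: since $\Div\we_1=0$, one replaces $\no_\eps\otimes\no_\eps:\nabla\we_1$ by $-\mathbf{P}_\eps:\nabla\we_1$ and integrates by parts twice, which converts $\partial_{\no_\eps}(\theta_0'(\rho)^2)$ into a harmless $O(\eps^{\frac32})$ remainder. Combined with $\|Z_\eps\|_{H^1}$ controlled through \eqref{f2u estimate-2}, the $\Psi_\eps$-correction via \eqref{f1u estimate}, and $\Lambda_\eps$ via \eqref{assumptions4}, this yields $|I|\lesssim(\|\Lambda_\eps/\eps\|_{L^1(0,T)}^{1/2}+\|u\|_{L^2})\|\nabla\we_1\|_{L^2}$, hence $C(R)\eps^{N+\frac14}\|\nabla\we_1\|_{L^2}$. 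You also attribute the $\eps^{\frac32}$ gain to Lemma~\ref{lem:EstimMeanValueFree}, but that lemma concerns $V_\eps'$-dual norms used in the Allen--Cahn estimate; the gain in Theorem~\ref{thm:ErrorVelocity} comes from Lemma~\ref{lem:DivergenceFreeRemainder}. As written, Step~3 would not close: the boundary-pairing/cancellation path dead-ends, and one must fall back on the direct estimate outlined above.
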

\begin{rem}
  Now we choose
\begin{equation}
  \ue = \frac{\we_1}{\eps^{N+\frac14}} \in L^2(0,T_0; H^1_0(\Omega)^2)\cap C([0,T_0];L^2_\sigma(\Omega)).\label{def:ue}
\end{equation}
As in \cite{AbelsFei} this yields a non-linear evolution equation with a globally Lipschitz nonlinearity for $\ue$, which can be solved in the same manner as in \cite[Proof of Lemma~4.2]{StokesAllenCahn}.

  The result shows that, if we choose $M=2C_0(R)$, then there are $T'\in (0,T_0]$ and $\eps_0\in (0,1)$ (depending on $R>0$) such that
  \begin{equation*}
     \|\ue\|_{H^{\frac{1}{2}}(0,T;L^2(\Omega))}+\|\ue\|_{L^\infty(0,T;L^2(\Omega))}+\|\ue\|_{L^2(0,T;H^1(\Omega))}\leq M=2C_0(R)
   \end{equation*}
   provided that $T\in (0,\min(T', T_\eps)]$ and $\eps\in (0,\eps_0]$. In particular $P_M(\ue)=\ue$ in \eqref{eq:ApproxS3}. After the proof of Theorem~\ref{thm:ErrorVelocity} $M$ will be choosen as $M=2C_0(R)$. 
\end{rem}
\begin{proof*}{of Theorem~\ref{thm:ErrorVelocity}} The proof is a variant of the proof of \cite[Theorem~4.1]{AbelsFei}. But there are several careful modifications necessary because of the different powers in the estimates \eqref{assumptions} in the present case and the new $\eps$-dependent coordinates $(d_\eps, S_\eps)$, which are only approximatively orthogonal.

As in \cite{AbelsFei} let $(\we_0,q_0)$ solve the system
\begin{subequations}\label{neweq:w0}
\begin{alignat}{2}
\partial_t \we_0-\Delta\we_0+\nabla q_0&=0 &\qquad&\text{in }\Omega\times (0,T),\\
  \Div \we_0&=G_\eps&\qquad&\text{in }\Omega\times (0,T),\\
  \we_0|_{\partial\Omega}&= 0&\qquad&\text{on }\partial\Omega\times (0,T),\\
  \we_0|_{t=0} &=(\ve_A-\ve_\eps)|_{t=0}&\qquad& \text{in }\Omega,
\end{alignat}
\end{subequations}
where we note that $G_\eps|_{t=0}= \Div (\ve_A-\ve_\eps)|_{t=0}$.
By standard results on strong solutions of the Stokes system one obtains a unique solution $\we_0\in L^2(0,T;H^2(\Omega)^2)\cap H^1(0,T;L^2(\Omega)^2)$, which satisfies \eqref{est:w0}.
Then $\we_1:=\tilde{\we}+\we_0$ is a solution of the modified system
\begin{align}\nonumber
  \partial_t \we_1{+ \ve_\eps \cdot \nabla \we_1}-\Div(2\nu(c_\eps) D\we_1)+\nabla q&=-\eps\Div (\nabla u\otimes^s \nabla c_A)-\eps\Div (\nabla u\otimes  \nabla u)\\\nonumber&\quad+\Div(2(\nu(c_\eps)-\nu(c_A)) D\ve_A)
  +\we_0\cdot \nabla \ve_A\\\label{neweq:w}
  &\quad +\partial_t\we_0+\ve_\eps \cdot \nabla \we_0-\Div(2\nu(c_\eps) D\we_0)- R_\eps,\\\nonumber
  \Div \we_1&= 0,\\\nonumber
    \we_1|_{\partial\Omega}&= 0,\\\nonumber
  \we_1|_{t=0}&=0  
\end{align}
in a weak sense.
Now testing \eqref{neweq:w} with $\we_1$ and using Gronwall's inequality yields
\begin{align}
  &\sup_{0\leq t\leq T}\frac{1}{2}\int_{\Omega} |\we_1|^2(x,t)\sd x  +2\int_{0}^T\int_{\Omega} \nu(c_\eps) |D\we_1|^2\sd x \sd t\nonumber\\&\leq e^{CT}\bigg( \frac{1}{2}\int_{\Omega} |\we_1|^2(x,0)\sd x+\eps\int_{0}^T\bigg|\int_{\Omega}\big(\nabla u\otimes^s  \nabla c_A:\nabla\we_1\big)\sd x\bigg|\sd t\nonumber\\&\quad+2\int_{0}^T\bigg|\int_{\Omega}\big(\big(\nu(c_\eps)-\nu(c_A)\big) D\ve_A:\nabla\we_1\big)\sd x\bigg|\sd t
  \nonumber\\
  &\quad +\varepsilon\int_{0}^T\bigg|\int_{\Omega}\big(\nabla u\otimes\nabla u:\nabla\we_1\big)\sd x\bigg|\sd t
    +\int_{0}^T\bigg|\int_{\Omega}R_\eps\cdot\we_1\sd x\bigg|\sd t
  \nonumber\\&\quad+\int_{0}^T\bigg|\int_{\Omega}(\we_1\cdot \nabla \ve_A)\cdot \we_1\sd x\bigg|\sd t + \int_0^T\bigg| \weight{\partial_t \we_0, \we_1}_{V',V} \bigg|\sd t
 \nonumber\\&\quad+\int_{0}^T\bigg|\int_{\Omega}\big((\ve_\eps\cdot \nabla \we_0+\we_0\cdot\nabla \ve_A )\cdot\we_1+\big(2\nu(c_\eps) D\we_0:\nabla\we_1\big)\big)\sd x\bigg|\sd t\bigg).\label{wenergyequ}
\end{align}
Now we estimate the different terms on the right-hand side separately.

The most important step is to show
\begin{align}
&\eps\int_0^T\bigg|\int_{\Omega}\nabla u\otimes \nabla c_A:\nabla\we_1\sd x\bigg|\sd t
 \nonumber\\
  &\quad \leq C(R)\eps^{\order+\frac{1}{4}}\|\nabla\we_1\|_{L^2(\Omega\times(0,T))}+ C(R,M)\eps^{\order+\frac{1}{4}}(T^{\frac12}+\eps^{\frac14})\|\nabla\we_1\|_{L^2(\Omega\times(0,T))}.\label{w1est-1}
\end{align}
To this end we decompose $\Omega$ into  $\Omega\backslash\Gamma_t^\eps(\frac{3\delta}{2})$ and $\Gamma_t^\eps(\frac{3\delta}{2})$ and split the integrals accordingly. Then the proof of \eqref{w1est-1} will
consist of three parts.

First of all,  we have
\begin{align}
&\varepsilon\int_0^{T}\bigg|\int_{\Omega\backslash \Gamma_t^\eps(\frac{3\delta}{2})}\nabla u\otimes \nabla c_A:\nabla\we_1\sd x\bigg| \sd t                \nonumber\\&\qquad
\leq C(M)\varepsilon\|\nabla u\|_{L^2(\Omega\times (0,T)\backslash\Gamma(\frac{3\delta}2))\big)}\|\nabla\we_1\|_{L^2(\Omega\times(0,T))}\nonumber\\
  &\qquad
    \leq C(R,M)\eps^{\order+\frac{5}{4}}\|\nabla\we_1\|_{L^2(\Omega\times(0,T))}.\label{w1est-11}
\end{align}
Furthermore, since
\begin{equation}\label{eq:LowerBdd}
\big|d_\eps\big|\geq \frac{3\delta}{2}\quad \text{for all } x\in \Gamma_t(3\delta)\backslash\Gamma_t(\tfrac52\delta),\ t\in[0,T], \eps\in (0,\eps_0]
\end{equation}
due to \eqref{eq_Gamma_eps}, it holds
\begin{align}
  \nabla c_A (x,t)&=\nabla(\zg )\(\tc_A^{in}(\rho,s,t)-c_A^+\chi_+-c_A^-\chi_- \)+\varepsilon^{-1}\zg\theta_0'(\rho)\nn_\eps \nonumber\\&\quad
  +\zg\sum_{k=\frac{3}{2}}^{N+2}\eps^{k} \nabla_{\btau_\eps}\tc_k(\rho,s,t)+\zg\sum_{k=\frac{3}{2}}^{N+2}\eps^{k-1} \partial_{\rho}\tc_k(\rho,s,t)\nn_\eps\nonumber\\
  & \quad +\eps^{N-\frac74}\theta_0''(\rho)\no_\eps h_{N,\eps}\circ S_\eps+\eps^{N-\frac34}\theta_0'(\rho)\nabla( h_{N,\eps}\circ S_\eps)\nonumber\\
  &= \varepsilon^{-1}\zg\theta_0'(\rho)\nn_\eps+\eps^{N-\frac34}\theta_0'(\rho)\nabla( h_{N,\eps}\circ S_\eps)+O(\varepsilon^\frac{1}{2})
  \nonumber 
\end{align}
in $L^\infty(\Gamma(3\delta))$ because of the matching conditions.
Therefore we can estimate
\begin{align}
&\varepsilon\int_0^{T}\bigg|\int_{\Gamma^\eps_t(\frac{3\delta}{2})}\nabla u\otimes \nabla c_A:\nabla\we_1\sd x\bigg| \sd t
                \nonumber\\&\leq \underbrace{\int_0^{T}\int_{\Gamma^\eps_t(\frac{3\delta}{2})}\zg\theta_0'(\rho)\partial_{\nn_\eps} u\, \no_\eps \otimes \no_\eps :\nabla \we_1\sd x \sd t}_{=: I}\nonumber\\
  &\quad + C(M)\eps^{N-\frac54}\|\nabla u\|_{L^2\big(0,T;L^4(\Gamma^\eps_t(\frac{3\delta}{2}))\big)}\|\nabla\we_1\|_{L^2(\Omega\times (0,T))}
  \nonumber\\&\quad+ C(M)\varepsilon^{\frac{3}{2}}\|\nabla u\|_{L^2\big(0,T;L^2(\Gamma^\eps_t(\frac{3\delta}{2}))\big)}\|\nabla\we_1\|_{L^2(\Omega\times (0,T))}\nonumber\\
  &\leq I + C(R,M)\eps^{N+\frac12}\|\nabla \we_1\|_{L^2(\Omega\times (0,T))}\label{w1est-3}
\end{align}
since $\sqrt\eps \partial_s h_{N,\eps}\in L^\infty(0,T;L^4(\T^1))$ is bounded due to \eqref{eq:hEstim2} and \eqref{eq:EstimLinStokesMCF2}.
Because of $\partial_{\no_\eps} (\theta_0'(\rho))^2= \frac1\eps \partial_\rho (\theta_0'(\rho))^2$ and Lemma~\ref{lem:DivergenceFreeRemainder} and using \eqref{decompose u}, we obtain
\begin{align}
|I|
&=\left|\,\int_0^{T}\int_{\Gamma^\eps_t(\frac{3\delta}{2})}\zg\theta_{0}'(\rho)\partial_{\nn_\eps}\left(
\eps^{-\frac{1}{2}}Z(S_\eps,t)\left(\beta_\eps\theta_{0}'(\rho)+\Psi_\eps(\rho,S_\eps,t)\right)+\psi^{\mathbf{R}}\right)\no_\eps \otimes \no_\eps :\nabla \we_1\, \sd x \sd t\right|\nonumber\\
& \leq\left|\,\int_0^{T}\int_{\Gamma^\eps_t(\frac{3\delta}{2})}\frac{1}{2}\partial_{\nn_\eps}\left(\theta_{0}'(\rho)^{2}\right)\varepsilon^{-\frac{1}{2}}Z(S_\eps,t)\beta_\eps \no_\eps \otimes \no_\eps :\nabla \we_1\sd x\sd t\right|\nonumber\\
&\quad+\,\left|\int_{0}^{T}\int_{\mathbb{T}^{1}}\int_{-\frac{3\delta}{2\varepsilon}}^{\frac{3\delta}{2\varepsilon}}\theta_{0}'(\rho)
\varepsilon^{-\frac{3}{2}}Z(s,t)\partial_{\rho}\Psi_\eps(\rho,s,t)\no_\eps \otimes \no_\eps :\nabla \we_1\circ X_\eps\, J_\eps\left(\eps\rho,s,t\right)
\sd\rho \sd s\sd t\right|\nonumber\\
  & \quad+C\left\Vert \psi^{\mathbf{R}}\right\Vert _{L^{2}\left(0,T;H^{1}\left(\Gamma^\eps_t(\frac{3\delta}{2})\right)\right)}\left\Vert \we_1\right\Vert _{L^{2}\left(0,T;H^{1}\left(\Gamma^\eps_t(\frac{3\delta}{2})\right)\right)}\nonumber\\
  &\quad +C(M)e^{-\frac{\delta}{2\varepsilon}}\|\nabla u\|_{L^2\big(\Gamma^\eps(\frac{3\delta}{2})\big)}\|\we_1\|_{L^2(\Omega\times (0,T))}\nonumber\\
  & \leq C\|Z\|_{L^2(0,T;H^1(\mathbb{T}^{1}))}\|\we_1\|_{L^2(0,T;H^1(\Omega))}\nonumber\\
  &\quad +\frac{C}\eps \bigg(\sup_{t\in[0,T]}\sup_{s\in\T^1}\int_{I_{\varepsilon}}|\partial_{\rho} \Psi_\eps(\tilde\rho,s,t)|^2J(\eps \tilde\rho,s,t)d\tilde{\rho}\bigg)^{\frac{1}{2}}\|Z\|_{L^2(0,T;H^1(\mathbb{T}^{1}))}\|\we_1\|_{L^2(0,T;H^1(\Omega))}\nonumber
\\
& \quad+C\left\Vert \psi^{\mathbf{R}}\right\Vert _{L^{2}\left(0,T;H^{1}\left(\Gamma^\eps_t(\frac{3\delta}{2})\right)\right)}\left\Vert \we_1\right\Vert _{L^{2}\left(0,T;H^{1}\left(\Gamma^\eps_t(\frac{3\delta}{2})\right)\right)}\nonumber\\
& \quad+C(M)e^{-\frac{\delta}{2\varepsilon}}\|\nabla u\|_{L^2\big(0,T;L^2(\Gamma^\eps_t(\frac{3\delta}{2}))\big)}\|\we_1\|_{L^2\big(0,T;L^2(\Omega)\big)}.\label{estimation:I}
\end{align}
Now using \eqref{f2u estimate-2} and \eqref{f1u estimate} we derive
\begin{align}
|I|\leq C\bigg(\bigg\|\frac{\Lambda_\varepsilon}{\varepsilon}\bigg\|_{L^1(0,T)}^{\frac{1}{2}}+\|u\|_{L^2\big(0,T;L^2(\Gamma^\eps_t(\frac{3\delta}{2}))\big)}\bigg)\|\nabla\we_1\|_{L^2(\Omega\times (0,T))}
,\label{w1est-7}
\end{align}
where
\begin{equation*}
  \Lambda_\eps = \int_{\Gamma^\eps_t(\frac{3\delta}{2})}\left(\eps |\nabla u|^2 + \frac1\eps f''(c_A^\eps)u^2\right) \sd x.
\end{equation*}
Altogether \eqref{assumptions}, \eqref{w1est-7}, and \eqref{w1est-3} yield
\begin{align}
&\varepsilon\int_0^{T}\bigg|\int_{\Gamma^\eps_t(\frac{3\delta}{2})}\nabla u\otimes \nabla \tilde{c}_A:\nabla\we_1\sd x\bigg| \sd t
                \nonumber\\&
\leq   C(R)\eps^{\order+\frac{1}{4}}
  \|\nabla\we_1\|_{L^2(\Omega\times (0,T))}+C(R,M)\eps^{\order+\frac12}\|\nabla\we_1\|_{L^2(\Omega\times (0,T))}.
  \label{w1est-0}
\end{align}
This shows \eqref{w1est-1} because of \eqref{w1est-11} and \eqref{w1est-0}.

For the following we will use that by construction
\begin{equation*}
  \ve_A^{in}(x,t)= \tilde{\ve}_A^{in}(x,t) + \eps^{N+\frac14} \hat{\we}_\eps(x,t).
\end{equation*}
Using that $\|\we_\eps\|_{L^2(0,T;H^1(\Omega))}$ is uniformly bounded and $\hat{\ve}_0(\rho,x,t)=\ve_0^+(x,t)\eta(\rho)+\ve_0^-(x,t)(1-\eta(\rho))$ together with $\ve_0^+|_{\Gamma}=\ve_0^-|_\Gamma$, one can show similarly as in \cite[Estimate (4.28)]{AbelsFei}
\begin{equation*}
  \nabla\ve_A = \nabla \tilde{\ve}_A + \eps^{N+\frac14} \nabla\we_\eps,
\end{equation*}
where $\|\nabla\tilde{\ve}_A\|_{L^\infty(\Omega\times (0,T))}$, $\|\nabla \we_\eps\|_{L^2(\Omega\times (0,T))}$, $\sqrt{\eps}\|\nabla \we_\eps\|_{L^2(0,T;L^r(\Omega))}$ are uniformly bounded for every $1\leq r<\infty$.
Therefore we obtain
\begin{align}\nonumber
  &\int_0^T\bigg|\int_{\Omega}\bigg(\big(\nu(c_\eps)-\nu(c_A)\big) D\ve_A:\nabla\we_1\bigg)\sd x\bigg|\sd t\\\nonumber
  &\leq C(M)\left(T^{\frac{1}{2}}\| u\|_{L^\infty(0,T;L^2)}+\eps^{N+\frac14}\|u\|_{L^\infty(0,T;L^4)}\|\nabla \we_\eps\|_{L^2(0,T;L^4)}\right)\|\nabla\we_1\|_{L^2(\Omega\times (0,T))}\\
  &\leq C(R,M)(T^{\frac12}+\eps)\eps^{N+\frac12} \label{w1est-00}
\end{align}
because of $|\nu(c_\eps)-\nu(c_A)|\leq \|\nu'\|_{L^\infty(\R)}|u|$.

Next we use that
\begin{align}
\|\nabla u\|_{L^4(\Omega\times(0,T))}&\leq C\bigg(\|\nabla u\|_{L^{\infty}(0,T;L^2(\Omega))}^{\frac{1}{2}}\|\Delta u\|_{L^2(\Omega\times(0,T))}^{\frac{1}{2}}+T_0^{\frac{1}{4}}\|\nabla u\|_{L^{\infty}(0,T;L^2(\Omega))}\bigg)
\nonumber\\ &\leq C(R)\big(\eps^{\order-\frac{13}{8}}+\eps^{\order-\frac{3}{2}}\big)\leq C(R)\eps^{N-\frac{13}8}, \label{est:ul4l4}
\end{align}
which yields
\begin{align}\nonumber
  &\varepsilon\int_{0}^T\bigg|\int_{\Omega}\bigg(\nabla u\otimes\nabla u:\nabla\we_1\bigg)\sd x\bigg|\sd t\leq C\varepsilon\|\nabla u\|_{L^4(0,T;L^4(\Omega))}^2\|\nabla \we_1\|_{L^2(\Omega\times (0,T))}
\nonumber\\&\qquad\qquad\qquad\qquad\leq C(R)\varepsilon^{2N-\frac{9}{4}}\|\nabla \we_1\|_{L^2(\Omega\times (0,T))}\leq C(R)\varepsilon^{N+\frac{1}{4}}\|\nabla \we_1\|_{L^2(\Omega\times (0,T))}
\label{w2est-2}
\end{align}
due to $N\geq \frac52$.
Because of \eqref{estimation:Reps},
\begin{align}
  \int_{0}^T\bigg|\int_{\Omega}R_\eps\cdot\we_1\sd x\bigg|\sd t
                                                                    &\leq C(M)\eps^{N+\frac14}\|\we_1\|_{L^\infty(0,T;L^2(\Omega))}\nonumber\\
  &\quad + C(R,M)\eps^{N+\frac14}(T^{\frac{1}{2}}+\eps^{\frac14})\|\we_1\|_{L^\infty(0,T;L^2(\Omega))}.\label{w2est-3}
\end{align}
Using \eqref{est:w0} one obtains as in \cite[Proof of Theorem~4.1]{AbelsFei}
\begin{align}\nonumber
  &\int_0^T \left|\weight{\partial_t \we_0, \we_1}_{V',V} \right|\sd t +\int_{0}^T\bigg|\int_{\Omega}\bigg(  (\ve_\eps\cdot \nabla \we_0-\we_0\cdot\nabla \ve_A)\cdot\we_1  +\big(2\nu(c_\eps) D\we_0:\nabla\we_1\big)\bigg)\sd x\bigg|\sd t\\
  &\quad \leq C(M)\varepsilon^{N+\frac12}\|\nabla\we_1\|_{L^2(\Omega\times (0,T))}+C(M) \eps^{N+\frac12}\|\we_1\|^{\frac{1}{2}}_{L^{\infty}(0,T;L^2)}\|\we_1\|^{\frac{1}{2}}_{L^2(0,T;H^1)},
 \label{w2est-5}
\end{align}
Combining \eqref{w1est-0}, \eqref{w1est-00}, \eqref{w2est-2}-\eqref{w2est-5} and utilizing \eqref{initial assumption}, Korn's and Young's  inequality we conclude
 \begin{equation}
    \|\we_1\|_{L^\infty(0,T;L^2(\Omega))}+\|\we_1\|_{L^2(0,T;H^1(\Omega))}\leq C(R)\eps^{\order+\frac{1}{4}}+C(R,M)\eps^{\order+\frac{1}{4}}(T^{\frac12}+\eps^{\frac14}).\label{w1barestimate}
  \end{equation}
Furthermore, by testing \eqref{neweq:w} with  $\boldsymbol\varphi\in L^2(0,T;V(\Omega))$ and using the similar arguments as above we arrive at
\begin{align*}
\|\partial_t\we_1\|_{L^2(0,T;V(\Omega)')} \leq C(R)\eps^{\order+\frac{1}{4}}+C(R,M)\eps^{\order+\frac{1}{4}}(T^{\frac12}+\eps^{\frac14}),
\end{align*}
which by interpolation leads to
\begin{align}\label{est:w1bar-11}
 \|\we_1\|_{H^{\frac{1}{2}}(0,T;L^2(\Omega))}\leq C(R)\eps^{\order+\frac{1}{4}}+C(R,M)\eps^{\order+\frac{1}{4}}(T^{\frac12}+\eps^{\frac14}).
\end{align}
Finally, \eqref{est:w0} and \eqref{w1barestimate}-\eqref{est:w1bar-11} yield the desired result.
\end{proof*}

Since by definition  $\we=\frac{\we_1}{\varepsilon^{N+\frac{1}{4}}}$, we get for $T\in(0,T_\varepsilon)$
 \begin{equation}
     \|\partial_t\we|_{V(\Omega)}\|_{L^2(0,T;V(\Omega)')}+\|\we\|_{L^\infty(0,T;L^2(\Omega))}+\|\we\|_{L^2(0,T;H^1(\Omega))}\leq C(R).\label{neww1estimate}
  \end{equation}

\begin{prop}\label{prop:ErrorConvectionTerm} For $T\in(0,T_\varepsilon)$ there holds
 \begin{alignat}{2}
\int_0^{T}\bigg|\int_{\Omega}\big(\we-\we|_{X_\eps (0,S_\eps,t)}\big)\cdot \nabla c_Au\, \sd x\bigg|\sd t&\leq C(R,T)\eps^{\order+\frac{3}{4}},\label{est:diffw1couple}
\end{alignat}
where $C(R,T)\rightarrow 0$ as $T\rightarrow 0$.
\end{prop}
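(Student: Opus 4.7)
The guiding idea is that the difference $\we - \we|_{X_\eps(0,S_\eps,t)}$ vanishes on the approximate interface $\Gamma_t^\eps = \{d_\eps = 0\}$, precisely where $\nabla c_A$ has its main concentration. Taking the inner product with $\nabla c_A$ should then yield an extra factor $\sqrt{\eps}$ via Cauchy--Schwarz in the normal direction. Since $\supp \nabla c_A \subseteq \Gamma(\tfrac{5\delta}{2})$ and since \eqref{eq_Gamma_eps} combined with the matching conditions from Theorem~\ref{thm:Approx1} guarantees that $|\nabla c_A|$ is bounded by $Ce^{-\alpha\delta/\eps}$ on $\Gamma(\tfrac{5\delta}{2}) \setminus \Gamma^\eps(\tfrac{3\delta}{2})$, I would first reduce the estimate to an integration over $\Gamma^\eps_t(\tfrac{3\delta}{2})$ modulo exponentially small errors in $\eps$.

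On $\Gamma^\eps_t(\tfrac{3\delta}{2})$, I would switch to $\eps$-coordinates $x = X_\eps(r,s,t)$ with $r = d_\eps(x,t), s = S_\eps(x,t)$ and bounded Jacobian $J_\eps$, and write
\begin{equation*}
|\we(X_\eps(r,s,t)) - \we(X_\eps(0,s,t))| \leq |r|^{1/2}\|\partial_\tau(\we \circ X_\eps)(\cdot,s,t)\|_{L^2(-\frac{3\delta}{2},\frac{3\delta}{2})}
\end{equation*}
by Cauchy--Schwarz in $\tau$. The expansion of $\nabla c_A$ recorded before \eqref{w1est-3} would then be split into (i) the leading singular part $\eps^{-1}\zeta(d_\Gamma)\theta_0'(\rho)\no_\eps$, (ii) bounded terms (coming from $\partial_\rho \tc_k, \nabla_{\btau_\eps}\tc_k$ with $k\geq \tfrac{3}{2}$) carrying exponential decay in $\rho$ and a prefactor $O(\sqrt{\eps})$, and (iii) the $h_{N,\eps}$-contributions $\eps^{N-\frac{7}{4}}\theta_0''(\rho)\no_\eps\bar h_{N,\eps}(S_\eps)$ and $\eps^{N-\frac{3}{4}}\theta_0'(\rho)\nabla(\bar h_{N,\eps}\circ S_\eps)$.

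For part (i), combining the above pointwise estimate for $\we-\we|_{X_\eps(0,S_\eps)}$ with Cauchy--Schwarz in $r$ produces the crucial integral
\begin{equation*}
\Bigl(\int_{-\infty}^{\infty}|r|\,\eps^{-2}\theta_0'(r/\eps)^2 \, dr \Bigr)^{1/2} = C\sqrt{\eps},
\end{equation*}
and a further Cauchy--Schwarz in $(s,t)$ together with $\|u\|_{L^\infty(0,T;L^2)} \leq R\eps^{N+\frac{1}{2}}$ from \eqref{assumptions1}, $\|\nabla\we\|_{L^2(\Omega\times(0,T))} \leq C(R)$ from \eqref{neww1estimate}, and a factor $T^{1/2}$ from H\"older in $t$, yields a bound of order $C(R)T^{1/2}\eps^{N+1}$. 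For part (ii), since $\|A_1\|_{L^\infty(\Gamma^\eps_t(3\delta/2))} \leq C\sqrt{\eps}$ and since the Poincar\'e-type inequality $\|\we - \we|_{X_\eps(0,S_\eps)}\|_{L^2(\Gamma^\eps_t(\frac{3\delta}{2}))} \leq C\|\nabla\we\|_{L^2(\Gamma^\eps_t(\frac{3\delta}{2}))}$ follows directly from the pointwise bound, the same order $C(R)T^{1/2}\eps^{N+1}$ is obtained. For part (iii), the uniform bounds $\|\bar h_{N,\eps}\|_{L^\infty(0,T;H^1(\T^1))} \leq C(M)$ and $\sqrt{\eps}\|h_{N-\frac{3}{4},\eps}\|_{L^2(0,T;H^{5/2}(\T^1))} \leq C(M)$ from \eqref{eq:EstimLinStokesMCF}, combined with the one-dimensional embedding $H^{5/2}(\T^1) \hookrightarrow W^{1,\infty}(\T^1)$, give a contribution of order $C(R,M)T^{1/2}\eps^{2N - \frac{5}{4}}$, which is negligible for $N \geq 3$.

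Taking the maximum of these three contributions together with the exponentially small remainder from outside $\Gamma^\eps_t(\tfrac{3\delta}{2})$ delivers the desired bound with $C(R,T) := C(R) T^{1/2}\eps^{1/4} \to 0$ as $T \to 0$. The main obstacle is the $h_{N,\eps}$-contribution (iii): the derivative $\partial_s h_{N-\frac{3}{4},\eps}$ is only uniformly bounded in $L^2(0,T;L^\infty(\T^1))$ with norm $O(\eps^{-1/2})$, so a naive $L^\infty$-bound on $\nabla c_A$ is not admissible; this is resolved by exploiting both the smallness of the prefactor $\eps^{N-\frac{3}{4}}$ and the pointwise bound $\|u\|_{L^\infty(0,T;L^2)} \leq R\eps^{N+\frac{1}{2}}$, which together more than compensate for the $\eps^{-1/2}$ loss.
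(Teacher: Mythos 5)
There is a genuine gap in your treatment of the leading term (i). The ``crucial integral'' you compute is not $O(\sqrt{\eps})$: substituting $\rho=r/\eps$ in $\int_\R|r|\,\eps^{-2}\theta_0'(r/\eps)^2\,dr$ gives $\int_\R|\rho|\,\theta_0'(\rho)^2\,d\rho$, a constant independent of $\eps$. With the correct $O(1)$ value, the chain of Cauchy--Schwarz inequalities you propose yields for part (i) the bound $C(R)T^{1/2}\|\nabla\we\|_{L^2(\Omega\times(0,T))}\|u\|_{L^\infty(0,T;L^2)}\leq C(R)T^{1/2}\eps^{N+\frac12}$, which is a full factor $\eps^{1/4}$ short of the asserted $\eps^{N+\frac34}$. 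Note that nothing in your argument uses that $\we$ is divergence-free, and this is precisely the structural input that is needed: a pure one-dimensional Cauchy--Schwarz in the normal variable cannot beat $\eps^{N+\frac12}$ here.

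The paper's actual argument exploits this structure. Writing $\we(X_\eps(r,s,t))-\we(X_\eps(0,s,t))=\int_0^r\partial_rX_\eps\cdot\nabla\we\,dr'$ and using the identity \eqref{eq:XepsId} together with $\partial_r(\no_\eps\circ X_\eps)=O(\eps^2)$, one replaces $\no_\eps\cdot(\partial_rX_\eps\cdot\nabla)\we$ by $\Div\we-\nabla S_\eps\cdot(\partial_sX_\eps\cdot\nabla)\we+O(\eps^2)|\nabla\we|$. The $\Div\we$ contribution is negligible, and the middle term is a tangential $\partial_s$-derivative of $\we\circ X_\eps$ which is then integrated by parts in $s$, so the derivative lands on $u$. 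After the IBP, the factor involving $\we$ obeys a pointwise bound $\big|\tfrac1\eps\int_0^r\we(X_\eps(r',\cdot))\,dr'\big|\leq\tfrac{|r|}{\eps}\|\we(X_\eps(\cdot,s,t))\|_{L^2}^{1/2}\|\we(X_\eps(\cdot,s,t))\|_{H^1}^{1/2}$ with a full power $|r|$ (not $|r|^{1/2}$), so the $r$-integral $\bigl(\int\tfrac{|r|^2}{\eps^2}\theta_0'(r/\eps)^2\,dr\bigr)^{1/2}$ really is $O(\sqrt{\eps})$. Moreover the derivative that was shifted onto $u$ is estimated by $\|\nabla_{\btau_\eps}u\|_{L^2(\Omega\times(0,T)\cap\Gamma^\eps(\tfrac{3\delta}{2}))}\leq R\eps^{N+\frac14}$ from \eqref{assumptions2}, which is $\eps^{1/4}$ better than $\|u\|_{L^\infty(0,T;L^2)}$. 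Both gains together, plus a $T^{1/4}$ from H\"older in time, give \eqref{est:j1} and hence $C(R,T)\eps^{N+\frac34}$. Your decomposition of $\nabla c_A$ into the three parts and the reduction to $\Gamma^\eps_t(\tfrac{3\delta}{2})$ are fine, but for part (i) you need this integration-by-parts step to reach the stated order.
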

\begin{proof}
Let
\begin{alignat}{1}
  c^{(0)}(x,t)&=\zg(x,t) \theta_0(\rho) +(1-\zg(x,t) )\(c_A^+(x,t)\chi_+(x,t) +c_A^-(x,t)\chi_-(x,t) \)\label{leading order app}
\end{alignat}
be the leading part of $c_A$.
Using that
   \begin{align}
   \bigg(\theta_0(\rho)-\big(\pm1\big|_{\Omega^{\pm}(t)}\big)\bigg)\nabla(\zg)=O(e^{-\frac{\alpha\delta}{2\varepsilon}}), \label{outer decay}
   \end{align}
due to \eqref{eq:LowerBdd},   we  obtain
 \begin{align}
&\int_{\Gamma^\eps_t(\frac{3\delta}{2})}\big(\we-\we|_{X_\eps (0,S_\eps,t)}\big)\cdot\nabla c^{(0)}u\sd x=J+O(e^{-\frac{\alpha\delta}{2\varepsilon}})\|\we\|_{H^1(\Omega)}\|u\|_{L^2(\Gamma^\eps_t(\frac{3\delta}{2}))},
\label{est:diffw1-1}
\end{align}
where
\begin{align*}
  J&:= \varepsilon^{-1}\int_{\Gamma^\eps_t(\frac{3\delta}{2})}\zg\big(\we-\we|_{X_\eps (0,S_\eps,t)}\big)\cdot\nn_\eps\theta_0'(\rho)u\, \sd x.
\end{align*}
Now we use that
\begin{align*}
  \partial_r X_\eps(r,s,t)= \no_\eps (X_\eps(0,s,t),t)+O(\eps^2)= \partial_r X_\eps(r',s,t)+O(\eps^2)
\end{align*}
for all $r,r'\in (-\tfrac{3\delta}2,\tfrac{3\delta}2)$, $s\in \T^1, t\in [0,T_0]$ because of \eqref{eq:XepsNoEps}.
Therefore we obtain
  \begin{align}
J&=\int_{-\frac{3\delta}{2}}^{\frac{3\delta}{2}}\int_{\T^1} \frac1\eps\int_0^r(\no_\eps(X_\eps(r,.)) \cdot \left(\partial_r X_\eps (r',.)\cdot (\nabla \we)(X_\eps(r',.))\right)\sd r'\, \theta_0'(\tfrac{r}\eps) u(X_\eps(r,.))J_\eps(r,.)\sd s\sd r\nonumber\\
&= - \int_{-\frac{3\delta}{2}}^{\frac{3\delta}{2}}\int_{\T^1}  \frac1\eps\int_0^{r}(\Div \we) (X_\eps(r',.))\sd r'\, \theta_0'(\tfrac{r}\eps) u(X_\eps(r,.))J_\eps(r,.)\sd s\sd r\nonumber\\
     &\quad - \int_{-\frac{3\delta}{2}}^{\frac{3\delta}{2}}\int_{\T^1} \frac1\eps\int_0^{r}\nabla S_\eps (X_\eps(r',.))\cdot (\partial_s X_\eps(r',.)\cdot \nabla \we)(X_\eps(r',.))\sd r'\, \theta_0'(\tfrac{r}\eps) u(X_\eps(r,.))J_\eps(r,.)\sd s\sd r\nonumber\\
    &\quad + \int_{-\frac{3\delta}{2}}^{\frac{3\delta}{2}}\int_{\T^1} \frac1\eps\int_0^r \mathbb{A}_\eps (r,r',s,t):(\nabla \we)(X_\eps(r',.))\sd r'\, \theta_0'(\tfrac{r}\eps) u(X_\eps(r,.))J_\eps(r,.)\sd s\sd r,
  \end{align}
  where $\mathbb{A}_\eps (r,r',s,t)= O(\eps^2)$ and
  \begin{align*}
    &\left|\int_{-\frac{3\delta}{2}}^{\frac{3\delta}{2}}\int_{\T^1} \frac1\eps\int_0^{r}\nabla S_\eps (X_\eps(r,.))\cdot (\partial_s X_\eps(r',.)\cdot \nabla \we)(X_\eps(r',.))\sd r'\, \theta_0'(\tfrac{r}\eps) u(X_\eps(r,.))J_\eps(r,.)\sd s\sd r\right|\\
    &= \left|\int_{-\frac{3\delta}{2}}^{\frac{3\delta}{2}}\int_{\T^1} \frac1\eps\int_0^{r}  \we(X_\eps(r',.)) \theta_0'(\tfrac{r}\eps) \cdot \partial_s\left( \nabla S_\eps (X_\eps(r',.) u( X_\eps(r,.))J_\eps(r,.)\right)\sd r'\sd s\sd r\right|\\
   &\leq  C\varepsilon^{\frac{1}{2}} \|\we(\cdot,t)\|_{L^2(\Omega)}^{\frac{1}{2}}\|\we(\cdot,t)\|_{H^1(\Omega)}^{\frac{1}{2}}\big(\|\nabla_{\btau_\eps}u\|_{L^2(\Gamma^\eps_t(\frac{3\delta}{2}))}+\|u\|_{L^2(\Gamma^\eps_t(\frac{3\delta}{2}))}\big)
  \end{align*}
  because of
    \begin{align*}
      \left|\tfrac1\eps\int_0^{r}\we(X_\eps(r,s,t)\sd r'\right| &\leq \frac{|r|}\eps \|\we (X_\eps(\cdot,s,t))\|_{L^\infty (-\frac{3\delta}2,\frac{3\delta}2)}\\
      &\leq \frac{|r|}\eps \|\we (X_\eps(\cdot,s,t))\|_{L^2 (-\frac{3\delta}2,\frac{3\delta}2)}^{\frac12}\|\we (X_\eps(\cdot,s,t))\|_{H^1 (-\frac{3\delta}2,\frac{3\delta}2)}^{\frac12}.
  \end{align*}
  Similarly we have
  \begin{align*}
    &\left|\int_{-\frac{3\delta}{2}}^{\frac{3\delta}{2}}\int_{\T^1}  \frac1\eps\int_0^{r}(\Div \we) (X_\eps(r',s,t))\sd r'\, \theta_0'(\tfrac{r}\eps) u(X_\eps(r,s,t))J_\eps(r,s,t)\sd s\sd r\right|\\
    &\leq C \|\Div \we\|_{L^2(\Omega)} \|u\|_{L^2(\Gamma^\eps_t(\frac{3\delta}{2}))}
  \end{align*}
  due to
    \begin{align*}
      \left|\tfrac1\eps\int_0^{r}(\Div\we)(X_\eps(r,s,t)\sd r'\right|  &\leq \frac{|r|^{\frac12}}\eps \|(\Div \we) (X_\eps(\cdot,s,t))\|_{L^2 (-\frac{3\delta}2,\frac{3\delta}2)}.
    \end{align*}
    Now using $\|\Div \we\|_{L^2(\Omega\times (0,T_\eps))} = O(\eps^{\frac14}) $, we obtain
 \begin{align}
   \int_{0}^T |J|\sd t
                      &\leq C(R)T^{\frac{1}{4}}\big(\varepsilon^{\frac{1}{2}}\|\nabla_{\btau_\eps}u\|_{L^2(\Omega\times (0,T))}+T^{\frac{1}{2}}\varepsilon^{\frac{1}{4}}\|u\|_{L^\infty(0,T;L^2(\Omega))}\big) \nonumber+ CT^{\frac12}\eps^{N+1}
   \nonumber\\&\leq C(R)T^{\frac{1}{4}}(1+T^{\frac{1}{2}})\eps^{\order+\frac{3}{4}}.
 \label{est:j1}
  \end{align}
Combining \eqref{est:j1} and \eqref{est:diffw1-1}
 we obtain
 \begin{align}
\int_{0}^T \bigg|\int_{\Gamma^\eps_t(\frac{3\delta}{2})}\big(\we-\we|_{X_\eps (0,S_\eps,t)}\big)\cdot\nabla c^{(0)} u\, \sd x\bigg|\sd t\leq C(R,T)\eps^{\order+\frac{3}{4}},
\label{est:diffw1-2}
\end{align}
where $C(R,T)\rightarrow 0$ as $T\rightarrow 0$.

Finally, the corresponding estimate $\nabla (c_A- c^{(0)})$ can be done in a straight forward manner since all terms are of higher order in $\eps$ (by at least a factor $\eps^{\frac32}$) compared to $\nabla c^{(0)}$. This finishes the proof.
\end{proof}

\begin{rem}\label{rem_ErrorConvectTerm}
	Analogous to Proposition \ref{prop:ErrorConvectionTerm} one can show that
	 \begin{align}
		\int_0^{T}\bigg|\int_{\Omega}\big(\we_\eps-\we_\eps|_{X_\eps (0,S_\eps,t)}\big)\cdot \nabla c_Au\, \sd x\bigg|\sd t&\leq C(R,T)\eps^{\order+\frac{3}{4}},
	\end{align}
	where $C(R,T)\rightarrow 0$ as $T\rightarrow 0$. To this end one uses the same computations and estimates as in the proof of Proposition \ref{prop:ErrorConvectionTerm} for $\we_\eps$ instead of $\we$ and the estimate \eqref{eq_estimate_wA}.
\end{rem}

\subsection{Proof of the Main Result Theorem \ref{thm:main}}

In order to estimate the error due to linearization of $\eps^{-\frac32}f'(c)$ we need:
\begin{prop}\label{prop:u3Estim}
  Under the assumptions of Section~\ref{subsec:LeadingErrorVelocity} we have for every $T\in (0,T_\eps)$
\begin{align}
\int_0^{T}\int_{\Omega} |u|^3\sd x\sd t\leq C(R)T^{\frac{1}{2}}\varepsilon^{3N+\frac{7}{8}}.\label{est:nonlinear}
 \end{align}
\end{prop}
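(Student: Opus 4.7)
The plan is to split the integral according to
$$\int_0^T\!\!\int_\Omega |u|^3\,dx\,dt = \int_0^T\!\!\int_{\Omega\setminus\Gamma_t^\eps(\frac{3\delta}{2})} |u|^3\,dx\,dt + \int_0^T\!\!\int_{\Gamma_t^\eps(\frac{3\delta}{2})} |u|^3\,dx\,dt,$$
to treat the outer piece with a standard isotropic $L^3$--$L^2$ interpolation, and for the inner piece to pass to the stretched coordinates $(\rho,s)=\big(\tfrac{d_\eps}\eps,S_\eps\big)$ and use an anisotropic Gagliardo--Nirenberg inequality that separately exploits the much better control on the normal derivative $\partial_\rho\hat u$ in rescaled variables. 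Without separating tangential and normal directions a plain $\|u\|_{L^3}^3\le C\|u\|_{L^2}^2\|u\|_{H^1}$ only yields $T^{1/2}\eps^{3N+1/2}$; the missing $\eps^{3/8}$ is precisely recovered by the anisotropy.

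In the outer region, the assumptions \eqref{assumptions1} give $\|u\|_{L^\infty(0,T;L^2)}\le R\eps^{N+\frac12}$ and $\|\nabla u\|_{L^2((\Omega\setminus\Gamma^\eps(\frac{3\delta}{2}))\times(0,T))}\le R\eps^{N+\frac14}$, so that the 2D interpolation $\|u\|_{L^3}^3\le C\|u\|_{L^2}^2\|u\|_{H^1}$ combined with Cauchy--Schwarz in $t$ produces $\int_0^T\|u\|_{L^3(\Omega\setminus\Gamma^\eps_t(\frac{3\delta}{2}))}^3\,dt\le CR^3 T^{\frac12}\eps^{3N+\frac54}$, which is strictly better than the desired $\eps^{3N+\frac78}$.

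For the interface region I would set $\hat u(\rho,s,t)=u(X_\eps(\eps\rho,s,t),t)$ and use $dx=\eps\, J_\eps\, d\rho\,ds$ together with the identities $\partial_\rho\hat u=\eps\,(\partial_{\no_\eps}u)\circ X_\eps+O(\eps^2)$ and $|\partial_s\hat u|\le C|\nabla_{\btau_\eps}u|\circ X_\eps$ from \eqref{eq:XepsNoEps} and the formulas below it. Combined with \eqref{assumptions1}--\eqref{assumptions2} this yields, uniformly in $\eps$,
\begin{equation*}
\sup_{0\le t\le T}\|\hat u(t)\|_{L^2_{\rho,s}}^2\le CR^2\eps^{2N},\quad \int_0^T\!\|\partial_\rho\hat u\|_{L^2}^2\,dt\le CR^2\eps^{2N},\quad \int_0^T\!\|\partial_s\hat u\|_{L^2}^2\,dt\le CR^2\eps^{2N-\frac12}.
\end{equation*}
The key ingredient is then an anisotropic Ladyzhenskaya-type inequality on the cylinder $\big(-\tfrac{3\delta}{2\eps},\tfrac{3\delta}{2\eps}\big)\times\T^1$, which I would prove by averaging the Newton--Leibniz identity over $\rho$ and $s$, obtaining
\begin{equation*}
\hat u(\rho,s)^2\le \tfrac{\eps}{3\delta}\|\hat u(\cdot,s)\|_{L^2_\rho}^2+2\|\hat u(\cdot,s)\|_{L^2_\rho}\|\partial_\rho\hat u(\cdot,s)\|_{L^2_\rho}
\end{equation*}
together with the analogue in $s$; multiplying these and integrating leads, after a Cauchy--Schwarz step and the $L^3$--interpolation $\|\hat u\|_{L^3}^3\le\|\hat u\|_{L^2}\|\hat u\|_{L^4}^2$, to
\begin{equation*}
\|\hat u\|_{L^3_{\rho,s}}^3\le C\|\hat u\|_{L^2}^2\|\partial_\rho\hat u\|_{L^2}^{\frac12}\|\partial_s\hat u\|_{L^2}^{\frac12}+(\text{l.o.t.\ with extra factors of }\eps^{\frac12}\text{ or }\eps).
\end{equation*}

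To close the argument I would integrate in $t$, pulling out $\sup_t\|\hat u\|_{L^2}^2$ and estimating the remaining time integral by Cauchy--Schwarz twice:
\begin{equation*}
\int_0^T\!\|\partial_\rho\hat u\|^{\frac12}\|\partial_s\hat u\|^{\frac12}dt\le T^{\frac12}\Big(\!\int_0^T\!\|\partial_\rho\hat u\|^2\Big)^{\!\frac14}\!\Big(\!\int_0^T\!\|\partial_s\hat u\|^2\Big)^{\!\frac14}\!\le CRT^{\frac12}\eps^{N-\frac18},
\end{equation*}
which gives $\int_0^T\|\hat u\|_{L^3}^3\,dt\le CR^3T^{\frac12}\eps^{3N-\frac18}$ and, after multiplication by the Jacobian factor $\eps$ from the change of variables, exactly $CR^3T^{\frac12}\eps^{3N+\frac78}$ for the leading contribution; the lower-order terms from the anisotropic interpolation carry at least an extra $\eps^{\frac12}$ and so are bounded by $CR^3T^{\frac12}\eps^{3N+1}$. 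The main obstacle is the careful verification of the anisotropic $L^3$--$L^2$ interpolation on the $\eps$-scaled cylinder, in particular showing that the finite length of the $\rho$-interval only contributes harmless $O(\eps)$ corrections via the averaging identity, and cleanly translating the assumed bounds \eqref{assumptions1}--\eqref{assumptions2} (formulated in physical coordinates in terms of $\nabla_{\btau_\eps}$ and $\partial_{\no_\eps}$) into the bounds on $\partial_\rho\hat u$ and $\partial_s\hat u$ with the right powers of $\eps$.
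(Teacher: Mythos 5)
Your proof is correct and follows essentially the same approach as the paper, which simply invokes the corresponding result in Abels--Fei: a split into bulk and interface pieces combined with an anisotropic Gagliardo--Nirenberg/Ladyzhenskaya estimate exploiting separate control on the tangential and (rescaled) normal derivatives. Your exponent bookkeeping is consistent with the paper's remark that the $\eps^{1/8}$ loss relative to the constant-mobility case comes from the weakened bound on $\|\nabla_{\btau_\eps}u\|_{L^2}^{1/2}$, which is precisely what your leading term $\|\hat u\|_{L^2}^{2}\|\partial_\rho\hat u\|_{L^2}^{1/2}\|\partial_s\hat u\|_{L^2}^{1/2}$ exhibits.
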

\begin{proof}
  The proof is almost identical to \cite[Proposition~4.3]{AbelsFei} with $\Gamma_t(\delta)$ and $\nabla_\btau u$ replaced by $\Gamma_t^\eps (\tfrac{3\delta}2)$, $\nabla_{\btau_\eps} u$, respectively. In the present case the power of $\eps$ in the estimate for $\|\nabla_{\btau_\eps} u\|_{L^2}^{\frac12}$ is decreased by $\frac18$, which cause the loss of $\frac18$ in the power of $\eps$ in the present case compared to \cite[Proposition~4.3]{AbelsFei}.
\end{proof}

In the following the proof is similar to \cite[Section~4.2]{AbelsFei}. But because of the different powers of $\eps$ in the estimates, some terms are critical compared to \cite{AbelsFei} and we have to choose additionally $T>0$ sufficiently small to finally control all terms.

First of all, by definition $  \ve_\eps= \ve_A+ \varepsilon^{N+\frac{1}{4}}\we-\we_0$. Therefore \eqref{eq:NSAC3} and \eqref{eq:ApproxS3} (with $\ue=\we$) imply
 \begin{alignat}{2}\nonumber
   \partial_t u&+\ve_\eps\cdot \nabla u+ \eps^{N+\frac14}\big(\we-\we|_{X_\eps (0,S_\eps,t)}-(\we_\eps-\we_\eps|_{X_\eps (0,S_\eps,t)})\big)\cdot \nabla c_A-\we_0\cdot \nabla c_A\\\label{eq:u}
   &= \eps^{\frac{1}{2}}\Delta u-\eps^{-\frac{3}{2}}f''(c_A)u -\eps^{-\frac{3}{2}}\mathcal{N}(c_A,u)- s^1_\eps-s^2_\eps,
  \end{alignat}
where
$
  \mathcal{N}(c_A,u)=f'(c_\eps)-f'(c_A)-f''(c_A)u.
$
 Taking the $L^2(\Omega)$-inner product of \eqref{eq:u} and $u$, using integration by parts we obtain
  \begin{alignat}{2}\label{ineq:u energy-00}
    &\frac{1}{2}\frac{d}{dt}\int_{\Omega} u^2\sd x+ \eps^{\frac{1}{2}}\int_{\Omega}\big( |\nabla u|^2+\frac{f''(c_A)}{\eps^2}u^2\big) \sd x\nonumber\\&\leq\eps^{N+\frac14}\bigg|\int_{\Omega}\big(\we-\we|_{X_\eps (0,S_\eps,t)}-(\we_\eps-\we_\eps|_{X_\eps (0,S_\eps,t)})\big)\cdot \nabla c_Au\, \sd x\bigg|+\bigg|\int_{\Omega}\we_0\cdot \nabla c_Au\, \sd x\bigg|\nonumber\\&\qquad+\eps^{-\frac{3}{2}}C\int_{\Omega}|u|^3\sd x+\bigg|\int_{\Omega} s_\eps u\, \sd x\bigg|
  \end{alignat}
  because of
  \begin{align*}
  \int_{\Omega} \mathcal{N}(c_A,u)u\sd x\geq-C\int_{\Omega} |u|^3\sd x.
  \end{align*}
  Application of Lemma \ref{thm:Spectral} now yields
  \begin{alignat}{2}\label{ineq:u energy}
    &\frac{1}{2}\frac{d}{\sd t}\int_{\Omega} u^2\sd x-C_L\eps^{\frac{1}{2}}\int_\Om u^2 \sd x +\eps^{\frac{1}{2}} \int_{\Om\setminus \Gamma^\eps_t(\frac{3\delta}{2})} |\nabla u|^2\sd x + \eps^{\frac{1}{2}} \int_{\Gamma^\eps_t(\frac{3\delta}{2})} |\nabla_{\btau_\eps} u|^2\sd x\nonumber\\&\leq\eps^{N+\frac14}\bigg|\int_{\Omega}\big(\we-\we|_{X_\eps (0,S_\eps,t)}-(\we_\eps-\we_\eps|_{X_\eps (0,S_\eps,t)})\big)\cdot \nabla c_Au\sd x\bigg|+\bigg|\int_{\Omega}\we_0\cdot \nabla c_Au\sd x\bigg|\nonumber\\&\qquad+C\eps^{-\frac{3}{2}}\int_{\Omega}|u|^3\sd x+\bigg|\int_{\Omega} s_\eps u\, \sd x\bigg|.
  \end{alignat}
Therefore we obtain
  \begin{alignat}{2}\label{ineq:u energy-1}
    &\sup_{0\leq t\leq T}\frac{1}{2}\int_{\Omega} u(x,t)^2\sd x + \eps^{\frac{1}{2}}\int_{0}^T\int_{\Om\setminus \Gamma^\eps_t(\frac{3\delta}{2})} |\nabla u|^2\sd x \sd t +  \eps^{\frac{1}{2}}\int_{0}^T\int_{\Gamma^\eps_t(\frac{3\delta}{2})} |\nabla_{\btau_\eps} u|^2d x \sd t\nonumber\\&\leq e^{C_LT_0}\bigg(\frac{1}{2}\int_{\Omega} u(x,0)^2\sd x +\eps^{N+\frac14}\int_{0}^T\bigg|\int_{\Omega}\big(\we-\we|_{X_\eps (0,S_\eps,t)}-(\we_\eps-\we_\eps|_{X_\eps (0,S_\eps,t)})\big)\cdot \nabla c_Au\,\sd x\bigg|\sd t\nonumber\\&\qquad\qquad+\bigg|\int_{\Omega}\we_0\cdot \nabla c_Au\sd x\bigg|+C\eps^{-\frac{3}{2}}\int_{0}^T\int_{\Omega}|u|^3\sd x\sd t+\int_{0}^T\bigg|\int_{\Omega} s_\eps u\, \sd x\bigg|\sd t\bigg)
    \nonumber\\&\leq \frac{R^2}{8}\eps^{2N+1}+C'(R,T)\big(\eps^{2N+1}+\eps^{3N-\frac58}\big)\leq \frac{R^2}{8}\eps^{2N+1}+C(R,T)\eps^{2N+1}
  \end{alignat}
  for all $0\leq T\leq T_\varepsilon$
  due to \eqref{estimation:Seps}, \eqref{initial assumption-0}, Proposition~\ref{prop:ErrorConvectionTerm}, Remark \ref{rem_ErrorConvectTerm}, Proposition~\ref{prop:u3Estim}, Gronwall's inequality, and $N\geq 3$, where $C(R,T),C'(R,T)\to_{T\to 0} 0$.
  Hence, if $\varepsilon\in\min(0,\varepsilon_0)$ and $\varepsilon_0>0$ and $T>0$ are sufficiently small, we have
  \begin{align*}
\frac{R^2}{8}\eps^{2N+1}+C(R,T)\eps^{2N+1}\leq \frac{R^2}{4}\eps^{2N+1}
  \end{align*}
and therefore
 \begin{alignat}{2}\nonumber
   &\sup_{0\leq t\leq T}\frac{1}{2}\int_{\Omega} u(x,t)^2\sd x +\eps^{\frac{1}{2}} \int_{\Om\times (0,T)\setminus \Gamma^\eps(\frac{3\delta}{2})} |\nabla u|^2\sd (x,t) \\\label{ineq:u energy-2}
   &\quad+ \eps^{\frac{1}{2}} \int_{\Omega\times (0,T)\cap \Gamma^\eps(\frac{3\delta}{2})} |\nabla_{\btau_\eps} u|^2\sd (x,t)\leq \frac{R^2}{4}\eps^{2N+1}.
  \end{alignat}
Combining this estimate with  \eqref{ineq:u energy-00} we obtain
  \begin{alignat}{2}
    &\eps^{\frac{1}{2}}\int_0^T \int_{\Omega}\left( |\nabla u|^2+\frac{f''(c_A)}{\eps^2}u^2\right) \sd x \sd t\leq\frac{1}{2}\int_{\Omega} u(x,0)^2\sd x
     \nonumber\\&\quad +\eps^{N+\frac14}\int_{0}^T\bigg|\int_{\Omega}\big(\we-\we|_{X_\eps (0,S_\eps,t)}-(\we_\eps-\we_\eps|_{X_\eps (0,S_\eps,t)})\big)\cdot \nabla c_Au\sd x\bigg|\sd t+\bigg|\int_{\Omega}\we_0\cdot \nabla c_Au\sd x\bigg|\nonumber\\&\quad+C\eps^{-\frac{3}{2}}\int_{0}^T\int_{\Omega}|u|^3\sd x\sd t+\int_{0}^T\bigg|\int_{\Omega}s_\eps u\sd x\bigg|\sd t
    \nonumber\\\label{ineq:u energy-4}
    &\quad\leq\frac{R^2}{8}\eps^{2N+1}+C(R,T)\eps^{2N+1}\leq \frac{R^2}{4}\eps^{2N+1}.
  \end{alignat}
   for $T\in (0, T_\varepsilon]$ sufficiently small and, if $\eps_0>0$ is sufficiently small,
  \begin{alignat}{2}\label{ineq:u energy-5}
   \eps^{2}\int_0^T \int_{\Omega}|\partial_{\nn_\eps} u|^2\sd x \sd t&\leq\eps^{2}\int_0^T \int_{\Omega}|\nabla u|^2\sd x \sd t
    \nonumber\\&\leq\int_0^T \int_{\Omega}\big( \eps^{2}|\nabla u|^2+f''(c_A)u^2\big) \sd x \sd t+C\int_0^T \int_{\Omega}u^2\sd x \sd t\nonumber\\&\leq\frac{R^2}{4}\eps^{2N+\frac{5}{2}}+CT\frac{R^2}{2}\eps^{2N+1}
    \leq\frac{3R^2}{4}\eps^{2N+1}.
  \end{alignat}

Next we derive the  estimates for $\Delta u$ in $L^2(\Omega\times (0,T))$ and $\nabla u$ in $L^\infty(0,T;L^2(\Omega))$. To this end we
 take the $L^2(\Omega)$-inner product of \eqref{eq:u} and $-\varepsilon^4\Delta u$, integrate by parts, and obtain
  \begin{alignat}{2}\label{ineq:u energy-6}
    &\sup_{0\leq t\leq T}\frac{\varepsilon^4}{2}\int_{\Omega} |\nabla u|^2(x,t)\sd x+ \varepsilon^{\frac{9}{2}}\int_0^T\int_{\Omega}|\Delta u|^2\sd x\sd t
    \nonumber\\&\leq\frac{\varepsilon^4}{2}\int_{\Omega} |\nabla u|^2(x,0)\sd x+\varepsilon^{\frac{5}{2}}\int_0^T\int_{\Omega}\big|f''(c_A)u\Delta u\big|\sd x \sd t
    +\varepsilon^4\int_0^T\int_{\Omega}\big|\ve_\eps\cdot \nabla u\Delta u  \big|\sd x \sd t
    \nonumber\\&\quad+\eps^{N+\frac{17}{4}}\int_0^T\int_{\Omega}\big|\big(\we-\we|_{X_\eps (0,S_\eps,t)}-(\we_\eps-\we_\eps|_{X_\eps (0,S_\eps,t)})\big)\cdot \nabla c_A\Delta u\big|\sd x\sd t
    \nonumber\\&\quad+\varepsilon^4\bigg|\int_{\Omega}\we_0\cdot \nabla c_A\Delta u\sd x\bigg|+\varepsilon^{\frac{5}{2}}\int_0^T\int_{\Omega}\big|\mathcal{N}(c_A,u)\Delta u\big|\sd x \sd t+\varepsilon^4\int_0^T\bigg|\int_{\Omega}s_\eps\Delta u\,\sd x\bigg|\sd t,
  \end{alignat}
where
   \begin{alignat}{2}\label{ineq:u energy-7}
\varepsilon^{\frac{9}{2}}\int_0^T\int_{\Omega}\big|f''(c_A)u\Delta u\big|\sd x \sd t&\leq C\varepsilon^{\frac{9}{2}} T^{\frac{1}{2}}\|u\|_{L^\infty(0,T;L^2(\Omega))}\|\Delta u\|_{L^2(0,T;L^2(\Omega))}
\nonumber\\&\leq CR\varepsilon^{N+3} T^{\frac{1}{2}}\|\Delta u\|_{L^2(\Omega\times (0,T))}.
\end{alignat}
  As in \cite[Section~4.2]{AbelsFei} one estimates
    \begin{alignat}{2}\label{ineq:u energy-13}
    &\eps^{2}\int_0^T\int_{\Omega}\big|\mathcal{N}(c_A,u)\Delta u\big|\sd x \sd t 
    \leq CR^2\eps^{2N+2}\|\Delta u\|_{L^2(0,T;L^2(\Omega))}
  \end{alignat}
and, using  $   \ve_\eps= \ve_A+ \varepsilon^{N+\frac{1}{4}}\we-\we_0$,
   \begin{alignat}{2}\label{ineq:u energy-11}
    \varepsilon^4\int_0^T\int_{\Omega}\big|\ve_\eps\cdot \nabla u\Delta u  \big|\sd x \sd t&\leq C R\varepsilon^{N+\frac{7}{2}}\|\Delta u\|_{L^2(0,T;L^2(\Omega))}\nonumber\\&\quad+C(R)\eps^{N+\frac{17}{4}}\|\nabla u\|_{L^\infty(0,T;L^2(\Omega))}^{\frac{1}{2}}\|\Delta u\|_{L^2(0,T;L^2(\Omega))}^{\frac{3}{2}}.
  \end{alignat}
Furthermore
    \begin{alignat}{2}\nonumber
    &\eps^{N+\frac92}\int_0^T\int_{\Omega}\big|\big(\we-\we|_{X_\eps (0,S_\eps,t)}-(\we_\eps-\we_\eps|_{X_\eps (0,S_\eps,t)})\big)\cdot \nabla c_A\Delta u\big|\sd x\sd t\\
    &\qquad \leq C(R)\eps^{N+\frac{13}{4}}\|\Delta u\|_{L^2(0,T;L^2(\Omega))},\label{ineq:u energy-12}
    \\&\varepsilon^4\bigg|\int_{\Omega}\we_0\cdot \nabla c_A\Delta u\sd x\bigg|\leq  C(R)\eps^{N+\frac{9}{2}}\|\Delta u\|_{L^2(0,T;L^2(\Omega))}\label{ineq:u energy-13}
  \end{alignat}
 and
 \begin{alignat}{2}\nonumber
   \varepsilon^4\int_0^T\bigg|\int_{\Omega}s_\eps\Delta u\,\sd x\bigg|\sd t&\leq \varepsilon^4\|s^1_\eps+s^2_\eps\|_{L^2(0,T;L^2(\Omega))}\|\Delta u\|_{L^2(0,T;L^2(\Omega))}\\\label{ineq:u energy-14}
   &\leq C\varepsilon^{N+\frac{15}{4}}\|\Delta u\|_{L^2(0,T;L^2(\Omega))}.
 \end{alignat}
 due to \eqref{estimation:Seps'}.

 Now using \eqref{initial assumption-0}, \eqref{ineq:u energy-7} and \eqref{ineq:u energy-11}-\eqref{ineq:u energy-14} in \eqref{ineq:u energy-6} leads to
  \begin{alignat}{2}
    &\sup_{0\leq t\leq T}\frac{\varepsilon^4}{2}\int_{\Omega} |\nabla u|^2(x,t)\sd x+ \varepsilon^{\frac{9}{2}}\int_0^T\int_{\Omega}|\Delta u|^2\sd x\sd t
    \nonumber\\&\leq \frac{R^2}{8}\eps^{2\order+1}+C(R)\eps^{N+\frac{17}4}\|\nabla u\|_{L^\infty(0,T;L^2(\Omega))}^{\frac{1}{2}}\|\Delta u\|_{L^2(0,T;L^2(\Omega))}^{\frac{3}{2}}
    \nonumber\\\nonumber&\quad+C(R)\varepsilon^{N+3} \|\Delta u\|_{L^2(0,T;L^2(\Omega))}.
  \end{alignat}
 An application of Young's inequality yields
 \begin{alignat}{2}\label{ineq:u energy-16}
    &\sup_{0\leq t\leq T}\frac{3\varepsilon^4}{8}\int_{\Omega} |\nabla u|^2(x,t)\sd x+ \frac{3}{8}\varepsilon^{\frac{9}{2}}\int_0^T\int_{\Omega}|\Delta u|^2\sd x\sd t
    \nonumber\\&\qquad \leq \frac{R^2}{8}\eps^{2\order+1}+C(R)\eps^{2N+\frac32}\leq \frac{R^2}{4}\eps^{2\order+1},
  \end{alignat}
and
 \begin{alignat}{2}\label{ineq:u energy-17}
    \sup_{0\leq t\leq T}\int_{\Omega} |\nabla u|^2(x,t)\sd x + \eps^{\frac{1}{2}}\int_{0}^T\int_{\Omega} |\Delta u|^2\sd x \sd t \leq \frac{2R^2}{3}\eps^{2N-3}.
  \end{alignat}
  if $\varepsilon_0>0$ is small enough.

  Altogether we see from \eqref{ineq:u energy-2}, \eqref{ineq:u energy-4}, \eqref{ineq:u energy-5} and \eqref{ineq:u energy-17} and the definition of $T_\eps$ that there are $\eps_0>0$ and $T_1>0$ such that
  $T_\varepsilon>T_1$ for all $\eps\in (0,\eps_0)$ and therefore \eqref{assumptions} hold true for $\tau =T_1$.

  Finally, \eqref{eq:convVelocityb} follows from $\tilde{\we}=\ve_\eps-\ve_A$ and Theorem~\ref{thm:ErrorVelocity}, in particular \eqref{w1estimate}, and the remaining two conclusions in Theorem \ref{thm:main} are a consequence of the constructions of $c_A$ and  $\ve_A$. This finishes the proof of Theorem \ref{thm:main}.
  
\appendix

\section{Wellposedness of Linearized Two-Phase Flow System}
\begin{thm}\label{solvingkorder} Let $\Omega^\pm, \Gamma$ be smooth and as in Section~\ref{sec:Introduction} with $V_{\Gamma_t}= \no_{\Gamma_t}\cdot \ve_0^\pm$ for all $t\in[0,T_0]$ and some smooth $\ve_0^\pm \colon \ol{\Omega^\pm}\to \R^2$ with $\Div \ve_0^\pm =0$ in $\Omega^\pm$. Moreover, let $T\in (0,T_0]$, $q\in (4,\infty)$, and $\mathbf{b}_j\colon \Gamma\to \R^2 $, $j=0,1,2$, $a_k\colon \T^1\times [0,T_0]\to \R$, $k=0,1$ be smooth. Then for all
  \begin{align*}
    \mathbf{f}&\in L^q(\Omega\times (0,T))^2, \quad
    g\in L^q(0,T;W^1_q(\Omega\setminus \Gamma_t)), \quad w \in L^q(0,T;W_q^{2-\frac1q}(\T^1)), \\
    \mathbf{a}_1&\in L^q(0,T;W_q^{2-\frac1q}(\Gamma_t))^2\cap W_q^{\frac12-\frac1{2q}}(0,T;L^q(\Gamma_t))^2,\quad     \mathbf{v}_0 \in W^{2-\frac2q}_q(\Omega\setminus \Gamma_0)^2, \\
    \mathbf{a}_2&\in L^q(0,T;W_q^{1-\frac1q}(\Gamma_t))^2\cap W_q^{\frac12-\frac1{2q}}(0,T;L^q(\Gamma_t))^2, \quad h_0 \in W^{3-\frac2q}_q(\T^1), \\
    \mathbf{a}&\in L^q(0,T;W^{2-\frac1q}_q(\partial\Omega))^2\cap W_q^{1-\frac1{2q}}(0,T;L^q(\partial\Omega))^2
  \end{align*}
  satisfying
\begin{align}\label{eq:CompCondStokes}
\int_{\Omega\setminus \Gamma_t}g\,\sd x=-\int_{\Gamma_{t}}\mathbf{a}_{1}\cdot\mathbf{n}_{\Gamma_{t}}\sd\mathcal{\mathcal{H}}^{1}+\int_{\partial\Omega}\mathbf{a}\cdot\mathbf{n}_{\partial\Omega}\sd\mathcal{H}^{1}
\end{align}
for almost all $t\in (0,T)$ such that $g=\Div R$ for some $R\in W^1_q(0,T;L^q(\Omega))^2$, $\Div \mathbf{v}_0|_{t=0} = g|_{t=0}$ and $\llbracket\mathbf{v}_0\rrbracket =\mathbf{a}_{1}|_{t=0}+ \mathbf{b}_0 h_0\circ S_0|_{t=0}$, $\ve^-_0|_{\partial\Omega}= \mathbf{a}|_{t=0}$, there are unique
\begin{align*}
  \mathbf{v}&\in L^q(0,T; W^2_q(\Omega\setminus \Gamma_t))^2\cap W_q^1(0,T; L^q(\Omega))^2, \quad
  p\in L^q(0,T; W^1_{q,(0)}(\Omega\setminus \Gamma_t)),\\
  h&\in L^q(0,T;W_q^{3-\frac1q}(\T^1))\cap W^1_q(0,T;W^{2-\frac1q}_q(\T^1))
\end{align*}
solving
\begin{align}\label{eq:CoupledStokes1}
\partial_t\mathbf{v}^{\pm}-\nu^\pm\Delta\ve^{\pm}+\nabla p^{\pm}  &=\mathbf{f} &  &\text{in }\Omega^{\pm}_t, t\in (0,T),\\\label{eq:CoupledStokes2}
\operatorname{div}\mathbf{v}^{\pm} & =g  &  &\text{in }\Omega^{\pm}_t, t\in (0,T),\\\label{eq:CoupledStokes3}
\llbracket\mathbf{v}\rrbracket - \mathbf{b}_0 h\circ S_0 & =\mathbf{a}_{1}  &  & \text{on }\Gamma_t, t\in (0,T),\\\label{eq:CoupledStokes5}
  \llbracket 2\nu D\ve-p \tn{I}\rrbracket\no +\ol{\nu}\llbracket \partial_{\no} \ve\rrbracket-\sigma\Delta^{\Gamma_t} h\circ S_0\no &=   \mathbf{b}_1\partial_s h\circ S_0\nonumber\\
  &\quad +\mathbf{b}_2h\circ S_0+ \mathbf{a}_2  &  & \text{on }\Gamma_t, t\in (0,T),\\\label{eq:CoupledStokes6}
\partial_t h+a_1\partial_s h +\tfrac12(\ve^{+}_\no+\ve^{-}_\no)\circ X_0 + a_0h&=w &  & \text{on }\T^1\times(0,T),\\ \label{eq:CoupledStokesLast}
  \mathbf{v}^{-} & =\mathbf{a}  &  &\text{on }\partial\Omega\times (0,T),\\\label{eq:CoupledIV1}
  \mathbf{v}^\pm|_{t=0} &= \mathbf{v}_0^\pm &&\text{in }\Omega,\\\label{eq:CoupledIV2}
    h|_{t=0} &= h_0 &&\text{in }\T^1,
\end{align}
where $\ve^\pm = \ve|_{\Omega^\pm}$, $p^\pm= p|_{\Omega^\pm}$, $\no=\no_{\Gamma_t}$.
\end{thm}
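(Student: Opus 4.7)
\begin{proof*}{proposal for Theorem~\ref{solvingkorder}}
The plan is to reduce the system to the standard two-phase Stokes framework with free boundary on a fixed reference configuration and apply maximal $L^q$-regularity theory as developed by Prüss--Simonett~\cite{PruessSimonettMovingInterfaces}. First I would construct a smooth Hanzawa-type diffeomorphism $\Psi\colon \overline\Omega\times [0,T_0]\to \overline{\Omega}$ with $\Psi(\Omega^\pm_0,t)=\Omega^\pm_t$, $\Psi|_{\partial\Omega}=\operatorname{id}$, constructed from the flow of a divergence-free smooth extension of $\ve^\pm_0$ (note that by assumption $V_{\Gamma_t}=\no\cdot \ve^\pm_0$, so the flow genuinely maps $\Gamma_0$ onto $\Gamma_t$). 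Pulling back \eqref{eq:CoupledStokes1}--\eqref{eq:CoupledStokesLast} via $\Psi$ yields a system for $(\tilde\ve,\tilde p,h)$ on $\Omega\setminus \Gamma_0$ with smooth, time-dependent coefficients which at $t=0$ coincide with those of the classical two-phase Stokes system on $\Gamma_0$.

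Next I would reduce to homogeneous data. Extending $\mathbf{v}_0$, $h_0$ to solutions of the standard heat/Stokes initial value problem via the relevant Prüss--Simonett lifting, and subtracting off a right inverse of the divergence to absorb $g$ (using the compatibility condition \eqref{eq:CompCondStokes} together with $g=\Div R$ to keep the lift in $W^1_q(0,T;L^q(\Omega))\cap L^q(0,T;W^2_q)$), one reduces to the case with vanishing initial data and source terms concentrated on $\Gamma_0$, $\partial\Omega$. Thereafter the equations on $\Omega\setminus\Gamma_0$ take the canonical form of a two-phase Stokes system with a free interface parametrized by $h$, and the interface condition \eqref{eq:CoupledStokes6} plays the role of the kinematic equation. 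The additional zeroth- and first-order terms $a_1\partial_s h+a_0 h$, as well as the lower-order perturbations $\mathbf{b}_0 h, \mathbf{b}_1\partial_s h, \mathbf{b}_2 h$ and the coefficient terms coming from $\Psi$, are genuine lower-order perturbations with respect to the Prüss--Simonett scales
\[
\mathbb{E}_1(T):= \big(L^q(0,T;W^2_q(\Omega\setminus \Gamma_0))\cap W^1_q(0,T;L^q(\Omega))\big)^2\times L^q(0,T;W^1_{q,(0)}(\Omega\setminus\Gamma_0))\times \mathbb{E}_h(T),
\]
where $\mathbb{E}_h(T)=L^q(0,T;W^{3-1/q}_q(\T^1))\cap W^1_q(0,T;W^{2-1/q}_q(\T^1))$, which embeds into $C([0,T];W^{3-2/q}_q(\T^1))$.

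The heart of the argument is then the maximal $L^q$-regularity for the principal, constant-coefficient (in time, at $t=0$) two-phase Stokes/mean-curvature-type system on the fixed interface $\Gamma_0$; this is covered by Theorem~6.3.3 / Theorem~8.5.1 in \cite{PruessSimonettMovingInterfaces}, which provides an isomorphism between $\mathbb{E}_1(T)$ and the data space defined by the regularity assumptions in the theorem (subject exactly to the compatibility conditions stated in the hypotheses). The perturbation from the time-dependent coefficients (coming from $\Psi$) and from the lower-order terms $\mathbf{b}_j, a_k$ has strictly smaller scaling: every term either carries an extra factor of $T^\alpha$ for some $\alpha>0$ upon taking the $\mathbb{E}_1(T)$--norm of the residual, or acts by a compact/smoothing operator in the relevant spaces. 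Thus a standard Neumann series/contraction argument on a sufficiently small interval $[0,T^\ast]$ yields existence and uniqueness in $\mathbb{E}_1(T^\ast)$; since the estimate is linear in the data with constants depending only on $T_0$ and the (fixed) geometry, the local solution extends by iteration to the full interval $[0,T]$.

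The main obstacle I anticipate is the matching of regularities between the two interface conditions \eqref{eq:CoupledStokes5} and \eqref{eq:CoupledStokes6}: the first produces $\sigma\Delta^{\Gamma_t}h$, which is two tangential derivatives of $h$, balanced against one normal derivative of $\ve$ via the Stokes operator; the second is of first order in $\partial_t$ and in $\partial_s$. Closing these into a single maximal-regularity statement requires $h$ in the Prüss--Simonett class $\mathbb{E}_h(T)$ above and the corresponding trace space $W^{3-2/q}_q(\T^1)$ for $h_0$, which is exactly the hypothesis $h_0\in W^{3-2/q}_q(\T^1)$ used here. Once this is set up, verifying that all remaining coupling terms ($\mathbf{b}_0 h$ in \eqref{eq:CoupledStokes3}, $\mathbf{b}_1\partial_s h$, $\mathbf{b}_2 h$ in \eqref{eq:CoupledStokes5}, and $a_1\partial_sh+a_0 h$ in \eqref{eq:CoupledStokes6}) are of lower order in the scales is routine from the embeddings $\mathbb{E}_h(T)\hookrightarrow C([0,T];W^{3-2/q}_q(\T^1))\hookrightarrow C^0([0,T]\times\T^1)$ for $q>4$. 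Uniqueness follows from the same estimate applied to the difference of two solutions, and the smoothness/compatibility assumptions ensure the necessary trace conditions at $t=0$.
\end{proof*}
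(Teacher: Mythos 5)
Your proposal follows the same overall strategy as the paper's proof: transform the evolving domains to a fixed reference configuration via the Lagrangian flow generated by $\ve_0^\pm$, appeal to maximal $L^q$-regularity for the model two-phase Stokes problem with a height function from Pr\"uss--Simonett, treat the time-dependent coefficients and the $\mathbf{b}_j$, $a_k$ terms as perturbations that gain a factor $T^\alpha$, obtain local solvability by a Neumann series/openness-of-isomorphisms argument, and then extend to $[0,T]$ by concatenation using linearity. The paper's proof is organized in exactly these three stages.

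However, there is one genuine gap. The term $\ol{\nu}\llbracket \partial_{\no} \ve\rrbracket$ in the stress jump condition \eqref{eq:CoupledStokes5} is \emph{not} a lower-order perturbation: it involves a full normal derivative of $\ve$ and is of the same order as the leading term $\llbracket 2\nu D\ve\rrbracket\no$. It is therefore not absorbed by the smallness-of-$T$ argument, nor is it part of the standard two-phase Stokes problem to which the Pr\"uss--Simonett maximal regularity results directly apply. Your proposal, by listing only the $\mathbf{b}_j$, $a_k$ and flow-map coefficients among the perturbations, silently assumes that the cited Pr\"uss--Simonett theorem already covers this modified stress condition, which it does not.

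The paper handles this in its first step by an explicit algebraic reduction: once one is reduced to $\llbracket\ve\rrbracket=0$ on $\Gamma$ and $\Div\ve^\pm=0$ in $\Omega^\pm$, one has $\llbracket\nabla(\ve\cdot\no)\rrbracket=0$ on $\Gamma$ (the tangential derivatives of $\llbracket\ve\cdot\no\rrbracket$ vanish trivially, and the normal one vanishes by the divergence constraint), which together with $\partial_\no\ve = 2D\ve\,\no - \nabla(\ve\cdot\no)+(\nabla\no)\ve$ yields $\llbracket\partial_\no\ve\rrbracket = 2\llbracket D\ve\rrbracket\no$. Hence \eqref{eq:CoupledStokes5} is equivalent to the \emph{standard} stress jump $\llbracket 2(\nu+\ol{\nu}) D\ve-p\,\tn{I}\rrbracket\no =\sigma\Delta^\Gamma h\,\no +\mathbf{a}_2$, and only then is the model problem in the Pr\"uss--Simonett class (with an enhanced viscosity $\nu+\ol\nu$). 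Without this observation, the base isomorphism on which your perturbation argument rests is not available. Apart from this missing reduction, your proposal is sound and essentially coincides with the paper's proof.
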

\begin{proof}
First of all by subtracting suitable extensions of $\mathbf{a}$, $\ve_0$, and $h_0$ we can easily reduce to the case $\mathbf{a}=0$, $\ve_0=0$, and $h_0=0$.

\medskip

\noindent
  \textbf{Step 1: Time-independent interface, zero lower order terms:}
  Let us first consider the case that $\Gamma_t=\Gamma$, $\Omega^\pm_t =\Omega^\pm$ are independent of $t\in [0,T_0]$, $\mathbf{b}_j=0$ for $j=0,1,2$, $b_0=0$.
  First of all by subtracting suitable extensions, we can easily reduce to the case that $g=0$ and $\mathbf{a}_1=0$. Then
  \begin{equation*}
    \ol{\nu}\llbracket \nabla \ve_{\no}\rrbracket=0\qquad \text{if } \llbracket \ve\rrbracket=0\text{ on }\Gamma,\ \Div \ve^\pm =0\text{ in }\Omega^\pm. 
  \end{equation*}
  Hence in this case \eqref{eq:CoupledStokes5} is equivalent to
  \begin{equation}\label{eq:CoupledStokes5'}
    \llbracket 2(\nu+\ol{\nu}) D\ve-p \tn{I}\rrbracket\no =\sigma\Delta^\Gamma h\no +\mathbf{a}_2.
  \end{equation}
  Thus the result follows e.g.\ from \cite[Corollary~8.1.3]{PruessSimonettMovingInterfaces}.

\medskip

  \noindent \textbf{Step 2: Existence for $T=T_1>0$ sufficiently small:} Let $\Phi\colon \ol{\Omega}\times [0,T_0]\to \ol{\Omega}$ be defined by
  \begin{alignat*}{2}
    \frac{d}{dt} \Phi_t(\xi) &= \ve_0^\pm(\Phi_t(\xi),t) &\quad & \text{for all }\xi\in\overline{\Omega^\pm_0}, t\in [0,T_0],\\
    \Phi_0(\xi) &= \xi &\quad & \text{for all }\xi\in\overline{\Omega^\pm_0}.
  \end{alignat*}
  Then $\Phi$ is smooth in $\ol{\Omega^\pm}$ and $\Phi(\Gamma_0,t)=\Gamma_t$, $\Phi(\Omega^\pm_0,t)=\Omega^\pm_t$ for all $t\in [0,T_0]$. Moreover, $(\mathbf{\ve}^\pm,p^\pm,h)$ solves \eqref{eq:CoupledStokes1}-\eqref{eq:CoupledIV2} if and only if $(\tilde{\ve}^\pm,\tilde{p}^\pm,h)$, where $\tilde\ve^\pm(x,t)= \ve^\pm(\Phi_t(x),t)$, $\tilde{p}^\pm(x,t)= p^\pm(\Phi_t(x),t)$ for all $x\in \ol{\Omega_0^\pm}$, $t\in [0,T]$,  solves the perturbed system
\begin{align}\label{eq:CoupledStokes1p}
\partial_t\tilde{\mathbf{v}}^{\pm}-\nu^\pm\Delta\tilde\ve^{\pm}+\nabla \tilde{p}^{\pm}  &=\tilde{\mathbf{f}} + \mathbf{R}_1 &  &\text{in }\Omega^{\pm}_0\times(0,T),\\\label{eq:CoupledStokes2p}
\operatorname{div}\tilde{\mathbf{v}}^{\pm} & =\tilde{g} +R_2  &  &\text{in }\Omega^{\pm}_0\times (0,T),\\\label{eq:CoupledStokes3p}
  \llbracket\tilde{\mathbf{v}}\rrbracket & =\tilde{\mathbf{a}}_{1}+ \tilde{b}_0\btau h   &  & \text{on }\Gamma_0\times (0,T),\\\label{eq:CoupledStokes5p}
    \llbracket 2\nu D\tilde{\ve}-\tilde{p} \tn{I}\rrbracket\no +\ol{\nu}\llbracket \partial_{\no} \tilde\ve\rrbracket -\sigma\Delta^{\Gamma_0} h\circ S_0^0 &= \no + \tilde{\mathbf{b}}_1\partial_s h\circ S_0^0\nonumber\\
  &\quad +\tilde{\mathbf{b}}_2h\circ S_0^0+ \tilde{\mathbf{a}}_2+\mathbf{R}_3  &  & \text{on }\Gamma_0, t\in (0,T),\\\label{eq:CoupledStokes6p}
\partial_t h+a_1\partial_s h +\tfrac12(\ve^{+}_\no+\ve^{-}_\no)\circ X_0^0 &=w- a_0h + R_4 &  & \text{on }\T^1\times(0,T),
\end{align}
together with $\tilde\ve|_{\partial\Omega}=0$, $\tilde\ve|_{t=0}=0, h|_{t=0}=0$,
where $S_0^0 (x,t)= S_0(x,0)$ for all $(x,t)\in \Omega\times (0,T)$, $X_0^0(s)= X_0(s,0)$ for all $s\in\T^1$, and
\begin{alignat*}{3}
  \tilde{\mathbf{f}}(x,t)&=\mathbf{f}(\Phi_t(x),t), &\quad  \tilde{g}(x,t)&=g(\Phi_t(x),t) &\qquad&\text{for all }(x,t)\in \Omega_0^\pm\times (0,T),\\
  \tilde{\mathbf{a}}_j(x,t)&=\mathbf{a}_1(\Phi_t(x),t), &\quad\tilde{\mathbf{b}}_j(x,t)&=\mathbf{b}_j(\Phi_t(x),t)  &\qquad&\text{for all }(x,t)\in \Gamma_0\times (0,T),j=1,2,\\
     \tilde{b}_0(x,t)&=b_0(\Phi_t(x),t),&&   &\qquad&\text{for all }(x,t)\in \Gamma_0\times (0,T).
\end{alignat*}
Here $(\mathbf{R}_1, R_2, \mathbf{R}_3,0, \mathbf{R}_3, R_4)\in \mathbb{F}(T):= \mathbb{F}_1(T)\times \ldots\times \mathbb{F}_5(T)$ depends linearly on $(\tilde\ve, \tilde{p},h)\in \mathbb{E}(T):= \mathbb{E}_1(T)\times \mathbb{E}_2(T) \times \mathbb{E}_3(T)$, where
\begin{align*}
  \mathbb{E}_1(T)&:=  L^q(0,T; W^2_q(\Omega\setminus \Gamma_0)\cap W^1_{q,0}(\Omega))^2\cap {}_0W^1_q(0,T; L^q(\Omega))^2, \\
  \mathbb{E}_2(T)&:= L^q(0,T; W^1_{q,(0)}(\Omega\setminus \Gamma_0))\cap {}_0W^1_q(0,T; \dot{W}^{-1}_{q,(0)}(\Omega)) \\
  \mathbb{E}_3(T)&:=L^q(0,T;W_q^{3-\frac1q}(\T^1))\cap {}_0 W^1_q(0,T;W_q^{2-\frac1q}(\T^1)),\\
  \mathbb{F}_1(T)&:= L^q(\Omega\times (0,T))^2, \quad \mathbb{F}_2(T):= L^q(0,T;W^1_q(\Omega\setminus \Gamma_0)), \\
  \mathbb{F}_3(T)&:=L^q(0,T;W_q^{2-\frac1q}(\Gamma_0))^2\cap {}_0W_q^{\frac12-\frac1{2q}}(0,T;L^q(\Gamma_0))^2, \quad\\
    \mathbb{F}_4(T)&:=L^q(0,T;W_q^{1-\frac1q}(\Gamma_0))^2\cap W_q^{\frac12-\frac1{2q}}(0,T;L^q(\Gamma_0))^2, \quad
    \mathbb{F}_5(T):=  L^q(0,T;W_q^{2-\frac1q}(\T^1)), 
\end{align*}
normed in the standard way,
and, for a Banach space $X$ and $s>1-\frac1q$,
\begin{equation*}
  {}_0 W^s_q(0,T;X) :=\{u \in W^s_q(0,T;X): u(0)=0\}. 
\end{equation*}
Moreover, since $\Phi_t\to_{t\to 0} \operatorname{id}_{\overline{\Omega}}$ in $C^k(\overline{\Omega})$ for every $k\in\N$, we have that
\begin{equation*}
  \|(\mathbf{R}_1, R_2, 0,  \mathbf{R}_3, \mathbf{R}_4)\|_{\mathbb{F}(T)}\leq C(T) \|(\tilde\ve, \tilde{p},h)\|_{\mathbb{E}(T)},
\end{equation*}
for some $C(T)\to_{T\to 0} 0$. Furthermore using e.g.\ $\mathbb{E}_3(T)\hookrightarrow C([0,T];W_q^{3-\frac3q}(\T^1))$ one can show that
\begin{align*}
  \|(0, 0, \tilde{b}_0\btau h, \tilde{\mathbf{b}}_1\partial_s h\circ S_0^0+\tilde{\mathbf{b}}_2h\circ S_0^0, a_0h)\|_{\mathbb{F}(T)}\leq CT^\alpha \|(\tilde\ve, \tilde{p},h)\|_{\mathbb{E}(T)}
\end{align*}
for some $\alpha>0$ and $C$ independent of $T\in (0,T_0]$. Hence, since the set of all invertible linear operators is open, there is some $T_1>0$ such that \eqref{eq:CoupledStokes1p}-\eqref{eq:CoupledStokes6p} possesses a unique solution in $(\tilde\ve, \tilde{p},h)\in \mathbb{E}(T)$ provided $T\in (0,T_1]$.
Transforming $(\tilde{\ve},\tilde{p})$ to $\Omega^\pm$ with the aid of $\Phi_t^{-1}$ yields the statement in this case.
\medskip

    \noindent
    \textbf{Step 3: Existence for general $T>0$:} Since the system is linear, the existence time $T_1>0$ in the second step is independent of the norms of the data. Moreover, as in the second step we obtain that for any $t_0\in [0,T)$ there is some $T_1(t_0)>t_0$ such that the system has a unique solution for $t\in (t_0,T_1(t_0))$ for a given initial value $\ve|_{t=t_0}=\tilde{\ve}_0$ at $t=t_0$. Because of the compactness of $[0,T]$ and uniqueness of the solutions, we can concatenate these solutions and obtain a solution on $[0,T]$. 
  \end{proof}
  \begin{rem}\label{rem:Solvability}
    With the aid of Theorem~\ref{solvingkorder} one can obtain that for all smooth $\mathbf{f}, g, a_{1,2}, \mathbf{b}, \mathbf{w}, \mathbf{a}$ (without precribed initial values $\ve_0$, $h_0$) a smooth solution of \eqref{eq:CoupledStokes1}-\eqref{eq:CoupledStokesLast}. To this end one extends the smooth data $\mathbf{f}$, $g$, $w$, $\mathbf{a}_k$, $k=1,2$, $\mathbf{a}$, and $\Omega^\pm_t, \Gamma_t$ on a time interval $[-1,T_0]$ in a smooth manner such that these functions vanish in $[-1,\frac12]$. Then one can apply Theorem~\ref{thm:Approx1} to obtain a solution $(\ve^\pm, p^\pm,h)$ of \eqref{eq:CoupledStokes1}-\eqref{eq:CoupledStokesLast} on a time intervall $(-1,T_0)$ instead of $(0,T_0)$ and with initial values $\ve_0 =0$, $h_0=0$. Then one can apply the parameter-trick in space and time (cf.\ e.g.\ \cite[Section~9.4]{PruessSimonettMovingInterfaces} to obtain that $\ve^\pm,p$ are smooth in $\bigcup_{t\in (-1,T_0]}\Omega^\pm_t\times \{t\}$ and $h$ is smooth in $\bigcup_{t\in (-1,T_0]}\T^1\times \{t\}$. Restriction to $[0,T_0]$ in time yields the existence of a smooth solution to \eqref{eq:CoupledStokes1}-\eqref{eq:CoupledStokesLast}, which satisfy \eqref{eq:CoupledIV1}-\eqref{eq:CoupledIV2} for some $\ve_0^\pm:= \ve^\pm|_{t=0}$, $h_0 := h|_{t=0}$.
  \end{rem}

  \section*{Acknowledgments}
M.~Fei was partially supported by NSF of China under Grant No.12271004 and Anhui Provincial Talent Funding Project under Grant No.gxbjZD2022009.
Moreover, M.~Moser has received funding from the European Research Council (ERC) under the European Union's Horizon 2020 research and innovation programme (grant agreement No 948819).


\def\ocirc#1{\ifmmode\setbox0=\hbox{$#1$}\dimen0=\ht0 \advance\dimen0
  by1pt\rlap{\hbox to\wd0{\hss\raise\dimen0
  \hbox{\hskip.2em$\scriptscriptstyle\circ$}\hss}}#1\else {\accent"17 #1}\fi}

\bigskip

\noindent
{\it
  (H. Abels) Fakult\"at f\"ur Mathematik,
  Universit\"at Regensburg,
  93040 Regensburg,
  Germany}\\
{\it E-mail address: {\sf helmut.abels@mathematik.uni-regensburg.de} }\\[1ex]
{\it
(M. Fei) School of  Mathematics and Statistics, Anhui Normal University, Wuhu 241002, China}\\
{\it E-mail address: {\sf mwfei@ahnu.edu.cn} }\\[1ex]
{\it
(M. Moser) Institute of Science and Technology Austria, Am Campus 1, AT-3400 Klosterneuburg, Austria}\\
{\it E-mail address: {\sf maximilian.moser@ist.ac.at} }\\[1ex]

\end{document}